\documentclass[11pt]{article}
\usepackage[margin=0.75in]{geometry}

\usepackage{amssymb,amsmath,mathtools,amsthm,thmtools,amsfonts}

\usepackage{comment}
\usepackage{thm-restate}

\usepackage{url} 
\usepackage{hyperref}
\hypersetup{
colorlinks,
linkcolor={black},
citecolor={black},
urlcolor={blue!60!black},
pdftitle={A Combinatorial Proof of Hilton’s Conjecture and Beyond},
pdfauthor={Thomas Lesgourgues, Luke Postle}}

\usepackage[utf8]{inputenc}
\usepackage{color}

\usepackage{enumitem}
\setlist[enumerate,1]{label={(\roman*)},noitemsep, topsep=0pt}
\setlist[enumerate,2]{label={(\alph*)},noitemsep, topsep=0pt}
\setlist[enumerate,3]{label={(\arabic*)},noitemsep, topsep=0pt}

\setlist[itemize]{nolistsep,noitemsep, topsep=0pt}

\newcommand{\bb}[1]{\mathbb{#1}}

\newcommand{\cA}{\ensuremath{\mathcal A}}
\newcommand{\cB}{\ensuremath{\mathcal B}}

\newcommand{\cE}{\ensuremath{\mathcal E}}
\newcommand{\cF}{\ensuremath{\mathcal F}}
\newcommand{\cG}{\ensuremath{\mathcal G}}
\newcommand{\cH}{\ensuremath{\mathcal H}}

\newcommand{\cK}{\ensuremath{\mathcal K}}
\newcommand{\cL}{\ensuremath{\mathcal L}}
\newcommand{\cM}{\ensuremath{\mathcal M}}

\newcommand{\cP}{\ensuremath{\mathcal P}}
\newcommand{\cQ}{\ensuremath{\mathcal Q}}
\newcommand{\cR}{\ensuremath{\mathcal R}}
\newcommand{\cS}{\ensuremath{\mathcal S}}
\newcommand{\cT}{\ensuremath{\mathcal T}}

\newcommand{\cV}{\ensuremath{\mathcal V}}
\newcommand{\cW}{\ensuremath{\mathcal W}}

\newcommand{\Expect}[1]{\ensuremath{\mathbb E\left[#1\right]}}
\newcommand{\Prob}[1]{\ensuremath{\mathbb P\left[#1\right]}}

\DeclarePairedDelimiter{\parens}{(}{)}
\DeclarePairedDelimiter{\set}{\{}{\}}
\DeclarePairedDelimiter{\brackets}{[}{]}
\DeclarePairedDelimiter{\floor}{\lfloor}{\rfloor}
\DeclarePairedDelimiter{\ceil}{\lceil}{\rceil}
\DeclarePairedDelimiter{\abs}{\lvert}{\rvert}   
\DeclarePairedDelimiter{\size}{\lvert}{\rvert}   

\newcommand{\cBoff}{\ensuremath{\cB_{{\rm off}}}}
\newcommand{\cBon}{\ensuremath{\cB_{{\rm on}}}}

\renewcommand{\div}[1]{\ensuremath{\mathrm{Div}\parens*{#1}}}

\renewcommand{\tilde}[1]{\widetilde{#1}}

\usepackage[noabbrev,capitalise]{cleveref}

\theoremstyle{plain}
\newtheorem{thm}{Theorem}[section]
\newtheorem{lem}[thm]{Lemma}
\newtheorem{proposition}[thm]{Proposition}

\newtheorem{cor}[thm]{Corollary}

\newtheorem{conj}[thm]{Conjecture}

\newenvironment{proofclaim}[1][\proofname]
{\proof[#1]}
{\endproof}

\declaretheorem[
  style=plain,
  name=Claim,
  within=theorem,
]{claim}
\renewcommand{\theclaim}{\thetheorem.\arabic{claim}}

\newenvironment{lateproof}[1]
 {%
  \renewcommand{\theclaim}{\ref{#1}.\arabic{claim}}%
  \begin{proof}[Proof of~\cref{#1}]%
 }
 {\end{proof}}

\newenvironment{nonlateproof}[1]
 {%
  \renewcommand{\theclaim}{\ref{#1}.\arabic{claim}}%
  \begin{proof}%
 }
 {\end{proof}}

\usepackage{etoolbox}
 \AtEndEnvironment{proof}{\setcounter{claim}{0}}

\theoremstyle{plain} 
\newcommand{\thistheoremname}{}
\newtheorem{genericthm}{\thistheoremname}

\theoremstyle{definition}
\newtheorem{definition}[thm]{Definition}

\newtheorem{remark}[thm]{Remark}

\crefname{equation}{}{}
\crefname{lem}{Lemma}{Lemmas}
\crefname{claim}{Claim}{Claims}
\crefname{thm}{Theorem}{Theorems}
\crefname{enumi}{}{}

\numberwithin{equation}{section}

\usepackage{thm-restate}
\usepackage{soul}
\setstcolor{red}

\usepackage[dvipsnames, table]{xcolor}

\newcommand{\Erdos}{Erd\H{o}s}
\newcommand{\Kuhn}{K\"{u}hn}
\newcommand{\Lovasz}{Lov{\'a}sz}
\newcommand{\Renyi}{R\'{e}nyi}
\newcommand{\Sos}{S\'os}

\title{A Combinatorial Proof of Hilton's Conjecture and Beyond}

\author{
Thomas Lesgourgues
\thanks{School of Mathematics and Statistics, University of New South Wales, Australia. {\tt t.lesgourgues@unsw.edu.au}. Most of this work has been done while TL was at the University of Waterloo.}
\and
Luke Postle
\thanks{Combinatorics and Optimization Department,
University of Waterloo, Waterloo, Ontario N2L 3G1, Canada {\tt lpostle@uwaterloo.ca}. Partially supported by NSERC
under Discovery Grant No. 2019-04304.}}
\date{\today}

\usepackage{ifthen}

\usepackage{tikz}
\usepackage{calc}
\usetikzlibrary{calc,patterns,decorations.pathreplacing,backgrounds,shapes}

\tikzstyle{vertex}=[circle, draw, fill=black, inner sep=0pt, minimum size=4pt] 
\newcommand{\vertex}{\node[vertex]} 

\newcommand{\TriangleBooster}[1]
{
\begin{tikzpicture}[x=#1 cm, y=#1 cm]

    \draw [draw=black, fill=yellow!40] (0,0) -- (5,5) -- (5,3) -- cycle;
    \draw [draw=black, fill=yellow!40] (10,0) -- (5,3) -- (5,1) -- cycle;
    \draw [draw=black, fill=yellow!40] (0,0) -- (5,1) -- (5,-1) -- cycle;
    \draw [draw=black, fill=yellow!40] (10,0) -- (5,-1) -- (5,-3) -- cycle;
    \draw [draw=black, fill=yellow!40] (0,0) -- (5,-3) -- (5,-5) -- cycle;

    \draw [draw=black, fill=red!40] (10,0) -- (5,5) -- (5,3) -- cycle;
    \draw [draw=black, fill=red!40] (0,0) -- (5,3) -- (5,1) -- cycle;
    \draw [draw=black, fill=red!40] (10,0) -- (5,1) -- (5,-1) -- cycle;
    \draw [draw=black, fill=red!40] (0,0) -- (5,-1) -- (5,-3) -- cycle;
    \draw [draw=black, fill=red!40] (10,0) -- (5,-3) -- (5,-5) -- cycle;

    \vertex (u) at (0,0) [label=180:$u$]{};
    \vertex (v) at (10,0) [label=0: $v$]{};
    
    \foreach \i in {1, 2, 3, 4, 5, 6}{
        \pgfmathsetmacro\y{7-2*\i}
        \vertex (b\i) at (5,\y) [label=0:$b_{\i}$]{};
    }

    \begin{scope}[on background layer]
        \path (b1) edge[line width= 2pt, dashed, fill =red!40, out=180, in=180, looseness=2] (b6);
        \path (b1) edge[line width= 3pt, dashed] (v);
        \path (b6) edge[line width= 3pt, dashed] (v);
    \end{scope} 

    \draw [draw = black, fill = red!40] (0,-6) rectangle (1,-6.5);
    \node at (1,-6.25) [label=0:$\cBon$]{};

    \draw [draw = black, fill = yellow!40] (3,-6) rectangle (4,-6.5);
    \node at (4,-6.25) [label=0:$\cBoff$]{};

    \draw [draw = black] (6,-6.25) -- (7,-6.25);
    \node at (7,-6.25) [label=0:$B$]{};

    \draw [line width= 2pt, dashed, draw = black] (9,-6.25) -- (10,-6.25);
    \node at (10,-6.25) [label=0:$R$]{};

\end{tikzpicture}}

\begin{document}

\maketitle

\begin{abstract} 
Using refined absorption, we prove that for every integer $g\ge 1$ and real $\gamma > 0$, and for sufficiently large $n$, there exists an $n^{-1+\gamma}$-spread distribution on Latin squares of order $n$ and girth at least $g$ that have no proper subsquares.

This implies a combinatorial proof of Hilton's conjecture from the 1970s (recently proved algebraically by Allsop and Wanless) that for all sufficiently large $n$, there exists a subsquare-free Latin square of order $n$; indeed, it implies there exist at least $n^{(1-o(1))n^2}$ subsquare-free squares.

Simultaneously it also implies the existence of high girth Latin squares (recently proved by Kwan, Sah, Sawhney and Simkin) and even an $n^{-1+\gamma}$-spread distribution on high girth Latin squares.
\end{abstract}


\section{Introduction}

\subsection{Subsquare-Free Latin Squares of Arbitrarily High Girth}

A {\em Latin square} of order $n$ is an $n\times n$ matrix with entries from a set of $n$ symbols, such that each row and each column contains each symbol exactly once. These are a historically fundamental type of combinatorial designs, see for instance~\cite{laywine1998discrete,keedwell2015latin} for an account of their rich history, properties and applications. 

There exist a surprising number of open questions regarding some basic properties of Latin squares. Most notably, there is no known easily computable formula for the number of Latin squares of order $n$ (see e.g. Van Lint and Wilson~\cite[Chapter 17]{van2001course}). A fundamental area of research focuses on the existence of various substructures in Latin squares. Given a Latin square $L$ of order $n$, a {\em subsquare} of order~$k$ in~$L$ is a $k\times k$ submatrix of $L$ that is itself a Latin square, called {\em proper} if $2\leq k<n$. In particular, a subsquare of size $2$ is called an \emph{intercalate}. If $L$ has no proper subsquares, it is said to be {\em subsquare-free}. It is known since 1975 that there exist Latin squares with no intercalates for all orders (except for orders $2$ and $4$)~\cite{kotzig1975latin,kotzig1976certain,mcleish1975existence}. This question was in part motivated by a connection with disjoint Steiner triple systems~\cite{kotzig1975latin}. In 1974, Hilton conjectured that there should exist a subsquare-free Latin square of order $n$ for any $n$ large enough. This conjecture first appeared in~\cite[Problem 1.7]{keedwell2015latin}, although it is stated incorrectly there. Over the ensuing decades, constructions were developed for various values of $n$. In 1980, Heinrich~\cite{heinrich1980latin} constructed subsquare-free Latin squares of order $n = pq$ with $p,q$ distinct primes and $n\neq 6$; this was generalised in 1982 by Andersen and Mendelsohn~\cite{andersen1982direct} for $n$ divisible by a prime $p\geq 5$. In 2007, Maenhaut, Wanless, and Webb~\cite{maenhaut2007subsquare} proved their existence for odd $n$. This left values of $n$ of the form $n=2^x 3^y$ for some $x\geq1$, $y\geq0$, which was finally provided by Allsop and Wanless~\cite{allsop2023latin} in 2023. We also note that Allsop and Wanless~\cite{allsop2023latin} proved the existence of $d$-dimensional permutations with no sub-hypercubes for all $d\ge 2$. 

An intercalate is also an example of a {\em low girth} substructure. The \emph{girth} of a graph is the length of its shortest cycle. In particular, if the graph~$G$ is connected, the girth of~$G$ is the smallest~$k$ such that~$G$ has~$k$ vertices that span at least~$k$ edges. In \cite{linial2018challenges}, Linial defines a {\em cycle} in a Latin square $L$ to be a set of rows $A$, a set of columns $B$, and a set of symbols $C$, with $|A| + |B| + |C| > 3$, such that the $A \times B$ submatrix of~$L$ contains at least $|A| + |B| + |C|-2$ symbols from $C$. We define the {\em girth} of a Latin square $L$ to be the minimum of $|A| + |B| + |C| - 2$ over all such cycles in $L$\footnote{For consistency with the definition of girth in other design problems, this differs  from Linial's definition by an additive constant of $2$.}. It is not hard to see that no Latin square has girth smaller than $4$, and that a Latin square has girth $4$ if and only if it contains an intercalate. Linial then naturally asked if there exist Latin squares with arbitrarily high girth. 

These definitions and questions were in part motivated by the Brown-\Erdos-\Sos~\cite{BES73} problem in extremal hypergraph theory, and in particular by an old conjecture of \Erdos~\cite{E73} on the existence of high-girth Steiner triple systems, which was recently proved by Kwan, Sah, Sawhney, and Simkin~\cite{KSSS2022STS}. 
The same group answered Linial's question positively, showing that there exist Latin squares with arbitrarily high girth~\cite{KSSS2023substructures}. 

In this article, we start by extending Hilton's conjecture to the setting of high-girth Latin squares.
\begin{thm}\label{thm:ExistHighGirthSQLS}
For every integer $g\geq 1$, there exists an integer $n_0 >0 $ such that for all $n\ge n_0$, there exists a subsquare-free Latin square of order $n$ and girth at least $g$.
\end{thm}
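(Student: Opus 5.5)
Here is the plan. I would model a Latin square of order $n$ as a $K_3$-decomposition of the complete tripartite graph $K_{n,n,n}$, with parts rows $R$, columns $C$ and symbols $S$; a cell $(r,c)$ with entry $s$ is the triangle $rcs$. Girth at least $g$ then means that no set of $j$ triangles with $2\le j< g$ spans at most $j+2$ vertices. A proper subsquare of order $k$ is a set of $k^2$ triangles supported on $k$ rows, $k$ columns and $k$ symbols, so it spans only $3k$ vertices. I would split subsquares by size. Those of order $k<g'$, for a constant $g'$ depending only on $g$, are themselves low-girth configurations: $k^2$ triangles on $3k\le k^2+2$ vertices once $k\ge 2$. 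Forbidding all "Erd\H{o}s configurations" up to a large constant size therefore handles small subsquares as well as the girth condition. The real work is the subsquares of order $k$ with $g'\le k<n$.

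The construction follows the refined absorption framework. First I set aside a random sparse reserve $X\subseteq K_{n,n,n}$ of density $p=n^{-\beta}$, and build inside $X$ a high-girth "omni-absorber" $A$. This means that for every $L\subseteq K_{n,n,n}\setminus A$ that is tridivisible (balanced at every vertex) and lies in a designated sparse leftover graph, $A\cup L$ has a $K_3$-decomposition. Moreover, that decomposition together with any high-girth decomposition of the rest creates no new Erd\H{o}s configurations. Second, I run a high-girth triangle-removal process, in the style of Kwan--Sah--Sawhney--Simkin or the conflict-free hypergraph matching method of Glock--Joos--Kim--K\"uhn--Lichev / Delcourt--Postle, on $K_{n,n,n}\setminus A$. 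This process avoids all Erd\H{o}s configurations of size at most $g'$, both among its own triangles and with the absorber's potential triangles. It leaves a leftover $L$ of maximum degree $n^{1-\varepsilon}$ that is confined to a prescribed sparse "reservoir" structure. Third, I absorb $L$ using $A$.

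For large subsquares I would argue probabilistically that they do not occur. For $g'\le k\le n/2$, say, a fixed $k\times k\times k$ box has to receive $k^2$ of its triangles. Triangles chosen by the nibble are spread: any fixed set of $t$ triangles all appear with probability at most $(Cn^{-1+\gamma})^t$. There are $\binom nk^3\le n^{3k}$ boxes, so a union bound gives $n^{3k}(Cn^{-1+\gamma})^{k^2-O(\text{absorber part})}$, which is tiny for $k\ge g'$. Cells of the box covered by the absorber $A$ need care. Because $A$ has maximum degree $n^{1-\beta}$ and is spread, I would bound its contribution inside any $k$-box by $o(k^2)$ when $k$ is large, and by a constant configuration count when $k$ is small. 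The remaining range $n/2<k<n$ is impossible for trivial reasons, since a subsquare of order $k$ in a square of order $n$ forces $k\le n/2$. So the union bound only has to cover $k\le n/2$.

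The main obstacle I expect is making the spreadness estimate hold for the whole final square, not just the nibble portion. I would build the absorber from spread random "booster" gadgets, each of which has two decompositions $\mathcal{B}_{\rm on}$ and $\mathcal{B}_{\rm off}$. The absorber's choice of decomposition then depends on the leftover, but every triangle that can possibly be used still lies in a fixed sparse random set with $O(1)$ choices per edge. Intersecting a $k$-box with that set then yields at most $O(k^2 n^{-\beta}+k)$ triangles with high probability, uniformly over all boxes. With this in hand, the count for the nibble part goes through as above and proves \cref{thm:ExistHighGirthSQLS}.
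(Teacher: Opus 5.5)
\paragraph{A gap in the large-subsquare union bound.}
The plan fails at the large-subsquare step. Spreadness bounds the probability that a \emph{fixed} set of triangles appears. The event ``this $k\times k\times k$ box carries a subsquare'' is instead a union over all ways of filling the box, that is, over roughly
\[
L(k)=\exp\bigl(k^2(\log k-2+o(1))\bigr)\approx (k/e^2)^{k^2}
\]
Latin squares of order $k$ on it. Your bound $n^{3k}(Cn^{-1+\gamma})^{k^2}$ omits this factor. The correct bound is about $n^{3k}\bigl(Ckn^{-1+\gamma}/e^2\bigr)^{k^2}$. This is small for $k\le n^{1-\gamma-\delta}$ with $\delta>0$ fixed, but enormous once $k\ge n^{1-\gamma/2}$.

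With the factor restored, your argument therefore excludes only small subsquares (via girth, which you handle correctly) and medium ones. A sharper $C/n$-spread would not rescue the range $k$ near $n/2$ either, unless $C<2e^2$. That is within a factor $2$ of the optimal $e^2/n$, which is not known even for Latin squares without the girth constraint. Your treatment of the absorber's triangles does not touch this issue, since the obstruction is the $L(k)$ fillings of the nibble part itself.

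\paragraph{The missing idea.}
The idea the paper supplies is to require near-optimal spread only on a sparse set of cells.
\begin{enumerate}
\item Choose a random $Y\subseteq K_{n,n}$ with $\Delta(Y)\le n/\log^{a}n$ that meets every $k\times k$ box with $n^c\le k\le n/2$ in at least $k\log^2 n$ cells (\cref{lem:ExistenceYIntersectingAllLarge}).
\item Pre-seed a triangle on every $Y$-edge using a $\frac{1+\varepsilon}{n}$-spread distribution on $Y$-enclosed packings. This is done quantumly: keep each candidate triangle through a $Y$-edge with probability $\log^a n/n$, then pick one good candidate uniformly.
\item Ensure the projection treasury of this seed stays regular and abundant, so the spread high-girth absorption machinery can complete it without creating low-girth configurations through the seed (\cref{thm:preseeding,thm:MinDegree}).
\end{enumerate}

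A large subsquare then restricts to a $Y$-enclosed packing of size at least $k\log^2 n$. Each $Y$-cell has only $k$ possible symbols, so the union bound runs over $\binom{n}{k}^3k^{|Y\cap R|}$ choices. This gives
\[
\sum_k n^{3k}\bigl((1+\varepsilon)k/n\bigr)^{k\log^2 n}=o(1),
\]
precisely because $k\le n/2$.

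Without a device of this kind, your plan does not rule out subsquares of order between roughly $n^{1-\gamma}$ and $n/2$.
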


It must be noted at that point that our proof of~\cref{thm:ExistHighGirthSQLS} is not a mere extension of the proof of Kwan, Sah, Sawhney, and Simkin~\cite{KSSS2023substructures} to the subsquare-free setting, or vice versa, of the proof of Hilton's conjecture to the high-girth setting. Thus we provide new independent proofs of both results. Indeed, we provide a new, purely combinatorial, proof of Hilton's conjecture, using tools recently developed by Delcourt and the second author~\cite{DPI}, called refined absorbers. These allow us to extend our results into many directions, including the high girth version from~\cref{thm:ExistHighGirthSQLS}, but also adding spread, counting and thresholds properties, as presented below.

For the rest of this article, we will view Latin squares as graph decompositions. For graphs $F$ and $G$, an {\em $F$-packing} of $G$ is a collection $S$ of pairwise edge-disjoint subgraphs of $G$, each being isomorphic to $F$, and an {\em $F$-decomposition} of $G$ is an $F$-packing $S$ of $G$ in which every edge of $G$ is in an element of $S$. We sometimes call $K_3$-decompositions (respectively, $K_3$-packings) by triangle-decompositions (respectively, triangle-packings). A Latin square $S$ of order $n$ then corresponds to a triangle decomposition of the complete tripartite graph $K_{n,n,n}$, and a subsquare of $S$ of order $k$ corresponds to a triangle packing in $K_{n,n,n}$ that is also a triangle decomposition of $K_{k,k,k}$. In that setting, a cycle of length $j$ in $L$, often called a {\em $(j+2,j)$-configuration}, is a set of $j$ edge-disjoint triangles spanning at most $j+2$ vertices, and the girth of a packing is naturally the length of its shortest cycle. In particular, a Latin square of order $n$ containing a subsquare of order $k$ is a triangle packing in $K_{n,n,n}$ with girth at most $3k-2$ (though the converse does not hold); and therefore a Latin square of order $n$ with girth at least $g$ contains no subsquare of order at most $\frac{g+1}{3}$. Since every triangle-packing of $K_{n,n,n}$ has girth at least $4$, the cases $g\in\{1,2,3\}$ of our final results follow from the case $g=4$. We therefore state those results for every $g\geq1$, but formulate the technical high-girth treasury results only for $g\geq 4$.

\subsection{A Highly Spread Existence Theorem}

Determining thresholds of combinatorial properties in random hypergraphs is a major research question in combinatorics, dating back to the seminal result of \Erdos{} and \Renyi{}~\cite{erdHos1966existence} for the appearance of perfect matchings in the binomial random graph $G(n,p)$. Shamir’s problem~\cite{schmidt1983threshold} asks for the threshold for the appearance of perfect matchings in $G^{(3)}(n,p)$, the random $n$-vertex $3$-uniform hypergraph in which each triangle is present with probability $p$. This question was solved (up to a constant factor) in the seminal paper by Johansson, Kahn, and Vu~\cite{johansson2008factors}, with sharp threshold and hitting time results obtained in later work of Kahn~\cite{kahn2022hitting,kahn2023asymptotics}. 
Building on recent progress on the Sunflower Conjecture by Alweiss, Lovett, Wu and Zhang~\cite{alweiss2020improved}, Park and Pham~\cite{PP22} proved the Kahn–Kalai conjecture, while the fractional version (conjectured by Talagrand~\cite{talagrand2010many}) was obtained earlier by Frankston, Kahn, Narayanan, and Park~\cite{frankston2021thresholds}. Using an observation by Talagrand~\cite{talagrand2010many}, these breakthrough results can be used to deduce upper bounds on thresholds for a wide variety of problems, in particular yielding a much simpler proof of the resolution of Shamir's problem by Johansson, Kahn, and Vu. One of the key concepts at the center of these questions concerns the notion of {\em spread} probability distributions, defined as follows for triangle-packings.

\begin{definition}\label{def:Spread}
    Let $\cF,\cS$ be families of triangle-packings of $K_{n,n,n}$. A probability distribution over $\cF$ is {\em $q$-spread} if for all triangle-packings $S$ of $K_{n,n,n}$ and a random triangle-packing $P\in\cF$ (chosen according to the distribution) we have
    \[\Prob{S\subseteq P}\leq q^{|S|}.\]
    We say that the probability distribution is {\em $q$-spread on $\cS$} if the above holds for all $S\in \cS$.
\end{definition}

A natural follow-up to Shamir's problem concerns $\cG^{(3)}(n,n,n,p)$, the random tripartite $3$-uniform hypergraph on parts of size $n$ in which each balanced $3$-set is included independently with probability $p$. The question of determining the threshold for when $\cG^{(3)}(n,n,n,p)$ contains a Latin square
is commonly attributed to an unpublished 2006 manuscript
of Johansson. A trivial necessary condition is that each edge must be contained in at least one triangle, which happens when $p=\Omega(\log n/n)$. Luria and Simkin~\cite{luria2019threshold} conjectured the threshold to be $\Theta(\log n/n)$. Jain and Pham~\cite{JP22}, and independently Keevash~\cite{K22}, solved this conjecture by constructing an $O(1/n)$-spread distribution on Latin squares in the complete tripartite $3$-uniform graph. Previous progress on this conjecture had been done by Sah, Sawhney, Simkin~\cite{SSS23} with an $O(n^{o(1)-1})$-spread distribution, and later by Khan, Kelly, K\"{u}hn, Methuku, and Osthus~\cite{KKKMO22}, improving the spread parameter to $O(\log n/n)$.  Both arguments used some form of iterative absorption. Kelly further conjectured (see the comment after \cite[Corollary~1.7]{divoux2026subsquares}) that the {\em uniform} distribution over Latin squares of order $n$ is $((e^2 +o(1))/n)$-spread. If true, this would imply an alternative proof of the recent results on threshold probabilities for Latin squares via Park–Pham's Theorem~\cite{PP22}. We note that, recently, Allsop and Morris~\cite{AM2026} proved that the uniform distribution over Latin squares of order $n$ is $O(1/n)$-spread on $\cS$, where $\cS$ is a given (large) family of sparse $K_3$-packings of $K_{n,n,n}$. We in fact prove a much stronger form of~\cref{thm:ExistHighGirthSQLS}, proving that there exists a highly spread distribution over high girth subsquare-free Latin squares as follows.

\begin{thm}\label{thm:SpreadSubsquare}
For every integer $g\geq 1$ and real $\gamma > 0$, there exists an integer $n_0 > 0$ such that for all integers $n\ge n_0$, there exists an $n^{-1+\gamma}$-spread distribution over subsquare-free Latin squares of order $n$ and girth at least $g$.
\end{thm}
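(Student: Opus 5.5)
We plan to build the random Latin square by refined absorption in the style of Delcourt and the second author~\cite{DPI}, randomizing every step that is not deterministic. Fix $g\ge 4$ (smaller $g$ reduce to this case) and $\gamma>0$. Consider $G=K_{n,n,n}$.

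First, choose a random sparse subgraph $X\subseteq G$ (each edge kept with probability $n^{-\beta}$ for small $\beta\ll\gamma$). Inside $X$, construct an \emph{omni-absorber} $A$ with decomposition family $\cH$. The property needed is: for every leftover $L\subseteq G\setminus (X\cup A)$ that is $K_3$-divisible (balanced degrees between parts) and has maximum degree at most $n^{1-\beta'}$, the graph $A\cup L$ has a $K_3$-decomposition using only triangles from $\cH$. The refined absorbers must be built from high-girth gadgets, so that every decomposition of every $A\cup L$ has girth at least $g$. They must also have no small subsquares: each triangle of $\cH$ lies in few members, and the boosters have bounded size and high girth. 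The boosters, such as the triangle boosters, supply the two alternative decompositions.

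Second, on $G\setminus A$, run a random high-girth triangle-packing process. The natural choice is the generalized high-girth nibble, or the "random greedy with forbidden configurations" as in~\cite{KSSS2023substructures}. Alternatively, to obtain spread cleanly, we can sample from a conflict-free hypergraph matching theorem with spread guarantees. The process should cover all but an $n^{1-\beta'}$-bounded-degree leftover $L$ that avoids $X$. We also require the packing to create no $(j+2,j)$-configuration together with any triangle of $\cH$, for $j<g$. This forbidden-configuration hypergraph has bounded codegrees, so conflict-free matching methods apply. Finally, absorb $L$ using $A$.

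The spread bound is the union of two contributions. The nibble or random-matching part is $n^{-1+o(1)}$-spread by standard weight arguments. The triangles coming from $\cH$ are all contained in the random sparse $X$, and the construction of $A$ can itself be randomized: we embed the gadgets using random vertex choices. A fixed triangle set $S$ therefore lies in the absorber portion with probability at most $(n^{-1+\gamma})^{|S|}$, since each triangle of $\cH$ appears with probability $n^{-1+O(\beta)}$ and the gadget embeddings are nearly independent. Splitting $S=S_1\cup S_2$ according to which part covers each triangle and multiplying gives the claim.

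Subsquare-freeness splits by the order $k$ of a putative subsquare. Girth at least $g$ already kills subsquares of order at most $(g+1)/3$. For $k$ moderate up to $n/2$, a $k\times k$ subsquare forces about $k^2$ triangles inside a $3k$-vertex set. In the nibble part, the random process avoids this with high probability: we bound the expected number of such dense configurations by a union bound using spread-type estimates, e.g. $\binom{n}{k}^3 (n^{-1+\gamma})^{ck^2}$. Such an estimate suffices once $k$ exceeds a large constant. The absorber triangles are too few per vertex set to matter, by the sparsity of $X$. Subsquares have order at most $n/2$ (Lagrange-type bound), so this covers all proper orders. The bounded but larger-than-$g$ orders need a separate union bound using a constant $k$ and a count of $K_3$-decompositions of $K_{k,k,k}$. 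This mid-range case analysis, where the constant-$k$ and near-$n/2$ bounds must both be uniform while $A$ is deterministic-like, is the main obstacle. I would first try to handle it by forbidding, in the conflict hypergraph, all configurations of $j$ triangles on $j+2+\epsilon j$ vertices for $j$ up to a large constant.
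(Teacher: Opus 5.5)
\textbf{There is a genuine gap: your union bound does not handle large subsquares.} Your treatment of small orders matches the paper: girth at least $g$ excludes subsquares of order at most $(g+1)/3$. Your treatment of orders up to $n^{c}$ with $c<1-\gamma$ also matches: a union bound against an $n^{-1+\gamma}$-spread distribution. The failure is at large orders.

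Spreadness bounds $\mathbb{P}[S\subseteq P]$ only for each \emph{fixed} packing $S$. A union bound must therefore also count the candidate packings, and your expression $\binom{n}{k}^3(n^{-1+\gamma})^{ck^2}$ omits this factor. On fixed rows, columns and symbols there are $L(k)=\exp\big(k^2(\log k-2+o(1))\big)$ subsquares. Even if you keep only $ck^2$ of the cells, there are still $k^{(1-o(1))ck^2}$ choices. So the correct term is roughly
\[\binom{n}{k}^{3}\Big(\frac{k\,n^{\gamma}}{e^{2}n}\Big)^{(1+o(1))k^{2}},\]
which is $\exp(\Omega(k^2))$ as soon as $k\ge e^{3}n^{1-\gamma}$. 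Hence a union bound driven by $n^{-1+\gamma}$-spreadness cannot exclude subsquares of order near $n/2$. The same holds for $C/n$-spreadness with any $C>2e^2$. This is exactly the obstruction the paper singles out.

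Your proposed remedy targets the wrong range. Forbidding configurations of $j$ triangles on $j+2+\epsilon j$ vertices for bounded $j$ says nothing about subsquares of order $n^{1-o(1)}$. Meanwhile, for $4\le k\le n^{c}$ the spread union bound already works.

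\textbf{The missing idea is near-optimal spreadness on a sparse but well-placed set of cells.} The paper chooses a random set $Y$ of cells with $\Delta(Y)\le n/\log^{a}n$ that meets every $k\times k$ grid with $k\ge n^{c}$ in at least $k\log^2 n$ cells. It then builds a distribution that is $n^{-1+\gamma}$-spread overall and $\frac{1+\varepsilon}{n}$-spread on $Y$-enclosed packings (\cref{thm:main}).

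For a large subsquare on a grid $R$, the triangles through $Y\cap R$ form a $Y$-enclosed packing using only $k$ symbols. There are at most $\binom{n}{k}^3k^{|Y\cap R|}$ such packings, so the total is at most $\sum_k\binom{n}{k}^3\big(\frac{(1+\varepsilon)k}{n}\big)^{k\log^2 n}=o(1)$. This works because $k\le n/2$ makes the base at most $\frac{1+\varepsilon}{2}<1$.

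Getting the constant $1+\varepsilon$ rather than $n^{o(1)}$ requires a separate pre-seeding step before any absorption (\cref{thm:preseeding}). Each edge of $Y$ receives a random set of about $\log^a n$ candidate triangles. It then picks one uniformly among those that are edge-disjoint from, and high-girth together with, all other candidates. One then needs a completion theorem relative to the projection treasury of this seed (\cref{thm:MinDegree}). That theorem requires the treasury to have a regular and $g$-abundant subtreasury, so that the spheres and the nibble create no short cycles together with the seed. A single nibble-plus-absorber construction whose only guarantee is $n^{-1+o(1)}$-spreadness does not provide any of this.

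\textbf{Secondary issue.} Your absorber setup is inverted. The leftover to be absorbed should be a divisible subgraph of the reserve $X$, with $A$ edge-disjoint from $X$. The nibble with reserves then covers everything outside $X\cup A$.
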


There is famously no known easily computable formula for the number of Latin squares of order $n$, and the best known lower bound is $\parens*{e^{-2}n-o(n)}^{n^2}$ (see e.g. Van Lint and Wilson~\cite[Chapter 17]{van2001course}). Kwan, Sah, Sawhney, and Simkin~\cite{KSSS2023substructures} showed that the number of Latin squares of order $n$ with no intercalates is at least $\parens*{e^{-9/4}n-o(n)}^{n^2}$, and as observed by Allsop and Wanless~\cite{allsop2023latin}, now that the existence of subsquare-free Latin squares is settled, the most natural question asks for asymptotic estimates of their number.~\cref{thm:SpreadSubsquare} yields the following counting result as an immediate corollary.

\begin{cor}\label{cor:counting}
For every integer $g\geq 1$ and real $\gamma > 0$, there exists an integer $n_0 >0$ such that for all integers $n\ge n_0$, there are at least $n^{(1-\gamma)n^2}$ subsquare-free Latin squares of order $n$ and girth at least $g$.
\end{cor}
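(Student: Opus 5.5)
The corollary follows from \cref{thm:SpreadSubsquare} by the standard entropy-type argument for spread distributions: a highly spread measure cannot concentrate on a small set. Fix $g\ge 1$ and $\gamma>0$. Apply \cref{thm:SpreadSubsquare} with $g$ and with $\gamma$ replaced by $\gamma/2$. For all large $n$ this gives an $n^{-1+\gamma/2}$-spread distribution $\mu$ on the family $\cF$ of subsquare-free Latin squares of order $n$ and girth at least $g$. In particular $\cF$ is nonempty.

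The key step is to take $S$ in \cref{def:Spread} to be a whole Latin square. Each $L\in\cF$ is a triangle-decomposition of $K_{n,n,n}$, so it is a triangle-packing with exactly $n^2$ triangles, one per cell. Applying the spread condition with $S=L$, and using that $P\supseteq L$ forces $P=L$ (both are decompositions of the same graph), gives
\[\mu(\{L\}) = \Prob{L\subseteq P}\le \parens*{n^{-1+\gamma/2}}^{n^2}\]
for every $L\in\cF$.

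Summing over the support of $\mu$ then gives
\[1=\sum_{L\in\cF}\mu(\{L\})\le |\cF|\cdot n^{-(1-\gamma/2)n^2},\]
so $|\cF|\ge n^{(1-\gamma/2)n^2}\ge n^{(1-\gamma)n^2}$. Taking $n_0$ to be the threshold supplied by \cref{thm:SpreadSubsquare} for the pair $(g,\gamma/2)$ completes the argument. The replacement of $\gamma$ by $\gamma/2$ is only slack; applying the theorem with $\gamma$ itself already suffices.

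I expect no real obstacle here. The one point to check is that the spread condition applies to $S$ being a full Latin square, which holds because \cref{def:Spread} quantifies over all triangle-packings of $K_{n,n,n}$. All the substantive difficulty lies in proving \cref{thm:SpreadSubsquare} itself.
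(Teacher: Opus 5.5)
Your proof is correct and is the argument the paper has in mind: the paper calls this an immediate corollary of \cref{thm:SpreadSubsquare} and gives no further detail. The implicit reasoning is yours: apply the spread bound with $S=L$, a full Latin square of $n^2$ triangles, to get $\Prob{P=L}\le n^{(-1+\gamma)n^2}$, and then sum over the support.
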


It must be noted here that spreadness is a key component of our proof of Hilton's conjecture, and not simply an additional optional property. We use spreadness to avoid subsquares, and our most general theorem could in fact be used to prove the existence of Latin squares avoiding any substructure that is avoided via spreadness with some positive probability. We also note that our proof does not require the Park-Pham Theorem~\cite{PP22} or its fractional version by Frankston, Kahn, Narayanan, and Park~\cite{frankston2021thresholds}. However, in conjunction with the existence of a well-spread distribution over subsquare-free Latin squares from~\cref{thm:SpreadSubsquare}, the Park-Pham Theorem implies the following upper bound on the threshold for the appearance of subsquare-free Latin squares (which is trivially at least $\Omega(\log n/n)$).

\begin{cor}\label{cor:threshold}
    For every integer $g\geq 1$ and real $\gamma > 0$, and every sufficiently large integer $n$, if $p\geq n^{\gamma-1}$ then $\cG^{(3)}(n,n,n,p)$ contains a subsquare-free Latin square of order $n$ and girth at least $g$ with high probability (i.e. with probability going to $1$ as $n\to\infty$).
\end{cor}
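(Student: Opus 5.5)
This is the standard Talagrand-type reduction combined with the Park--Pham theorem~\cite{PP22}. We apply both to the increasing property $\cP$ that a tripartite $3$-uniform hypergraph on parts of size $n$ contains a subsquare-free Latin square of order $n$ with girth at least $g$.

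\textbf{Step 1: identify the minimal elements of $\cP$.} Every such hypergraph contains one of these Latin squares, and each Latin square has exactly $n^2$ triangles. So every minimal element of $\cP$ has size $n^2$, and we may take $\ell = n^2$ in Park--Pham.

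\textbf{Step 2: produce a spread distribution.} Apply \cref{thm:SpreadSubsquare} with $\gamma/2$ in place of $\gamma$. This gives an $n^{-1+\gamma/2}$-spread distribution on subsquare-free Latin squares of order $n$ and girth at least $g$, supported on minimal elements of $\cP$. For every set $S$ of triangles we have $\Prob{S\subseteq P}\le (n^{-1+\gamma/2})^{|S|}$. This is immediate when $S$ is not a packing, since then the probability is zero. So $\cP$ is $q$-spread in the Talagrand sense with $q=n^{-1+\gamma/2}$.

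\textbf{Step 3: bound the expectation threshold.} Talagrand's observation says a $q$-spread measure certifies that the fractional expectation threshold is at most $q$, and hence so is the expectation threshold $q(\cP)$. Directly: if $\cP$ were $p$-small with witness family $\cG$, so that every element of $\cP$ contains some $G\in\cG$ and $\sum_{G\in\cG} p^{|G|}\le 1/2$, then a union bound would give
\[
1=\Prob{P \supseteq \text{some } G\in\cG}\le \sum_{G\in\cG} q^{|G|}.
\]
Hence whenever $p\ge 2q$ we get $1\le \sum_G q^{|G|} \le \sum_G (p/2)^{|G|}\le \tfrac12\sum_G p^{|G|}\le \tfrac14$ (using $|G|\ge 1$), a contradiction. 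Thus $q(\cP)\le 2q$.

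\textbf{Step 4: apply Park--Pham.} The theorem gives an absolute constant $K$ such that $\cG^{(3)}(n,n,n,p)$ satisfies $\cP$ with probability $1-o(1)$ whenever
\[
p\ge K q(\cP)\log \ell = O\bigl(n^{-1+\gamma/2}\log n\bigr).
\]
For $n$ large this is at most $n^{-1+\gamma}$, which completes the proof.

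The only real obstacle is bookkeeping: matching the Park--Pham normalisation, namely that $\ell$ is the maximum size of a minimal element and that "with high probability" comes from the $\epsilon$-dependence of the constant $K$. We also need $n_0$ large enough both for \cref{thm:SpreadSubsquare} with parameter $\gamma/2$ and for the $\log n$ factor to be absorbed by $n^{\gamma/2}$.
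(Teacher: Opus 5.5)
Your proposal is correct and follows the route the paper indicates. The paper states this corollary without a written proof, noting only that it follows from \cref{thm:SpreadSubsquare} together with Talagrand's observation and the Park--Pham theorem, and your argument supplies exactly those details. Applying \cref{thm:SpreadSubsquare} with $\gamma/2$ absorbs the $\log \ell = 2\log n$ factor, the union bound gives $q(\cP)\le 2q$, and the remaining slack of $n^{\gamma/2}/\log n$ handles the passage to high probability.
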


\subsection{A More General Theorem}

As explained, spreadness is the key strategy to prove the existence of subsquare-free Latin squares. Throughout this article, for a given Latin square of order $n$, we partition potential subsquares into three categories, depending on their size. We call subsquares {\em small} if they have bounded size, {\em large} if they have size at least $n^c$ for some fixed constant $c$, and {\em medium} otherwise. The rationale for this partition is that avoiding small subsquares is a consequence of being high girth, while randomness should be sufficient to avoid the appearance of all other subsquares. However an $n^{-1+\gamma}$-spread distribution is not spread-enough to avoid the large subsquares. We show that such a spread distribution avoids all medium subsquares, but avoiding large subsquares would require to be within a factor $2$ of an optimal spreadness of $e^2/n$. This level of spreadness is already an open problem for general Latin squares, while we would further need it for the family of high girth Latin squares.

The key idea then is to choose a set $Y\subseteq K_{n,n,n}$ with small maximum degree that intersects all large subsquares, and prove that we can achieve a form of ``super-spreadness'' for the triangles containing edges of $Y$, namely a $\frac{1+\varepsilon}{n}$-spread distribution, simultaneously with an $n^{-1+\gamma}$-spread distribution over high-girth Latin squares. We remark that we cannot use the same methodology directly for medium subsquares too, as we cannot guarantee the existence of a set $Y\subseteq K_{n,n,n}$ that intersects all medium subsquares and has small enough maximum degree. 

For a given set $Y\subseteq K_{n,n,n}$, we say that a $K_3$-packing $S$ of $K_{n,n,n}$ is {\em $Y$-disjoint} if $|E(T)\cap E(Y)|\le 1$ for all $T\in S$ and {\em $Y$-enclosed} if $|E(T)\cap E(Y)|=1$ for all $T\in S$. Here then is our main result.

\begin{thm}\label{thm:main}
    For every integer $g\geq 4$ and for every reals $\gamma,\varepsilon\in(0,1)$, there exist integers $a \ge 1$ and $n_0 > 0$ such that the following holds for all $n\ge n_0$: Let $Y \subseteq K_{n,n,n}$  with $\Delta(Y)\leq \frac{n}{\log^{ag} n}$, let $\cL_D$ be the family of $Y$-disjoint $K_3$-decompositions of $K_{n,n,n}$ with girth at least $g$, and let $\cS$ be the family of $Y$-enclosed $K_3$-packings of $K_{n,n,n}$. Then
    there exists a probability distribution $\nu$ over $\cL_D$ such that $\nu$ is $n^{-1+\gamma}$-spread and $\frac{1+\varepsilon}{n}$-spread on $\cS$.
\end{thm}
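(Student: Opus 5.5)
We will build $\nu$ by the refined-absorption scheme of Delcourt and the second author~\cite{DPI}, adapted to $K_{n,n,n}$ with the extra structure coming from $Y$. There are four steps.

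First, we fix a high-girth \emph{refined omni-absorber} $A\subseteq K_{n,n,n}$ with $\Delta(A)\le n/\log^{C}n$. For every $K_3$-divisible (tripartite-balanced) leftover $L$ of bounded degree in a suitable reserve graph $X$, the graph $A\cup L$ should have a $K_3$-decomposition of girth at least $g$. Moreover, these decompositions should be chosen from a fixed family of bounded-size gadgets (boosters), so that absorber triangles never create short cycles with the rest. We choose $A$ and $X$ randomly, disjoint from $Y$-heavy regions. Concretely, we require that no triangle of $A$ or of any gadget contains two edges of $Y$, and that no gadget triangle meets $Y$ at all. This is affordable since $\Delta(Y)\le n/\log^{ag}n$ is much smaller than the degree budget. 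Because the absorber is sparse, it then contributes no triangle meeting $Y$, which is what the $\frac{1+\varepsilon}{n}$-spread on $\cS$ requires.

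Second, on $G=K_{n,n,n}\setminus(A\cup X)$ we run a random greedy high-girth triangle process, or equivalently a nibble with conflict-free hypergraph matching in the style of Glock–Joos–Kim–Kühn–Lichev / Delcourt–Postle forbidden submatchings. The conflicts are the $(j+2,j)$-configurations for $j<g$, together with triangles having two $Y$-edges, which we simply delete from the candidate set. The output is an almost-decomposition whose leftover is absorbed into $X$ by a covering step, and the absorber then finishes. This yields a random element of $\cL_D$.

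Third, we verify spreadness. For the generic $n^{-1+\gamma}$ bound, the matching process is, up to $n^{o(1)}$ factors, spread, because each triangle is chosen with probability about $1/n$ conditioned on the history. Triangles of the absorber or cover form only a sparse set, and since each of them has probability at most $n^{-1+\gamma}$ after random embedding, we take $A$ and $X$ as uniform random embeddings. The $\frac{1+\varepsilon}{n}$ bound on $Y$-enclosed packings is the delicate part. Each edge $e\in Y$ lies in exactly one triangle of the final decomposition, and that triangle is never from the absorber. For the triangle $T\ni e$ the heuristic is $\Prob{T\in P}\approx 1/n$ over the $n$ choices, and we need $(1+\varepsilon)/n$ uniformly and jointly over $S$.

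Fourth, and this is the main obstacle, we must get this tight constant. A black-box nibble gives only $n^{o(1)}/n$. We plan to cover $Y$ first, as a separate stage: before anything else, choose for each edge $e\in Y$ a triangle uniformly at random among the triangles through $e$ whose other two edges avoid $Y$. We do this as a random matching in the auxiliary hypergraph, using a nearly uniform process (e.g.\ a random greedy process on $Y$ with $\Delta(Y)$ small, so that each step sees about $n(1-o(1))$ available triangles). Then
\[\Prob{S\subseteq P}\le\prod_{T\in S}\frac{1}{(1-o(1))n}\le\parens*{\frac{1+\varepsilon}{n}}^{|S|}\]
by sequential conditioning, and the girth conflicts cost only a $1-o(1)$ factor since $|Y|$-triangles are sparse. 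The rest of the process, steps one through three, is then run on the remainder, conditioned on this stage. We must then check that the remainder is still suitably quasirandom and that the absorber survives. This is what $\Delta(Y)\le n/\log^{ag}n$ buys, since the choice of $a$ absorbs the $\log$ losses from the $g$-dependent configuration counts.
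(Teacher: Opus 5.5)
Your architecture matches the paper's. You first cover $Y$ by a near-uniformly random $Y$-fully-enclosed packing $P$, getting the $\frac{1+\varepsilon}{n}$ bound from choosing each triangle among $(1-o(1))n$ options. You then complete $K_{n,n,n}\setminus E(P)$ by a spread high-girth absorption argument and multiply the two bounds.

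The genuine gap is the interface between the two stages, which you reduce to ``check that the remainder is still suitably quasirandom''. Since $P\cup M$ must have girth at least $g$, the second stage cannot merely avoid short configurations among its own triangles, as in your Step~2 conflict list. It must work in the projection of ${\rm Girth}^g(K_{n,n,n},K_3)$ by $P$:
\begin{itemize}
\item every triangle that closes an \Erdos{} configuration with triangles of $P$ is deleted;
\item every partial configuration that extends by triangles of $P$ to an \Erdos{} configuration becomes a new, shorter conflict.
\end{itemize}
The forbidden-submatching step and the booster embedding need this projected treasury to satisfy all of (RT1)--(RT4), and to be $g$-abundant. Concretely, that means lower degrees at least $(1-\alpha)n$ after the deletions, the $\Delta_1$ and $\Delta_t$ bounds on the projected conflicts, and the $2$-codegree and common $2$-degree bounds. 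None of this follows from $\Delta(Y)\le n/\log^{ag}n$ alone. These are upper-tail statements about polynomially many configuration counts in the random $P$, and a sequential greedy process gives you no independence with which to prove them.

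This is exactly the step for which the paper introduces quantum pre-seeding. Each $e\in Y$ samples a candidate set $\cK_e$ independently at rate $\log^a n/n$, and Kim--Vu shows that the \emph{common} projection over all of $\cM(\cK)$ is regular and abundant. Only then is $F_e$ picked uniformly among the at least $(1-\sigma)\log^a n$ candidates that are edge-disjoint from, and high-girth with, all other candidates. The final choices are therefore independent and the spread is $\frac{1}{(1-\sigma)^2n}$. A greedy route might be rescued by deriving moment or upper-tail bounds from the joint spread of $P$ itself, but that argument is absent from your proposal.

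The spread of the completion stage is likewise asserted rather than derived. Existence theorems for conflict-free or forbidden submatchings give no spread. The heuristic ``each triangle is chosen with probability about $1/n$ given the history'' does not yield joint bounds. The paper obtains them by first sparsifying the treasury, keeping each hyperedge with probability $D^{\gamma/2-1}$, checking that the sparse treasury is still regular, and applying the existence theorem to it.

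A uniform random embedding of $A$ and $X$ is not available either. The absorber is built for the specific reserve $X$ and must lie in the projected treasury. Moreover, relabelling preserves the dense local structure of the decomposition family (many triangles on few vertices), so it cannot give joint $n^{-1+\gamma}$-spread. The paper instead boosts each triangle of the decomposition family of a $C$-refined Latin absorber by a private, randomly chosen, $H$-avoiding $g$-sphere; this is where abundance is used. It shows the resulting omni-booster is $O(1/n)$-vertex-spread, and converts this to $n^{-1+3/(2g)}$ triangle-spread via \cref{lem:VertexToTriangleSpread}. The $p^{1/g}$ loss there comes from the at most one closing step per sphere.
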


We show how Theorem~\ref{thm:SpreadSubsquare} follows from Theorem~\ref{thm:main} in~\cref{sec:ProofSqFree} by choosing $Y$ appropriately and then showing that the $n^{-1+\gamma}$-spreadness implies the existence of a medium subsquare happens with low probability while the $\frac{1+\varepsilon}{n}$-spreadness on $\cS$ implies the existence of a large subsquare happens with low probability.

We note then that the concept of medium subsquares could be replaced by any family of triangle-packings of $K_{n,n,n}$ which happen with low probability given $n^{-1+\gamma}$-spreadness. Similarly large subsquares can be replaced with any $Y$-enclosed family $\cS$ of packings which happen with low probability given $\frac{1+\varepsilon}{n}$-spreadness for some appropriately chosen $Y$.

\subsection{A High Girth Completion Theorem}

The celebrated Nash-Williams' Conjecture~\cite{nash1970unsolved} focuses on triangle decompositions in graphs with large minimum degree. It asserts that, subject to divisibility conditions, every large enough graph on $n$ vertices with minimum degree at least $3n/4$ has a triangle decomposition. Many constructions show that the ratio $3/4$ is tight, see for instance~\cite{DP2021progress}. After decades of partial progress, this conjecture was recently resolved in full by Delcourt and Postle~\cite{DP26}. Their proof represents a major breakthrough in graph decomposition theory, settling a problem that had guided much of the development of modern absorption methods for designs. The resolution of Nash-Williams' Conjecture also clarifies the role of earlier progress on triangle decompositions. Barber, \Kuhn{}, Lo, and Osthus~\cite{BKLO16} showed that the existence of $K_3$-decompositions is related to the existence of so-called fractional $K_3$-decompositions. Using this relation, it is known from Garaschuk's thesis~\cite{garaschuk2014linear} in 2014 that a ratio of 0.956 is sufficient. In 2015, Dross~\cite{dross2016fractional} improved it to 0.9, and the best result before the resolution of the problem was due to Delcourt and Postle~\cite{DP2021progress} who showed that a ratio of $0.82733$ is sufficient. Their recent work proves the conjectured threshold $3/4$ itself, first at the fractional level and then, via a stability analysis and absorption, for exact triangle decompositions.
These results can be viewed as {\em completion} theorems. Indeed, Nash-Williams' Conjecture implies that, for $n$ large enough, any $K_3$-packing of $K_n$ in which any vertex is in at most $n/8$ triangles can be extended into a $K_3$-decomposition of $K_n$. 

Similar problems have been naturally studied in the partite setting, i.e.~for Latin squares. A partial Latin square of order $n$ is a partially filled $n\times n$ grid of cells satisfying the conditions of a Latin square, or equivalently a $K_3$-packing of $K_{n,n,n}$. In 1981, Smetaniuk~\cite{smetaniuk1981new}, and independently Anderson and Hilton~\cite{andersen1983thank}, showed that if at most $n-1$ entries have been made, then a partial Latin square can always be completed into a Latin square (and this bound is best possible). In 1983, Daykin and H{\"a}ggkvist~\cite{daykin1983completion} conjectured that this is also true for any partial Latin square in which we have used each row, column and symbol at most $n/4$ times, yielding a transposition of Nash-Williams' Conjecture in the Latin square setting. After progress by Chetwynd and H\"{a}ggkvist~\cite{chetwynd1985completing}, 
Gustavsson~\cite{gustavsson1991decompositions} and Bartlett~\cite{bartlett2013completions}, Barber, K{\"u}hn, Lo, Osthus, and Taylor~\cite{BKLOT2017clique} proved that this is true for any partial Latin square in which we have used each row, column and symbol less than $n/25$ times. They used iterative absorption to reduce the completion problem to the corresponding fractional decomposition problem, and their result then follows from progress on the fractional problem by Bowditch and Dukes~\cite{BD2019}.  Yu and Feng~\cite{YF2026} recently improved the completion density from $1/25$ to $2/25$, adapting ideas from previous progress on the Nash-Williams' Conjecture by Delcourt and Postle~\cite{DP2021progress}.

One could then naturally conjecture a similar statement in high girth setting; ``Let $S$ be a high girth $K_3$-packing of $K_{n,n,n}$ and $G:=K_{n,n,n}\setminus E(S)$. If $G$ has high enough minimum degree, then there exists a high-girth $K_3$-decomposition of $G$''. However such a statement would not amount to a completion theorem. Indeed one needs to ensure that $S$ and the $K_3$-decomposition of $G$ have high girth collectively. It is not clear if a minimum degree condition on $G$ is sufficient to ensure that there exists a high girth $K_3$-decomposition of $K_{n,n,n}$, unless perhaps if $S$ is quasi-random. 

In order to prove~\cref{thm:main}, one could try to follow the outline of the proof of existence conjecture via refined absorption~\cite{DPI}, adapting it to the partite, high girth and spread settings, while also taking into account $Y$-enclosed packings at every step of the proof. It seemed more natural, and much easier, to deal with $Y$-enclosed packings in a first {\em pre-seeding} step, and then to adapt existing results to work for completion of these packings.

The following theorem is a transposition of Gustavsson's conjecture in a high-girth setting. The minimum degree condition is complemented by a regularity requirement on the {\em design-treasury} of the host graph. This concept, introduced by Delcourt and Postle~\cite{DPII}, captures the necessary conditions for the application of a nibble/forbidden-submatching approach to form a large packing, coupled with absorption techniques to complete it into a full decomposition. Informally, a treasury is composed of three hypergraphs: the first one (the design-hypergraph) encodes the family of cliques that are available to decompose most of the edges of $G$ via a probabilistic / nibble approach; the second one (the reserve-hypergraph), encodes additional cliques that might be used to augment this family, forcing the non-covered edges into a small set of preselected ones; and the third hypergraph (the configuration-hypergraph) encodes packings that have to be avoided when building a decomposition, e.g. packings with low girth. A perfect matching of this treasury is then a decomposition of the host graph avoiding undesirable configurations. Our two-step ``pre-seeding/completion'' approach imposes another new requirement on this design-treasury, called abundance; indeed the configuration-hypergraph additionally encodes packings that must be avoided, not because of their low girth, but because they would induce a low girth cycle when combined with the packing built during the pre-seeding step. We then need all our constructions to avoid these ``forbidden configurations''. In particular, the absorption techniques rely on the existence of ``sufficiently many'' small graphs (called boosters, see~\cref{def:booster}) presenting some useful decomposition properties. The treasury is then said to be abundant if most of these small graphs avoid the forbidden configurations. While the abundance of a treasury seems related to its regularity, it cannot be directly inferred by it. We refer the reader to~\cref{def:Treasury,def:DesignTreasury} for the definitions of treasuries, and to~\cref{sec:AbundantTreasuries,sec:RegularTreasuries} for a formal introduction of regularity and abundance.

\begin{restatable}[]{thm}{MinDegreeThm}\label{thm:MinDegree}
    For every integer $g\geq 4$, there exist an integer $n_0 > 0$ and reals~$\alpha,\beta,\sigma\in(0,1)$ such that the following holds for all $n\ge n_0$: Let $G \subseteq K_{n,n,n}$ be a $K_{1,1,1}$-divisible graph with $\delta(G)\geq (2-\sigma) n$ such that there exists a subtreasury $T$ of ${\rm Treasury}^g(G,G,K_3)$ that is $(n,\alpha n,\beta,\alpha)$-regular and $g$-abundant. Then there exists an $n^{-1+2/g}$-spread probability distribution over perfect matchings of~$T$.
\end{restatable}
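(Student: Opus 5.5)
We will follow the refined-absorption blueprint of Delcourt and Postle~\cite{DPI,DPII}, adapted to the tripartite, high-girth and spread setting. The plan has four stages.

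\textbf{Stage 1: the absorber.} First we set aside a sparse random subgraph $X\subseteq G$ with maximum degree $\Delta(X)\le n^{1-c}$ for a small constant $c>0$. We build on $X$ an omni-absorber $A$ (as in~\cite{DPI}), which is an edge-disjoint union of boosters drawn from $T$. It has the property that for every $K_{1,1,1}$-divisible $L\subseteq X$, the graph $L\cup A$ admits a $K_3$-decomposition using only cliques of $T$. We also require that these decompositions have girth at least $g$ and avoid every configuration in the configuration-hypergraph of $T$.

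Boosters are embedded greedily and randomly, one at a time. Here $g$-abundance is what we use: it guarantees that most candidate boosters at each step avoid the forbidden configurations. We therefore choose each booster uniformly among the surviving candidates. The $(n,\alpha n,\beta,\alpha)$-regularity and the bound $\delta(G)\ge(2-\sigma)n$ ensure that enough candidates remain even after $G$ has been depleted by $X$ and the boosters already placed. Choosing uniformly among polynomially many options is also what gives $A$ itself spread of order $n^{-1+o(1)}$ per triangle.

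\textbf{Stage 2: the nibble.} We apply a forbidden-submatching nibble to the design-hypergraph of $T$ restricted to $G\setminus(X\cup A)$, using the reserve-hypergraph to push the leftover into $X$. This is the high-girth nibble with reserves of~\cite{DPII} (compare~\cite{KSSS2022STS}), run as a random greedy process. It produces a packing $M$ that avoids the configuration-hypergraph, has girth at least $g$, and leaves a leftover $L\subseteq X$. Since $G$ is $K_{1,1,1}$-divisible and the remaining pieces are decomposed, $L$ is automatically $K_{1,1,1}$-divisible.

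Spread for the nibble comes from the standard fact that in a random greedy process each fixed triangle is chosen with probability $O(1/n)$. For a fixed set $S$ of triangles, the conditional version of this bound, tracked along the process, gives probability at most $(C/n)^{|S|}$ up to $n^{o(1)}$ factors. Triangles coming from the reserve and the absorber contribute at most $n^{-1+c'}$ each.

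\textbf{Stage 3: completion.} We apply the absorber to $L$. We must check that the final decomposition $M\cup\mathcal{A}(L)$ has no cycle shorter than $g$ that uses triangles from both parts. To handle this, we declare in advance as forbidden in the nibble every configuration that could close a short cycle with some potential absorber decomposition. There are only polynomially many boosters, each of bounded size, so the resulting configuration-hypergraph still satisfies the degree and codegree bounds the nibble requires.

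\textbf{Stage 4: combining the spread bounds.} For any packing $S$, we split it as $S=S_1\cup S_2\cup S_3$ according to whether each triangle comes from the nibble, the reserve, or the absorber. This gives
\[\Prob{S\subseteq P}\le \sum_{S_1\cup S_2\cup S_3=S}\prod_i (n^{-1+2/(3g)})^{|S_i|}\le 3^{|S|}\,n^{(-1+2/(3g))|S|}\le n^{(-1+2/g)|S|}.\]
The exponent $2/g$ leaves room to absorb the losses from the $3^{|S|}$ factor and from the polylogarithmic factors.

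\textbf{Main obstacle.} The hardest step is making the nibble and the absorber compatible with respect to girth, while keeping the absorber-side spread. Short cycles can mix nibble triangles with absorber triangles, and the absorber's decomposition depends on $L$, which is not known in advance. The first approach is to enumerate all bounded-size configurations formed by a partial nibble packing together with booster triangles, and to feed these into the configuration-hypergraph. Abundance should then show that this enlarged hypergraph remains sparse enough for the forbidden-submatching nibble to apply.
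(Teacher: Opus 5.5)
Your four-stage outline matches the paper's outline. Your Stage~3 device, forbidding in advance everything that closes a configuration with some absorber decomposition, is exactly the projection ${\rm Proj}^g(\tilde T,A,X)$. However, two load-bearing steps are missing, and the spread claims are unsupported.

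\textbf{Regularity for the nibble.} The hypothesis only gives $(n,\alpha n,\beta,\alpha)$-regularity, so design degrees may lie anywhere in $[(1-\alpha)n,n]$. Deleting $X\cup A$ and projecting only makes this worse. By contrast, \cref{thm:ForbiddenSubmatchingReserves} requires a $(D,D^{1-\beta},\beta,\alpha_0)$-regular treasury, i.e.\ all degrees within $D^{1-\beta}$ of $D$. With fluctuations of order $\alpha n$ the nibble theorem does not apply, and there is no reason the uncovered part would fit into a reserve of maximum degree $n^{1-\varepsilon}$. The paper inserts the Kwan--Sah--Sawhney--Simkin triangle-regularization lemma (\cref{lem:RegBoost}). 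It is applied to $J_A=G\setminus(X\cup A)$ and the triangles of the regular subtreasury of the projection, and brings every edge to within $O(n^{3/4})$ of $\kappa n/4$ triangles. That lemma needs $J_A$ to be exactly edge-balanced and locally degree-balanced up to $n^{2/3}$. This is why $X$ must come from \cref{thm:reserve} (random, then patched by a matching to be edge-balanced, with ${\rm Saving}(T,X)$ still regular), not be a bare random sparse subgraph.

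\textbf{Regularity of the absorber's projection.} You need ${\rm Proj}^g(\tilde T,A,X)$ to contain an $(n,2\alpha n,\beta,2\alpha)$-regular subtreasury. Concretely, each edge of $J_A$ may lose only about $\alpha n$ triangles that close a forbidden configuration with on/off triangles of possibly several boosters. The $2$-codegree, common $2$-degree and $\Delta_t(H^{(s)})$ bounds must also survive. ``Polynomially many boosters of bounded size'' does not imply this; it fails for adversarial placements. Abundance cannot supply it either: it only says that for a fixed root most $(R,g)$-spheres are individually $H$-avoiding, so it controls configurations inside one sphere. Configurations meeting several spheres are controlled only because the spheres are random. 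The paper proceeds as follows:
\begin{itemize}
\item It starts from a $C$-refined partite template $A_0$ (\cref{thm:LatinAbsorberThm}); your absorber is not just a union of boosters.
\item For each template triangle $F$ it samples a quantum set $\cK_F$ of candidate spheres at density $\log^a n/n^{2g-1}$.
\item It proves via \cref{cor:KimVu} that every root keeps many available spheres and that the common projection over \emph{all} possible choices is regular.
\item Only then does it pick one available sphere per root.
\end{itemize}
Your one-at-a-time greedy choice would need an equivalent global concentration argument, which is absent.

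\textbf{Spread.} Spread of the forbidden-submatching nibble, including its reserve phase, is not an output of \cref{thm:ForbiddenSubmatchingReserves}. The paper gets it by sparsification: keep each edge of $G_1\cup G_2$ with probability $p=D^{\gamma/2-1}$, check the sparse treasury stays regular, apply the existence theorem, and read off $\Prob{S\subseteq M}\le 2p^{|S|}$. Absorber spread does not follow from ``uniform among polynomially many options''. The template triangles are deterministic, so one needs every triangle of $\cQ_A(L)$ to be a sphere triangle with a random non-root vertex. One then converts vertex-spread to triangle-spread via \cref{lem:VertexToTriangleSpread}. There, the closing triangle of a sphere weakens the per-triangle bound from $O(1/n)$ to $O(n^{-1+1/g})$. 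This is why the absorber is only $n^{-1+3/(2g)}$-spread and the final bound is $n^{-1+2/g}$.
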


In~\cref{sec:proof_main}, we prove that~\cref{thm:MinDegree} implies the following completion theorem.

\begin{cor}[Completion]\label{cor:completion}
    For every integer $g\geq 4$, there exist an integer $n_0 > 0$ and reals $\alpha,\beta\in(0,1)$ such that the following holds for all $n\ge n_0$:  Let $S$ be a $K_3$-packing of $K_{n,n,n}$ with girth at least $g$ and $\Delta(S) \le \frac{n}{\log^{g-1} n}$, such that there exists a subtreasury of ${\rm Proj}^g(K_{n,n,n},S,K_3)$ that is $(n,\alpha n,\beta,\alpha)$-regular and $g$-abundant. Then there exists a $K_3$-decomposition of $K_{n,n,n}$ with girth at least $g$ that contains $S$.
\end{cor}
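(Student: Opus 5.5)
The plan is to apply \cref{thm:MinDegree} to the leave graph $G := K_{n,n,n}\setminus E(S)$ and then take the union of $S$ with a decomposition of $G$. The work is in checking the hypotheses of \cref{thm:MinDegree} and then arguing that this union has girth at least $g$.

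\textbf{Hypotheses on $G$.} First, $G$ is $K_{1,1,1}$-divisible. The graph $K_{n,n,n}$ is $K_{1,1,1}$-divisible, meaning every vertex has equal degree into the two other parts. Each triangle of $S$ removes exactly one edge from a vertex to each of the other two parts, so the balance is preserved. Second, for the minimum degree, every vertex lies in at most $\Delta(S)\le n/\log^{g-1} n$ triangles of $S$. Hence
\[
\delta(G)\ge 2n-2n/\log^{g-1}n\ge(2-\sigma)n
\]
for $n$ large, whatever $\sigma$ \cref{thm:MinDegree} provides. I take $\alpha,\beta$ to be those of \cref{thm:MinDegree} and enlarge $n_0$ as needed.

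\textbf{Relating the two treasuries.} The projected treasury ${\rm Proj}^g(K_{n,n,n},S,K_3)$ should be exactly a subtreasury of ${\rm Treasury}^g(G,G,K_3)$, in the following sense.
\begin{itemize}
\item Its design and reserve hypergraphs consist of triangles of $G$.
\item Its configuration hypergraph contains, in addition to the low-girth configurations inside $G$, the ``projections'' $C\setminus S$ of every low-girth configuration $C$ of $K_{n,n,n}$ that meets $S$.
\end{itemize}
I would unpack the definition of ${\rm Proj}^g$ (given later in the paper) to confirm this. Granting it, the given $(n,\alpha n,\beta,\alpha)$-regular, $g$-abundant subtreasury $T$ of the projection is also such a subtreasury of ${\rm Treasury}^g(G,G,K_3)$. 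Regularity and abundance are properties of $T$ itself, so they transfer unchanged. \cref{thm:MinDegree} then gives an $n^{-1+2/g}$-spread distribution on perfect matchings of $T$, and in particular at least one perfect matching $M$.

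\textbf{Conclusion.} A perfect matching of $T$ is a $K_3$-decomposition $M$ of $G$ that contains no edge of the configuration hypergraph. Then $S\cup M$ is a $K_3$-decomposition of $K_{n,n,n}$ containing $S$. Now suppose some $(j+2,j)$-configuration $C\subseteq S\cup M$ had $j<g$.
\begin{itemize}
\item $C\subseteq S$ is impossible, since $S$ has girth at least $g$.
\item $C\subseteq M$ is impossible, since the low-girth configurations of $G$ are forbidden in $M$.
\item In the mixed case, $C\cap M$ is exactly one of the projected configurations, which is also forbidden.
\end{itemize}
Hence $S\cup M$ has girth at least $g$.

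\textbf{Main obstacle.} The hard step is the bookkeeping in the treasury comparison. One has to check that the definition of ${\rm Proj}^g$ includes every mixed configuration, including the projections $C\setminus S$ of size $1$. Such a projection would forbid individual triangles, so these triangles should already be removed from the design and reserve hypergraphs rather than appear as configuration edges. One also has to check that the subtreasury notions are compatible, so that regularity measured in the projection agrees with regularity as a subtreasury of ${\rm Treasury}^g(G,G,K_3)$.
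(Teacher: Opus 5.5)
Your proposal is correct and matches the paper's proof. The paper applies \cref{thm:MinDegree} to $G=K_{n,n,n}\setminus E(S)$ using \cref{prop:ProjSubTreasury} and transitivity of the subtreasury relation, and then concludes with \cref{prop:SubTreasury} and \cref{prop:completion_treasury}, whose proof is exactly your three-case girth argument; the size-$1$ projections you worry about are handled by the definition of common projection, which deletes every such triangle $F$ (with $F\in Z\subseteq S\cup\{F\}$) from $V(H')$ and hence from the design hypergraph, and you should take a minimal low-girth configuration so that it is an \Erdos{} configuration and therefore an edge of ${\rm Girth}^g$.
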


For a given set $Y\subseteq K_{n,n,n}$, we say that a $K_3$-packing $S$ of $K_{n,n,n}$ is {\em $Y$-fully-enclosed} if $S$ is $Y$-enclosed and for every edge $e\in Y$, there exists $T\in S$ containing $e$. Finally, the pre-seeding step is as follows, finding a ``super-spread'' probability-distribution over $Y$-fully-enclosed packings. We will combine it with~\cref{thm:MinDegree}, applying the latter to the ``left-overs'' of the pre-seeding theorem; i.e.~to $G$ being formed by $K_{n,n,n}$ after removing the edges of the pre-seeding packing. The treasury ${\rm Proj}^g(K_{n,n,n},S,K_3)$, called a projection treasury (see~\cref{def:ProjectionPacking}), is similar to the treasury of~\cref{thm:MinDegree}, with a larger family of forbidden configurations, to ensure that the union of the two packings (coming from~\cref{thm:MinDegree,thm:preseeding}) is also not a forbidden configuration.

\begin{thm}[Pre-seeding]\label{thm:preseeding}
    For every integer $g\geq 4$ there exists $a \ge 1$ such that, for every reals $\alpha,\beta,\varepsilon\in(0,1)$, there exists $n_0 > 0$ such that the following holds for all $n\ge n_0$: Let $Y \subseteq K_{n,n,n}$  with $\Delta(Y)\leq \frac{n}{\log^{ag} n}$. Let $\cL_P$ be the family of $Y$-fully-enclosed $K_3$-packings $S$ of $K_{n,n,n}$ with girth at least $g$ and $\Delta(S) \le \frac{4n}{\log^{a(g-1)} n}$, such that there exists a subtreasury of ${\rm Proj}^g(K_{n,n,n},S,K_3)$ that is $(n,\alpha n,\beta,\alpha)$-regular and $g$-abundant. Then $\cL_P$ is non-empty, and there exists a $\frac{1+\varepsilon}{n}$-spread probability distribution over $\cL_P$.
\end{thm}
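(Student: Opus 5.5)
We will construct the distribution explicitly by a random greedy (nibble-type) process that covers each edge of $Y$ by exactly one triangle. We then show that, with probability at least $1/2$, the output lies in $\cL_P$. Conditioning the output on the event that it lies in $\cL_P$ costs at most a factor $2^{|S|}$ in the spread bound, which would ruin $\frac{1+\varepsilon}{n}$. So we will instead condition on a good event $\cE$ of probability $1-o(1)$, and we will bound $\Prob{S\subseteq P}$ directly for the unconditioned process, with only a $(1+\varepsilon/3)^{|S|}$ loss.

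\textbf{The process.} Order the edges of $Y$ arbitrarily as $e_1,\dots,e_m$. Process them one at a time. When $e_i=uv$ is processed, consider the roughly $n$ vertices $w$ in the third part. Call $w$ \emph{admissible} if the following hold:
\begin{itemize}
\item $uw,vw\notin Y$, so the triangle $uvw$ meets $Y$ only in $e_i$;
\item $uw$ and $vw$ are not yet used by previously chosen triangles;
\item $uvw$ does not close a configuration of girth less than $g$ with the current packing;
\item both $u$ and $w$, and $v$ and $w$, have current $S$-degree at most $\frac{2n}{\log^{a(g-1)}n}$.
\end{itemize}
Choose $w$ uniformly among the admissible vertices.

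Since $\Delta(Y)\le n/\log^{ag}n$, each vertex lies in at most $O(n/\log^{ag}n)$ chosen triangles. Hence the number of blocked $w$ at each step is small:
\begin{itemize}
\item the $Y$-edges and used edges at $u$ or $v$ block only $O(n/\log^{ag}n)$ choices;
\item the degree-capped vertices number at most $O(|Y|\log^{a(g-1)}n/n)$ per part in a suitable local sense. We handle this by a simple averaging: the total triangle count per vertex neighbourhood is small, so few $w$ are capped;
\item each configuration of at most $g$ triangles through $u$ or $v$ forbids $w$ only when $g-1$ earlier triangles near $u,v$ connect in a chain. The number of such chains is $O\big((n/\log^{ag}n)^{g-2}\cdot \mathrm{poly}\big)$ relative to $n^{g-2}$, i.e.\ $o(n)$ forbidden $w$, provided $a$ is large.
\end{itemize}
So at least $(1-\varepsilon/3)n$ vertices are always admissible, and the process never gets stuck. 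It outputs a $Y$-fully-enclosed packing of girth at least $g$ with $\Delta(S)\le \frac{4n}{\log^{a(g-1)}n}$.

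\textbf{Spreadness.} Let $S$ be $Y$-enclosed. Each triangle $T\in S$ contains a unique $Y$-edge $e_T$, and $T\subseteq P$ requires that the step processing $e_T$ picked the apex of $T$. Conditioned on the history, this has probability at most $\frac{1}{(1-\varepsilon/3)n}$. Multiplying over the triangles in processing order gives $\Prob{S\subseteq P}\le\big(\frac{1+\varepsilon/2}{n}\big)^{|S|}$.

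\textbf{Membership in $\cL_P$: the main obstacle.} It remains to ensure that the projection treasury ${\rm Proj}^g(K_{n,n,n},S,K_3)$ has an $(n,\alpha n,\beta,\alpha)$-regular and $g$-abundant subtreasury. The plan is to show the following. Because $S$ is sparse ($\Delta(S)$ is polylogarithmically smaller than $n$) and random, the forbidden configurations induced by $S$ destroy only a $o(1)$-fraction of the following objects at every edge:
\begin{itemize}
\item the triangles of $K_{n,n,n}\setminus E(S)$;
\item the reserve cliques;
\item the boosters.
\end{itemize}
Counting the paths of triangles of $S$ that link the vertices of a fixed booster or triangle, the expected number of damaged objects per edge is at most $n\cdot O(\log^{-a}n)$. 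Martingale concentration (Azuma over the steps, each of bounded effect thanks to the degree cap) then makes these counts uniform over all edges with probability $1-o(1)$. Deleting the damaged objects yields the required regular subtreasury. Abundance follows because boosters have bounded size, so each is hit with probability $o(1)$.

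Let $\cE$ be this good event. We take $\nu$ to be the process conditioned on $\cE$, so that $\nu$ is supported on $\cL_P$; this also shows that $\cL_P$ is non-empty. The spread bound then becomes
\[
\frac{\Prob{S\subseteq P}}{\Prob{\cE}}\le \Big(\frac{1+\varepsilon/2}{n}\Big)^{|S|}(1+o(1)),
\]
which is at most $\big(\frac{1+\varepsilon}{n}\big)^{|S|}$ whenever $|S|\ge1$ and $n$ is large. I expect the verification of regularity and abundance, matching the precise definitions of those sections, to be the technically heavy step.
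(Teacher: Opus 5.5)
There is a genuine gap: both probabilistic claims your argument rests on are unproved, and the second is where the paper needs a different idea.

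\textbf{The admissibility bound is not deterministic.} Your spread bookkeeping (uniform choice among at least $(1-\varepsilon/3)n$ admissible apexes, chain rule, then dividing by $\Prob{\cE}=1-o(1)$) is sound \emph{provided} every step really has that many admissible apexes. You claim this deterministically, but the girth-blocking count is not $o(n)$ in general. Exploring a blocking Erd\H{o}s configuration from the fixed pair $u,v$ through triangles of the current packing can require two free choices. Take the $(8,6)$-configuration formed by two disjoint permutation patterns of symbols $w,w'$ in a $3\times 3$ array, with $uvw$ one of its cells. The $S$-triangle through $u$ costs $\Delta(S)$ choices, and the one through $v$ is then forced. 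However, each of the three remaining $S$-triangles meets the discovered vertex set in a single vertex. So the counting only gives $O(\Delta(S)^2)=O(n^2/\log^{2a(g-1)}n)$ blocked apexes, far more than $n$. The admissibility bound can therefore only be a high-probability statement about your random process. You could stop the process on failure, which keeps the chain-rule bound intact, but proving the bound needs exactly the concentration that is missing below.

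\textbf{The concentration step is the real missing idea.} Membership in $\cL_P$ needs (RT1), (RT3), (RT4) and $g$-abundance of a subtreasury of ${\rm Proj}^g(K_{n,n,n},S,K_3)$. These are bounds for every edge, every triangle, every $t$-packing $Q$ and every root $R$ simultaneously, so each count must concentrate with failure probability $n^{-\omega(1)}$. For abundance, the natural first-moment bound is $n^{2g-2}$ up to logarithms, leaving a margin of only about $n^{1/2}$ below the allowed $n^{2g-3/2}$. Markov's inequality then cannot be union-bounded over the $n^3$ roots, so ``each booster is hit with probability $o(1)$'' is nowhere near enough.

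Azuma with bounded differences does not apply to your adaptive greedy process, for three reasons:
\begin{enumerate}
\item it is not a product space;
\item altering one choice changes the admissible sets at every later step;
\item one triangle can lie in many of the counted configurations.
\end{enumerate}
So you have neither a Lipschitz constant nor a variance bound without a full differential-equations or Freedman-type analysis, and the proposal supplies none.

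\textbf{How the paper avoids this.} It uses the quantum approach (\cref{thm:SpreadQuantumPreseeding}). Each triangle of $\Omega=\bigcup_{e}\Omega_e$ is kept independently with probability $p=\log^a n/n$, giving $\cK$. The Kim--Vu bound (\cref{cor:KimVu}) on this product space then shows that, with probability at least $1-\sigma$:
\begin{enumerate}
\item the common projection ${\rm Proj}^g(K_{n,n,n},\cK,K_3)$ over \emph{all} matchings in $\cM(\cK)$ is regular and abundant;
\item each $e\in E(Y)$ keeps at least $(1-\sigma)\log^a n$ candidates in ${\rm Disjoint}(\cK,\cK_e)\cap{\rm HighGirth}^g(\cK,\cK_e)$.
\end{enumerate}
Only then is one candidate chosen per $Y$-edge, uniformly and independently. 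Every resulting $S$ lies in $\cM(\cK)$, so it inherits the regular abundant subtreasury after discarding triangles meeting $E(S)$. The spread is $\frac{p}{1-\sigma}\cdot\frac{1}{(1-\sigma)\log^a n}\le\frac{1+\varepsilon}{n}$.
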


We demonstrate that combining~\cref{thm:MinDegree,thm:preseeding} yields~\cref{thm:main} in~\cref{sec:proof_main}.

\subsection{Notation}\label{ss:Notation}

A \emph{hypergraph} $H$ consists of a pair $(V,E)$ where $V=V(H)$ is a set whose elements are called \emph{vertices} and $E=E(H)$ is a set of subsets of $V$ called \emph{edges}; for brevity, we also write $H$ for $E(H)$, hence identifying a hypergraph with its edge-set. Similarly, we write $v(H)$ for the number of vertices of $H$ and either $e(H)$ or alternatively $|H|$ for the number of edges of $H$. 
For an integer $r\ge 1$, a hypergraph $H$ is said to be \emph{$r$-bounded} if every edge of $H$ has size at most $r$ and  \emph{$r$-uniform} if every edge has size exactly $r$; an \emph{$r$-uniform hypergraph} is called an \emph{$r$-graph} for short. For a hypergraph $G$ and subset $S\subseteq V(G)$, we let $G(S)$ denote the set $\{e\in G: S\subseteq e\}$. Note this differs from the standard notation $G(S)$ used to denote the related concept of a `link hypergraph' where the set $S$ is removed from every edge of $G(S)$.\smallskip

Let $G$ be a hypergraph. For an integer $k\ge 1$, we let $G^{(k)}$ denote the $k$-uniform subhypergraph of $G$ consisting of all edges of $G$ of size $k$. If $G$ is uniform, then for a vertex $v \in V(G)$, we let $d_G(v)$ denote the number of edges of $G$ containing $v$. If $G$ is not uniform, then for an integer $i\ge 1$, we thus write $d_{G^{(i)}}(v)$ for the \emph{$i$-degree of $v$ in $G$}, which is defined to be the number of edges of $G$ of size $i$ containing $v$. 
If $G$ is uniform, then \emph{maximum $i$-codegree of $G$}, denoted $\Delta_i(G)$ is the maximum of $|G(U)|$ for $U\subseteq V(G)$ with $|U|=i$. If $G$ is $k$-uniform, then the \emph{maximum degree of $G$}, denoted $\Delta(G)$ is $\Delta_{k-1}(G)$; similarly we write $\delta(G)$ for the \emph{minimum degree of $G$} which is $\delta_{k-1}(G)$. If $G$ is not uniform, then the \emph{maximum $(s,t)$-codegree of $G$}, denoted $\Delta_{t}\left(G^{(s)}\right)$ is the maximum of $|G^{(s)}(U)|$ for $U\in \binom{V(G)}{t}$. \smallskip

For a graph $G\subseteq K_{n,n,n}$ on vertex set $V(G)=V^1\sqcup V^2\sqcup V^3$, we write $G_{i,j} = G[V^i \cup V^j]$ for the graph induced by the edges between $V^i$ and $V^j$, and $e(V^i,V^j)$ for $e(G[V^i \cup V^j])$. Throughout, indices are taken modulo 3. We say that $G\subseteq K_{n,n,n}$ is {\em edge-balanced} if for some integer $m\geq 0$ and for all $i\in[3]$ we have $e(V^i,V^{i+1})=m$, and
that $G$ is {\em $K_{1,1,1}$-divisible} if for all $i\in[3]$ and all vertex $v\in V^i$, \[\size{\set{u\in V^{i-1},uv\in E(G)}} = \size{\set{u\in V^{i+1},uv\in E(G)}}.\] 
For a given graph $G\subseteq K_{n,n,n}$, we denote by $\div{G}$, the set of all $K_{1,1,1}$-divisible subgraphs of $G$. Given a family $S$ of graphs, we write $E(S)$ for $\bigcup_{H\in S}E(H)$, $V(S)$ for $\bigcup_{H\in S}V(H)$, and $\Delta(S)$ for the maximum degree of the graph induced by the edges in $E(S)$. 

\subsection{Outline of Paper}\label{ss:Outline}

\cref{sec:OurProofOverview} gives an overview of the proof strategy. In particular, we explain the division into small, medium, and large subsquares, describe the pre-seeding and completion steps, and prove that~\cref{thm:MinDegree,thm:preseeding} together imply~\cref{thm:main} as well as~\cref{cor:completion}.
In~\cref{sec:ProofSqFree}, we prove~\cref{thm:SpreadSubsquare} from~\cref{thm:main}. 
\cref{sec:PreSeeding} is devoted to the pre-seeding step. We introduce the required notions of rooted boosters, treasuries, abundance, projections, and quantum packings, and prove the quantum pre-seeding theorem,~\cref{thm:SpreadQuantumPreseeding}. We then deduce the pre-seeding theorem,~\cref{thm:preseeding}, from~\cref{thm:SpreadQuantumPreseeding}.
In~\cref{sec:AdditionalTools}, we collect the tools required for the completion step. These include the reserve lemma,~\cref{thm:reserve}, the refined Latin-omni-absorber theorem,~\cref{thm:LatinAbsorberThm}, the spread Latin-omni-absorber theorem,~\cref{thm:MainLatinAbsorbers}, the regularity-boosting lemma,~\cref{lem:RegBoost}, and the spread version of the forbidden-submatching theorem,~\cref{thm:ForbiddenSubmatchingReservesSpread}.
In~\cref{sec:ProofCompletion}, we combine the tools from~\cref{sec:AdditionalTools} to prove the spread minimum-degree completion theorem,~\cref{thm:MinDegree}.
\cref{sec:HighGirthAbsorption} is devoted to the spread absorption argument underlying~\cref{thm:MainLatinAbsorbers}. We introduce Latin omni-boosters and their projections, prove the conversion from vertex-spread to triangle-spread in~\cref{lem:VertexToTriangleSpread}, and prove the omni-booster theorem,~\cref{thm:HighGirthOmniBooster}. We then deduce~\cref{thm:MainLatinAbsorbers}.
Finally,~\cref{sec:ConcludingRemarks} discusses several possible extensions and open questions arising from our work.

\section{Proof Overview}\label{sec:OurProofOverview}

\subsection{Small, Medium, and Large Subsquares}

We recall that for a given Latin square of order $n$, we partition potential subsquares into three categories, depending on their size. We call subsquares {\em small} if they have size at most $k_0$ for some fixed constant $k_0$, {\em large} if they have size at least $n^c$ for some fixed constant $c$, and {\em medium} otherwise. Avoiding small subsquares is an immediate consequence of being high girth, while randomness should be sufficient to avoid the appearance of all other subsquares. Indeed, if we denote by $L(n)$ the number of Latin squares of order $n$, it is known (e.g.~see~\cite[Theorem 4.13]{ball2024course}) that 
\[\frac{(n!)^{2n}}{n^{n^2}}\leq L(n)\leq \prod_{i=1}^n(i!)^{n/i}.\]

Both upper and lower bounds can be obtained by studying the permanent of a matrix associated with a Latin square. The upper bound follows from the Bregman-Minc inequality~\cite{bregman1973some}, while the lower bound follows from a result by Egorychev~\cite{egorychev1981solution} on permanent of doubly stochastic matrices. Using Stirling's formula, it follows that the number of Latin squares of order $n$ is asymptotically equal to
\begin{equation}\label{eq:EnumLS}
L(n)=\exp\parens*{n^2(\log n-2+o(1))}.
\end{equation}
Recall that the existence of a $q$-spread distribution $\nu$ over high-girth Latin squares implies that the probability for a fixed $K_3$-packing $S$ to be contained in a Latin square (selected at random using $\nu$) is at most $q^{|S|}$. In particular, as there exist at most $\binom{n}{k}^3L(k)$ subsquares of order $k$ in $K_{n,n,n}$, the probability that such a random Latin square contains a subsquare is at most (by union bound) $\sum \binom{n}{k}^3L(k)\cdot q^{k^2}$. 

Avoiding all medium and large subsquares would follow from an ``optimal'' spreadness, namely a $\frac{(1+\varepsilon)e^2}{n}$-spread distribution for some $\varepsilon<1$, which is not even known to exist over Latin-squares (dropping the high girth requirement). An $n^{-1+\gamma}$-spread distribution over high-girth Latin squares is spread-enough to avoid all medium subsquares since $\sum \binom{n}{k}^3L(k)\cdot n^{(\gamma-1)k^2} \le \sum \parens*{\frac{ne}{k}}^{3k} k^{k^2} \cdot n^{(\gamma-1)k^2}$ tends to $0$ as $n$ goes to infinity provided $k_0\ge 4$ (for the small terms) and $c < 1-\gamma$ (for the large terms). However, it is not spread-enough to avoid all large subsquares. The key idea then is to choose a set $Y\subseteq K_{n,n}$ of cells, that is pairs (row, column) in a Latin square, with small maximum degree and such that $Y$ intersects each large subsquare of order $k$ in at least $k\log^2 n$ row-column pairs. We prove that we can achieve a form of ``super-spreadness'', namely a $\frac{(1+\varepsilon)}{n}$-spread distribution $\nu$ for some $\varepsilon<1$, on the family of $K_3$-packings of $K_{n,n,n}$ such that every triangle contains exactly one edge of $Y$, i.e. on the family of $Y$-enclosed $K_3$-packings of~$K_{n,n,n}$.

If a Latin square contains a large subsquare of size $k$, we know that the set of cells $R$ of this subsquare contains at least $k\log^2 n$ edges from $Y$. This induces an $Y$-enclosed $K_3$-packing of size $|Y\cap R|$. There are at most $\binom{n}{k}^2$ choices for $R$, and given that $R$ is contained in a Latin square of size $k$, at most $\binom{n}{k}k^{|Y\cap R|}$ ways to ``complete'' the $Y$-enclosed $K_3$-packing of size $|Y\cap A|$ using at most $k$ symbols.

Using the ``super-spreadness'' of the probability distribution $\nu$ on the family of $Y$-enclosed $K_3$-packings of $K_{n,n,n}$, we know that the probability that a random Latin square (selected at random using $\nu$) contains such a fixed packing $S$ is at most $\parens*{\frac{(1+\varepsilon)}{n}}^{|S|}$. By union bound, the probability that a random Latin-square contains a large subsquare is then at most $\sum \binom{n}{k}^3\parens*{\frac{k(1+\varepsilon)}{n}}^{k\log^2 n}$ $\le \sum \parens*{\frac{ne}{k}}^{3k} \parens*{\frac{k(1+\varepsilon)}{n}}^{k\log^2 n} \le \sum e^{3k} \parens*{\frac{k(1+\varepsilon)}{n}}^{k(\log^2 n-3)}$
, summing over all large sizes $k$ (up to $n/2$), which tends to $0$ as $n$ goes to infinity.

\subsection{Pre-seeding Step}\label{sec:PreSeedingIntro}

The existence of such a spread distribution is achieved in two steps.~\cref{thm:preseeding} is a {\em pre-seeding} step, followed by the min-degree/completion argument of~\cref{thm:MinDegree}. During the pre-seeding step, for a given set of edges $Y\subseteq K_{n,n,n}$, our goal is to find a set $S$ of edge-disjoint triangles in $K_{n,n,n}$ such that each triangle contains one edge from $Y$, and every edge of $Y$ is contained in a triangle of $S$ (a property called $Y$-fully-enclosed) with the additional properties that: a) $S$ has high girth and low maximum degree, b) there exists a regular enough and abundant subtreasury of the projection treasury of $S$, and c) a super-spread distribution over such $S$. 

The problem for (a) is similar to the ``bipartite matching" problem from~\cite{DP22} but as $Y$ is relatively small, the edge-disjointness and high-girth requirements could be handled using the \Lovasz{} Local Lemma~\cite{EL73}, while the required maximum-degree bound can be obtained using the slotting argument from~\cite[Lemma~4.5]{DKPIV}. To then also obtain (c) - existence of a spread-enough distribution over such packings - we could use a variant of the Local Lemma (see Haeupler, Saha, and Srinivasan~\cite{HSS11} and Jain and Pham~\cite{JP22}), but this still leaves the problem of (b) - to ensure that we will be able to extend any such packing into a high-girth $K_3$-decomposition of $K_{n,n,n}$, we want the ``left-overs'' to be regular-enough; formally we require the girth-$g$ projection treasury of $S$ to contain a regular enough subtreasury. This problem does not seem amenable to Local Lemma based techniques. Thus we employ the \emph{quantum} approach from~\cite{DPII} used to resolve a similar issue in the proof of the High-Girth Existence Conjecture. This allows us to forego using the Local Lemma altogether as follows.

Namely, each edge of $Y$ chooses not one triangle but some set of triangles (independently with some small probability $p$). Mentally, it is best to consider this as some \emph{quantum} choice where the triangle is not yet fixed. We show that if $p$ is small enough, then with high probability all possible choices of triangles from the resulting sets will satisfy (b); more formally, the common projection treasury of all the possible choices has a regular enough subtreasury (and hence any individual choice does as well). It then remains to show for each edge of $Y$ that almost all remaining choices for its triangle will be high girth with all other choices (the low maximum degree is ensured by all the quantum sets together with high probability). Picking the triangle randomly from this good set then yields the desired super-spread distribution. 

\subsection{A Spread Minimum Degree High Girth Theorem}\label{sec:OutlineCompletion}

\cref{thm:MinDegree} is then the min-degree/completion argument, proving that we can indeed extend any such packing into the desired $K_3$-decomposition. This is achieved more generally by showing that any $K_{1,1,1}$-divisible regular-enough (in the regular-treasury sense) graph $G\subseteq K_{n,n,n}$, with high-enough minimum degree, contains a suitable high-girth $K_3$-decomposition. We follow the strategy laid out by Delcourt and Postle~\cite{DPII} in their proof of the high girth existence conjecture. 
\begin{enumerate}[label=(\arabic*)]
    \item Reserve a random subset $X$ of $E(G)$.
    \item Construct a high girth omni-absorber $A$ of $X$ that {\em does not shrink too many configurations}.
    \item Regularity boost $G\setminus (A\cup X)$.
    \item Apply a ``Forbidden Submatchings with reserves''~\cite{DP22} theorem to find a $K_3$-packing of $G\setminus A$ covering $G\setminus (A\cup X)$ and then extend this to a $K_3$-decomposition of $G$ by definition of omni-absorber.  
\end{enumerate}

We follow this plan with the following notable modifications. The first obvious one is that each step has to be done in a tripartite setting. In particular, the existence of partite absorbers uses the refined efficient omni-absorber theorem for Latin squares recently announced by Delcourt and Postle~\cite{DP2026Partite}. Contrary to~\cite{DPII}, the host graph $G$ is not complete, and the absorption has to be achieved in a min-degree / regular-subtreasury setting. We show that this is not a huge gap, as most of the required ingredients from~\cite{DPII} already exist in a high minimum-degree setting. Passing to a regular subtreasury requires only minor changes: the required lower bounds on degree estimates follow from regularity, while the relevant upper bounds are preserved when available triangles are deleted. The most challenging aspect of the proof then is related to spreadness, and specifically showing that the triangles used in the decomposition family of our high girth omni-absorbers can be embedded in a spread way. To do so, we start with a refined omni-absorber $A_0$, and replace every triangle in its decomposition family by a private $g$-sphere. As in~\cite{DPII}, we first choose a small random set of candidate spheres for every root. This quantum-booster argument shows that every root retains many available spheres, that their union has low maximum degree, and that the common projection treasury remains regular. Choosing one sphere for each root then yields a vertex-spread distribution over omni-boosters. The new spread ingredient is~\cref{lem:VertexToTriangleSpread}, which converts this vertex-spreadness into spreadness of the triangles appearing in the on/off decompositions. Informally, prescribing several triangles from one sphere still determines sufficiently many of its non-root vertices. This yields the spread distribution over omni-absorbers required in~\cref{thm:MainLatinAbsorbers}.

\subsection{Proofs of Completion}\label{sec:proof_main}

The following two propositions relate the treasuries of~\cref{thm:MinDegree,thm:preseeding}, and show how they complement each other to build a high-girth $K_3$-decomposition of $K_{n,n,n}$. At this step, these are the only statements needed to prove~\cref{thm:main} from~\cref{thm:MinDegree,thm:preseeding}, relying on the formal definitions of treasuries. We therefore defer their proofs to~\cref{sec:GirthProjection}.  

\begin{restatable}[]{proposition}{ProjSubTreasury}
\label{prop:ProjSubTreasury}
    For any $g\geq 4$, let $S$ be a $K_3$-packing of $K_{n,n,n}$ and let $G:=K_{n,n,n}\setminus E(S)$. Then ${\rm Proj}^g(K_{n,n,n},S,K_3)$, the girth-$g$ projection of $S$, is a subtreasury of ${\rm Treasury}^g(G,G,K_3)$.
\end{restatable}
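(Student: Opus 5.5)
This is a definition-chasing statement, so the plan is to unfold the definitions of ${\rm Treasury}^g(G,G,K_3)$ (\cref{def:Treasury,def:DesignTreasury}) and of the girth-$g$ projection ${\rm Proj}^g(K_{n,n,n},S,K_3)$ (\cref{def:ProjectionPacking}). We then check, hypergraph by hypergraph, that each component of the projection is contained in the corresponding component of the treasury. A treasury consists of three parts.

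\begin{enumerate}[label=(\arabic*)]
\item A design hypergraph, whose vertices are the edges of $G$ and whose hyperedges are copies of $K_3$ in $G$.
\item A reserve hypergraph, here also built on $G$ since both arguments are $G$.
\item A configuration hypergraph, encoding the $K_3$-packings to be avoided, namely the $(j+2,j)$-configurations for $j<g$.
\end{enumerate}

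Being a subtreasury should mean having the same vertex set, with each of the three hypergraphs a subhypergraph of the corresponding one. Alternatively, it may mean a weaker containment in which every configuration of the larger treasury restricted to the smaller design hypergraph appears in the smaller configuration hypergraph. I will check whichever formulation the definition uses.

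\textbf{Step 1: design and reserve hypergraphs.} The projection of $S$ uses as design (and reserve) hyperedges the triangles $T$ of $K_{n,n,n}$ that are edge-disjoint from $E(S)$, possibly further restricted (for instance to triangles $T$ such that $S\cup\{T\}$ has girth at least $g$). Every such $T$ lies in $G=K_{n,n,n}\setminus E(S)$, so it is a hyperedge of the design hypergraph of ${\rm Treasury}^g(G,G,K_3)$. The vertex sets agree, since both are $E(G)$. The reserve part is handled identically.

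\textbf{Step 2: configuration hypergraph.} The projection's configuration hypergraph consists of sets $C$ of triangles in $G$ such that $C\cup S'$ is a $(j+2,j)$-configuration for some $S'\subseteq S$ and some $j<g$. It contains at least the case $S'=\emptyset$, and possibly only the minimal such sets $C$ (the projections). This family contains all genuine configurations of triangles in $G$, which is the configuration hypergraph of ${\rm Treasury}^g(G,G,K_3)$. So in this step the inclusion goes the opposite way: the projection forbids more. This matches the intended notion of subtreasury, under which a larger family of forbidden configurations makes the structure more restrictive, and is consistent with the paper's description of the projection as "similar \ldots with a larger family of forbidden configurations". I will verify this against the precise definition. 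In particular, I will check that a configuration of $G$ restricted to the projection's design hypergraph is either present in the projection's configuration hypergraph or contains some member of it.

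\textbf{Main obstacle.} The main obstacle is this configuration-hypergraph check, together with any side conditions imposed by the definitions. Possible side conditions are uniformity bounds, for example edges of size between $2$ and $g$, and minimality requirements. Projections $C=Q\setminus S$ of a configuration $Q$ can have fewer triangles than $Q$, but they are still bounded in size by $g$. If the treasury definition requires configurations to be minimal, the check is more delicate: I would show that any non-minimal projection contains a minimal one, and that the definition of subtreasury only requires containment up to this. Once these are matched, the proposition follows directly.
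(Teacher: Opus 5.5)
Your plan is correct and is essentially the paper's argument. The paper only factors it through the intermediate treasury $({\rm Design}(G,K_3),{\rm Girth}^g(K_{n,n,n},K_3))$, which is a subtreasury of ${\rm Treasury}^g(G,G,K_3)$ because ${\rm Girth}^g(G,K_3)\subseteq{\rm Girth}^g(K_{n,n,n},K_3)$, and then applies \cref{rem:ProjectionSubtreasury} and transitivity; its key point is exactly your case $S'=\emptyset$, i.e.\ taking $Z=P$ in \cref{def:CommonProjection}, which is legitimate since no triangle of $G$ is a member of $S$.

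Your hedges all resolve in your favour: \cref{def:SubTreasury} is the second formulation you anticipated (spanning design subgraph and $H[E(G_1')]\subseteq H'$), both treasuries are bankrupted so there is no reserve hypergraph to check, and the common projection imposes no minimality, so each restricted configuration $P$ is itself an edge of the projected configuration hypergraph.
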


\begin{restatable}[]{proposition}{CompletionTreasury}
\label{prop:completion_treasury}
For a fixed integer $g\geq4$, let $S$ be a $K_3$-packing of $K_{n,n,n}$ with girth at least $g$. If there exists a perfect matching $M$ of the girth-$g$ projection treasury ${\rm Proj}^g(K_{n,n,n},S,K_3)$, then $S\cup M$ is a $K_3$-decomposition of $K_{n,n,n}$ with girth at least $g$.
\end{restatable}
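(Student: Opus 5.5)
We prove \cref{prop:completion_treasury} directly from the definitions of the treasury and the projection, which are not yet stated in the text above. We use the descriptions given in the introduction and in \cref{sec:PreSeedingIntro}. The projection treasury ${\rm Proj}^g(K_{n,n,n},S,K_3)$ has three parts. Its design hypergraph consists of triangles of $G:=K_{n,n,n}\setminus E(S)$, viewed as edges on the vertex set $E(G)$. Its reserve part is built on top of this. Its configuration hypergraph consists of those triangle-packings $C$ of $G$ such that $C\cup S'$ is a $(j+2,j)$-configuration with $j<g$, for some $S'\subseteq S$. Here $C$ is nonempty, and $S'$ is possibly empty, which covers cycles lying entirely inside $G$. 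A perfect matching $M$ of the treasury means a set of triangles from the design (and reserve) hypergraph with three properties: every edge of $G$ lies in exactly one triangle, no two triangles share an edge, and $M$ contains no edge of the configuration hypergraph.

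The first step is to show that $S\cup M$ is a $K_3$-decomposition of $K_{n,n,n}$. The triangles of $M$ use only edges of $G$, so they are edge-disjoint from the triangles of $S$. The triangles of $M$ are pairwise edge-disjoint and cover $E(G)$ because $M$ is a perfect matching. The triangles of $S$ are pairwise edge-disjoint and cover $E(S)$ by definition. Since $E(G)\sqcup E(S)=E(K_{n,n,n})$, every edge of $K_{n,n,n}$ lies in exactly one triangle of $S\cup M$.

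The second step is girth. Suppose $S\cup M$ contains a $(j+2,j)$-configuration $\cC$ with $4\le j<g$. Split $\cC$ as $\cC=(\cC\cap S)\sqcup(\cC\cap M)$. If $\cC\cap M=\emptyset$, then $\cC\subseteq S$, which contradicts the assumption that $S$ has girth at least $g$. Otherwise $C:=\cC\cap M$ is a nonempty subset of $M$, and $C\cup(\cC\cap S)$ is a cycle of length less than $g$. By the definition of the girth-$g$ projection, $C$ is therefore an edge of the configuration hypergraph. This contradicts the fact that $M$ avoids the configuration hypergraph. Hence $S\cup M$ has girth at least $g$.

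The main obstacle is matching this argument to the paper's formal definition of ${\rm Proj}^g$. If the definition lists only \emph{minimal} projected configurations, or only those $C$ for which $|C|$ ranges over bounded sizes, we must check that every short cycle in $S\cup M$ contains one of the listed forbidden configurations. For this we would take $\cC$ to be a shortest cycle, that is, of minimum $j$. We would then argue that $\cC\cap M$ is exactly the projection of $\cC$ relative to $\cC\cap S$, so it appears in the configuration hypergraph. A second check is needed if perfect matchings may also use reserve-hypergraph edges. In that case we verify that reserve edges are still triangles of $G$, so both steps above go through unchanged.
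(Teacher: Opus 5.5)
Your strategy matches the paper's proof. You split a short cycle $\mathcal{C}\subseteq S\cup M$ into $\mathcal{C}\cap S$ and $P_M:=\mathcal{C}\cap M$, use the girth of $S$ to get $P_M\neq\emptyset$, and conclude from the projection that $P_M$ is forbidden. Your fallback of taking $\mathcal{C}$ to be a shortest cycle is exactly what is needed. It makes $\mathcal{C}$ an Erd\H{o}s configuration, i.e.\ an edge $Z$ of ${\rm Girth}^g(K_{n,n,n},K_3)$ with $P_M\subseteq Z\subseteq P_M\cup S$ and $P_M\cap S=\emptyset$.

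There is, however, one gap. Your guessed definition of the projection differs from the paper's in one place, and your list of checks does not catch it. In the paper, configuration hypergraphs only have edges of size at least two, and in \cref{def:CommonProjection} the edge set $E(H')$ explicitly requires $|P|\geq 2$. So your sentence ``$C$ is therefore an edge of the configuration hypergraph'' fails when $|P_M|=1$. That is the case where a single triangle $F\in M$ closes an Erd\H{o}s configuration using only triangles of $S$.

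The paper handles this case through the vertex set of the projected configuration hypergraph, not its edge set. Every $F\notin S$ with $F\in Z\subseteq S\cup\{F\}$ for some $Z\in{\rm Girth}^g(K_{n,n,n},K_3)$ is deleted from $V(H')$. The projected design hypergraph is $E(G_1')=E(G_1)\cap V(H')$. Since $M$ is a matching of $G_1'$, you have $P_M\subseteq M\subseteq V(H')$, which rules out $|P_M|=1$. Only then does $P_M\in E(H')$ follow, contradicting that $M$ is $H'$-avoiding. With this one-line addition your argument coincides with the paper's proof.

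Two of your other concerns are harmless. The reserve concern is moot, since this treasury is bankrupted. Your first step is also unaffected: the projection keeps $V(G_1')=V(G_1)=E(K_{n,n,n}\setminus E(S))$, so $M$ still covers every edge outside $E(S)$ exactly once.
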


Using these propositions, we show how the completion~\cref{cor:completion} follows from the ``minimum degree''~\cref{thm:MinDegree},  and how the main~\cref{thm:main} on high-girth $K_3$-decompositions follows using additionally the pre-seeding~\cref{thm:preseeding}. We start with the proof of the corollary.

\begin{proof}[Proof of Completion~\cref{cor:completion}]
    Let $\alpha,\beta,\sigma,n_0$ be such that~\cref{thm:MinDegree} holds. For a large enough integer $n\geq n_0$ (in particular such that $\sigma\cdot \log^{g-1}n>1$), let $S$ be a $K_3$-packing of $K_{n,n,n}$ with girth at least $g$ and $\Delta(S) \le \frac{n}{\log^{g-1} n}$, such that there exists a subtreasury $T$ of ${\rm Proj}^g(K_{n,n,n},S,K_3)$ that is $(n,\alpha n,\beta,\alpha)$-regular and $g$-abundant. Let $G:=K_{n,n,n}\setminus E(S)$, and observe that $G$ is $K_{1,1,1}$-divisible, and that, with $n$ large enough, $G$ has minimum degree $2n-\Delta(S)\geq (2-\sigma)n$.
    By~\cref{prop:ProjSubTreasury},~${\rm Proj}^g(K_{n,n,n},S,K_3)$ is a subtreasury of ${\rm Treasury}^g(G,G,K_3)$ and therefore so is $T$. By~\cref{thm:MinDegree}, as $T$ is a subtreasury of ${\rm Treasury}^g(G,G,K_3)$ that is $(n,\alpha n,\beta,\alpha)$-regular and $g$-abundant, there exists a perfect matching  $M$ of $T$. By~\cref{prop:SubTreasury} $M$ is also a perfect matching of ${\rm Proj}^g(K_{n,n,n},S,K_3)$, and by~\cref{prop:completion_treasury} $S\cup M$ is a $K_3$-decomposition of $K_{n,n,n}$ of girth at least $g$.
\end{proof}

We are now ready to show that~\cref{thm:main} follows from~\cref{thm:preseeding,thm:MinDegree}. 

\begin{proof}[Proof of~\cref{thm:main}]
    
    Fix an integer $g\geq 4$ and reals $\gamma,\varepsilon\in(0,1)$. Let $h$ be an integer such that $h\geq g$ and $h>2/\gamma$. Let $\alpha,\beta,\sigma,n_1$ be such that~\cref{thm:MinDegree} holds with parameter $h$. Let $a_0$ and $n_2$ such that~\cref{thm:preseeding} holds with parameters $h,\alpha,\beta,\varepsilon$. Let $n\geq \max\{n_1,n_2\}$ be sufficiently large. Let  $a:=\ceil*{\frac{a_0h}{g}}$, and observe that 
    $\Delta(Y)\leq \frac{n}{\log^{ag} n}\leq \frac{n}{\log^{a_0h} n}$.

    Let $\cL_P$ be the family of $Y$-fully-enclosed $K_3$-packings $P$ of $K_{n,n,n}$ with girth at least $h$ and $\Delta(P)\leq \frac{4n}{\log^{a_0(h-1)}n}$, such that there exists a subtreasury of ${\rm Proj}^h(K_{n,n,n},P,K_3)$ that is $(n,\alpha n,\beta,\alpha)$-regular and $h$-abundant. Let $\eta_1$ be the $\parens*{\frac{1+\varepsilon}{n}}$-spread probability distribution over $\cL_P$, as guaranteed by~\cref{thm:preseeding}. Let $P$ be a packing chosen at random in $\cL_P$ using the probability distribution $\eta_1$, with $T$ denoting the regular and abundant subtreasury of ${\rm Proj}^h(K_{n,n,n},P,K_3)$ also guaranteed by~\cref{thm:preseeding}. For any packing $S$ of $K_{n,n,n}$ we have
    \[\Prob{S\subseteq P}\leq\parens*{\frac{1+\varepsilon}{n}}^{|S|}.\]

    Let $G:=K_{n,n,n}\setminus E(P)$ and observe that $G$ is $K_{1,1,1}$-divisible, with minimum degree $\delta(G)=2n-\Delta(P)\geq(2-\sigma) n$. By~\cref{prop:ProjSubTreasury},~${\rm Proj}^h(K_{n,n,n},P,K_3)$ is a subtreasury of ${\rm Treasury}^h(G,G,K_3)$ and therefore so is $T$ by transitivity of the subtreasury relation (see~\cref{def:SubTreasury}).
    
    As $T$ is a subtreasury of ${\rm Treasury}^h(G,G,K_3)$ that is $(n,\alpha n,\beta,\alpha)$-regular and $h$-abundant, by~\cref{thm:MinDegree} there exists an $\parens*{n^{-1+2/h}}$-spread probability distribution $\eta_2$ over perfect matchings $M$ of $T$. For any packing $S$ of $K_{n,n,n}$ and any $P'\in \cL_P$ we have
    \[\Prob{~S\subseteq M\mid P=P'~}\leq\parens*{\frac{1}{n^{1-2/h}}}^{|S|}.\]
    
    Let $Z=P\cup M$. By~\cref{prop:SubTreasury}, $M$ is also a perfect matching of ${\rm Proj}^h(K_{n,n,n},P,K_3)$, and by~\cref{prop:completion_treasury} $Z$ is a $K_3$-decomposition of $K_{n,n,n}$ with girth at least $h\geq g$. Given that $P$ is $Y$-enclosed while $E(M)\cap E(Y)=\emptyset$, we obtain that $Z$ is $Y$-disjoint. We now show that this probability distribution is $n^{-1+\gamma}$-spread over $\cL_D$, and $(1+\varepsilon)/n$-spread on $\cS$. To that end, fix a packing $S$ of $K_{n,n,n}$, and let $S_1:=\{F\in S\colon E(F)\cap E(Y)\neq\emptyset\}$, and $S_2:=S\setminus S_1$. Recall that $P$ is $Y$-fully-enclosed, therefore $S\subseteq (P\cup M)$ if and only if $S_1\subseteq P$ and $S_2\subseteq M$. We obtain
        \begin{align*}
            \Prob{S\subseteq Z}&= \Prob{(S_1\subseteq P)\wedge (S_2\subseteq M)}\\    
            &= \Prob{S_2\subseteq M\mid S_1\subseteq P}\cdot \Prob{S_1\subseteq P}\\  
            &\leq \Prob{S_1\subseteq P}\cdot\parens*{\sum_{P'\in\cL_P}\Prob{S_2\subseteq M\mid P=P'}\cdot \Prob{P=P'\mid S_1\subseteq P}}\\  
            &\leq 
            \parens*{\frac{1+\varepsilon}{n}}^{|S_1|}
            \parens*{\frac{1}{n^{1-2/h}}}^{|S_2|}
            \brackets*{\sum_{P'\in\cL_P}\Prob{P=P'\mid S_1\subseteq P}}\\  
            &= \parens*{\frac{1+\varepsilon}{n}}^{|S_1|}
            \parens*{\frac{1}{n^{1-2/h}}}^{|S_2|}.
        \end{align*}

    If $S\in\cS$, then $S$ is $Y$-enclosed and every triangle in $S$ intersect $Y$ on one edge, therefore $S_1=S$, $S_2=\emptyset$ and
    \[\Prob{S\subseteq Z}\leq \parens*{\frac{1+\varepsilon}{n}}^{|S|}.\]
    Otherwise
    \[\Prob{S\subseteq Z}\leq \parens*{\frac{1+\varepsilon}{n}}^{|S_1|}\parens*{\frac{1}{n^{1-2/h}}}^{|S_2|}\leq \parens*{\frac{1}{n^{1-\gamma}}}^{|S|},\]    
    since $2/h< \gamma$.
\end{proof}

\section{Subsquare-free Latin Squares}\label{sec:ProofSqFree}

We now prove that~\cref{thm:main} implies~\cref{thm:SpreadSubsquare}, following the proof strategy laid out in~\cref{sec:OurProofOverview}. We start by showing the existence of a set $Y\subseteq K_{n,n}$, with small maximum degree and such that $Y$ intersects each large subsquare of order $k$ in many cells.

\begin{lem}\label{lem:ExistenceYIntersectingAllLarge}
For every $a \ge 1$ and $c\in (0,1)$, there exists an integer $n_0$ such that for all $n\ge n_0$ there exists a set $Y\subseteq K_{n,n}$ with $\Delta(Y)\leq n/\log^a n$ such that for all integers $k$ with $\frac{n}{2} \ge k \ge n^c$ and every $R\subseteq K_{n,n}$ with $R\cong K_{k,k}$ we have $|E(Y)\cap E(R)|\ge k \log^2 n$.
\end{lem}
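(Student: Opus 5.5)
We take $Y$ to be a random subgraph of $K_{n,n}$: each of the $n^2$ edges is included independently with probability $p:=\frac{1}{2\log^a n}$. The degree bound is the easy part. Every vertex has expected degree $pn = \frac{n}{2\log^a n}$. A Chernoff bound gives that a fixed vertex has degree exceeding $2pn = n/\log^a n$ with probability at most $\exp(-\Omega(n/\log^a n))$. A union bound over the $2n$ vertices then yields $\Delta(Y)\le n/\log^a n$ with high probability.

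The intersection property is handled the same way. Fix $k$ with $n^c\le k\le n/2$ and a copy $R\cong K_{k,k}$, that is, a choice of $k$ rows and $k$ columns. Then $|E(Y)\cap E(R)|$ is binomial with mean
\[
\mu = pk^2 = \frac{k^2}{2\log^a n}.
\]
Since $k\ge n^c$, we have $\mu \ge k\cdot \frac{n^c}{2\log^a n}\ge 2k\log^2 n$ for $n$ large. So the bad event $|E(Y)\cap E(R)|<k\log^2 n$ requires falling below $\mu/2$. By Chernoff, this happens with probability at most $\exp(-\mu/8)=\exp\bigl(-k^2/(16\log^a n)\bigr)$.

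The number of choices of $R$ of order $k$ is $\binom{n}{k}^2\le n^{2k}=\exp(2k\log n)$. The only real check is that this count is beaten by the tail bound. We need
\[
\frac{k^2}{16\log^a n} \ge 2k\log n + 2\log n,
\]
which holds since $k\ge n^c$ dominates any polylogarithmic factor. Summing over $k$ from $n^c$ to $n/2$, the total failure probability is at most
\[
\sum_k \exp\bigl(-k^2/(32\log^a n)\bigr)\le n\exp\bigl(-n^{2c}/(32\log^a n)\bigr)=o(1).
\]

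Hence with positive probability both properties hold simultaneously, and such a $Y$ exists for all $n\ge n_0$. This is where the assumption $k\ge n^c$ is essential: it makes $k^2p$ exceed $k\log n$ by a polynomial factor, which is enough both to beat the union bound and to guarantee the required $k\log^2 n$ intersection.
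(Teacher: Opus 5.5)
Your proposal is correct and follows the same route as the paper: a random $Y$ with $p=\frac{1}{2\log^a n}$, a Chernoff lower-tail bound for each fixed copy of $K_{k,k}$ (you take $\delta=1/2$ explicitly where the paper leaves $\delta$ generic), and a union bound over the at most $\binom{n}{k}^2\le e^{2k\log n}$ copies, which succeeds because $pk^2$ exceeds $k\log n$ by a polynomial factor once $k\ge n^c$. One small correction: the inequality you actually need to pass to the exponent $k^2/(32\log^a n)$ is $\frac{k^2}{32\log^a n}\ge 2k\log n$, not the one you display, but it holds for the same reason.
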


\begin{proof}
    Let $p :=\frac{1}{2\log^a n}$, and $Y$ be a random subgraph of $K_{n,n}$ keeping each edge independently at random with probability $p$. With high probability we have $\Delta(Y)<n/\log^a n$. Let $n$ be large enough, and let $\cP$ be the probability that there exists a set $R\subseteq K_{n,n}$, with $R\cong K_{k,k}$, $\frac{n}{2} \ge k \ge n^c$  and $|E(Y)\cap E(R)|\leq k \log^2 n$. It suffices to show that $\cP<1$.

    For a fixed set of edges $R\subseteq K_{n,n}$ with $R\cong K_{k,k}$ and $n^c\leq k \leq \frac{n}{2}$, let $N=|E(Y)\cap E(R)|$. We have $N\sim{\rm Bin}(k^2,p)$, $\mathbb{E}N=pk^2$ and by the Chernoff bound  (e.g.~see Alon and Spencer~\cite{AS16}), for any $\delta\in(0,1)$,
    \[\Prob{N<k \log^2 n} < \Prob{N<(1-\delta)\mathbb{E}N} = \exp\brackets*{-\delta^2\frac{pk^2}{2}},\]
    where we used $k\geq n^c$ and therefore $k \log^2 n\ll pk^2$ for the first inequality. By union bound, we have
    \[\cP\leq \sum_{k=n^c}^{n/2}\binom{n}{k}^2\exp\brackets*{-\delta^2\frac{pk^2}{2}}\leq\sum_{k=n^c}^{n/2}\exp\brackets*{2k\log n-\delta^2\frac{pk^2}{2}}.\]
    With $k\geq n^c$, we have that $2k\log n-\delta^2\frac{pk^2}{2}$ decreases with $k$, and we conclude that
    \[\cP\leq n \exp\brackets*{2n^c\log n-\delta^2\frac{pn^{2c}}{2}}\leq 
    \exp\brackets*{\log n + 2n^c\log n-\delta^2\frac{n^{2c}}{4\log^a n}}=o(1).\qedhere\]
\end{proof}

We are now ready to prove~\cref{thm:SpreadSubsquare}, using spreadness over the family of high-girth $K_3$-packings of $K_{n,n,n}$, and ``super-spreadness'' on the family of $Y$-enclosed $K_3$-packings of $K_{n,n,n}$.

\begin{lateproof}{thm:SpreadSubsquare}
    For an integer $g\geq 1$ and reals $\gamma,\varepsilon\in(0,1)$, let $c,\theta>0$ be arbitrarily small reals, in particular with $\theta<\gamma$. Let $g_0:=\max\{g,11\}$, $k_0=\floor*{\frac{g_0+1}{3}}\geq 4$, and let $n$ be a large enough integer. We say that a Latin square of order $k$ is {\em small} if $k\leq k_0$, {\em large} if $k\geq k_1:=n^c$, and {\em medium} otherwise. 

    Let $a$ be an integer such that~\cref{thm:main} holds with parameters $g_0$, $\theta$, and $\varepsilon$. Let $Y\subseteq K_{n,n}$ be a set of cells in $K_{n,n,n}$ with $\Delta(Y)\leq \frac{n}{\log^{ag_0}n}$, as guaranteed by~\cref{lem:ExistenceYIntersectingAllLarge}. Let $\cL_D$ be the family of $Y$-disjoint $K_3$-decompositions of $K_{n,n,n}$ with girth at least $g_0$, and let $\cS$ be the family of $Y$-enclosed $K_3$-packings of $K_{n,n,n}$. \Cref{thm:main} yields a probability distribution $\nu$ over $\cL_D$ that is $n^{-1+\theta}$-spread and $\frac{1+\varepsilon}{n}$-spread on $\cS$.   

    It is not difficult to see that any Latin square of order $n$ contains no proper subsquare of order larger than $n/2$. Meanwhile, any Latin square from $\cL_D$ has girth at least $g_0\geq 3k_0-1$, and therefore contains no small subsquare (of order at most $k_0$). 

    \begin{claim}\label{claim:no_medium}
        Let $P$ be Latin square chosen at random from $\cL_D$ using the probability distribution $\nu$. Then 
        \[\mathbb{P}_\nu\brackets*{P\in\cL_D\text{ contains a medium subsquare}}\leq 1/3.\]        
    \end{claim}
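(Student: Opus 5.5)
We bound the probability by a union bound over all potential medium subsquares, using the $n^{-1+\theta}$-spreadness of $\nu$. A subsquare of order $k$ in a Latin square $P$ corresponds to a $K_3$-packing $S\subseteq P$ with $|S|=k^2$ that decomposes a copy of $K_{k,k,k}$ inside $K_{n,n,n}$. Such a copy is determined by choosing $k$ rows, $k$ columns and $k$ symbols, giving at most $\binom{n}{k}^3$ choices. For each choice there are at most $L(k)\le k^{k^2}$ triangle-decompositions; this follows from the upper bound $L(k)\le\prod_{i=1}^k (i!)^{k/i}$, or simply because each of the $k^2$ cells receives one of $k$ symbols. Spreadness gives $\Prob{S\subseteq P}\le n^{(\theta-1)k^2}$ for each such $S$. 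Summing,
\[
\mathbb{P}_\nu\brackets*{P\text{ contains a medium subsquare}}\le \sum_{k=k_0+1}^{k_1}\parens*{\frac{ne}{k}}^{3k}k^{k^2}n^{(\theta-1)k^2}.
\]

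Next I bound the general term. Since $(ne/k)^{3k}\le (en)^{3k}$ and $k\le k_1=n^c$, the term is at most
\[
\exp\brackets*{3k(1+\log n)+k^2\parens*{c\log n+(\theta-1)\log n}}=\exp\brackets*{3k(1+\log n)-k^2(1-c-\theta)\log n}.
\]
Choosing $c,\theta$ small, say $c+\theta<1/4$, the exponent is at most $-k\log n\parens*{\tfrac34 k-3}+3k\le -k\log n\parens*{\tfrac34k-4}$ for large $n$. Since $k\ge k_0+1\ge 5$, we have $\tfrac34 k-4\ge 1/4\cdot k/5$ roughly, so each term is at most $n^{-\Omega(k)}$, and in particular at most $n^{-k/20}$. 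The sum over $k\ge 5$ is then $O(n^{-1/4})=o(1)$, which is below $1/3$ for $n$ large.

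The only delicate step is the small end of the range, $k$ close to $k_0$. There the factor $n^{3k}$ from choosing rows, columns and symbols must be beaten by $n^{-(1-c-\theta)k^2}$. This requires $k^2(1-c-\theta)>3k+O(k/\log n)$, i.e.\ $k$ strictly above $3/(1-c-\theta)$. Since medium $k$ satisfy $k>k_0\ge4$, so $k\ge5$, the margin is comfortable once $c+\theta$ is small: we need $5(1-c-\theta)>3$, i.e.\ $c+\theta<2/5$. I would therefore fix $c$ and $\theta$ accordingly; this is consistent with their being chosen arbitrarily small in the setup. The large end $k\le n^c$ is handled by the bound $\log k\le c\log n$, which is exactly why medium subsquares stop at $n^c$.
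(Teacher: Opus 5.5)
Your proposal is correct and follows essentially the same route as the paper: a union bound over the $\binom{n}{k}^3$ copies of $K_{k,k,k}$ and their at most $L(k)\le k^{k^2}$ decompositions, the $n^{-1+\theta}$-spreadness, $\log k\le c\log n$ from $k\le n^c$, and $k\ge 4$ to beat the $n^{3k}$ factor. The paper bounds every term by the $k=k_0$ term and multiplies by the at most $n^c$ terms, instead of summing a geometric series. One arithmetic slip needs fixing: with $c+\theta<1/4$, absorbing $3k$ into $k\log n$ leaves $\tfrac34 k-4=-\tfrac14<0$ at $k=5$, so your bound $n^{-k/20}$ is not justified there as written. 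Either keep the $3k$ term separate, since the exponent at $k=5$ is $15-\tfrac{15}{4}\log n$, or take $c+\theta$ smaller, as your final paragraph effectively does.
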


    \begin{proofclaim}
        Let $P$ be a Latin square chosen at random using the probability distribution $\nu$, and let $q:=n^{-1+\theta}$. It follows from~\eqref{eq:EnumLS} that there exists a constant $C$ such that, for any $k$, there are at most $\exp\brackets*{k^2(\log k+C)}$ Latin squares of order $k$. Given that $\nu$ is $q$-spread over $\cL_D$, by the union bound,
        \begin{align*}
            \mathbb{P}_\nu\brackets*{P\in\cL_D\text{ contains a medium subsquare}} 
            &\leq \sum_{k=k_0}^{k_1} {\binom{n}{k}}^3 L(k)\cdot q^{k^2}\\
             &\leq \sum_{k=k_0}^{k_1}n^{3k}\cdot \exp\brackets*{k^2(\log k+C)}\cdot n^{(-1+\theta)k^2}\\
             &\leq \sum_{k=k_0}^{k_1}\exp\brackets*{k^2\cdot \parens*{\log k+C+\frac{3}{k}\log n - (1-\theta)\log n}}\\
        \end{align*}
        With $k\leq k_1 = n^c$, we have $\log k\leq c\log n$ and
        \begin{align*}
        \mathbb{P}_\nu\brackets*{P\in\cL_D\text{ contains a medium subsquare}} 
             &\leq \sum_{k=k_0}^{k_1}\exp\brackets*{k^2\cdot \log n\cdot \parens*{c + o(1) + \frac{3}{k}-1+\theta}}\\
             &\leq \sum_{k=k_0}^{k_1}\exp\brackets*{k^2\cdot \log n\cdot \parens*{c + o(1) + \frac{3}{k_0}-1+\theta}}
        \end{align*}
        With $n$ large enough, $k_0\geq 4$ and $c,\theta$ arbitrarily small, we have $c + o(1) + \frac{3}{k_0}-1+\theta<0$, and 
        \begin{align*}
        \mathbb{P}_\nu\brackets*{P\in\cL_D\text{ contains a medium subsquare}} 
             &\leq \sum_{k=k_0}^{k_1}\exp\brackets*{k_0^2\cdot \log n\cdot \parens*{c + o(1) + \frac{3}{k_0}-1+\theta}}\\
              &\leq n^c\exp\brackets*{k_0^2\cdot \log n\cdot \parens*{c + o(1) + \frac{3}{k_0}-1+\theta}}\\
             &\leq \exp\brackets*{k_0^2\cdot \log n\cdot \parens*{c + o(1) + \frac{3}{k_0}-1+\theta + \frac{c}{k_0^2}}}\\
             &\leq1/3,
        \end{align*}
        using again that $n$ is large enough, while $k_0\geq 4$ and $c,\theta$ are arbitrarily small.
    \end{proofclaim}

    \begin{claim}\label{claim:no_large}
        Let $P$ be a Latin square chosen at random from $\cL_D$ using the probability distribution $\nu$. Then 
        \[\mathbb{P}_\nu\brackets*{P\in\cL_D\text{ contains a large subsquare}}\leq 1/3.\]      
    \end{claim}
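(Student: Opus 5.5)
We prove \cref{claim:no_large} by a union bound over the possible large subsquares, using only the $\frac{1+\varepsilon}{n}$-spreadness of $\nu$ on $\cS$ together with the intersection property of $Y$ from \cref{lem:ExistenceYIntersectingAllLarge}. Proper subsquares have order at most $n/2$. So it suffices to fix $k$ with $k_1\le k\le n/2$ and bound the probability that $P$ contains a subsquare of order $k$. Such a subsquare is determined by a set $A$ of $k$ rows, a set $B$ of $k$ columns and a set $C$ of $k$ symbols; there are at most $\binom nk^3$ choices of $(A,B,C)$. Let $R\cong K_{k,k}$ be the cell set $A\times B$. By \cref{lem:ExistenceYIntersectingAllLarge} (whose hypothesis $k\ge n^c$ holds), $|E(Y)\cap E(R)|\ge k\log^2 n$.

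Next, fix a subset $Y_R\subseteq E(Y)\cap E(R)$ of exactly $m:=\ceil{k\log^2 n}$ cells. If $P$ contains a subsquare on $(A,B,C)$, then every cell $e\in Y_R$ is filled by some symbol $s(e)\in C$. The triangles $\{e\cup\{s(e)\}: e\in Y_R\}$ then form a $K_3$-packing $S\subseteq P$ of size $m$. Because $P$ is $Y$-disjoint and each of these triangles contains the edge $e\in Y$, each triangle meets $Y$ in exactly one edge. Hence $S$ is $Y$-enclosed, so $S\in\cS$. There are at most $k^m$ choices of the symbol assignment $s$, and for each resulting $S$ the spreadness of $\nu$ on $\cS$ gives $\Prob{S\subseteq P}\le\parens*{\frac{1+\varepsilon}{n}}^{m}$.

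Combining, the probability that $P$ has a large subsquare is at most
\[\sum_{k=k_1}^{n/2}\binom nk^3\parens*{\frac{k(1+\varepsilon)}{n}}^{m}\le \sum_{k=k_1}^{n/2}\exp\brackets*{3k\log n - k\log^2 n\cdot\log\frac{n}{(1+\varepsilon)k}}.\]
The key point, and the only delicate step, is that $k\le n/2$ gives $\frac{(1+\varepsilon)k}{n}\le\frac{1+\varepsilon}{2}<1$ since $\varepsilon<1$. So $\log\frac{n}{(1+\varepsilon)k}\ge\log\frac{2}{1+\varepsilon}=:\eta>0$, a constant. Each exponent is therefore at most $k(3\log n-\eta\log^2 n)\le -k\log n$ for $n$ large. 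The sum is thus at most $n\cdot n^{-k_1}=o(1)\le 1/3$, which proves the claim. This is exactly where the super-spreadness constant $1+\varepsilon<2$ is essential: a weaker spread such as $n^{-1+\gamma}$ would not make the base $\frac{k(1+\varepsilon)}{n}$ bounded away from $1$ when $k$ is close to $n/2$.
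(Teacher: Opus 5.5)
Your proof is correct and is essentially the paper's argument: a union bound over the cell set $R\cong K_{k,k}$, the symbol set, and the symbol assignment on the cells of $E(Y)\cap E(R)$; then the $\frac{1+\varepsilon}{n}$-spreadness of $\nu$ on $\cS$; then the observation that $k\le n/2$ and $\varepsilon<1$ keep $\frac{(1+\varepsilon)k}{n}$ bounded away from $1$. The only cosmetic difference is that you trim to exactly $\lceil k\log^2 n\rceil$ cells of $Y$, whereas the paper uses all of $E(Y)\cap E(R)$ and then uses $kq<1$ to reduce the exponent to $k\log^2 n$.
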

    
    \begin{proofclaim}
    
    With $q:=(1+\varepsilon)/n$, let $P$ be a Latin square of order $n$, chosen at random using the probability distribution $\nu$, and assume that $P$ contains a large subsquare $B$. For some integer $k\in[n^c,n/2]$, there exists $R\cong K_{k,k}$, representing the cells $B$, and by definition of $Y$ we have that $|E(Y)\cap E(R)|\geq k\log^2 n$.

    Consider the $K_3$-packing $S$ of $K_{n,n,n}$ induced by $B$, keeping only triangles intersecting the set $Y$,
    \[S:=\set*{T\in B\colon |E(Y)\cap E(T)|=1}.\]

    This is a $K_3$-packing of size $|E(Y)\cap E(R)|$ that is $Y$-enclosed, hence in $\cS$, and containing at most $k$ symbols. Therefore there are at most $\binom{n}{k}k^{|E(Y)\cap E(R)|}$ possible packings $S$ for any given $R$, and observe that there are at most $\binom{n}{k}^2$ choices for $R$. As $\nu$ is $q$-spread on $\cS$, we obtain by the union bound,
    
    \begin{align*}
    \mathbb{P}_\nu\brackets*{P\in\cL_D\text{ contains a large subsquare}}
    &\leq \mathbb{P}\brackets*{~\exists R \cong K_{k,k} \subseteq P\textrm{ s.t. $R$ induces a $Y$-enclosed packing $S$}}\\
             &\leq\sum_{k=k_1}^{n/2}\sum_{\substack{R\subseteq K_{n,n}\\R\cong K_{k,k}}} \binom{n}{k} k^{|E(Y)\cap E(R)|}\cdot q^{|S|}\\
            &=\sum_{k=k_1}^{n/2}\sum_{\substack{R\subseteq K_{n,n}\\R\cong K_{k,k}}} \binom{n}{k} (kq)^{|E(Y)\cap E(R)|}.
    \end{align*} 

    With $q=\frac{1+\varepsilon}{n}$ and $k\leq n/2$, we obtain that $kq<1$ for $n$ large enough, and by~\cref{lem:ExistenceYIntersectingAllLarge}, we have $|E(Y)\cap E(R)|\geq k\log^2 n$. Therefore
    \begin{align*}
    \mathbb{P}_\nu\brackets*{P\in\cL_D\text{ contains a large subsquare}}
             &\leq\sum_{k=k_1}^{n/2}\binom{n}{k}^3\cdot(kq)^{k\log^2 n}\\
             &=\sum_{k=k_1}^{n/2}\binom{n}{k}^3\cdot\parens*{\frac{(1+\varepsilon)k}{n}}^{k\log^2 n}\\
             &\leq\sum_{k=k_1}^{n/2} n^{3k}\cdot \exp\brackets*{k\cdot \log^2 n\cdot (\log k + \log(1+\varepsilon)-\log n)}\\
             &\leq\sum_{k=k_1}^{n/2} n^{3k}\cdot \exp\brackets*{k\cdot \log^2 n\cdot ( \log(1+\varepsilon)-\log 2)}.
    \end{align*}  
    Let $c' = (\log 2 - \log(1+\varepsilon))>0$, then 
    \begin{align*}
    \mathbb{P}_\nu\brackets*{P\in\cL_D\text{ contains a large subsquare}}
             &\leq\sum_{k=k_1}^{n/2} n^{3k}\cdot \exp\brackets*{-c'k\log^2 n}\\
             &\leq\sum_{k=k_1}^{n/2} \exp\brackets*{(3-c'\log n)\cdot k\cdot \log n}.
    \end{align*}  
    For $n$ large enough we have $3-c'\log n<-1$, therefore $(3-c'\log n)k\log n$ is decreasing with $k$ and 
    \begin{align*} 
    \mathbb{P}_\nu\brackets*{P\in\cL_D\text{ contains a large subsquare}}
             &\leq\sum_{k=k_1}^{n/2} \exp\brackets*{-n^c\log n }\\
             &\leq \exp\brackets*{\log n -n^c\log n }\\
             &\leq 1/3.\qedhere
    \end{align*} 
    \end{proofclaim}

Let $\cL$ be the family of subsquare-free Latin squares of girth at least $g$ and order $n$.
For a Latin square $P\in\cL_D$, let $\cE$ be the event ``$P$ contains no medium and no large subsquare". We define $\eta$ to be the probability distribution $\nu$ conditioning on $\cE$. 
Recall that every Latin square in $\cL_D$ has girth at least $g_0\geq g$, and contains no small subsquare. Therefore, if $P$ is a Latin square from $\cL_D$, selected at random using the probability distribution $\eta$, then $P$ is a random element from $\cL$. We obtain that for any non-empty $K_3$-packing $S$ of $K_{n,n,n}$, we have
\[\mathbb{P}_\eta\brackets*{S\subseteq P} = \mathbb{P}_\nu\brackets*{S\subseteq P\mid P\text{ is subsquare-free}} 
\leq \frac{\mathbb{P}_\nu\brackets*{S\subseteq P}}{\mathbb{P}_\nu\brackets*{P\text{ is subsquare-free}}}. \]
Recall that $\nu$ is $n^{-1+\theta}$-spread, while, by the above claims, we have $\mathbb{P}_\nu\brackets*{P\text{ is subsquare-free}}\geq \frac{1}{3}$. We obtain
\[\mathbb{P}_\eta\brackets*{S\subseteq P} \leq 3(n^{-1+\theta})^{|S|}\leq n^{(-1+\gamma)|S|},\]
with $n$ large enough and $\theta<\gamma$, while the case $S=\emptyset$ is trivially true. 
\end{lateproof}

We reiterate that the existence of a $\frac{(1+\varepsilon)e^2}{n}$-spread distribution over high-girth Latin squares would be sufficient to prove the existence of subsquare-free Latin squares, by simple modifications of the proof of~\cref{claim:no_medium}. In light of this, we conjecture the existence of the following distribution.

\begin{conj}
    For every integer $g\geq4$, there exists a $\frac{(1+o(1))e^2}{n}$-spread distribution over the family of Latin squares of order $n$ and girth at least $g$.
\end{conj}


\section{Pre-seeding}\label{sec:PreSeeding}

This section is dedicated to the proof of the pre-seeding theorem,~\cref{thm:preseeding}. 

\subsection{Probabilistic Tools - Kim-Vu Theorem}

We require the following corollary of the polynomial concentration theorem of Kim-Vu~\cite{KV00}, included in a more general form in~\cite{DKPIII}, to prove concentration results on degrees and codegrees of hypergraphs in various treasuries. 
\begin{cor}\label{cor:KimVu}
For all integers $k\ge 1$, there exists $n_0 \ge 1$ such that the following holds for all $p\in (0,1]$ and $n\ge n_0$. Let $H$ be a $k$-uniform multi-hypergraph on at most $n$ vertices, and $H_p$ be the subgraph of $H$ induced on the set $X$ where $X$ is obtained by choosing each vertex of $H$ independently with probability $p$. If $\kappa>0$ satisfies
\begin{equation}
    p^{i}\ge \frac{\Delta_i(H)}{\kappa}\cdot \log^{4k+2} n\tag{$*_i$}
\end{equation}
for all $i \in [k]$ and $e(H)\leq \kappa$, then with probability at least $1-e^{-\log^2 n}$,
\[e(H_p)\le p^k \cdot 2\kappa.\]
\end{cor}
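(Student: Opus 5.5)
This corollary is a specialization of the Kim--Vu polynomial concentration inequality, so I would deduce it directly from that theorem. Encode the random set $X$ by independent indicator variables $t_v\sim\mathrm{Bernoulli}(p)$, one for each vertex $v\in V(H)$. Then
\[
Y:=e(H_p)=\sum_{e\in H}\prod_{v\in e}t_v
\]
is a polynomial of degree $k$ with nonnegative coefficients (the coefficients are edge multiplicities, since $H$ is a multi-hypergraph) in at most $n$ variables. Kim--Vu asserts that
\[
\Prob{|Y-\mathbb E Y|>a_k\sqrt{\mathbb E_0 \mathbb E'}\,\lambda^k}=O\bigl(e^{-\lambda+(k-1)\log n}\bigr),
\]
where $\mathbb E_j$ is the maximum over $j$-sets $A$ of the expected partial derivative $\mathbb E[\partial_A Y]$, together with $\mathbb E_0=\max_{j\ge0}\mathbb E_j$ and $\mathbb E'=\max_{j\ge1}\mathbb E_j$.

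The first step is to compute these expectations. For a set $A$ of $j$ vertices, $\mathbb E[\partial_A Y]$ equals the number of edges containing $A$ times $p^{k-j}$, so $\mathbb E_j\le \Delta_j(H)\,p^{k-j}$, and $\mathbb E_0=\mathbb E Y\le e(H)p^k\le \kappa p^k$. Hypothesis $(*_j)$ rewrites as $\Delta_j(H)p^{k-j}\le \kappa p^k/\log^{4k+2}n$. Hence every $\mathbb E_j$ with $j\ge1$ is at most $\kappa p^k\log^{-(4k+2)}n$, while $\mathbb E_0\le\kappa p^k$. Consequently $\mathbb E_0\le \kappa p^k$ and $\mathbb E'\le \kappa p^k\log^{-(4k+2)}n$.

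The second step fixes $\lambda:=\log^2 n+k\log n$, say, so that the failure probability $O(e^{-\lambda+(k-1)\log n})$ is at most $e^{-\log^2 n}$ once $n\ge n_0$. The deviation is then
\[
a_k\sqrt{\mathbb E_0\mathbb E'}\,\lambda^k\le a_k\,\kappa p^k\,\log^{-(2k+1)}n\cdot(2\log^2 n)^k=O\bigl(\kappa p^k/\log n\bigr)\le \kappa p^k
\]
for large $n$. Therefore, with probability at least $1-e^{-\log^2 n}$, we get $Y\le \mathbb EY+\kappa p^k\le 2\kappa p^k$.

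The only delicate points are bookkeeping. The first is that the exponent $4k+2$ is exactly what is needed for $\sqrt{\mathbb E'/\mathbb E_0}$ to absorb $\lambda^k\approx\log^{2k}n$ with a spare $\log n$. The second is handling the case where $\mathbb E Y$ is much smaller than $\kappa$: there we use the one-sided bound $\mathbb E_0\le\kappa p^k$ instead of its true value, which is fine because Kim--Vu only requires upper bounds on the $\mathbb E_j$ (one may formally pad $H$ with dummy edges on fresh vertices to make $\mathbb E Y\approx\kappa p^k$). Multi-edges pose no issue, since they only contribute coefficients.
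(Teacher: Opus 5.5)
Your proposal is correct and follows the intended route. The paper gives no proof of its own beyond citing the Kim--Vu inequality, and your derivation is the standard one: bound $\mathbb{E}Y\le\kappa p^k$ and $\mathbb{E}_j\le\Delta_j(H)p^{k-j}\le\kappa p^k\log^{-(4k+2)}n$ for $j\ge1$, then take $\lambda\approx\log^2 n$. The padding with dummy edges is unnecessary: replacing $\mathbb{E}_0$ and $\mathbb{E}'$ by upper bounds only raises the deviation threshold, which can only shrink the bad event.
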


Intuitively ~\cref{cor:KimVu} states that if $H$ has at most $\kappa$ edges and no small set of vertices controls too many of those edges, then random vertex sampling behaves as though the surviving-edge count were essentially concentrated around $p^k\kappa$.

\subsection{Rooted Booster}

A booster is a graph with the desirable property that it has two disjoint $K_3$-decompositions. Boosters are essential to the second completion step where refined absorbers are converted into high-girth absorbers by replacing triangles of their decomposition families with suitable girth-boosters (in our setting, the spheres from~\cref{def:spheres}). However, the pre-seeding step introduces forbidden configurations that must be avoided in this second completion step. In order for the boosting argument to remain viable, we need to ensure that these new constraints do not eliminate too many potential boosters. This motivates the early introduction of boosters, well before they are required in the absorption step.

\begin{definition}[Latin-Booster]\label{def:booster}
A \emph{Latin-booster} is a graph $B\subseteq K_{n,n,n}$ along with two disjoint $K_3$-decompositions $\cB_1,\cB_2$ of $B$ such that $\cB_1\cap\cB_2=\emptyset$.
\end{definition}

In practice, we use these gadgets to boost one fixed copy $R$ of $K_3$ at a time. We then define a rooted booster as follows.

\begin{definition}[Rooted Latin-Booster]\label{def:rootedbooster}
A \emph{rooted Latin-booster} rooted at $R\cong K_3$, which we denote by $(B,\cBon,\cBoff,R)$, is a graph $B\subseteq K_{n,n,n}$ that is edge-disjoint from $R$ along with two disjoint $K_3$ packings $\cBoff,\cBon$ where $\cBoff$ is a $K_3$-decomposition of $B$ and $\cBon$ is a $K_3$-decomposition of $B\cup R$ with $R\not\in \cBon$.
\end{definition}

It follows from this definition that  $\cBon$ and $\cBoff\cup \{R\}$ are two triangle-decompositions of $B\cup R$. When building a triangle-packing $P$ of a graph $G$, if the packing uses a fixed triangle $R$, we can instead replace $R$ by $\cBon$; alternatively, if $R$ is not in the packing $P$, we simply use $\cBoff$ to decompose the edges of $B$. Therefore, we can replace $R$ with the triangles of the two decompositions $\cBon$ and $\cBoff$. Our goal is then to build boosters whose decomposition families have desirable properties, in particular, they will have high girth. To that end, we define the notion of the rooted girth of a booster as follows.

\begin{definition}[Rooted Girth]
Let $\cB$ be a $K_3$-packing of $G=K_{n,n,n}$. For a vertex set $R\subseteq V(G)$, we define the \emph{rooted girth} of $\cB$ at $R$ as the smallest integer $g\geq 1$ such that there exists a subset $\cB'\subseteq \cB$  with $|\cB'|=g$ and $|V(\bigcup \cB')\setminus R| < g$.  

Similarly, if $B$ is a rooted Latin-booster rooted at $R$, then we define the \emph{rooted girth} of $B$ as the minimum of the girth of $\cBon$, the girth of $\cBoff\cup \{R\}$, and the rooted girth of $\cBon$ at $V(R)$.
\end{definition}

This definition has been introduced by Delcourt and Postle~\cite[Definition 1.15]{DPII}, in the more general setting of hypergraph decompositions into copies of $K_q^r$. We observe that in the present case (with $q=3$ and $r=2$), if $B$ is a rooted Latin-booster rooted at $R$ then the rooted girth of $\cBon$ at $V(R)$ will always be at least the girth of $\cBon$. However, this is only an artifact of the definition when $q=r+1$. For consistency with the existing literature, we decided to keep the definition as presented.

In their remarkable works on high-girth Steiner Triple System~\cite{KSSS2022STS} and on substructures in Latin squares~\cite{KSSS2023substructures}, Kwan, Sah, Sawhney, and Simkin built the following Rooted Latin-boosters, also known as {\em cycles of length $2g$} in design theory.

\begin{definition}[Spheres]\label{def:spheres}
    For $g\geq 2$, and a given copy $R$ of $K_3$, we define the $(R,g)$-sphere to be the following rooted Latin-booster $(B,\cBon,\cBoff,R)$. Denote $V(R)$ by $\{v,b_1,b_{2g}\}$. Add $2g-1$ vertices $u,b_2,b_3,\ldots,b_{2g-1}$, and let $B$ be the graph with vertex-set $V(B)=\{u,v,b_1,\ldots,b_{2g}\}$ adding the following edges,
    \[\set{vb_j\colon j\in\{2,\ldots,2g-1\}},\quad \set{ub_j\colon j\in\{1,\ldots,2g\}},\quad\set{b_jb_{j+1}\colon j\in\{1,\ldots,2g-1\}}.\]    
    Let $\cBon,\cBoff$ be the $K_3$-packing of $B$ defined by
    \begin{align*}
        \cBon  &= \{(vb_1b_2),(ub_2b_3),(vb_3b_4),\ldots,(vb_{2g-1}b_{2g}),(ub_{2g}b_{1})\},\\
        \cBoff &= \{(ub_1b_2),(vb_2b_3),(ub_3b_4),\ldots,(ub_{2g-1}b_{2g})\}.
    \end{align*}
\end{definition}
We refer the reader to~\cref{fig:TriangleBooster} for an illustration of an $(R,3)$-sphere. It is easy to check that if $(B,\cBon,\cBoff,R)$ is an $(R,g)$-sphere, then $B\cup R$ is a graph on $2g+2$ vertices and $6g$ edges, that $\cBoff$ is a $K_3$-decomposition of $B$, $\cBon$ is a $K_3$-decomposition of $B\cup R$ with $R\not\in \cBon$, and that $B\cup R$ is a $K_{1,1,1}$-divisible tripartite graph with vertex-partition $\{\{u,v\},\{b_1,b_3,\ldots,b_{2g-1}\},\{b_2,b_4,\ldots,b_{2g}\}\}$. We will often make use of the following facts.

\begin{remark}\label{rem:GirthSpheres}
    For every integer $g\geq 2$, it follows from~\cite[Lemmas~4.7~and~4.8]{KSSS2022STS} that the rooted-girth of an $(R,g)$-sphere is $2g$.
\end{remark}

\begin{remark}\label{rem:countingSpheres}
    For an integer $g\geq 2$ and a fixed copy $R$ of $K_3$, there are at most $3n^{2g-1}$ copies of the $(R,g)$-sphere in $K_{n,n,n}$. For each such booster $(B,\cBon,\cBoff,R)$, there are at most $2^{|\cBon|+|\cBoff|}=2^{4g-1}$ packings $P$ such that $P\subseteq \cBon\cup\cBoff$.
\end{remark}

\begin{figure}
    \centering
    \TriangleBooster{.73}
    \caption{$(R,3)$-sphere rooted at $R=(vb_1b_6)$}
    \label{fig:TriangleBooster}
\end{figure}

\subsection{Treasuries}\label{sec:IntroTreasuries}

Treasuries provide a convenient framework for encoding both the hypergraph we wish to decompose and the configurations that must be avoided during the decomposition process. Roughly speaking, the design and reserve hypergraphs encode the available triangles, while the configuration hypergraph records forbidden low-girth configurations. We now formalize this framework and introduce the notions of regularity and abundance that will play a central role in the remainder of the proof.

\subsubsection{Treasuries And Bankrupted Treasuries.}

We say that a hypergraph $G=(A,B)$ is \emph{bipartite with parts $A$ and $B$} if $V(G)=A\cup B$ and  every edge of $G$ contains exactly one vertex from $A$. We further say that a matching of $G$ is \emph{$A$-perfect} if every vertex of $A$ is in an edge of the matching.

\begin{definition}[Configuration Hypergraph]\label{def:ConfigHypergrapph}
Let $G$ be a hypergraph. We say a hypergraph $H$ is a \emph{configuration hypergraph} for $G$ if $E(G)\subseteq V(H)$ and every edge $e$ of $H$ has size at least two and $e\cap E(G)$ is a matching of $G$. We say a matching of $G$ is \emph{$H$-avoiding} if it spans no edge of $H$.
\end{definition}

We now state the definition of a treasury, a key concept as introduced by Delcourt and Postle~\cite{DPII}. 
\begin{definition}[Treasury]\label{def:Treasury}
Let $r,g \ge 2$ be integers. An \emph{$(r,g)$-treasury} is a tuple $T=(G_1,G_2,H)$ where $G_1$ is an $r$-uniform hypergraph and $G_2=(A,B)$ is an $r$-bounded bipartite hypergraph such that $V(G_1)\cap V(G_2)=A$, and $H$ is a $g$-bounded configuration hypergraph of $G_1\cup G_2$.
A \emph{perfect matching} of $T$ is an $H$-avoiding $(V(G_1)\cap V(G_2))$-perfect matching of $G_1\cup G_2$.
\end{definition}

During our decomposition process of a graph $G$, for a given treasury $T=(G_1,G_2,H)$, the hypergraph $G_1$ often represents the triangles that may be selected to create a packing of $G$; the hypergraph $G_2$, aptly called a reserve hypergraph, encodes additional triangles that might be used to complete a packing of $G$ into a decomposition using some extra edges; while the configuration hypergraph $H$ denotes ``forbidden configurations'' that is families of triangles that are not allowed to be selected together (e.g. for low-girth reasons). To that end, we include the following definition, explaining the concept of a packing being $H$-avoiding and in $T$.

\begin{definition}\label{def:avoiding}
    Let $T=(G_1,G_2,H)$ be a treasury and let $P$ be a packing. We say that $P$ is {$H$-avoiding} if $P$ spans no edge of $H$, that is if for all $e\in E(H)$, $e\not\subseteq P$. We then say that $P$ is in $T$ if additionally we have $P\subseteq E(G_1)$.
\end{definition}

During the pre-seeding step of~\cref{thm:preseeding}, we use {\em bankrupted} treasuries with no reserve hypergraph, that is treasuries $T=(G_1,G_2,H)$ where $G_2$ is the empty graph on vertex set $V(G_1)$. In that case, we slightly abuse notation and write $T=(G_1,H)$ for the treasury.

\subsubsection{Regular Treasuries}\label{sec:RegularTreasuries}

By rephrasing decomposition problems into matchings in auxiliary hypergraphs, one of our main tasks in this article is to prove the existence of (distributions over) perfect matchings in treasuries, via forbidden submatching with reserve methodologies~\cite{DP22}. To that end, the treasuries must exhibit regularity properties, whose exact definitions depend on the following concepts.

\begin{definition}[$i$-codegree]\label{def:icodegree}
Let $G$ be a hypergraph and let $H$ be a configuration hypergraph of $G$. We define the \emph{$i$-codegree} of $e\in V(G)$ and $F\in E(G)\subseteq V(H)$ with $e\notin F$ as the number of edges of $H$ of size $i$ who contain $F$ and a vertex $F_e$ incident with $e$ in $G$. We then define the \emph{maximum $i$-codegree} of $G$ with $H$ as the maximum $i$-codegree over all $e\in V(G)$ and $F\in E(G)\subseteq V(H)$ with $e\notin F$. 
\end{definition}

\begin{definition}[common $2$-degree]\label{def:commontwodegree}
Let $G$ be a hypergraph and let $H$ be a configuration hypergraph of $G$. We define the \emph{common $2$-degree} of distinct vertices $F_1, F_2\in V(H)$ as $|\{F\in V(H): FF_1, FF_2\in E(H)\}|$. Similarly, we define the \emph{maximum common $2$-degree} of $H$ with respect to $G$ as the maximum of the common $2$-degree of $F_1$ and $F_2$ over all distinct pairs of vertex-disjoint edges $F_1,F_2$ of $G$.    
\end{definition}

With these definitions in mind, we are now ready to state the precise regularity conditions we require on treasuries. Roughly speaking, the degree and codegree requirements on the design and reserve hypergraphs ensure that there are sufficiently many available triangles to support a nibble argument, while the bounds on the configuration hypergraph ensure that the forbidden configurations are sparse enough for one to simultaneously find a matching that is independent in the configuration hypergraph. Therefore the regularity conditions are designed to guarantee that a treasury contains many perfect matchings while still allowing us to avoid the forbidden configurations encoded by its configuration hypergraph.

\begin{definition}[Regular Treasury]\label{def:RegularTreasury}
Let $T=(G_1,G_2,H)$ be a treasury with $G_2=(A,B)$, and $D, \sigma\ge 1$ and $\beta,\alpha \in (0,1)$ be reals. We say that the treasury $T$ is \emph{$(D,\sigma,\beta,\alpha)$-regular} if all of the following hold:
\begin{enumerate}[label=(RT\arabic*)]
    \item \textbf{Quasi-regularity.} Every vertex of $G_1$ has degree at most $D$ in $G_1$ and every vertex of $A$ has degree at least $D-\sigma$ in $G_1$,\label{item:RT_QuasiReg}
    
    \item \textbf{Reserve degrees.} Either $\Delta(G_2)=0$ (i.e. we are using a bankrupted treasury), or every vertex of $B$ has degree at most $D$ in $G_2$, and every vertex of $A$ has degree at least $D^{1-\alpha}$ in $G_2$,\label{item:RT_ReserveDeg}

    \item \textbf{Codegrees.} $G_1\cup G_2$ has codegrees at most $D^{1-\beta}$, and the maximum $2$-codegree of $G_1\cup G_2$ with $H$ and the maximum common $2$-degree of $H$ with respect to $G_1\cup G_2$ are both at most $D^{1-\beta}$,\label{item:RT_CoDeg}

    \item \textbf{Configuration (co)degrees} For all $2\le s\le g$ we have $\Delta_1\left(H^{(s)}\right) \le \alpha \cdot D^{s-1}\log D$. For all $2\le t<s\le g$, $\Delta_{t}\left(H^{(s)}\right) \le D^{s-t-\beta}$.\label{item:RT_ConfigDeg}
 
\end{enumerate}
\end{definition}

It might be helpful to the reader to note that if $T$ is $(D,\sigma,\beta,\alpha)$-regular, then $T$ is also $(D,\sigma',\beta',\alpha')$-regular for all $\sigma'\ge \sigma$, $\beta' \le \beta$ and $\alpha'\ge \alpha$. In our following work, we often cannot guarantee that the main treasuries we are studying are regular, but we prove that they contain ``subtreasuries'' that are regular enough, using the following natural definition.

\begin{definition}[Subtreasury]\label{def:SubTreasury}
Let $(G_1,G_2,H)$ be a treasury. We say that a treasury $(G_1',G_2',H')$ is a {\em subtreasury} of $(G_1,G_2,H)$ if $G_i'$ is a spanning subgraph of $G_i$ for each $i\in \{1,2\}$ and $H[E(G_1')\cup E(G_2')]$ is a subgraph of $H'$. 
\end{definition}

Observe that, as expected, this is a transitive relation, that is, if $T_1$ is a subtreasury of $T_2$ and $T_2$ is a subtreasury of $T_3$, then $T_1$ is a subtreasury of $T_3$. Informally, a subtreasury contains fewer edges, and includes more constraints from the configuration hypergraph. The following result from~\cite{DPII} is then a natural consequence of the definition of subtreasuries.

\begin{proposition}\label{prop:SubTreasury}
Let $T'$ be a subtreasury of a treasury $T$. If $M$ is a perfect matching of $T'$, then $M$ is a perfect matching of $T$.    
\end{proposition}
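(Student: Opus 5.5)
Write $T=(G_1,G_2,H)$ and $T'=(G_1',G_2',H')$. The plan is to unpack the definition of a perfect matching of a treasury (\cref{def:Treasury}) and check its three requirements one at a time, using only \cref{def:SubTreasury}.

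Let $M$ be a perfect matching of $T'$. By definition, $M$ is a matching of $G_1'\cup G_2'$ that is $H'$-avoiding and covers every vertex of $A'=V(G_1')\cap V(G_2')$.

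\textbf{Matching.} Since $G_i'$ is a subgraph of $G_i$ for $i\in\{1,2\}$, every edge of $M$ is an edge of $G_1\cup G_2$. Pairwise disjointness of its edges is inherited unchanged, so $M$ is a matching of $G_1\cup G_2$.

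\textbf{Perfectness.} Each $G_i'$ is a \emph{spanning} subgraph of $G_i$, so $V(G_i')=V(G_i)$. Hence $V(G_1')\cap V(G_2')=V(G_1)\cap V(G_2)=A$, and $M$ covers $A$ exactly because it covers $A'=A$. This is the only place where spanning is used, and it is the step that needs the most care.

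\textbf{Avoidance.} Suppose $e\in E(H)$ with $e\subseteq M$. Then $e\subseteq M\subseteq E(G_1')\cup E(G_2')$, so $e$ is an edge of the induced hypergraph $H[E(G_1')\cup E(G_2')]$. By \cref{def:SubTreasury} this induced hypergraph is a subgraph of $H'$, so $e\in E(H')$. This contradicts $M$ being $H'$-avoiding. Hence $M$ is $H$-avoiding.

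Combining the three checks, $M$ is an $H$-avoiding $A$-perfect matching of $G_1\cup G_2$, that is, a perfect matching of $T$.

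I expect no real obstacle. The only subtlety is that the perfectness condition depends on the vertex set $A$, which is why \cref{def:SubTreasury} requires each $G_i'$ to be spanning; without that, $A$ could shrink and the covering condition would not transfer.
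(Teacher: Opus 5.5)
Your proof is correct and takes the same approach as the paper. The paper gives no separate proof, calling the statement a natural consequence of the definition of subtreasury (citing \cite{DPII}), and your three checks are exactly that unpacking: every edge of $M$ lies in $G_1\cup G_2$, the spanning condition gives $V(G_1')\cap V(G_2')=V(G_1)\cap V(G_2)$, and $H[E(G_1')\cup E(G_2')]\subseteq H'$ transfers $H'$-avoidance to $H$-avoidance.
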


\subsubsection{Abundant Treasuries}\label{sec:AbundantTreasuries}

Recall that abundance was introduced to ensure that the forbidden configurations do not eliminate too many of the boosters required in the absorption step. Informally, a treasury is abundant if, for every root triangle, most suitable $g$-spheres avoid all forbidden configurations. For simplicity, we do not require these $g$-spheres to lie in the design hypergraph $G_1$ in the definition below; when abundance is used, this property will follow from the regularity of the treasury.

\begin{definition}
    Let $g\geq 2$ be an integer, $T=(G_1,G_2,H)$ be a treasury, and let $R$ be a copy of $K_3$ in $K_{n,n,n}$. We say that an $(R,g)$-sphere $(B,\cBon,\cBoff,R)$ is $H$-avoiding, if $\cBon$ and$\cBoff$ are both $H$-avoiding.
\end{definition}

\begin{definition}[Abundant treasuries]\label{def:abundant}
    For integers $g,n\geq 2$, let $T=(G_1,G_2,H)$ be a treasury.
    We say that $T$ is \emph{$g$-abundant} if for every copy $R$ of $K_3$ in $K_{n,n,n}$, at most 
    $n^{2g-3/2}$ copies of $(R,g)$-spheres are not $H$-avoiding.
\end{definition}
We note that the exponent $3/2$ in~\cref{def:abundant} is somehow arbitrary, and used only for convenience; $n^{2g-3/2}$ could be replaced by any amount between $n^{2g-2}\cdot\log^{8g^2}n$ and $n^{2g-1}/\log n$, while preserving the validity of this entire article.

\subsubsection{Bankrupted Design-Treasuries}

In order to introduce our main treasury of interest, we require the following auxiliary hypergraph from~\cite{DPI}, whose matchings encode packings of a given graph.

\begin{definition}[Design Hypergraph]\label{def:DesignHypergraph}
For graphs $F$ and $G$, the \emph{$F$-design hypergraph of $G$}, denoted ${\rm Design}(G,F)$ is the hypergraph $\Gamma$ with $V(\Gamma)=E(G)$ and $E(\Gamma) = \{F'\subseteq E(G): F' \text{ is isomorphic to } F\}$.
\end{definition}

The configuration hypergraph in our treasuries will then be used to avoid packings with low girth.
Recall that an {\em $(i+2,i)$-configuration} is a set of $i$ edge-disjoint triangles spanning at most $i+2$ vertices, and that the girth of a $K_3$-packing $S$ is the smallest $i$ such that $S$ contains an $(i+2,i)$-configuration. For $i\geq 4$, an \emph{\Erdos{} $(i+2,i)$-configuration} is a set of $i$ triangles on $i+2$ vertices with girth exactly $i$, hence containing no $(j+2,j)$-configuration for $4\leq j<i$.

\begin{definition}[Girth Configuration Hypergraph]\label{def:ConfigHypergraph}
Let $g\geq 4$ be an integer. Let $G$ be a graph and let $\mathbf{D}:={\rm Design}(G,K_3)$. The \emph{girth-$g$ $K_3$-configuration hypergraph} of $G$, denoted ${\rm Girth}^g(G,K_3)$ is the configuration hypergraph $H$ of $\mathbf{D}$ with 
\begin{align*}
V(H)&:=E(\mathbf{D}),\\
E(H) &:= \{S\subseteq E(\mathbf{D}): S \text{ is an $(i+2,i)$ \Erdos-configuration of } G \text{ for some $4\le i< g$}\}.   
\end{align*}
\end{definition}

We remark that ${\rm Girth}^g(G,K_3)$ is a $g$-bounded hypergraph. The smallest edges contain $4$ vertices, representing intercalates, i.e. $2\times 2$ Latin squares. We are now ready to define the main treasuries used throughout the pre-seeding step.

\begin{definition}[Bankrupted Design-Treasury]\label{def:BankruptedDesignTreasury}
Let $g\geq 4$ be an integer. Let $G$ be a graph and let $G'\subseteq G$. The \emph{girth-$g$ $K_3$-design treasury of $G$}, denoted ${\rm Treasury}^g(G,G',K_3)$, is the bankrupted treasury 
\[T:=\left({\rm Design}(G',K_3),~{\rm Girth}^g(G,K_3)\right).\]
\end{definition}

\subsubsection{Girth Projection}\label{sec:GirthProjection}

Recall that we reason in two steps to prove our main theorem,~\cref{thm:main}, first finding a high girth packing containing all edges from a set $Y$ in the pre-seeding step, and then finding a high girth packing of the ``left-overs''. However, this is not sufficient to ensure that we obtain a high girth decomposition of $K_{n,n,n}$. We need to ensure that these packings together are still high girth, that is that we do not create a small cycle combining cliques in both packings. To that end, we use the following treasuries from~\cite{DPII}. 

\begin{definition}[Common projection]\label{def:CommonProjection}
Let $T=(G_1,G_2,H)$ be a treasury. Let $\mathcal{M}=\{M_1,\ldots,M_k\}$ be such that for all $i\in [k]$, we have $M_i\subseteq V(H)$ and $M_i\cap (G_1\cup G_2)$ is a matching of $G_1\cup G_2$. The \emph{common projection of $T$ by $\mathcal{M}$}, denoted $T\perp \mathcal{M}$ is the treasury  $T'=(G_1',G_2',H')$ where for all $j\in \{1,2\}$, $V(G_j'):=V(G_j)$, and
\[V(H') := V(H)\setminus \{ F\in V(H): \exists Z\in E(H),~M_i\in \cM \text{ such that }~F\not\in M_i \text{ and } F\in Z \subseteq M_i\cup\{F\}\},\]

and for all $j\in \{1,2\}$, $E(G_j') = E(G_j)\cap V(H')$ and
\[E(H'):= \{ P \subseteq V(H'): |P|\geq 2,~\exists Z\in E(H),~M_i\in \cM \text{ such that } P\cap M_i=\emptyset\text{ and }P\subseteq Z\subseteq P\cup M_i\}.\]
\end{definition}

We now turn to defining the projection of a packing.

\begin{definition}[Girth-$g$ projection of a packing]\label{def:ProjectionPacking}
Let $g\geq 4$ be an integer, $G$ be a graph, and $S$ be a $K_3$-packing of $G$. We define the \emph{girth-$g$ projection treasury} of $S$ onto $G$ as:
\[{\rm Proj}^g(G,S,K_3):= {\rm Treasury}^g(G,~G\setminus E(S),K_3) \perp \{S\}.\]
\end{definition}

\begin{remark}\label{rem:ProjectionSubtreasury}
    For a $K_3$-packing $S$ of a graph $G$, the girth-$g$ projection treasury of $S$ onto $G$ is a subtreasury of ${\rm Treasury}^g(G,~G\setminus E(S),K_3)$. Indeed let $(G_1,H):={\rm Treasury}^g(G,~G\setminus E(S),K_3)$ and $(G'_1,H'):={\rm Proj}^g(G,S,K_3)$. If $P$ is a packing in $H[E(G'_1)]$, with $E(G'_1)=E(G_1)\cap V(H')$ and $ G_1={\rm Design(G\setminus E(S),K_3)}$, we obtain that $P\cap S=\emptyset$. Then, with $H={\rm Girth}^g(G,K_3)$ we also have trivially $P\subseteq P\subseteq P\cup S$, hence $P\in H'$. Observe that this is not true in general for projection of a treasury $T=(G_1,G_2,H)$, unless for every packing $P$ of $H[G_1\cup G_2]$, there exists a matching $M\in\cM$ such that $P\cap M=\emptyset$.
\end{remark}

With these formal definitions, we are now ready to prove~\cref{prop:ProjSubTreasury,prop:completion_treasury}, relating the projection treasury of a packing $S$ of $K_{n,n,n}$, with the treasury of the $K_{n,n,n}\setminus E(S)$. We recall their statement for convenience to the reader.  

\ProjSubTreasury*

\begin{proof}[Proof of~\cref{prop:ProjSubTreasury}]
    Recall~\cref{def:ProjectionPacking,def:BankruptedDesignTreasury},
    \begin{align*}
        {\rm Proj}^g(K_{n,n,n},S,K_3) &= {\rm Treasury}^g(K_{n,n,n},~K_{n,n,n}\setminus E(S),K_3) \perp \{S\}.\\
        &= {\rm Treasury}^g(K_{n,n,n},G,K_3) \perp \{S\}\\
        &=\left({\rm Design}(G,K_3),~{\rm Girth}^g(K_{n,n,n},K_3)\right) \perp \{S\}.\\
        {\rm Treasury}^g(G,G,K_3) 
        &= \left({\rm Design}(G,K_3),~{\rm Girth}^g(G,K_3)\right).\\        
    \end{align*}

    As ${\rm Girth}^g(G,K_3)$ is a subhypergraph of ${\rm Girth}^g(K_{n,n,n},K_3)$, then 
    \[\left({\rm Design}(G,K_3),~{\rm Girth}^g(K_{n,n,n},K_3)\right)\text{ is a subtreasury of }{\rm Treasury}^g(G,G,K_3).\]
    Then by~\cref{rem:ProjectionSubtreasury},
    \[{\rm Proj}^g(K_{n,n,n},S,K_3)\text{ is a subtreasury of }\left({\rm Design}(G,K_3),~{\rm Girth}^g(K_{n,n,n},K_3)\right),\]
    and therefore, by transitivity,
    \[{\rm Proj}^g(K_{n,n,n},S,K_3)\text{ is a subtreasury of }{\rm Treasury}^g(G,G,K_3).\qedhere\]    
\end{proof}

Informally, ${\rm Proj}^g(K_{n,n,n},S,K_3)$ is based on the same family of $K_3$-packings as ${\rm Treasury}^g(G,G,K_3)$, excluding the same low-girth packings, but additionally excluding any packing that might be extended into a low-girth packing using $S$.

\CompletionTreasury*

\begin{proof}[Proof of~\cref{prop:completion_treasury}]
    Let $G'_1,H'$ be hypergraphs such that
    \begin{align*}
        {\rm Proj}^g(K_{n,n,n},S,K_3)&= {\rm Treasury}^g(K_{n,n,n},~K_{n,n,n}\setminus E(S),K_3) \perp \{S\},\\
        &=(G_1,H)\perp \{S\}\\
        &=(G'_1,H'),
    \end{align*}
    where
    \[ G_1:={\rm Design}(K_{n,n,n}\setminus E(S),K_3),\qquad H:={\rm Girth}^g(K_{n,n,n},K_3),\]
    and $G'_1$ is the spanning subhypergraph of $G_1$ with $E(G'_1)=E(G_1)\cap V(H')$.     
    Recall that $M$ is an $H'$-avoiding perfect matching of $G'_1$, hence of $G_1$. Then $S\cup M$ is a $K_3$-decomposition of $K_{n,n,n}$. It remains to prove that this decomposition has girth at least $g$. Towards a contradiction, assume that $S\cup M$ has girth less than $g$. Then there exists a $K_3$-packing $Z$ of $K_{n,n,n}$ such that $Z\in E(H)$ and $Z\subseteq S\cup M$. Because $S$ has girth at least $g$, we have that $Z\not\subseteq S$, and there exist disjoint $K_3$-packings $P_S,P_M$ such that $Z=P_S\cup P_M$ with $P_S\subseteq S$, $P_M\neq\emptyset$ and $P_M\subseteq M$. Observe that $M$ is a matching of $G'_1$ with $E(G'_1)=E(G_1)\cap V(H')$, therefore $P_M\subseteq M$ implies that $P_M\subseteq V(H')$. We obtain that
    \[P_M\subseteq Z\subseteq P_M \cup S,\text{ and }\qquad P_M\cap S = \emptyset.\]
    By~\cref{def:CommonProjection} of common projection, we have that
    \begin{align*}
    V(H') &:= V(H)\setminus \{ F\in V(H): \exists Z\in E(H)\text{ such that }F\not\in S \text{ and } F\in Z \subseteq S\cup\{F\}\},\\
    E(H') &:= \{ P \subseteq V(H'): |P|\geq 2,~\exists Z\in E(H)\text{ such that } P\cap S=\emptyset\text{ and }P\subseteq Z\subseteq P\cup S\}.        
    \end{align*}

    It follows that $\size*{P_M}\geq 2$ and $P_M\in E(H')$, hence $M$ spans an edge of $H'$, a contradiction since $M$ is $H'$-avoiding.
\end{proof}

\subsection{Quantum Packings}

As mentioned in~\cref{sec:PreSeedingIntro}, we appeal to randomness to ensure that the packings we build during the pre-seeding step are high-girth and 'well-spread'. For each edge $e \in E(Y)$, we want to choose a copy of $K_3$ containing $e$, independently at random with some small probability; that is, instead of choosing one triangle for each $e$, we will choose ``many'' of them so that the set of chosen triangles for a given $e$ has size roughly logarithmic in $n$. To that end, we define a more general object than a packing, a \emph{quantum packing}, in line with the definition of quantum booster from~\cite{DPII}. 

\begin{definition}[Quantum packings]\label{def:QuantumPacking}
    Let $Y\subseteq K_{n,n,n}$. A {\em $Y$-enclosed quantum $K_3$-packing} of $K_{n,n,n}$ is a set $\cK$ of copies of $K_3$ such that, for every $F\in \cK$, we have $|E(F)\cap E(Y)|=1$. For a given edge $e\in E(Y)$, we always denote by $\cK_e$ the set of triangles in $\cK$ intersecting $E(Y)$ on $\{e\}$, that is $\cK_e:=\{F\in\cK\colon e\in F\}$.

    We denote by $\cM(\cK)$ the {\em matching set of $\cK$}, defined as the family of all $Y$-fully-enclosed $K_3$-packings of $K_{n,n,n}$ using cliques from $\cK$, that is,
    \[\cM(\cK) := \set*{M\in \prod_{e\in E(Y)}\cK_e\colon M\text{ is a matching of }{\rm Design}(K_{n,n,n},K_3)}.\]   

    For each $e\in E(Y)$, we define ${\rm Disjoint}(\cK,\cK_e)$ to be the cliques containing $e$ that are edge-disjoint from all the other cliques in $\cK\setminus\cK_e$, that is
    \[{\rm Disjoint}(\cK,\cK_e):=\set*{F\in \cK_e\colon \forall F'\in\cK\setminus\cK_{e}, E(F)\cap E(F')=\emptyset}.\]

    For an integer $g\geq 4$ and each $e\in E(Y)$, we define ${\rm HighGirth}^g(\cK,\cK_e)$ to be the cliques containing $e$ that are not part of any low-girth $K_3$-packing $S$ of $K_{n,n,n}$ that itself can be created from cliques in $\cK$, that is
    \[{\rm HighGirth}^g(\cK,\cK_e):=\set*{F\in \cK_e\colon \nexists S\in{\rm Girth}^g(K_{n,n,n},K_3) \textrm{ where } F\in S\subseteq M\text{ for some }M\in\cM(\cK)}.\]    
\end{definition}    

We require an extension of~\cref{def:Spread}, for the spread of a probability distribution over a family of quantum-packings.
\begin{definition}\label{def:spreadpackings}
A probability distribution over a family $\cL$, each element of which is a set of triangles of $K_{n,n,n}$, is {\em $q$-spread} if, for all triangle-packings $S$ of $K_{n,n,n}$ and a random $\cK\in\cL$ (chosen according to the distribution), we have
\[\Prob{S\subseteq \cK}\leq q^{|S|}.\]
\end{definition}

We also extend~\cref{def:ProjectionPacking} of girth-$g$ packing projections to quantum packings as follows. Informally, this is equivalent to projecting all possible choices of packings in a given quantum packing. 

\begin{definition}\label{def:ProjectionQuantumPacking}
    Let $g\geq 4$ be an integer, $Y\subseteq K_{n,n,n}$, and $\cK$ be a $Y$-enclosed quantum $K_3$-packing of $K_{n,n,n}$. The {\em girth-g projection treasury} of $\cK$ is
    \[{\rm Proj}^g(K_{n,n,n},\cK,K_3) := {\rm Treasury}^g(K_{n,n,n},K_{n,n,n},K_3) \perp \cM(\cK),\]
    where $\cM(\cK)$ is the matching set of $\cK$.  
\end{definition}

We now state a quantum version of the pre-seeding theorem,~\cref{thm:preseeding}.
\begin{thm}[Quantum pre-seeding]\label{thm:SpreadQuantumPreseeding}
    For every integer $g\geq 4$ there exists $a \ge 1$ such that, for every reals $\alpha,\beta,\sigma\in(0,1)$, there exist $n_0 > 0$ such that the following holds for all $n\ge n_0$: Let $Y \subseteq K_{n,n,n}$  with $\Delta(Y)\leq \frac{n}{\log^{ag} n}$. Let $\cL_\cK$ be the family of $Y$-enclosed quantum $K_3$-packings $\cK$ such that all of the following properties hold:
    
    \begin{enumerate}[topsep=.1in,itemsep=.1in]
        \item $\Delta(\cK)\leq \frac{4n}{\log^{a(g-1)}n}$. 
        \label{item:QuantPreSeedDelta}
        
        \item For all $e\in E(Y)$, $\size*{{\rm Disjoint}(\cK,\cK_e)\cap{\rm HighGirth}^g(\cK,\cK_e)}\geq (1-\sigma)\log^a n$. \label{item:QuantPreSeedDisjHG}
        
        \item ${\rm Proj}^g(K_{n,n,n},\cK,K_3)$ is $(n,\alpha n,\beta,\alpha)$-regular.\label{item:QuantPreSeedTreasury}

        \item ${\rm Proj}^g(K_{n,n,n},\cK,K_3)$ is $g$-abundant.\label{item:QuantPreSeedAbundant} 
    
    \end{enumerate}
    
    \noindent Then $\cL_\cK$ is non-empty. Furthermore, there exists a probability distribution that is $\frac{\log^a n}{(1-\sigma)n}$-spread over $\cL_\cK$.
\end{thm}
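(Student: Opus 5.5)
We will take $\cK$ random: independently for each $e\in E(Y)$ and each triangle $F\supseteq e$ of $K_{n,n,n}$ with $|E(F)\cap E(Y)|=1$, put $F$ into $\cK_e$ with probability $p:=\log^a n/n$. The distribution of this random $\cK$ conditioned on the event $\cE:=\{\cK\in\cL_\cK\}$ is our candidate. For any packing $S$ whose triangles are distinct, $\Prob{S\subseteq\cK}\le p^{|S|}$ by independence. It therefore suffices to show $\Prob{\cE}\ge 1-o(1)$, and more precisely $\Prob{\cE\mid S\subseteq\cK}\ge 1-\sigma$ for every $Y$-enclosed packing $S$, so that conditioning inflates the probability by at most $(1-\sigma)^{-1}\le(1-\sigma)^{-|S|}$. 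This gives spread $\frac{\log^a n}{(1-\sigma)n}$, and in particular $\cL_\cK\neq\emptyset$.

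The properties are checked in order. Since $\Delta(Y)\le n/\log^{ag}n$, each edge $e\in Y$ has about $n$ candidate triangles (an edge $xy$ lies in $n$ triangles, of which all but $O(\Delta(Y))$ meet $Y$ only in $e$). Chernoff then gives $|\cK_e|=(1\pm o(1))\log^a n$ with probability $1-e^{-\omega(\log n)}$.

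For \ref{item:QuantPreSeedDelta}, the degree of a vertex $v$ in $\cK$ is bounded by $2\Delta(Y)$ plus $\sum_{e}$(edges of chosen triangles through $v$). This is a sum of independent indicators with mean $O(\Delta(Y)\log^a n)=O(n/\log^{a(g-1)}n)$, so Chernoff applies.

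For \ref{item:QuantPreSeedDisjHG}, we bound for a fixed $e$ and $F\ni e$ the conditional probability that $F$ fails Disjoint or HighGirth. Failing Disjoint requires some other chosen triangle to share an edge with $F$; there are $O(\Delta(Y))$ such candidates, giving probability $O(\Delta(Y)p)=o(1)$. Failing HighGirth requires an Erd\H{o}s $(i+2,i)$-configuration through $F$ whose other $i-1$ triangles are chosen for distinct $Y$-edges. Every triangle of such a configuration contains a $Y$-edge, and a standard count of configurations with one fixed triangle gives $O(n^{i-1}\Delta(Y)^{i-1}/n^{i-1})$-type bounds, so the expected number is at most $\sum_i (\Delta(Y)p)^{i-1}\cdot\mathrm{poly}(\log n)$, which is $o(1)$ once $a$ is large. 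The count of bad triangles in $\cK_e$ is then handled by \cref{cor:KimVu} (for the polynomial counts) together with Chernoff, and is at most $\sigma\log^a n/2$ with probability $1-e^{-\log^2n}$. A union bound over $e$ finishes this property. Conditioning on $S\subseteq\cK$ only shifts these counts by $O(|S\cap\cdot|)$ locally, which preserves the bounds.

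For \ref{item:QuantPreSeedTreasury} and \ref{item:QuantPreSeedAbundant}, we compare ${\rm Proj}^g(K_{n,n,n},\cK,K_3)$ with the full treasury ${\rm Treasury}^g(K_{n,n,n},K_{n,n,n},K_3)$, which is roughly $(n,\cdot,\beta,\cdot)$-regular with configuration degrees $\Theta(n^{s-1})$.
\begin{itemize}
\item Vertices removed from the design hypergraph are triangles $F$ completing a configuration with some $M\in\cM(\cK)$, or sharing edges with $\cK$. Their number per edge is controlled by $\Delta(\cK)$ and by \cref{cor:KimVu} applied to the hypergraph of configurations through a fixed edge, sampled at rate $p$. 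This gives $O(n/\log n)\le\alpha n$ missing degree.
\item New projected configuration edges $P$, with $P\subseteq Z\subseteq P\cup M$, add to $\Delta_1(H'^{(s)})$ amounts bounded by $\sum_{j}n^{s-1+j}\cdot(\Delta(Y)p)^j\ldots$, again via Kim--Vu. These are $\le n^{s-1}$ up to $o(1)$ factors, which is $\le\alpha n^{s-1}\log n$. The codegree conditions are similar.
\item Abundance: for each root $R$, a sphere is bad if some subset of $\cBon$ or $\cBoff$, together with triangles of $\cK$, forms a low-girth configuration. Using \cref{rem:countingSpheres} and \cref{rem:GirthSpheres}, the expected number of bad spheres is $n^{2g-1}\cdot O(n^{-1}\mathrm{polylog})$, and Kim--Vu gives concentration below $n^{2g-3/2}$.
\end{itemize}

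The main obstacle is this last block. Verifying the regularity and abundance of the projection requires uniform Kim--Vu bounds for many configuration-counting polynomials with the correct codegree conditions $(*_i)$, and it is there that $a$ must be chosen large relative to $g$.
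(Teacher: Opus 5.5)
Your plan has the same architecture as the paper's proof. You include each triangle of $\Omega_e$ independently with probability $p=\log^a n/n$. You show that each requirement fails with small probability, using Chernoff for $|\cK_e|$ and $\Delta(\cK)$ and \cref{cor:KimVu} on auxiliary configuration hypergraphs for the rest. Then you condition. For the last step you only need $\mathbb{P}[\cE]\ge 1-\sigma$, because $\mathbb{P}[S\subseteq\cK\mid\cE]\le\mathbb{P}[S\subseteq\cK]/\mathbb{P}[\cE]\le p^{|S|}/(1-\sigma)$. A lower bound on $\mathbb{P}[\cE\mid S\subseteq\cK]$ plays no role, and the ``local shift'' argument (not evidently true for large adversarial $S$) should be dropped. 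The real problems are in the estimates you write down and in what you defer.

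(a) In ${\rm Proj}^g(K_{n,n,n},\cK,K_3)$ the design hypergraph keeps vertex set $E(K_{n,n,n})$. The only triangles deleted are those $F\notin M$ with $F\in Z\subseteq M\cup\{F\}$ for some $Z\in{\rm Girth}^g(K_{n,n,n},K_3)$ and $M\in\cM(\cK)$. Triangles sharing an edge with $\cK$ are not deleted, and could not be: every edge of $E(\cK)$ would then have degree $0$, violating (RT1). That deletion only happens later, for a chosen packing $S$, at a cost of $O(\Delta(S))$ per edge.

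(b) Your bound $O(\Delta(Y)^{i-1})$ on configurations through $F$ whose other triangles are $Y$-enclosed is false. Take the intercalate $\{r_1c_1s_1,r_1c_2s_2,r_2c_1s_2,r_2c_2s_1\}$ with $F=r_1c_1s_1$. If $r_2c_2,r_2s_2,c_2s_2\in Y$ (and no other edge of those triangles is), the other three triangles are $Y$-enclosed, and there can be $n\Delta(Y)^2$ such choices. The correct input is \cref{lem:ExtendPacking}, where one $Y$-enclosed triangle saves only a single factor $\Delta(Y)/n$. Per fixed $F$ this gives $O(n^{i-2}\Delta(Y)p^{i-1})=O(\log^{a(i-1-g)}n)$ for HighGirth. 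It gives $O(n^{s+j-2}\Delta(Y)p^j)=O(n^{s-1}\log^{a(j-g)}n)$ for the new edges of $\cH^{(s)}$ through $F$; your $\sum_j n^{s-1+j}(\Delta(Y)p)^j$ can be far larger than $n^{s-1}$. These quantities are small because the exponent $ag$ in the hypothesis on $\Delta(Y)$ beats $a(i-1)$ and $aj$ for $i,j<g$, not because $a$ is large. Largeness of $a$ ($a=8g$) is what absorbs the $\log^{4k+2}N$ loss in the conditions $(*_i)$.

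(c) The deferred Kim--Vu verifications are essentially the whole proof, and some are not ``similar''.
For abundance, the top condition $(*_k)$ fails with the trivial bound $\Delta_k(J)=O(n^{2g-1})$. You must show that every fixed $Z$ with $|Z|=k$ pins down a non-root vertex of the sphere. The paper does this via $|V(P)\cap V(Z)|\ge 4$ for $k\ge 2$, and for $k=1$ via the girth and structure of $\cBon,\cBoff$, which rule out $Z=\{R\}$.
For the maximum common $2$-degree, note that ${\rm Girth}^g$ has no edges of size below $4$, so every $2$-edge of $\cH$ is created by the projection. One must count pairs of configurations $S_1\supseteq\{F,F_1\}$, $S_2\supseteq\{F,F_2\}$ overlapping in some $S_3\ni F$. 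This is a separate estimate from the $\Delta_t(\cH^{(s)})$ bounds.
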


Before proving this theorem in the next section, we show how the pre-seeding theorem,~\cref{thm:preseeding}, follows from this quantum version.

\begin{lateproof}{thm:preseeding}

    For an integer $g\geq 4$ and reals $\alpha,\beta,\varepsilon\in(0,1)$, let $\sigma>0$ be small enough so that $(1+\varepsilon)(1-\sigma)^2>1$. Let $a,n_0$ be such that~\cref{thm:SpreadQuantumPreseeding} holds with parameters $g$, $\alpha/2$, $\beta$, and $\sigma$, and let $n\geq n_0$ be a large enough integer.\medskip
    
    Let $Y \subseteq K_{n,n,n}$ with $\Delta(Y)\leq \frac{n}{\log^{ag} n}$, and let $\cL_P$ be the family of $Y$-fully-enclosed $K_3$-packings $S$ of $K_{n,n,n}$ with girth at least $g$ and with $\Delta(S) \le \frac{4n}{\log^{a(g-1)} n}$, such that there exists a subtreasury of ${\rm Proj}^g(K_{n,n,n},S,K_3)$ that is $(n,\alpha n,\beta,\alpha)$-regular and $g$-abundant. We recall that we want to prove that~\cref{thm:SpreadQuantumPreseeding} implies the existence of a $\frac{1+\varepsilon}{n}$-spread probability distribution over $\cL_P$.\medskip

    Let $p=\frac{\log^a n}{n}$ and let $\cL_\cK$ be the family of $Y$-enclosed quantum $K_3$-packings of $K_{n,n,n}$ as defined in~\cref{thm:SpreadQuantumPreseeding}. By~\cref{thm:SpreadQuantumPreseeding}, there exists a probability distribution $\nu$ that is $\frac{p}{1-\sigma}$-spread over $\cL_\cK$. Let $\cK$ be a quantum packing in $\cL_\cK$ selected at random using the probability distribution $\nu$. \medskip
    
    For all $e\in E(Y)$ select $F_e$ in ${\rm Disjoint}(\cK,\cK_e)\cap{\rm HighGirth}^g(\cK,\cK_e)$ uniformly and independently at random, and let $S:=\{F_e: e\in E(Y)\}$. We show that $S$ is in $\cL_P$.\medskip

    \begin{claim}\label{clm:PackingIsEnclosed}
        $S$ is a $Y$-fully-enclosed packing of $K_{n,n,n}$, with $\Delta(S)\leq \frac{4n}{\log^{a(g-1)}n}$.
    \end{claim}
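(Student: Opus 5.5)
The claim has three parts: $S$ is a packing, it is $Y$-fully-enclosed, and it obeys the maximum degree bound. Each follows directly from the properties of $\cK \in \cL_\cK$ and from how the $F_e$ are chosen.

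First we check that $S$ is well defined. By \cref{thm:SpreadQuantumPreseeding}\ref{item:QuantPreSeedDisjHG}, for every $e\in E(Y)$ the set ${\rm Disjoint}(\cK,\cK_e)\cap{\rm HighGirth}^g(\cK,\cK_e)$ has size at least $(1-\sigma)\log^a n>0$. So $F_e$ exists for every $e\in E(Y)$.

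Next, $Y$-full-enclosure. Each $F_e\in\cK_e\subseteq\cK$, and $\cK$ is a $Y$-enclosed quantum packing. Hence $|E(F_e)\cap E(Y)|=1$, and since $e\in F_e$ this intersection is exactly $\{e\}$. In particular every edge $e$ of $Y$ is covered by the triangle $F_e$, and distinct edges $e\neq e'$ of $Y$ give distinct triangles $F_e\neq F_{e'}$. This shows $S$ is $Y$-enclosed and covers all of $Y$, once we know it is a packing.

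For the packing property, take $e\neq e'$ in $E(Y)$. The triangle $F_{e'}$ contains $e'$ but not $e$, since it meets $Y$ only in $e'$. Hence $F_{e'}\in\cK\setminus\cK_e$. Because $F_e\in{\rm Disjoint}(\cK,\cK_e)$, we get $E(F_e)\cap E(F_{e'})=\emptyset$. So the triangles of $S$ are pairwise edge-disjoint, $S$ is a matching of ${\rm Design}(K_{n,n,n},K_3)$, and $S\in\cM(\cK)$.

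Finally, the degree bound is monotone. Since $S\subseteq\cK$, the graph $E(S)$ is a subgraph of $E(\cK)$. Hence $\Delta(S)\le\Delta(\cK)\le \frac{4n}{\log^{a(g-1)}n}$ by \ref{item:QuantPreSeedDelta}.

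No step is a real obstacle. The only point needing care is to verify that $F_{e'}\notin\cK_e$, which uses the exact-one-edge enclosure; this is what makes the Disjoint condition apply across different edges of $Y$. The girth of $S$ is not part of this claim. It will presumably follow afterwards from $S\in\cM(\cK)$ and the HighGirth condition.
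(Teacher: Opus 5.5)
Your proof is correct and follows the paper's argument. The ${\rm Disjoint}$ condition gives pairwise edge-disjointness, the $Y$-enclosure of $\cK$ gives $E(F_e)\cap E(Y)=\{e\}$ and hence full enclosure, and $\Delta(S)\le\Delta(\cK)$ together with property~(i) gives the degree bound; you are slightly more explicit than the paper (checking nonemptiness and $F_{e'}\notin\cK_e$, and citing (i) directly instead of the paper's intermediate bound $4\Delta(Y)\log^a n$), while the paper also proves girth at least $g$ in this claim, via ${\rm HighGirth}^g(\cK,\cK_e)$ as you anticipated.
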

    \begin{proofclaim}
        For every $e\in E(Y)$, we have $F_e\in {\rm Disjoint}(\cK,\cK_e)\cap{\rm HighGirth}^g(\cK,\cK_e)$. Therefore $S$ is a $K_3$-packing of $K_{n,n,n}$ with girth at least $g$.
        
        For every $F\in S$, there exists $e\in E(Y)$ such that $E(F)\cap E(Y)=\{e\}$. Therefore $S$ is $Y$-enclosed. For every  $e\in E(Y)$, there exists $F\in S$ such that $e\in F$. Therefore $S$ is $Y$-fully-enclosed.

        Finally, for every $F\in S$, there exists $\cK_e\subseteq \cK$ such that $F\in \cK_e$. Therefore
        \[\Delta(S)\leq \Delta(\cK)\leq 4\Delta(Y)\log^an\leq \frac{4n}{\log^{a(g-1)}n}.\qedhere\]
    \end{proofclaim}    
    
    It remains to prove that ${\rm Proj}^g(K_{n,n,n},S,K_3)$ contains a subtreasury that is $(n,\alpha n,\beta,\alpha)$-regular and $g$-abundant. For brevity, denote by $\mathbf{G}$ the girth-configuration hypergraph ${\rm Girth}^g(K_{n,n,n},K_3)$ and by $\mathbf{D}$ the design hypergraph ${\rm Design}(K_{n,n,n},K_3)$. Let $G_1,H,\cG_1,\cH$ be hypergraphs and $T,\cT$ be treasuries such that
    \[T =(G_1,H) := {\rm Proj}^g(K_{n,n,n},S,K_3),\quad 
    \cT=(\cG_1,\cH):={\rm Proj}^g(K_{n,n,n},\cK,K_3).\]
    Recall that by~\cref{thm:SpreadQuantumPreseeding}, $\cT$ is an $(n,\alpha n/2,\beta,\alpha/2)$-regular treasury. By~\cref{def:ProjectionPacking,def:ProjectionQuantumPacking} of the projections of packings and quantum packings, we have
    \begin{align*}
        E(G_1) &= E({\rm Design}(K_{n,n,n}\setminus E(S),K_3))\cap V(H)\\
        V(H)   &= V(\mathbf{G})\setminus
        \{ F\in V(\mathbf{G}): \exists Z\in \mathbf{G} \text{ such that } F\notin S,~F\in Z\subseteq S\cup\{F\}  \}.\\            
        E(H)   &= \{P\subseteq V(H)\colon |P|\geq 2,~\exists Z\in \mathbf{G} \text{ such that }
        P\cap S=\emptyset \text{ and }P\subseteq Z\subseteq P\cup S.\}
    \intertext{and}
        E(\cG_1) &= E(\mathbf{D})\cap V(\cH)\\
        V(\cH)   &= V(\mathbf{G})\setminus
        \{ F\in V(\mathbf{G}): \exists Z\in \mathbf{G},M\in\cM(\cK) \text{ such that } F\notin M,~F\in Z \subseteq M\cup\{F\} \}.\\   
        E(\cH)   &= \{P\subseteq V(\cH)\colon |P|\geq 2,~\exists Z\in \mathbf{G},M\in\cM(\cK) \text{ such that } P\cap M=\emptyset \text{ and }P\subseteq Z\subseteq P\cup M.\}            
    \end{align*}

    By definition we have $S\in\cM(\cK)$ and therefore $V(\cH)\subseteq V(H)$. 
    Let $G'_1=\cG_1\cap {\rm Design}(K_{n,n,n}\setminus E(S))$, and let $T'$ be the treasury \[T':=(G'_1,H'),\text{ with }H'=\cH[E(G'_1)].\] 
    Given that $H'$ is a subgraph of $\cH$, then for any $R$, any $\cH$-avoiding $(R,g)$-sphere is also $H'$-avoiding. Therefore $\cT$ being $g$-abundant immediately implies that $T'$ is $g$-abundant.
    
    \begin{claim}\label{clm:ExistSubtreasury}
    $T'$ is a subtreasury of ${\rm Proj}^g(K_{n,n,n},S,K_3)$.  
    \end{claim}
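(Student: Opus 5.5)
We verify the two conditions of \cref{def:SubTreasury} for $T'=(G'_1,H')$ against $T=(G_1,H)={\rm Proj}^g(K_{n,n,n},S,K_3)$. The first is that $G'_1$ is a spanning subgraph of $G_1$, which amounts to $E(G'_1)\subseteq E(G_1)$ since vertex sets agree. The second is that $H[E(G'_1)]\subseteq H'=\cH[E(G'_1)]$. Both follow by comparing the explicit formulas for $V(H),E(H)$ and $V(\cH),E(\cH)$ displayed above. The key fact is that $S\in\cM(\cK)$, so every witness pair $(Z,S)$ for the $S$-projection is also a witness pair $(Z,M)$ with $M=S$ for the $\cK$-projection.

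For the first condition, take $F\in E(G'_1)$. By definition $F\in E(\cG_1)\subseteq V(\cH)$ and $F$ is a triangle of $K_{n,n,n}\setminus E(S)$. We have already observed that $V(\cH)\subseteq V(H)$: any $F$ deleted from $V(\mathbf G)$ in forming $V(H)$ is witnessed by some $Z$ with $F\notin S$ and $F\in Z\subseteq S\cup\{F\}$, and choosing $M=S$ shows $F$ is also deleted in forming $V(\cH)$. Hence $F\in V(H)\cap E({\rm Design}(K_{n,n,n}\setminus E(S),K_3))=E(G_1)$.

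For the second condition, let $P\in E(H)$ with $P\subseteq E(G'_1)$. Then $|P|\ge 2$, and there is $Z\in\mathbf G$ with $P\cap S=\emptyset$ and $P\subseteq Z\subseteq P\cup S$. Since $P\subseteq E(G'_1)\subseteq V(\cH)$, the same $Z$ together with $M=S\in\cM(\cK)$ witnesses $P\in E(\cH)$. As $P\subseteq E(G'_1)$, we get $P\in \cH[E(G'_1)]=E(H')$.

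The only point needing care is the membership $S\in\cM(\cK)$. This holds because each $F_e$ lies in $\cK_e$, so $S\in\prod_e\cK_e$, and $S$ is a packing (hence a matching of $\mathbf D$) by \cref{clm:PackingIsEnclosed}. Everything else is unwinding definitions, so I expect no real obstacle.
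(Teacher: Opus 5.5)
Your proof is correct and follows the paper's argument. Both use $S\in\cM(\cK)$ to obtain $V(\cH)\subseteq V(H)$, hence $E(G'_1)\subseteq E(G_1)$, and then reuse the same witness $Z$ with $M=S$ to show $H[E(G'_1)]\subseteq \cH[E(G'_1)]=H'$; you simply spell out the witness-reuse step that the paper compresses into $H[E(G'_1)]\subseteq H[V(\cH)]\subseteq\cH$.
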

    \begin{proofclaim}
        We first show that $T'$ is a subtreasury of ${\rm Proj}^g(K_{n,n,n},S,K_3)$. Because $E(\cG_1) = E(\mathbf{D})\cap V(\cH)$, we have 
        \[G'_1={\rm Design}(K_{n,n,n}\setminus E(S),K_3)\cap V(\cH) \]
        and recall that
        \[G_1={\rm Design}(K_{n,n,n}\setminus E(S),K_3)\cap V(H).\]
        With $V(\cH)\subseteq V(H)$, we obtain that $G'_1$ is a spanning subgraph of $G_1$. Finally, using $S\in\cM(\cK)$, we have 
        \[H[E(G'_1)]\subseteq H[V(\cH)]\subseteq \cH,\]
        therefore 
        \[H[E(G'_1)]\subseteq \cH[E(G'_1)] = H',\]
        hence $T'$ is a subtreasury of ${\rm Proj}^g(K_{n,n,n},S,K_3)$.
    \end{proofclaim}
    
    \begin{claim}\label{clm:ExistRegularSubtreasury}
    $T'$ is $(n,\alpha n,\beta,\alpha)$-regular.  
    \end{claim}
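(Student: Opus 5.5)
We need to show $T'=(G_1',H')$ is $(n,\alpha n,\beta,\alpha)$-regular. We know $\cT=(\cG_1,\cH)$ is $(n,\alpha n/2,\beta,\alpha/2)$-regular. Since both treasuries are bankrupted, (RT2) holds trivially. The plan is to check the three remaining conditions (RT1), (RT3), (RT4) one at a time.

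Every condition except the lower degree bound in (RT1) is an \emph{upper} bound, and these pass directly to the subtreasury. Indeed $G_1'\subseteq\cG_1$ and $H'=\cH[E(G_1')]\subseteq\cH$. Hence:
\begin{itemize}
\item vertex degrees in $G_1'$ are at most $n$;
\item codegrees of $G_1'$ are at most $n^{1-\beta}$;
\item the maximum $2$-codegree of $G_1'$ with $H'$ and the maximum common $2$-degree of $H'$ are at most the corresponding quantities for $\cG_1,\cH$, since each counts edges of $H'\subseteq\cH$ incident to fewer triangles;
\item $\Delta_1(H'^{(s)})\le \frac{\alpha}{2}n^{s-1}\log n\le \alpha n^{s-1}\log n$ and $\Delta_t(H'^{(s)})\le n^{s-t-\beta}$.
\end{itemize}
Since regularity is monotone in $\sigma$ and $\alpha$, it remains only to check the lower bound in (RT1): every vertex of $G_1'$, i.e. every edge $f$ of $K_{n,n,n}\setminus E(S)$, has degree at least $n-\alpha n$ in $G_1'$.

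For this, note that $G_1'=\cG_1\cap{\rm Design}(K_{n,n,n}\setminus E(S),K_3)$. So the degree of $f$ in $G_1'$ is its degree in $\cG_1$, which is at least $n-\alpha n/2$, minus the number of triangles of $\cG_1$ containing $f$ and also using an edge of $E(S)$. Write $f=xy$. Each such triangle is determined by its third vertex $z$, and $xz$ or $yz$ lies in $E(S)$. So the number of such triangles is at most $\deg_{E(S)}(x)+\deg_{E(S)}(y)\le 2\Delta(S)$. By \cref{clm:PackingIsEnclosed}, $2\Delta(S)\le \frac{8n}{\log^{a(g-1)}n}\le \alpha n/2$ for $n$ large. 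Hence the degree of $f$ in $G_1'$ is at least $n-\alpha n$.

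We should also confirm that the vertex set matches, i.e. that $G_1'$ is spanned on the edges of $K_{n,n,n}\setminus E(S)$, so that the minimum degree is required exactly there. This holds by construction as a spanning subgraph, consistent with \cref{clm:ExistSubtreasury}.

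The only step that needs care is this lower degree bound. The obstacle is modest: the degree loss comes entirely from removing triangles that touch $E(S)$, and the bound $\Delta(S)\le\Delta(\cK)$ from the quantum properties keeps that loss at most $\alpha n/2$. This is why the quantum theorem was applied with $\alpha/2$.
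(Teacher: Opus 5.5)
Your proof is correct and follows essentially the same route as the paper: the upper-bound conditions (codegrees, maximum $2$-codegree, common $2$-degree, configuration (co)degrees) are inherited from $\cT$ because $G_1'\subseteq\cG_1$ and $H'\subseteq\cH$, and the lower degree bound in (RT1) comes from subtracting, for each edge $f\notin E(S)$, the $O(\Delta(S))=o(n)$ triangles through $f$ that use an edge of $E(S)$. Your count of $2\Delta(S)$ via $\deg_{E(S)}(x)+\deg_{E(S)}(y)$ is slightly more careful than the paper's stated bound of $\Delta(S)$, though the factor $2$ makes no difference.
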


    \begin{proofclaim}
        Observe first that as $G'_1$ is a 3-uniform hypergraph, then it has codegrees at most $1\leq n^{1-\beta}$. With $G'_1\subseteq \cG_1$ and $H'$ being an induced subgraph of $\cH$, and as $\cT$ is $(n,\alpha n/2,\beta,\alpha/2)$-regular, then 
        \begin{enumerate}[label=\roman*),topsep=.1in,itemsep=.1in]
            \item the maximum $2$-codegree of $G'_1$ with $H'$ is at most the maximum $2$-codegree of $\cG_1$ with $\cH$, and therefore at most $n^{1-\beta}$,
            \item the maximum common $2$-degree of $H'$ with respect to $G'_1$ is at most the maximum common $2$-degree of $\cH$ with respect to $\cG_1$, and therefore at most $n^{1-\beta}$,       
            \item  for all $2\le i\le g$ we have 
            \[\Delta_1\left({H'}^{(i)}\right) \leq \Delta_1\left(\cH^{(i)}\right) \leq \alpha \cdot n^{i-1}\log n,\]
            and for all $2\le t<s\le g$, we have
            \[\Delta_{t}\left({H'}^{(s)}\right) \leq\Delta_{t}\left(\cH^{(s)}\right) \leq n^{s-t-\beta}.\]
        \end{enumerate}
        Therefore Properties~\cref{item:RT_CoDeg,item:RT_ConfigDeg} of regular treasuries are verified (see~\cref{def:RegularTreasury}), while~\cref{item:RT_ReserveDeg} is trivially true, as we are working with bankrupted treasuries. It remains to show that~\cref{item:RT_QuasiReg} holds. By~\cref{thm:SpreadQuantumPreseeding}, we know that $\cT$ is $(n,\alpha n/2,\beta,\alpha/2)$-regular. Fix a vertex $e\in V(G'_1)\subseteq E(K_{n,n,n})\setminus E(S)$. For $n$ large enough, given that $\Delta(S)$ is at most $\frac{4n}{\log^{a(g-1)}n}$, there exists at most $\frac{4n}{\log^{a(g-1)}n}\ll \alpha n/2$ triangles in $K_{n,n,n}$ containing the edge $e$ and an edge from $S$, hence
        \[d_{\cG_1}(e)-d_{G'_1}(e)\leq\alpha n/2,\]
        and \(\delta(G'_1)\geq n-\alpha n,\)
        as desired.
    \end{proofclaim}

    It follows from~\crefrange{clm:PackingIsEnclosed}{clm:ExistRegularSubtreasury} that $S\in\cL_P$. We claim that this random construction of a packing $S$ in $\cL_P$ induces a $\frac{1+\varepsilon}{n}$-spread distribution $\eta$ over $\cL_P$. To that end, let $P$ be a fixed $K_3$-packing of $K_{n,n,n}$. Trivially, if $P$ is not $Y$-enclosed, then $\mathbb{P}_{\eta}\brackets*{P\subseteq S}=0$. Otherwise, using the fact that the event $\{P\subseteq S\}$ implies the event $\{P\subseteq \cK\}$, we have
    \begin{align*}
    \mathbb{P}_{\eta}\brackets*{P\subseteq S}
    &=\mathbb{P}_{\eta}\brackets*{P\subseteq S\mid P\subseteq \cK}\cdot \mathbb{P}_{\nu}\brackets*{P\subseteq \cK}\\
    &\leq\parens*{\prod_{\substack{F\in P\\ \{e\}:=E(F)\cap E(Y)}}\frac{1}{\size{{\rm Disjoint}(\cK,\cK_e)\cap{\rm HighGirth}^g(\cK,\cK_e)}}}\cdot \mathbb{P}_{\nu}\brackets*{P\subseteq \cK}
    \end{align*}  
    Recall that $\nu$ is a $\frac{p}{1-\sigma}$-spread distribution over $\cL_\cK$, while $\cK\in\cL_\cK$ implies that $$\size{{\rm Disjoint}(\cK,\cK_e)\cap{\rm HighGirth}^g(\cK,\cK_e)}\geq (1-\sigma)\log^a n.$$ Therefore
    \[
    \mathbb{P}_{\eta}\brackets*{P\subseteq S}    
    \leq \parens*{\frac{1}{(1-\sigma)\log^a n}}^{|P|}\cdot \parens*{\frac{p}{1-\sigma}}^{|P|}\\
    = \parens*{\frac{p}{(1-\sigma)^2\log^an}}^{|P|}\\
    \leq \parens*{\frac{1+\varepsilon}{n}}^{|P|},\]
    where we used that $(1+\varepsilon)(1-\sigma)^2>1$ and $p=\frac{\log^a n}{n}$. It follows that $\eta$ is a $\frac{1+\varepsilon}{n}$-spread distribution over~$\cL_P$, as desired.
\end{lateproof}

\subsection{Proof of Quantum Pre-seeding}\label{sec:ProofQuantumPreSeeding}

The following lemma provides an upper bound for the number of low-girth packings $S$ that contain a fixed packing $Z$.

\begin{lem}\label{lem:ExtendPacking}
    For integers $i,k,g$ with $1\leq i <k\leq g$ and $4\leq k$, let $Z$ be a $K_3$-packing of $K_{n,n,n}$ of size $i$, and let $Y\subseteq K_{n,n,n}$. Then for some constant $c_k$ depending only on $k$,
    \[\size*{\set*{S\in{\rm Girth}^g(K_{n,n,n},K_3)\colon |S|=k,\text{ and } Z\subset S}}\leq 
    \begin{cases}
    c_k\cdot n^{k-i-1}&\text{ if }i\geq 2,\\
    c_k\cdot n^{k-1}&\text{ if }i = 1.
    \end{cases}
    \]
    Furthermore, if $i= 1$, then for some constant $c_k$ depending only on $k$,
    \[\size*{\set*{S\in{\rm Girth}^g(K_{n,n,n},K_3)\colon |S|=k,~Z\subset S\text{ s.t. }\exists~F\in S\setminus Z\text{ that is $Y$-enclosed } }}\leq c_k\cdot n^{k-2}\cdot\Delta(Y).\]
\end{lem}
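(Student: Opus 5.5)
Every $S\in{\rm Girth}^g(K_{n,n,n},K_3)$ with $|S|=k$ is an \Erdos{} $(k+2,k)$-configuration: $k$ triangles spanning exactly $k+2$ vertices, with no $(j+2,j)$-configuration for $4\le j<k$. The plan is to count such $S$ containing $Z$ by counting the vertex set of $S$ and then its triangles. There are at most a constant number of isomorphism types of $(k+2,k)$-configurations, and the number of ways to place triangles on a fixed vertex set of size $k+2$ is bounded by a constant depending only on $k$. So it suffices to bound the number of choices of the vertices of $V(S)\setminus V(Z)$, up to a factor $c_k$.

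The key input is a lower bound on $|V(Z)|$. Any $i$ edge-disjoint triangles with $i\ge 2$ span at least $i+3$ vertices whenever $i<k$. For $2\le i\le 3$ this is direct: two edge-disjoint triangles share at most one vertex, so they span at least $5$ vertices, and three span at least $6$. For $4\le i<k$, if $|V(Z)|\le i+2$ then $Z$ contains a $(j+2,j)$-configuration with $4\le j\le i<k$, contradicting that $S$ has girth $k$. The case $j$ smaller than $4$ is handled by the remark that no triangle packing of $K_{n,n,n}$ has girth below $4$; more carefully, for $i\ge 4$, $|V(Z)|\le i+2$ itself gives an $(i+2,i)$-configuration. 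Hence $|V(S)\setminus V(Z)|\le (k+2)-(i+3)=k-i-1$, giving $c_k n^{k-i-1}$. For $i=1$ we have $|V(Z)|=3$, so at most $k-1$ new vertices are needed, giving $c_k n^{k-1}$.

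For the furthermore part, take $i=1$ and $Z=\{F_0\}$. Suppose some $F\in S\setminus Z$ contains an edge of $Y$. I would order the vertices of $V(S)\setminus V(F_0)$ so that the $Y$-edge is revealed with one endpoint already chosen. Since $S$ is connected, there is a sequence of triangles starting at $F_0$, each sharing at least one vertex with the previous ones, that reaches $F$. Choose $F$ as the first such triangle in a BFS order. We then pick the vertices of $S$ in this order, $n$ choices each, until we reach the edge $e=xy\in Y$ of $F$. Because $F$ meets an earlier triangle, at least one vertex of $F$ is already placed. If that vertex is an endpoint $x$ of $e$, then $y$ has at most $\Delta(Y)$ choices instead of $n$. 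Otherwise the placed vertex is the third vertex $w$ of $F$, and both $x$ and $y$ are new; we choose $x$ with $n$ choices and $y$ with $\Delta(Y)$ choices. Either way, one factor of $n$ among the $k-1$ new vertices is replaced by $\Delta(Y)$. If both endpoints are already placed, we lose nothing, but then there are fewer new vertices than counted, and the bound still holds since $\Delta(Y)\ge 1$ can be assumed (if $Y$ is empty the statement is vacuous). This gives $c_k n^{k-2}\Delta(Y)$.

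The step that needs the most care is the furthermore case. One must make the ordering argument rigorous: the vertex count is exactly $k+2$ and $S$ is connected, where connectivity follows since a disconnected \Erdos{} configuration would contain a smaller cycle or have too many vertices. One must also handle the case where the endpoints of $e$ already lie in $V(F_0)$, where the saving comes for free. The first part is routine once the $|V(Z)|\ge i+3$ bound is established.
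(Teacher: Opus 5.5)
Your first inequality follows the paper's argument: you show $|V(Z)|\ge i+3$ for $2\le i<k$, directly for $i\le 3$, and for $i\ge 4$ because otherwise $Z$ would be an $(i+2,i)$-configuration inside the \Erdos{} configuration $S$. You then count the at most $k-i-1$ (resp.\ $k-1$) remaining vertices.

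\textbf{The gap is in the furthermore part, in the case you dismiss at the end.} If both endpoints of the $Y$-edge $e=xy$ of $F$ are already placed when $F$ is reached, you claim there are ``fewer new vertices than counted''. That is false: $|V(S)\setminus V(F_0)|=k-1$ whatever the order of exposure. For a concrete instance, take $k=4$ and let $S$ be an intercalate with triangles $r_1c_1s_1$, $r_1c_2s_2$, $r_2c_1s_2$, $r_2c_2s_1$, and $F_0=r_1c_1s_1$. Suppose only $F=r_2c_2s_1$ is $Y$-enclosed, with $e=r_2c_2$. If your BFS processes $r_1c_2s_2$ and $r_2c_1s_2$ first, all six vertices are placed before $F$ is reached. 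Your rule then gives $n^3=n^{k-1}$ with no saving, and there are still $3=k-1$ new vertices. The same problem can arise whenever $F$ comes late in the BFS order, for instance when $F$ is vertex-disjoint from $F_0$. So the BFS/connectivity scaffolding, as written, does not deliver the bound.

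\textbf{The fix is simple and makes the BFS and the connectivity of $S$ unnecessary.} Since $F_0$ and $F$ are edge-disjoint, $|V(F_0)\cap V(F)|\le 1$, so at least one endpoint of $e$ lies outside $V(F_0)$. Whichever endpoint of $e$ is placed later is therefore a new vertex that must be a $Y$-neighbour of an already placed vertex. It has at most $\Delta(Y)$ choices, in any order. In the example above, that endpoint is $r_2$, placed after $c_2$.

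The paper makes this explicit by exposing $V(F)$ immediately after $F_0$:
\begin{itemize}
\item If $V(F)\cap V(F_0)=\emptyset$, there are $O(n^2\Delta(Y))$ choices for $V(F)$ and then $k-4$ further vertices.
\item If $|V(F)\cap V(F_0)|=1$, there are $O(n\Delta(Y))$ choices for $V(F)$ and then $k-3$ further vertices.
\end{itemize}
Both cases give $O(n^{k-2}\Delta(Y))$.
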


\begin{proof}
    If $Z$ has girth at most $i$, then by minimality, no edge of ${\rm Girth}^g(K_{n,n,n},K_3)$ of size $k>i$ contains $Z$, i.e. the edge $Z$ itself. Assume now that $Z$ has girth greater than $i$, and that $i\geq 2$. Then $Z$ is a set of $i$ triangles in $K_{n,n,n}$ spanning at least $i+3$ vertices. Given that any packing $S\in{\rm Girth}^g(K_{n,n,n},K_3)$ of size $k$ is a set of $k$ triangles spanning at most $k+2$ vertices, there are at most $n^{(k+2)-(i+3)}$ choices for $V(S)$, and therefore at most $c_k\cdot n^{k-i-1}$ choices of packings $S$ containing $Z$, for some constant $c_k$.
    Observe that if $i=1$, then similarly there are at most $n^{(k+2)-3}$ choices for $V(S)$, and therefore at most $c_k\cdot n^{k-i}$ choices of packings $S$ containing $Z$.
    
    For the second inequality, assume that $i=1$ and let $Z=\{F_1\}$. Let $S$ be a packing in ${\rm Girth}^g(K_{n,n,n},K_3)$ of size $k$, such that $S$ contains $F_1$, and such that there exists $F_2\in S\setminus Z$ that is $Y$-enclosed (i.e. containing exactly one edge from $Y$).

    Given that $S$ is a packing with $\{F_1,F_2\}\subseteq S$, we know that $\size*{V(F_1)\cap V(F_2)}\leq 1$. There exist at most $n^2 \Delta(Y)$ choices for $V(F_2)$ such that $\size*{V(F_1)\cap V(F_2)}=0$, and  then at most $n^{(k+2)-6}$ choices for the remaining vertices of $V(S)$. There are therefore at most $c_k\cdot n^{k-2}\Delta(Y)$ choices for such $S$. Similarly, there exist at most $3n \Delta(Y)$ choices for $V(F_2)$ such that $\size*{V(F_1)\cap V(F_2)}=1$, at most $n^{(k+2)-5}$ choices for the remaining vertices of $V(S)$, and at most $c_k\cdot n^{k-2}\Delta(Y)$ choices for such $S$. 
\end{proof}

We are now ready to prove the quantum pre-seeding theorem.
\begin{lateproof}{thm:SpreadQuantumPreseeding}

    Fix an integer $g\geq 4$ and reals $\alpha,\beta,\sigma\in(0,1)$, and let $n$ be large enough throughout this proof. Let $a:=8g$ and $p:=\frac{\log^a n}{n}$. Let $Y \subseteq K_{n,n,n}$ with $\Delta(Y)\leq \frac{n}{\log^{ag} n}$, and let $\cL_\cK$ be the family of $Y$-enclosed quantum $K_3$-packings as defined in the statement of~\cref{thm:SpreadQuantumPreseeding}. For brevity, we sometimes denote by $\mathbf{G}$ the girth-configuration hypergraph ${\rm Girth}^g(K_{n,n,n},K_3)$ and by $\mathbf{D}$ the design hypergraph ${\rm Design}(K_{n,n,n},K_3)$. For each $e\in E(Y)$, let $\Omega_e$ be the set of triangles in $K_{n,n,n}$ whose intersection with $Y$ is exactly $e$,
    \[\Omega_e=\set*{F\in K_{n,n,n}\colon F\cong K_3, E(Y)\cap E(F)=\{e\}},\]
    and let $\Omega = \bigcup_{e\in E(Y)} \Omega_e$ be a $Y$-enclosed quantum packing of $K_{n,n,n}$. Observe that for any distinct edges $e,e'\in E(Y)$, $\Omega_{e}$ and $\Omega_{e'}$ are disjoint, and  
    \[n-2\Delta(Y)\leq |\Omega_{e}|\leq n.\]

    Let $\cV_p$ be a random subset of $V(\mathbf{G})=\set{F\in K_{n,n,n}\colon F\cong K_3}$, where each copy of $K_3$ is selected independently at random with probability $p$. Let $\cK=\Omega\cap\cV_p$ be a random $Y$-enclosed quantum $K_3$-packing of $K_{n,n,n}$. Let $\cG_1,\cH$ be hypergraphs such that $\cT=(\cG_1,\cH):={\rm Proj}^g(K_{n,n,n},\cK,K_3)$ is the girth-$g$ projection of the quantum packing $\cK$. We argue that each one of the following properties holds with probability at least $1-\sigma/10$.

    \begin{enumerate}[label=(P\arabic*),topsep=.1in,itemsep=.1in]
        \item $\Delta(\cK)\leq 4n/\log^{a(g-1)}n$.\label{prop:FIRSTQuantum}\label{prop:QuantumMaxDeg}
        \item For each $e\in E(Y)$, $|\cK_e|\geq (1-\frac{\sigma}3
        )\log^an$.\label{prop:QuantumMinSize}
        \item For all $e\in E(Y)$, $\size*{\cK_e\setminus {\rm HighGirth}^g(\cK,\cK_e)}\leq\frac{\sigma}{3}\log^a n$. \label{prop:QuantumHighGith}
        \item For all $e\in E(Y)$, $\size*{\cK_e\setminus {\rm Disjoint}(\cK,\cK_e)}\leq\frac{\sigma}{3}\log^a n$. \label{prop:QuantumDisjoint}      
        \item Every vertex of $\cG_1$ has degree at least $(1-\alpha)n$.\label{prop:QuantumRegDegree}
        \item For all $i\in\{2,\ldots,g\}$ we have $\Delta_{1}(\cH^{(i)})\leq \alpha n^{i-1}\log n$.\label{prop:QuantumRegConfigDegree}
        \item For all $t<s$ in $\{2,\ldots,g\}$ we have $\Delta_{t}(\cH^{(s)})\leq n^{s-t-\beta}$.  \label{prop:QuantumRegConfigCoDegree}
        \item The maximum common $2$-degree of $\cH$ with respect to $\cG_1$ is at most $n^{1-\beta}$.\label{prop:QuantumRegMaxCommon2Deg}
        \item The maximum $2$-codegree of $\cG_1$ with $\cH$ is at most $n^{1-\beta}$.\label{prop:QuantumTwoCodegrees}
        \item The treasury $\cT$ is $g$-abundant.\label{prop:QuantumAbundant}\label{prop:LASTQuantum}
    \end{enumerate}

    Observe that~\cref{prop:QuantumMaxDeg} ensures that~\cref{thm:SpreadQuantumPreseeding}\ref{item:QuantPreSeedDelta} holds, while~\cref{prop:QuantumMinSize}-\cref{prop:QuantumDisjoint} ensure that~\cref{thm:SpreadQuantumPreseeding}\ref{item:QuantPreSeedDisjHG} holds. Note that $\cG_1$ has trivially codegrees at most $1\leq D^{1-\beta}$, therefore~\cref{prop:QuantumRegDegree}-\cref{prop:QuantumTwoCodegrees} are sufficient to ensure that the girth-$g$ projection of $\cK$ is regular, hence that~\cref{thm:SpreadQuantumPreseeding}\ref{item:QuantPreSeedTreasury} holds.
    Finally~\cref{prop:QuantumAbundant} ensures that~\cref{item:QuantPreSeedAbundant} holds.
    
    We first remark that \cref{prop:QuantumMinSize,prop:QuantumMaxDeg} hold with probability at least $1-\sigma/10$, by standard applications of Chernoff bound. Each remaining property is shown in one claim below, all proved in a similar way. Each statement is of the form ''the probability that for all $x\in X$ we have $N(x)\leq b$ is large'', where $N$ enumerates objects with some desired property. We reason by union bound over all elements $x\in X$ (e.g. the family of all edges in $E(Y)$), showing that each event $\{N(x)>b\}$ happens with low probability. To do so, for each $x\in X$, we define a multi-hypergraph $J$ on vertex set $V(J)\subseteq V(\mathbf{G})$ and such that $N(x)\leq e(J_p)$, where $J_p$ is the subhypergraph of $J$ induced on the random set $\cV_p$. We then use Kim-Vu~\cref{cor:KimVu} to estimate a probabilistic bound on $e(J_p)$, summing over all uniform subhypergraphs of $J$ if necessary.      

    
    \begin{claim}\label{cl:HighGirthHolds}
        \cref{prop:QuantumHighGith}, ``For all $e\in E(Y)$, $\size*{\cK_e\setminus {\rm HighGirth}^g(\cK,\cK_e)}\leq\frac{\sigma}{3}\log^a n$'',  holds with probability at least $1-\sigma/10$.
    \end{claim}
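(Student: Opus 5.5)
Fix $e\in E(Y)$ and bound $N(e):=\size*{\cK_e\setminus{\rm HighGirth}^g(\cK,\cK_e)}$; at the end take a union bound over the at most $3n^2$ edges of $Y$. A triangle $F\in\cK_e$ fails to be high girth only if there is an \Erdos{} configuration $S\in\mathbf{G}$ with $F\in S\subseteq M$ for some $M\in\cM(\cK)$. Every triangle of such an $M$ is $Y$-enclosed, so every triangle of $S$ lies in $\cV_p$ and is $Y$-enclosed. Moreover, the triangles of $S$ contain distinct edges of $Y$, since $M$ picks one triangle per edge of $Y$ and is a matching. Therefore $N(e)$ is at most the number of pairs $(F,S)$ with $F\in\Omega_e$, $F\in S\in\mathbf{G}$, $|S|=k$ for some $4\le k<g$, all triangles of $S$ $Y$-enclosed, and $S\subseteq\cV_p$. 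For each $k$ let $J_k$ be the $k$-uniform multi-hypergraph on $V(\mathbf{G})$ whose edges are the sets $S$ arising this way (with multiplicity from the choice of $F$, which is at most $k$). Then $N(e)\le\sum_k e((J_k)_p)$, and we apply \cref{cor:KimVu} to each $J_k$.

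Next we bound the edge count and the codegrees of $J_k$. For the edge count, there are at most $|\Omega_e|\le n$ choices of $F$. Given $F$, the configuration $S$ contains another $Y$-enclosed triangle, so the second part of \cref{lem:ExtendPacking} (with $i=1$, $Z=\{F\}$) gives at most $c_kn^{k-2}\Delta(Y)$ choices of $S$. Hence $e(J_k)\le kc_kn^{k-1}\Delta(Y)\le kc_kn^{k}/\log^{ag}n$. Set $\kappa:=n^k\log^{-ag+1}n$. Then $p^k\cdot2\kappa=2\log^{ak-ag+1}n\le 2\log^{a-a+1}n\cdot\log^{-a(g-1-k)}n$, and since $k\le g-1$ this is $O(\log n)$, far below $\frac{\sigma}{3g}\log^a n$.

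For the codegree conditions $(*_i)$: if $U\subseteq V(\mathbf{G})$ has size $i<k$, then $\Delta_i(J_k)$ is bounded by summing over whether $F\in U$ or not. When $F\in U$ and $i\ge2$, the first part of \cref{lem:ExtendPacking} gives $O(n^{k-i-1})$. When $F\notin U$, there are $n$ choices of $F$ and the extension count from $U\cup\{F\}$ is $O(n^{k-i-2})$ if $i+1\ge2$, for $O(n^{k-i-1})$ overall. The case $i=1$ with $U=\{F\}$ gives $O(n^{k-1})$ by the lemma. In all cases $\Delta_i(J_k)=O(n^{k-i})$ (indeed $O(n^{k-i-1}\cdot n)$ at worst), and $\Delta_k\le k$. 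Then $(*_i)$ requires $p^i\kappa\ge\Delta_i\log^{4k+2}n$, that is, $\log^{ai}n\cdot n^{k-i}\log^{-ag+1}n\ge Cn^{k-i}\log^{4k+2}n$. This fails for small $i$ because $ai<ag$.

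This is the main obstacle: the crude $\kappa$ is too small for the low codegrees. To fix it, I would take $\kappa$ larger, $\kappa:=n^k\log^{-a(g-k)}n\cdot\log^{-1}n$. This still gives $p^k\kappa=o(\log^a n)$ since $k\ge4$. I would also refine the codegree bounds using the $Y$-enclosed structure: every triangle of $S$ contains an edge of $Y$, so each new non-root triangle costs a factor $\Delta(Y)/n\le\log^{-ag}n$ beyond the vertex count. With this, $\Delta_i(J_k)\le C n^{k-i}\log^{-ag\,(\cdot)}n$ gains enough log-powers, and $a=8g$ absorbs the $\log^{4k+2}n$ loss. If this bookkeeping proves too tight, I would instead split $J_k$ according to $|U\cap\{F\}|$ and treat the root separately, using $|\Omega_e|\le n$.

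With $(*_i)$ verified, \cref{cor:KimVu} gives $e((J_k)_p)\le\frac{\sigma}{3g}\log^a n$ with probability $1-e^{-\log^2n}$. Summing over $k<g$ and union bounding over $e\in E(Y)$ gives failure probability at most $3gn^2e^{-\log^2n}<\sigma/10$.
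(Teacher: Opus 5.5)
Your reduction matches the paper's: a multi-hypergraph $J_k$ of low-girth configurations through a triangle of $\Omega_e$, the bound $e(J_k)=O(n^k/\log^{ag}n)$ from the second part of \cref{lem:ExtendPacking}, \cref{cor:KimVu}, and a union bound over $e$. The gap is that you never complete the step that makes \cref{cor:KimVu} applicable, and the concrete fix you propose fails.

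\textbf{Why the proposed $\kappa$ fails.} With $\kappa=n^k\log^{-a(g-k)-1}n$ you get $p^k\kappa=\log^{a(2k-g)-1}n$. This exceeds $\log^a n$ as soon as $k>(g+1)/2$; for $g=5$, $k=4$ it is $\log^{3a-1}n$. So Kim--Vu no longer yields the target bound. For $k\le g/2$ this $\kappa$ is smaller than $\log^{4k+2}n/p^k$, so $(*_k)$ fails instead.

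\textbf{Why your $\Delta_1$ bound fails regardless of $\kappa$.} With $\Delta_1(J_k)=O(n^{k-1})$, condition $(*_1)$ needs $\kappa\ge n^k\log^{4k+2-a}n$. The target $2p^k\kappa\le\frac{\sigma}{3g}\log^a n$ forces $\kappa=O(n^k\log^{-a(k-1)}n)$, and these are incompatible for $k\ge 4$. Your suggested remedy, a factor $\Delta(Y)/n$ for every further triangle, does not follow from $Y$-enclosedness: the $Y$-edges of $S$ need not reach every new vertex, since they can close cycles. It is also not what \cref{lem:ExtendPacking} provides; the lemma gives only one such saving.

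\textbf{The missing calibration.} \cref{cor:KimVu} only requires $e(J_k)\le\kappa$, so take $\kappa$ as large as the target allows:
$\kappa_k=\frac{\sigma}{6g}\,n^k/\log^{a(k-1)}n$, for which $2p^k\kappa_k=\frac{\sigma}{3g}\log^a n$ exactly. This exceeds $e(J_k)$ by about $\log^{a(g-k+1)}n$, and with that slack a single log-saving at $i=1$ suffices. That saving is exactly the second part of \cref{lem:ExtendPacking}:
\begin{itemize}
\item For $Z=\{F_e\}$ with $F_e\in\Omega_e$, the count is $O(n^{k-2}\Delta(Y))=O(n^{k-1}/\log^{ag}n)$.
\item For $Z=\{F'\}$ with $F'\notin\Omega_e$, sum over at most $n$ roots to get $n\cdot O(n^{k-3})$.
\end{itemize}
Hence $\Delta_1(J_k)=O(n^{k-1}/\log^{ag}n)$, and $(*_1)$ reduces to $a(k-1)-ag+4k+2\le a$. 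This holds because $k\le g-1$ and $a=8g$.

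For $2\le i<k$ your crude bound $O(n^{k-i-1})$ is fine, since it has a spare factor $1/n$. For $i=k$ you have $\Delta_k(J_k)=1$, because a packing contains at most one triangle through $e$; so $(*_k)$ only needs $a>4k+2$.

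Your fallback of separating $F\in U$ from $F\notin U$ has the right shape. In the case $U=\{F\}$ it must invoke this second part of the lemma, since $|\Omega_e|\le n$ alone does not give the saving.
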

    \begin{proofclaim}
        Recall that we denote by $\mathbf{G}$ the girth-configuration hypergraph ${\rm Girth}^g(K_{n,n,n},K_3)$ and by $\mathbf{D}$ the design hypergraph ${\rm Design}(K_{n,n,n},K_3)$. 
        We reason via union bound on $E(Y)$. For each $e\in E(Y)$, we build a hypergraph $J$ such $|\cK_e\setminus{\rm HighGirth}^g(\cK,\cK_e)|\leq e(J_p)$, and we show that $e(J_p)>\frac{\sigma}{3}\log^a n$ with small probability using Kim-Vu's~\cref{cor:KimVu}.
    
        Fix $e\in E(Y)$ and let $J=J(e)\subseteq \mathbf{G}$ be the multi-hypergraph such that $V(J)=V(\mathbf{G})$ and
        \[E(J)=\bigcup_{F\in\Omega_e} \set*{S\in \mathbf{G}\colon F\in S\subseteq M\text{ for some }M\in\cM(\Omega)}.\]
        
        The hypergraph $J$ contains all $Y$-enclosed $K_3$-packing of $K_{n,n,n}$ with girth less than $g$ and containing (exactly) one copy $F$ of of $K_3$  with $e\in F$. Let $J_p$ be the random subhypergraph of $J$, induced on the vertex set $\cV_p$. Observe that for any edge $S\in E(J)$, there exists at most one $F\in\Omega_e$ such that $F\in S$. Furthermore, if $F$ is a copy of $K_3$ in $\cK_e\setminus{\rm HighGirth(\cK_e)}$, then there exists $S\in\mathbf{G}$, with $F\in S\subseteq M$ for some $M\in\cM(\cK)$. It follows that every triangle in $S$ is in $\cK$, hence $S$ is an edge of $J_p=J[\cV_p]$, and
        \begin{equation*}
          \size*{\cK_e\setminus{\rm HighGirth(\cK_e)}}\leq e(J_p)=\sum_{k=4}^{g-1}e(J_p^{(k)}).  
        \end{equation*}

        Let $k\in\set{4,\ldots g-1}$ and let $$\kappa_k :=\frac{\sigma}{6g}\frac{n^k}{\log^{a(k-1)}n}.$$
        
        Recall that any matching $M\in\cM(\Omega)$ is $Y$-enclosed. Therefore~\cref{lem:ExtendPacking} implies that 
        \begin{align*}
            e(J^{(k)}) &= \sum_{F\in\Omega_e} \size*{\set*{S\in \mathbf{G}\colon F\in S\subseteq M\text{ for some }M\in\cM(\Omega)}}\\
            &= |\Omega_e|\cdot O\parens*{n^{k-2}\Delta(Y)}\\
            &= O\parens*{n^{k}/\log^{ga}n}\\
            &\leq \kappa_k.
        \end{align*}
        
        Fix $Z\subseteq V(J)$ with $|Z|=i\in\{1,\ldots,k\}$. If $Z$ is not a packing then $d_J(Z)=0$, otherwise observe that there exist a unique $F_e\in Z\cap \Omega_e$, and therefore, if $i=k$ we have $d_J(Z)=1$. Assume now that $i<k$. It follows that
        \begin{align*}
          \Delta_i(J^{(k)}) &= 
          \size*{\set*{S\in \mathbf{G}\colon \exists M\in\cM(\Omega), Z\subseteq S\subseteq M}}
          +\sum_{\substack{F\in\Omega_e\\F\neq F_e}}\size*{\set*{S\in \mathbf{G}\colon \exists M\in\cM(\Omega), (Z\cup\{F\})\subseteq S\subseteq M}},  
        \end{align*}
        hence, by~\cref{lem:ExtendPacking}, if $i=1$ 
        \[  \Delta_1(J^{(k)}) = O\parens*{\frac{n^{k-1}}{\log^{ga}n}} + n\cdot O\parens*{n^{k-3}} = O\parens*{\frac{n^{k-1}}{\log^{ga}n}},\]
        and if $i\in\{2,\ldots,k-1\}$, 
        \[  \Delta_i(J^{(k)}) = O\parens*{n^{k-i-1}} + n\cdot O\parens*{n^{k-i-2}} = O\parens*{n^{k-i-1}}.\]
        
        We apply~\cref{cor:KimVu} to the $k$-uniform hypergraph $J^{(k)}$. Observe that,
        \[N:=v(J^{(k)})=v(\mathbf{G})=n^3.\]    
    
        For $i\in[k-1]$, we check that $(*_i)$ holds. 
        \[
        \frac{\Delta_i(J^{(k)})}{\kappa_k}\log^{4k+2}N
        = O\parens*{\frac{n^{k-i}}{\log^{ga}n}}\cdot\frac{6g}{\sigma}\cdot\frac{\log^{a(k-1)}n}{n^k}\log^{4k+2} n
        \leq \frac{\log^{ia}n}{n^i}
        =p^i,
        \]
        using $a=8g>4k+2$. We now verify that $(*_k)$ also holds
        \[
        \frac{\Delta_k(J^{(k)})}{\kappa_k}\log^{4k+2}N
        \leq  \frac{6g}{\sigma}\frac{\log^{a(k-1)}n}{n^k}\log^{4k+2}n
        \leq \frac{\log^{ak}n}{n^k}
        =p^k,
        \]
        again using $a=8g>4k+2$. It follows from~\cref{cor:KimVu} that we have,
        \[e(J^{(k)}_p)>2p^k\kappa_k = \frac{\sigma\log^a n}{3g}\]
        with probability at most $e^{-\log^2 N}$. By union bound it follows that
        \begin{align*}
            \Prob{\exists e\in E(Y), \size*{\cK_e\setminus{\rm HighGirth(\cK_e)}}>\frac{\sigma}{3})\log^a n} 
            &\leq e(Y)\cdot \max_{e\in E(Y)}\Prob{\size*{\cK_e\setminus{\rm HighGirth(\cK_e)}}>\frac{\sigma}{3}\log^a n}\\
            &\leq e(Y)\cdot \max_{e\in E(Y)}\Prob{e(J_p)>\frac{\sigma}{3}\log^a n}\\
            &\leq e(Y)\cdot g\cdot \Prob{e(J_p^{(k)})>\frac{\sigma}{3g}\log^a n}\\
            &\leq e(Y)\cdot g\cdot e^{-\log^2 N}\\
            &<g\cdot n^2\cdot e^{-\log^2n}\\
            &<\sigma/10
        \end{align*}
        given that $N>n$ is large enough and that $e(Y)\leq n^2$.   
    \end{proofclaim}

    \begin{claim}\label{cl:DisjointHolds}
        \cref{prop:QuantumDisjoint}, ``For all $e\in E(Y)$, $\size*{\cK_e\setminus {\rm Disjoint}(\cK,\cK_e)}\leq\frac{\sigma}{3}\log^a n$'', holds with probability at least $1-\sigma/10$.    
    \end{claim}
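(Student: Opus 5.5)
We mirror the proof of \cref{cl:HighGirthHolds}: fix $e\in E(Y)$, dominate $\size*{\cK_e\setminus{\rm Disjoint}(\cK,\cK_e)}$ by the number of edges of a random induced subhypergraph $J_p$, control $e(J_p)$ with Kim--Vu (\cref{cor:KimVu}), and finish with a union bound over the at most $n^2$ edges of $Y$.

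A triangle $F\in\cK_e$ fails to be in ${\rm Disjoint}(\cK,\cK_e)$ exactly when some $F'\in\cK\setminus\cK_e$ shares an edge with $F$. Since both are $Y$-enclosed with different $Y$-edges, the shared edge is one of the two non-$Y$ edges of $F$. We therefore take the $2$-uniform multi-hypergraph $J=J(e)$ on $V(\mathbf{G})$ with
\[E(J):=\set*{\{F,F'\}\colon F\in\Omega_e,\ F'\in\Omega\setminus\Omega_e,\ E(F)\cap E(F')\neq\emptyset}.\]
Every bad $F\in\cK_e$ lies in at least one edge of $J_p=J[\cV_p]$, so $\size*{\cK_e\setminus{\rm Disjoint}(\cK,\cK_e)}\le e(J_p)$.

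Next we estimate the codegrees of $J$. Each $F\in\Omega_e$ has two non-$Y$ edges $f$. A triangle $F'\neq F$ containing $f$ is determined by its third vertex, and $F'$ lies in $\Omega\setminus\Omega_e$ only if one of its other two edges lies in $Y$. This gives at most $2\cdot 2\Delta(Y)$ choices of $F'$ for each $F$. Hence $\Delta_1(J)\le 4\Delta(Y)$ (a triangle $F'$ has at most three edges, each lying in at most one $F\in\Omega_e$ together with $e$, so this bound holds on that side too), $\Delta_2(J)\le 1$, and $e(J)\le 4n\Delta(Y)\le 4n^2/\log^{ag}n$.

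We apply \cref{cor:KimVu} with $k=2$, $N=n^3$ and $\kappa:=\frac{\sigma}{6}\,n^2/\log^{a}n$. This $\kappa$ dominates $e(J)$ because $ag>a$. For condition $(*_2)$ we need $p^2=\log^{2a}n/n^2\ge \log^{10}N\cdot\frac{6}{\sigma}\cdot\log^a n/n^2$, which holds since $a=8g>10$ and $n$ is large. For condition $(*_1)$ we need $p\ge 4\Delta(Y)\log^{10}N/\kappa$. The right-hand side is $O(n^{-1}\log^{a+10-ag}n)$, which is far smaller than $p$. Kim--Vu then gives $e(J_p)\le 2p^2\kappa=\frac{\sigma}{3}\log^a n$ with probability at least $1-e^{-\log^2N}$. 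A union bound over $e\in E(Y)$ gives failure probability at most $n^2e^{-\log^2 n}<\sigma/10$.

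The only real obstacle is bookkeeping the choice of $\kappa$. It must be small enough that $2p^2\kappa\le\frac{\sigma}{3}\log^a n$, yet large enough to dominate $e(J)$ and satisfy $(*_1)$. The slack $\Delta(Y)\le n/\log^{ag}n$ makes this comfortable.

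Strictly, the multiplicity of a pair $\{F,F'\}$ is at most one, so $J$ is a simple graph and there is no subtlety there. One could even argue more directly, since $J_p$ restricted to a fixed $F$ is binomial, but following the paper's Kim--Vu template keeps the argument uniform with \cref{cl:HighGirthHolds}.
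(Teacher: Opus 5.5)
Your proposal is correct and follows the paper's proof essentially step for step. It uses the same auxiliary graph $J(e)$ on pairs $\{F,F'\}\in\Omega_e\times(\Omega\setminus\Omega_e)$ sharing an edge, the same $\kappa=\frac{\sigma}{6}n^2/\log^a n$, the same Kim--Vu application with $k=2$ yielding $2p^2\kappa=\frac{\sigma}{3}\log^a n$, and the same union bound over $E(Y)$; the differences are cosmetic (your bound $\Delta_1(J)\le 4\Delta(Y)$ versus the paper's $3\Delta(Y)$, and your explicit check of degrees on the $\Omega\setminus\Omega_e$ side).
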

    \begin{proofclaim}

    Fix $e\in E(Y)$. Let $J=J(e)$ be the graph defined by $V(J)=V(\mathbf{G})$ and 
    \[E(J)=\set*{\set{F,F'}\in \Omega_e \times (\Omega\setminus\Omega_e)\colon F\cap F'\neq\emptyset}.\]
    
    Let $J_p$ be the random subhypergraph of $J$, induced on the vertex set $\cV_p$. As desired, we have that 
    \[|\cK_e\setminus{\rm Disjoint}(\cK,\cK_e)|\leq e(J_p).\]

    Recall that any copy of $K_3$ in $\Omega$ intersect $E(Y)$ in exactly one edge, therefore for any $F\in\Omega_e$, we have 
    \[d_J(F)\leq \size*{\set*{F'\in(\Omega\setminus\Omega_e)\colon  F\cap F'\neq\emptyset}}\leq 3\Delta(Y),\]
    hence $\Delta_1(J) \leq 3\Delta(Y)$, while $\Delta_2(J) = 1$. Then, as every edge of $J$ is incident with some $F\in\Omega_e$,
    \[e(J)\leq \sum_{F\in\Omega_e}d_J(F)\leq |\Omega_e|\cdot 3\Delta(Y) \leq\frac{3n^2}{\log^{ga}n}.\]
    Let \[\kappa := \frac{\sigma n^2}{6\log^{a}n},\] hence satisfying $e(J)\leq \kappa$. We check that $(*)_i$ hold for $i\in\{1,2\}$. With $a>10$, we have
    
    \[\frac{\Delta_1(J)}{\kappa}\log^{10}n \leq \frac{3n}{\log^{ga}n}\frac{6\log^{a}n}{\sigma n^2}\log^{10}n = \frac{18}{\sigma\log^{ga-10}n}\frac{\log^an}{n}\leq p,\]
    and
    \[\frac{\Delta_2(J)}{\kappa}\log^{10}n \leq \frac{6\log^{a}n}{\sigma n^2}\log^{10}n \leq \frac{6}{\sigma\log^{a-10}n}\frac{\log^{2a}n}{n^2}\leq p^2.\]
    
    It follows from~\cref{cor:KimVu} that we have,
    \[e(J_p)>2p^2\kappa = \frac{\sigma}{3}\log^a n\]
    with probability at most $e^{-\log^2 N}$, where $N:=v(J)= n^3$. By union bound it follows that
        \begin{align*}
            \Prob{\exists e\in E(Y), \size*{\cK_e\setminus{\rm Disjoint(\cK_e)}}>\frac{\sigma}{3}\log^a n} 
            &\leq e(Y)\cdot \max_{e\in E(Y)}\Prob{\size*{\cK_e\setminus{\rm Disjoint(\cK_e)}}>\frac{\sigma}{3}\log^a n}\\
            &\leq n^2\cdot \max_{e\in E(Y)}\Prob{e(J_p)>\frac{\sigma}{3}\log^a n}\\
            &\leq n^2\cdot e^{-\log^2 N}\\
            &<\sigma/10
        \end{align*}
        given that $N>n$ is large enough.       
    \end{proofclaim}

    \begin{claim}\label{cl:RegularityDegreeHolds}
        \cref{prop:QuantumRegDegree}, ``Every vertex of $\cG_1$ has degree at least $(1-\alpha)n$'', holds with probability at least $1-\sigma/10$.    
    \end{claim}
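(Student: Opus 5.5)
The plan is to follow the template of \cref{cl:HighGirthHolds}: define, for each vertex of $\cG_1$, an auxiliary hypergraph whose random induced subgraph counts the lost triangles, bound its edges and codegrees with \cref{lem:ExtendPacking}, and apply Kim--Vu (\cref{cor:KimVu}) followed by a union bound.

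\textbf{Reduction.} Fix a vertex $xy$ of $\cG_1$, i.e.\ an edge of $K_{n,n,n}$. It lies in exactly $n$ triangles of $\mathbf{D}$. By \cref{def:CommonProjection,def:ProjectionQuantumPacking}, such a triangle $F$ fails to be in $\cG_1$ only if there exist $Z\in\mathbf{G}$ and $M\in\cM(\cK)$ with $F\notin M$ and $F\in Z\subseteq M\cup\{F\}$. In that case $Z\setminus\{F\}$ is a nonempty set of $|Z|-1$ triangles of $\cK\subseteq\Omega$. These triangles form a sub-packing of a $Y$-enclosed matching of $\mathbf{D}$, so each of them is $Y$-enclosed. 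So it suffices to show that, with high probability, at most $\alpha n$ triangles $F\ni xy$ admit such a $Z$.

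\textbf{The auxiliary hypergraph.} For each $k\in\{4,\dots,g-1\}$, let $J=J(xy)$ be the $(k-1)$-uniform multi-hypergraph on $V(\mathbf{G})$ with one edge $Z\setminus\{F\}$ for every pair $(F,Z)$ such that:
\begin{itemize}
\item $F\ni xy$ is a triangle;
\item $Z\in\mathbf{G}$, $|Z|=k$ and $F\in Z$;
\item $Z\setminus\{F\}\subseteq M$ for some $M\in\cM(\Omega)$.
\end{itemize}
Then the number of removed triangles containing $xy$ is at most $\sum_k e(J^{(k-1)}_p)$, where $J_p$ is the subgraph induced on $\cV_p$. Note that $k-1\ge 3$, so every edge of $J$ contains at least one $Y$-enclosed triangle.

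\textbf{Edge and codegree bounds.} For the edge count, sum over the $n$ choices of $F$ and apply the second part of \cref{lem:ExtendPacking} with $i=1$ to $Z\supset\{F\}$. This gives
\[
e(J^{(k-1)})\le n\cdot c_k n^{k-2}\Delta(Y)=O\!\left(\frac{n^k}{\log^{ag}n}\right).
\]
For codegrees, fix a set $T$ of $i\in[k-1]$ triangles and count pairs $(F,Z)$ with $T\cup\{F\}\subseteq Z$. There are at most $n$ choices of $F$. Then the first part of \cref{lem:ExtendPacking}, applied to the packing $T\cup\{F\}$ of size $i+1\ge2$, gives at most $c_kn^{k-i-2}$ choices of $Z$. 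Hence $\Delta_i(J^{(k-1)})=O(n^{k-i-1})$. Take
\[
\kappa_k:=\frac{\alpha}{2g}\cdot\frac{n^k}{\log^{a(k-2)}n}.
\]
Since $k-2<g$, we have $e(J^{(k-1)})\le\kappa_k$, and $2p^{k-1}\kappa_k=\alpha n/g$. Condition $(*_i)$ becomes
\[
\frac{O(n^{k-i-1})\log^{a(k-2)+4k+2}n}{n^k}\le \frac{\log^{ai}n}{n^i}=p^i,
\]
which holds comfortably, because the extra factor $n^{-1}$ absorbs all polylogarithmic factors.

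\textbf{Conclusion.} Kim--Vu then gives $e(J^{(k-1)}_p)\le\alpha n/g$ with probability at least $1-e^{-\log^2 n}$ for each $k$. Summing over $k$ yields at most $\alpha n$ removed triangles, so $d_{\cG_1}(xy)\ge(1-\alpha)n$. A union bound over the $3n^2$ edges $xy$ and the at most $g$ values of $k$ gives failure probability $3gn^2e^{-\log^2n}<\sigma/10$.

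The main point to check is the codegree bound. Fixing $T$ alone does not pin down $F$, so I pay a factor $n$ for $F$ and must recover it through the $i\ge2$ case of \cref{lem:ExtendPacking}; this works because $T\cup\{F\}$ always has size at least $2$. If $T\cup\{F\}$ already has girth at most $i+1$, the count is $0$, which only helps. The $i=k-1$ case is covered by the same argument, giving $\Delta_{k-1}=O(1)$.
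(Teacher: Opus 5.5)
Your proof is the paper's argument up to an index shift (your $|Z|=k$ is the paper's $k+1$). It uses the same multi-hypergraph of sets $Z\setminus\{F\}$, the second part of \cref{lem:ExtendPacking} for $e(J)$, the first part applied to $T\cup\{F\}$ for the codegrees, and then \cref{cor:KimVu} with a union bound. However, one computation is wrong as written, and it breaks the conclusion. Since $J^{(k-1)}$ is $(k-1)$-uniform, your choice of $\kappa_k$ gives
\[
2p^{k-1}\kappa_k=2\cdot\frac{\log^{a(k-1)}n}{n^{k-1}}\cdot\frac{\alpha}{2g}\cdot\frac{n^{k}}{\log^{a(k-2)}n}=\frac{\alpha}{g}\,n\log^{a}n,
\]
not $\alpha n/g$. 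Kim--Vu would then only bound the number of lost triangles by $\alpha n\log^a n$, which exceeds $n$ and gives nothing. The exponent of the logarithm in $\kappa_k$ has to match the uniformity. Take $\kappa_k:=\frac{\alpha}{2g}\cdot n^{k}/\log^{a(k-1)}n$, which is the paper's $\kappa_k$ after the shift. Then $e(J^{(k-1)})=O(n^k/\log^{ag}n)\le\kappa_k$ still holds because $k-1<g$. Each condition $(*_i)$ also still holds, thanks to the spare factor $n^{-1}$. The rest of your argument then goes through unchanged.

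The top codegree case $i=k-1$ also needs its own line; it is not ``the same argument''. \cref{lem:ExtendPacking} requires $i<k$, whereas here $T\cup\{F\}=Z$. Argue as the paper does. If $Z=T\cup\{F\}$ is an Erd\H{o}s $(k+2,k)$-configuration, then $T$ is a proper sub-packing and so spans at least $|T|+3=k+2\ge|V(Z)|$ vertices. This forces $V(F)\subseteq V(T)$, so there are only $O(1)$ choices for $F$. The step matters: the trivial bound of $n$ choices for $F$ would make $(*_{k-1})$ fail by a polylogarithmic factor.
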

    \begin{proofclaim} 
        Recall that we denote by $\mathbf{G}$ the girth-configuration hypergraph ${\rm Girth}^g(K_{n,n,n},K_3)$ and by $\mathbf{D}$ the design hypergraph ${\rm Design}(K_{n,n,n},K_3)$, and that $(\cG_1,\cH)$ is the projection treasury of $\cK$, that is $(\cG_1,\cH)={\rm Proj}^g(K_{n,n,n},\cK,K_3)$.
        
        By~\cref{def:ProjectionQuantumPacking} of projection, we have $E(\cG_1)=E(\mathbf{D})\cap V(\cH)$, and with $\cH\subseteq V(\mathbf{G})=E(\mathbf{D})$ we obtain that 
        \[E(\cG_1)=V(\cH)=V(\mathbf{G})\setminus\set{F\in V(\mathbf{G})\colon \exists S\in\mathbf{G}, \exists M\in\cM(\cK), F\notin M,
        F\in S\subseteq M\cup\{F\}}.\]
        
        Fix $e\in V(\cG_1)$. We define $J=J(e)$ to be the multi-hypergraph with $V(J)=V(\mathbf{G})$ and 
        \[E(J)=\bigcup_{\substack{F\in\mathbf{D}\\e\in F}} \set*{S\setminus\{F\}\colon S\in \mathbf{G}\text{ s.t } \exists M\in\cM(\Omega), F\notin M, F\in S\subseteq M\cup\{F\}}.\]
        
        Observe that any edge in $J$ has size in $\{3,\ldots,g-1\}$. Let $J_p$ be the random subhypergraph of $J$, induced on the vertex set $\cV_p$. Assume that there exists an edge $F\in\mathbf{D}$ with $e\in F$, such that $F\notin\cG_1$. Then there must exist $S\in\mathbf{G}$ such that $F\in S\subseteq M\cup\{F\}$ for some $M\in\cM(\cK)$. It follows that $S\setminus\{F\}$ is an edge of $J_p$, hence
        \[d_{\mathbf{D}}(e)-d_{\cG_1}(e)< e(J_p).\]

        Fix $k\in\{3,\ldots,g-1\}$, and let \[\kappa_k:=\frac{\alpha}{2g}\cdot\frac{n^{k+1}}{\log^{ak}n}.\]
        
        Let $Z\subseteq V(J)$ with $|Z|=i\in[k]$. Recall that for every $M\in\cM(\cK)$, $M$ is $Y$-enclosed, and observe that if $Z$ is not a packing, or if $Z$ contains a copy $F_e$ of $K_3$ with $e\in F_e$, then $d_J(Z)=0$. We can then assume that $Z$ is a packing not containing the edge $e$. For every edge of $J^{(k)}$ that contains $Z$, there exist $S\in\mathbf{G}^{(k+1)}$ and $F\in\mathbf{D}$ such that $e\in F$, $Z\subseteq S$, and $S\setminus \{F\}$ is $Y$-enclosed. It follows from~\cref{lem:ExtendPacking} that if $i<k$ we have
        \[d_{J^{(k)}}(Z) \leq \sum_{\substack{F\in\mathbf{D}\\e\in F}}  \size*{\set*{S\in\mathbf{G}^{(k+1)}\colon Z\cup\{F\}\subseteq S}}= n\cdot O\parens*{ n^{(k+1)-(i+1)-1}}= O\parens*{n^{k-i}}.\]
        If $|Z|=i=k$, observe that if $Z\in\mathbf{G}$, then $Z\cup \{F\}$ is not in $\mathbf{G}^{(k+1)}$ for any $F$, and again $d_{J^{(k)}}(Z)=0$. We can then assume that $Z$ is a set of $k$ triangles spanning at least $k+3$ vertices. If $Z\cup \{F\}=S\in\mathbf{G}^{(k+1)}$ for some $F$, it is then a set of $k+1$ triangles spanning at most $k+3$ vertices. there exists only a constant number of possible such triangles $F$, therefore $d_{J^{(k)}}(Z)=O(1)=O(n^{k-i})$. We furthermore have
        \begin{align*}
            e(J^{(k)})&=\sum_{\substack{F\in\mathbf{D}\\e\in F}} \size*{\set*{S\setminus\{F\}\colon S\in \mathbf{G}^{(k+1)}\text{ s.t } \exists M\in\cM(\Omega), F\notin M, F\in S\subseteq M\cup\{F\}}}\\
            &\leq\sum_{\substack{F\in\mathbf{D}\\e\in F}} \size*{\set*{S\setminus\{F\}\colon S\in \mathbf{G}^{(k+1)}\text{ s.t } F\in S, \text{ and } S\setminus \{F\}\text{ is $Y$-enclosed}}}\\    
            &\leq n\cdot O\parens*{ n^{(k+1)-2}\cdot\Delta(Y)}\\
            &= O\parens*{\frac{n^{k+1}}{\log^{ag}n}}\\
            &\leq \kappa_k.
        \end{align*}        
                
        We apply~\cref{cor:KimVu} to the $k$-uniform hypergraph $J^{(k)}$. Observe that,
        \[N:=v(J^{(k)})= v(\mathbf{G})= n^3.\]    
    
        For $i\in[k]$, we check that $(*_i)$ holds. 
        \[
        \frac{\Delta_i(J^{(k)})}{\kappa_k}\log^{4k+2}N
        = O\parens*{n^{k-i}}\cdot \frac{2g}{\alpha}\frac{\log^{ak}n}{n^{k+1}}\cdot\log^{4k+2}n 
        \leq \frac{\log^{ia}n}{n^i}
        =p^i.
        \]

        It follows from~\cref{cor:KimVu} that we have,
        \[e(J^{(k)}_p)>2p^k\kappa_k = \frac{\alpha}{g}\cdot\frac{\log^{ak} n}{n^k}\cdot\frac{n^{k+1}}{\log^{ak} n} =  \frac{\alpha n}{g}\]
        with probability at most $e^{-\log^2 N}$. For every edge $e\in E(Y)$, we have that $d_{\mathbf{D}}(e)\leq n$, therefore by the union bound,
        \begin{align*}
            \Prob{\exists e\in E(K_{n,n,n}), d_{\cG_1}(e)<(1-\alpha) n}
            &=\Prob{\exists e\in E(K_{n,n,n}), d_{\mathbf{D}}(e)-d_{\cG_1}(e)> \alpha n}\\
            &\leq \Prob{\exists e\in E(K_{n,n,n}), e(J_p)> \alpha n}\\
            &\leq e(K_{n,n,n})\cdot \max_{e\in E(K_{n,n,n})} \Prob{\sum_{k=3}^{g-1}e(J_p^{(k)})> \alpha n}\\
            &\leq 3n^2\cdot g\cdot \max_{e\in E(K_{n,n,n})}\Prob{e(J_p^{(k)})> \alpha n/g}\\
            &= 3n^2\cdot g\cdot \max_{e\in E(K_{n,n,n})}\Prob{e(J_p^{(k)})> 2p^k \kappa_k}\\
            &\leq 3n^2\cdot g\cdot e^{-\log^2 N}\\
            &<\sigma/10
        \end{align*}
        given that $N>n$ is large enough.
    \end{proofclaim}

    \begin{claim}\label{cl:RegularityConfigDegreeHolds}
        \cref{prop:QuantumRegConfigDegree}, ``For all $s\in\{2,\ldots,g\}$ we have $\Delta_{1}(\cH^{(s)})\leq \alpha n^{s-1}\log n$.'', holds with probability at least $1-\sigma/10$.    
    \end{claim}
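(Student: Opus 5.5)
We bound the $1$-degree of each uniform layer $\cH^{(s)}$ separately. By \cref{def:ProjectionQuantumPacking}, an edge $P\in\cH^{(s)}$ is a set of $s$ triangles with $|P|\ge2$ such that some $Z\in\mathbf{G}$ and $M\in\cM(\cK)$ satisfy $P\cap M=\emptyset$ and $P\subseteq Z\subseteq P\cup M$. Write $Z=P\cup Q$, where $Q\subseteq M\subseteq\cK$ is $Y$-enclosed and has some size $j\ge0$ with $s+j=k<g$.

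The case $j=0$ gives $P=Z\in\mathbf{G}^{(s)}$. The number of such edges containing a fixed triangle $F$ is at most $c_s n^{s-1}$ by \cref{lem:ExtendPacking}, which is deterministically far below $\alpha n^{s-1}\log n$ once $n$ is large. So the work is in the terms with $j\ge1$. Fix a triangle $F$ and $j\ge1$, and define the $j$-uniform multi-hypergraph $J=J(F,s,j)$ on $V(\mathbf{G})$ as follows: its edges are the sets $Q$ arising from some $Z\in\mathbf{G}^{(s+j)}$ with $F\in Z$ and $Z=P\cup Q$, where $|P|=s$, $P\ni F$, and $Q$ is a $Y$-enclosed packing disjoint from $P$. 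We count $Q$ once for each such $P$. Then $d_{\cH^{(s)}}(F)\le c_sn^{s-1}+\sum_j e(J_p)$, where $J_p$ is induced on $\cV_p$.

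To apply \cref{cor:KimVu}, we need $e(J)$ and the codegrees $\Delta_i(J)$. Each triangle of $Q$ contains an edge of $Y$, so the counting in \cref{lem:ExtendPacking}, iterated, shows that each of the $j$ triangles of $Q$ costs an extra factor $\Delta(Y)/n\le\log^{-ag}n$. This gives
\[e(J)=O\bigl(n^{s+j-1}(\Delta(Y)/n)^{j}\bigr)\le n^{s+j-1}\log^{-agj}n.\]
For a fixed $W\subseteq V(J)$ of size $i\le j$, fixing $W$ together with $F$ fixes at least $i+1$ further vertices beyond those of $F$, unless a low-girth configuration is already present. As in the proof of \cref{lem:ExtendPacking}, this yields $\Delta_i(J)=O(n^{s+j-1-i})$. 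Set
\[\kappa:=\frac{\alpha}{2g^2}\,n^{s+j-1}\log^{1-aj}n.\]
Then $p^j\cdot2\kappa\le\frac{\alpha}{g^2}n^{s-1}\log n$. Condition $(*_i)$ amounts to $n^{s+j-1-i}\log^{4j+2}n/\kappa\le p^i$, that is, $\log^{aj-1+4j+2}n\le\log^{ai}n\cdot n^{j-i}$. This holds for $i<j$ because of the polynomial factor, and for $i=j$ because then $\Delta_j(J)=O(n^{s-1})$ while $\kappa$ carries $n^{s+j-1}$. We also have $e(J)\le\kappa$ since $agj\ge aj$.

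Finally, we sum over $j\le g$ and $s\le g$ and take a union bound over the at most $n^3$ triangles $F$. Each failure probability is $e^{-\log^2 n}$, so the total failure probability is $O(g^2n^3e^{-\log^2n})<\sigma/10$. The main obstacle is the codegree bound for $J$ when $W$ mixes triangles sharing vertices with $F$. There I would argue exactly as in \cref{lem:ExtendPacking}: if $\{F\}\cup W$ already has girth at most its size, then $Z$ cannot be an Erd\H{o}s configuration containing it, so the degree is zero; otherwise $\{F\}\cup W$ spans at least $|W|+4$ vertices and the count follows. The prefactor $\log^{1-aj}$ in $\kappa$ leaves slack of order $\log^{a}n$, and it must absorb the multiplicity from choosing $P$ inside $Z$ (a constant).
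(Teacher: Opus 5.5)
\textbf{Gap in the Kim--Vu step.} Your overall plan is the paper's. For fixed $F$ and $s$, you build the multi-hypergraph of completing sets $Q=Z\setminus P$ (one layer for each $j=|Q|\ge 1$). You bound $d_{\cH^{(s)}}(F)$ by $O(n^{s-1})$ plus $\sum_j e(J_p)$, apply \cref{cor:KimVu} layer by layer, and take a union bound. However, your verification of $(*_i)$ fails as written. You use $\Delta_i(J)=O(n^{s+j-1-i})$ and $\kappa=\frac{\alpha}{2g^2}n^{s+j-1}\log^{1-aj}n$, which give
\[
\frac{\Delta_i(J)}{\kappa}\log^{4j+2}N=O\bigl(n^{-i}\log^{aj+4j+1}n\bigr).
\]
The powers of $n$ cancel exactly against $p^i=n^{-i}\log^{ai}n$; there is no factor $n^{j-i}$. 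What remains is $\log^{aj+4j+1}n\lesssim\log^{ai}n$, which is false for every $i\le j$. The same happens at $i=j$: your bound $\Delta_j(J)=O(n^{s-1})$ only matches powers of $n$ and does not beat the polylogarithms. So the hypothesis of \cref{cor:KimVu} is not established.

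\textbf{The fix.} The missing factor of $n$ is already in your own vertex count. For $|W|=i$, the set $W\cup\{F\}$ is an $(i+1)$-packing with $i+1<s+j$. Either it contains a configuration, in which case the Erd\H{o}s configuration $Z$ cannot contain it and the degree is $0$. Or it spans at least $i+4$ vertices, leaving at most $(s+j+2)-(i+4)$ free vertices of $Z$. This gives $\Delta_i(J)=O(n^{s+j-i-2})$, which is \cref{lem:ExtendPacking} applied to $W\cup\{F\}$, exactly as in the paper. For $i=j$ it gives $O(n^{s-2})$. The extra $1/n$ swamps all polylogarithmic factors, so $(*_i)$ holds for every $i\in[j]$.

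\textbf{A smaller point on $e(J)$.} Your claim that each of the $j$ triangles of $Q$ contributes an independent factor $\Delta(Y)/n$ is not justified in general. The $Y$-edges of $Q$ may close cycles once $V(F)$ is fixed, for example two of them joining one outside vertex to two vertices of $F$, and then fewer independent savings occur. You do not need this. The second part of \cref{lem:ExtendPacking} gives one saving factor, so $e(J)=O\bigl(n^{s+j-1}\Delta(Y)/n\bigr)=O\bigl(n^{s+j-1}\log^{-ag}n\bigr)$. Since $j\le g-2$, this is already at most your $\kappa$.
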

    \begin{proofclaim} 
    
    Recall that we denote by $\mathbf{G}$ the girth-configuration hypergraph ${\rm Girth}^g(K_{n,n,n},K_3)$ and by $\mathbf{D}$ the design hypergraph ${\rm Design}(K_{n,n,n},K_3)$, and that $(\cG_1,\cH)$ is the projection treasury of $\cK$, that is $(\cG_1,\cH)={\rm Proj}^g(K_{n,n,n},\cK,K_3)$, and that
    \[E(\cH)=\set*{P\subseteq V(\cH)\colon |P|\geq 2,~\exists S\in\mathbf{G},\exists M\in\cM(\cK) \text{ s.t. } P\subseteq S\subseteq M\cup P\text{ and } P\cap M=\emptyset}.\]
    
    Fix an integer $s\in\{2,\ldots,g\}$ and a copy $F$ of $K_3$. Let $J=J(F)$ be the multi-hypergraph defined by $V(J)=V(\mathbf{G})$ and 
    \[E(J) = \bigcup_{\substack{P\subseteq V(\mathbf{G})\\F\in P,\ |P|=s}}\set*{S\setminus P\colon S\in\mathbf{G},\text{ s.t. }\exists M\in\cM(\Omega), P\subsetneq S\subseteq M\cup P,\text{ and } P\cap M=\emptyset}.\]    

    Observe that every edge in $J$ has size in $\{1,\ldots, g-s\}$, and importantly that, in the definition of $J$, we ask for $P\subsetneq S$. Let $J_p$ be the random subhypergraph of $J$, induced on the vertex set $\cV_p$. Assume that there exists an edge $P\in\cH^{(s)}$ with $F\in P$, such that $P\notin\mathbf{G}^{(s)}$. Then there exists $S\in\mathbf{G}$ such that $P\subsetneq S\subseteq M\cup P$ for some $M\in\cM(\cK)$. It follows that $S\setminus P$ is an edge of $J_p$, and
    \[d_{\cH^{(s)}}(F) - d_{\mathbf{G}^{(s)}}(F)\leq \sum_{k=1}^{g-s}e(J^{(k)}_p).\]

    Fix $k\in[g-s]$ and let $$\kappa_k:=\frac{\alpha}{4g}\cdot{\frac{n^{k+s-1}}{\log^{ak-1}n}}.$$ Let $Z\subseteq V(\mathbf{G})$ with $|Z|=i\in[k]$. Note that if $Z$ is not a packing, or if $F\in Z$, then $d_J(Z)$=0. Otherwise, for every edge of $J^{(k)}$ that contains $Z$, there exist $S\in\mathbf{G}^{(k+s)}$ such that $Z\cup \{F\} \subseteq S$. Conversely, every edge $S\in\mathbf{G}^{(k+s)}$ induces at most $\binom{k+s}{s}$ edges in $J^{(k)}$, and it follows from~\cref{lem:ExtendPacking} that 
    \begin{align*}
        d_{J^{(k)}}(Z) \leq \binom{k+s}{s}\size*{\set*{S\in\mathbf{G}^{(k+s)}\colon Z\cup \{F\}\subseteq S}} = O\parens*{n^{(k+s)-(i+1)-1}} ,
    \end{align*}
    and
    \[\Delta_i(J^{(k)})=O\parens*{n^{k+s-i-2}}.\]
    
    Similarly we obtain that
    \begin{align*}
        e(J^{(k)}) \leq \binom{k+s}{s}\size*{\set*{S\in\mathbf{G}^{(k+s)}\colon \{F\}\subseteq S}} = O\parens*{\frac{n^{k+s-1}}{\log^{ag} n}}\leq \kappa_k,
    \end{align*}        
    using $k\leq g-s\leq g-2$.
    We apply~\cref{cor:KimVu} to the $k$-uniform multi-hypergraph $J^{(k)}$. Observe that,
    \[N:=v(J^{(k)})= v(\mathbf{G})= n^3.\]    

    For $i\in[k]$, we check that $(*_i)$ holds. 
    \[
    \frac{\Delta_i(J^{(k)})}{\kappa_k}\log^{4k+2}N
    = O\parens*{n^{k+s-i-2}}\cdot \frac{4g}{\alpha}\cdot\frac{\log^{ak-1}n}{n^{k+s-1}}\cdot\log^{4k+2}n = O\parens*{\frac{\log^{a(k+1)} n}{n^{i+1}}}
    \leq \frac{\log^{ai}n}{n^i}
    =p^i.
    \]

    It follows from~\cref{cor:KimVu} that we have,
    \[e(J^{(k)}_p)>2p^k\kappa_k = \frac{\alpha}{2g}\cdot\frac{\log^{ak} n}{n^k}\cdot\frac{n^{k+s-1}}{\log^{ak-1} n} =   \frac{\alpha}{2g}n^{s-1}\log n,\]
    with probability at most $e^{-\log^2 N}$. We obtain,
        \begin{align*}
            \Prob{\exists s\in \{2,\ldots,g\}, \Delta_{1}(\cH^{(s)})>\alpha n^{s-1}\log n}
            &\leq g\cdot \max_{s\in \{2,\ldots,g\}}\Prob{\Delta_{1}(\cH^{(s)})>\alpha n^{s-1}\log n}\\
            &\leq g\cdot \max_{s\in \{2,\ldots,g\}}\Prob{\exists F\in\mathbf{D}\text{ s.t. } d_{\cH^{(s)}}(F)>\alpha n^{s-1}\log n}.
        \end{align*}  
        Remark that, by~\cref{lem:ExtendPacking}, for every $F\in\mathbf{D}$, we have $d_{\mathbf{G}^{(s)}}(F)=O\parens*{n^{s-1}}$, and therefore
        \begin{align*}
        \Prob{\exists s\in \{2,\ldots,g\}, \Delta_{1}(\cH^{(s)})>\alpha n^{s-1}\log n}
            &\leq g\cdot \Prob{\exists F\in\mathbf{D}\text{ s.t. } d_{\cH^{(s)}}(F)-d_{\mathbf{G}^{(s)}}(F)>\frac{\alpha}{2} n^{s-1}\log n}\\
            &\leq g\cdot n^3\cdot \max_{s,F}\Prob{d_{\cH^{(s)}}(F)-d_{\mathbf{G}^{(s)}}(F)>\frac{\alpha}{2} n^{s-1}\log n}\\
            &\leq g\cdot n^3 \cdot \max_{s,F}\Prob{\sum_{k=1}^{g-s} e(J_p^{(k)})> \frac{\alpha}{2} n^{s-1}\log n}\\
            &\leq g^2\cdot n^3 \cdot \max_{s,F}\Prob{e(J_p^{(k)})> \frac{\alpha}{2g} n^{s-1}\log n}\\     
            &=g^2\cdot n^3 \cdot \max_{s,F}\Prob{e(J_p^{(k)})> 2\kappa_k p^k}\\            
            &\leq g^2\cdot n^3 \cdot e^{-\log^2 N}\\
            &<\sigma/10
        \end{align*}
        given that $N>n$ is large enough.   
    \end{proofclaim}


    \begin{claim}\label{cl:RegularityConfigCoDegreeHolds}
        \cref{prop:QuantumRegConfigCoDegree}, ``For all $t<s$ in $\{2,\ldots,g\}$ we have $\Delta_{t}(\cH^{(s)})\leq n^{s-t-\beta}$'', holds with probability at least $1-\sigma/10$.    
    \end{claim}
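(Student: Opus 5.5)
We follow the same template as \cref{cl:RegularityConfigDegreeHolds}: a union bound over $t<s$, over $t$-sets $Q$, and over edge sizes $k$, with the excess codegree controlled by a multi-hypergraph $J$ and \cref{cor:KimVu}.

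\textbf{Setup.} Fix $2\le t<s\le g$ and a $t$-set $Q\subseteq V(\mathbf{G})$ of triangles. If $Q$ is not a packing, then $d_{\cH^{(s)}}(Q)=0$, so we assume $Q$ is a packing. Every edge $P\in\cH^{(s)}$ containing $Q$ falls into one of two cases:
\begin{itemize}
\item $P\in\mathbf{G}^{(s)}$. Since $t\ge 2$, \cref{lem:ExtendPacking} gives $O(n^{s-t-1})$ such edges, which is far below $\frac12 n^{s-t-\beta}$ once $\beta<1$.
\item $P\subsetneq S\subseteq M\cup P$ for some $S\in\mathbf{G}$ and $M\in\cM(\cK)$ with $P\cap M=\emptyset$. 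In this case $S\setminus P$ is a nonempty $Y$-enclosed set of triangles lying in $\cK\subseteq\cV_p$.
\end{itemize}

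\textbf{The hypergraph $J$.} Let $J=J(Q)$ have vertex set $V(\mathbf{G})$ and edges $S\setminus P$, taken over all $P\supseteq Q$ with $|P|=s$ and all $S\in\mathbf{G}$ with $P\subsetneq S\subseteq M\cup P$ for some $M\in\cM(\Omega)$ and $P\cap M=\emptyset$. Then
\[
d_{\cH^{(s)}}(Q)-d_{\mathbf{G}^{(s)}}(Q)\le \sum_{k=1}^{g-s} e(J_p^{(k)}).
\]

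\textbf{Counting.} Fix $k\in[g-s]$. Each $S\in\mathbf{G}^{(k+s)}$ with $Q\subseteq S$ produces at most $\binom{k+s}{s}$ edges of $J^{(k)}$. Applying \cref{lem:ExtendPacking} with $Z=Q$ (so $|Z|=t\ge 2$) gives
\[
e(J^{(k)})=O(n^{k+s-t-1}).
\]
For the codegrees, take $Z'\subseteq V(J)$ with $|Z'|=i\le k$ and $Z'\cap Q=\emptyset$. Applying \cref{lem:ExtendPacking} with $Z=Q\cup Z'$ (size $t+i$) gives
\[
\Delta_i(J^{(k)})=O(n^{k+s-t-i-1}).
\]
When $t+i=k+s$ the count is $O(1)$; this is consistent with the formula, because in that case $Q\cup Z'$ must itself be the configuration.

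\textbf{Applying \cref{cor:KimVu}.} We need $2p^k\kappa_k\le n^{s-t-\beta}/(2g)$, so we set
\[
\kappa_k:=\frac{n^{k+s-t-\beta}}{4g\log^{ak}n}.
\]
Since $\beta<1$, this satisfies $\kappa_k\ge e(J^{(k)})$, with room to spare of a factor $n^{1-\beta}/\mathrm{polylog}$. The condition $(*_i)$ then reads
\[
\frac{\Delta_i(J^{(k)})}{\kappa_k}\log^{4k+2}n=O\!\left(n^{-i-1+\beta}\log^{ak+4k+2}n\right)\le \frac{\log^{ai}n}{n^i},
\]
which holds because $\beta<1$ absorbs all polylogarithmic factors. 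So $e(J_p^{(k)})\le n^{s-t-\beta}/(2g)$ fails with probability at most $e^{-\log^2 n^3}$. A union bound over the $\le g^2$ pairs $(t,s)$, the $\le n^{3g}$ choices of $Q$, and $k$ still leaves total failure probability $o(1)<\sigma/10$.

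\textbf{Main obstacle.} The delicate step is the bound $\Delta_i(J^{(k)})$ when $Q\cup Z'$ already contains a configuration, or is not a packing. In those cases $d_J=0$, because $\mathbf{G}$ consists of Erd\H{o}s configurations, which by minimality contain no smaller configuration; this is the same device used in \cref{lem:ExtendPacking}. A second point to check is that the gain from $t\ge 2$ in \cref{lem:ExtendPacking} (an extra factor $n^{-1}$) is what makes the exponent $s-t-\beta$ attainable. Unlike \cref{cl:RegularityConfigDegreeHolds}, no $\log^{ag}$ saving from $\Delta(Y)$ is needed here; we rely only on $\beta<1$.
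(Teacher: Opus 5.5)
Your proposal is correct and follows the paper's proof essentially step for step: the same multi-hypergraph $J(Q)$, the same $\kappa_k=n^{k+s-t-\beta}/(4g\log^{ak}n)$, the bounds $e(J^{(k)})=O(n^{k+s-t-1})$ and $\Delta_i(J^{(k)})=O(n^{k+s-t-i-1})$ from \cref{lem:ExtendPacking} applied to $Q\cup Z'$, and the same union bound over $(t,s)$, $Q$ and $k$. The only superfluous point is your remark on the case $t+i=k+s$, which cannot occur since $t<s$ and $i\le k$.
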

    \begin{proofclaim} 

    Fix integers $2\leq t<s\leq g$ and a packing $Q\subseteq V(\mathbf{G})$ of size $t$. Let $J=J(Q)$ be the multi-hypergraph defined by $V(J)=V(\mathbf{G})$ and 
    \[E(J) = \bigcup_{\substack{P\subseteq V(\mathbf{G})\\Q\subset P,\ |P|=s}}\set*{S\setminus P\colon S\in\mathbf{G},\text{ s.t. }\exists M\in\cM(\Omega), P\subsetneq S\subseteq M\cup P,\text{ and } P\cap M=\emptyset}.\]

    Observe that the definition of the multi-hypergraph $J$ is almost identical to the one in the proof of~\cref{cl:RegularityConfigDegreeHolds}, taking the union over all packings $P$ containing the fixed packing $Q$ instead of a fixed graph $F$. Then, every edge in $J$ has size in $\{1,\ldots, g-s\}$. Let $J_p$ be the random subhypergraph of $J$, induced on the vertex set $\cV_p$. Assume that there exists an edge $P\in\cH^{(s)}$ with $Q\subset P$, such that $P\notin\mathbf{G}^{(s)}$. Then there exists $S\in\mathbf{G}$ such that $P\subsetneq S\subseteq M\cup P$ for some $M\in\cM(\cK)$. It follows that $S\setminus P$ is an edge of $J_p$, and
    \[d_{\cH^{(s)}}(Q) - d_{\mathbf{G}^{(s)}}(Q)\leq \sum_{k=1}^{g-s}e(J^{(k)}_p).\]    

    Fix $k\in[g-s]$ and let 
    \[\kappa_k:=\frac{n^{k+s-t-\beta}}{4g\cdot\log^{ak} n}.\]
    Let $Z\subseteq V(\mathbf{G})$ with $|Z|=i\in[k]$. Note that if $Z$ is not a packing, or if $Q\cap Z\neq\emptyset$, then $d_J(Z)$=0. Otherwise, for every edge of $J^{(k)}$ that contains $Z$, there exist $S\in\mathbf{G}^{(k+s)}$ such that $Z\cup Q \subseteq S$. Conversely, every edge $S\in\mathbf{G}^{(k+s)}$ induces at most $\binom{k+s}{s}$ edges in $J^{(k)}$, and it follows from~\cref{lem:ExtendPacking} that
    \begin{align*}
        d_{J^{(k)}}(Z) \leq \binom{k+s}{s}\size*{\set*{S\in\mathbf{G}^{(k+s)}\colon Z\cup Q\subseteq S}} = O\parens*{n^{k+s-i-t-1}} 
    \end{align*}
    and
    \[\Delta_i(J^{(k)})=O\parens*{n^{k+s-i-t-1}}.\]

    Similarly we obtain that
    \begin{align*}
        e(J^{(k)}) \leq \binom{k+s}{s}\size*{\set*{S\in\mathbf{G}^{(k+s)}\colon Q\subseteq S}} = O\parens*{n^{k+s-t-1}}\leq \kappa_k,
    \end{align*}        
    where we used $\beta<1$ (and $n$ large enough) for the last inequality. We apply~\cref{cor:KimVu} to the $k$-uniform multi-hypergraph $J^{(k)}$. Observe that,
    \[N:=v(J^{(k)})= v(\mathbf{G})= n^3.\]    

    For $i\in[k]$, we check that $(*_i)$ holds, 
    \[
    \frac{\Delta_i(J^{(k)})}{\kappa_k}\log^{4k+2}N
    = O\parens*{n^{k+s-i-t-1}}\cdot \frac{4g\cdot \log^{ak}n}{n^{k+s-t-\beta}}\cdot\log^{4k+2}n = O\parens*{\frac{\log^{a(k+1)} n}{n^{i+1-\beta}}}
    \leq \frac{\log^{ai}n}{n^i}
    =p^i.
    \]
    
    We then apply~\cref{cor:KimVu} to the $k$-uniform hypergraph $J^{(k)}$, and we obtain
    \[e(J^{(k)}_p)>2p^k\kappa_k = \frac{1}{2g}n^{s-t-\beta},\]
    with probability at most $e^{-\log^2 N}$. Therefore
        \begin{align*}
            \Prob{\exists t<s\in \{2,\ldots,g\}, \Delta_{t}(\cH^{(s)})> n^{s-t-\beta}}
            &\leq g^2\cdot \max_{t<s}\Prob{\Delta_{t}(\cH^{(s)})>n^{s-t-\beta}}\\
            &\leq g^2\cdot \max_{t<s}\Prob{\exists \text{ a packing }Q\subset \mathbf{D}, |Q|=t, \text{ s.t. } d_{\cH^{(s)}}(Q)> n^{s-t-\beta}}.
        \end{align*}  
        Remark that, by~\cref{lem:ExtendPacking}, for every packing $Q$ of size $t$, we have $d_{\mathbf{G}^{(s)}}(Q)=O\parens*{n^{s-t-1}}$, and therefore
        \begin{align*}
        \Prob{\exists t<s\in \{2,\ldots,g\}, \Delta_{t}(\cH^{(s)})> n^{s-t-\beta}}
            &\leq g^2\cdot n^{3g}\cdot \max_{t<s,Q}\Prob{d_{\cH^{(s)}}(Q)-d_{\mathbf{G}^{(s)}}(Q)>\frac{1}{2}  n^{s-t-\beta}}\\
            &\leq g^2\cdot n^{3g} \cdot \max_{t<s,Q}\Prob{\sum_{k=1}^{g-s} e(J_p^{(k)})> \frac{1}{2}  n^{s-t-\beta}}\\
            &\leq g^3\cdot n^{3g} \cdot\max_{t<s,Q}\Prob{e(J_p^{(k)})> \frac{1}{2g} n^{s-t-\beta}}\\
            &= g^3\cdot n^{3g} \cdot \max_{t<s,Q}\Prob{e(J_p^{(k)})> 2\kappa_k p^k}\\            
            &\leq g^3\cdot n^{3g} \cdot e^{-\log^2 N}\\
            &<\sigma/10
        \end{align*}
        given that $N>n$ is large enough.
    \end{proofclaim}


    \begin{claim}\label{cl:RegularityConfigMaxCommon2DegHolds}
        \cref{prop:QuantumRegMaxCommon2Deg}, ``The maximum common $2$-degree of $\cH$ with respect to $\cG_1$ is at most $n^{1-\beta}$'', holds with probability at least $1-\sigma/10$.    
    \end{claim}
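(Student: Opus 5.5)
We follow the same template as the previous claims: a union bound over pairs of vertex-disjoint triangles $F_1,F_2$ (at most $n^6$ pairs), and for each pair a multi-hypergraph $J=J(F_1,F_2)$ on $V(\mathbf{G})$ whose induced subgraph $J_p$ on $\cV_p$ controls the excess common $2$-degree. A triangle $F$ contributes to the common $2$-degree in $\cH$ only if $FF_1,FF_2\in E(\cH)$. By the definition of common projection, each pair $\{F,F_j\}$ is an edge of $\cH$ either because $\{F,F_j\}\in\mathbf{G}$ or because there are $S_j\in\mathbf{G}$ and $M_j\in\cM(\cK)$ with $\{F,F_j\}\subsetneq S_j\subseteq M_j\cup\{F,F_j\}$. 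Since $\mathbf{G}$ has no edges of size $2$ (its smallest edges are intercalates, of size $4$), the first case never occurs. So every contributing $F$ is witnessed by two sets $S_1\setminus\{F,F_1\}$ and $S_2\setminus\{F,F_2\}$, both entirely inside $\cK$.

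Define $J$ to have as edges all unions $(S_1\setminus\{F,F_1\})\cup(S_2\setminus\{F,F_2\})$, over all $F$ and all $S_1,S_2\in\mathbf{G}$ with $\{F,F_j\}\subsetneq S_j$ and $S_j\setminus\{F,F_j\}$ $Y$-enclosed, i.e. contained in some $M\in\cM(\Omega)$. These edges have sizes $k\in\{2,\dots,2g-4\}$. The common $2$-degree is then at most $\sum_k e(J_p^{(k)})$, and it suffices to show $e(J_p^{(k)})\le n^{1-\beta}/(2g)$ for each $k$ with probability $1-e^{-\log^2 N}$.

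For the edge count, apply \cref{lem:ExtendPacking} with $Z=\{F_1\}$: there are $O(n^{k_1-1})$ choices of $S_1$ of size $k_1$ containing $F_1$. Then, with $Z=\{F,F_2\}$, there are $O(n^{k_2-3})$ choices of $S_2\supseteq\{F,F_2\}$; this bound is $O(n^{k_2-3})$ whether or not $F$ meets $F_2$. This gives $e(J^{(k)})=O(n^{k_1+k_2-4})$ with $k=k_1+k_2-4$, so $e(J^{(k)})=O(n^{k})$.

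This is too big by a factor $n^{1-\beta}$ once multiplied by $p^k=\log^{ak}n/n^k$: it gives polylog, which is in fact fine since $\mathrm{polylog}\ll n^{1-\beta}$. So take $\kappa_k:=n^{k+1-\beta}/(4g\log^{ak}n)$, giving $2p^k\kappa_k=n^{1-\beta}/(2g)$. For the codegree conditions $(*_i)$, fixing $Z$ of size $i$ inside $S_1\cup S_2$ removes at least $i$ degrees of freedom by \cref{lem:ExtendPacking}, so $\Delta_i(J^{(k)})=O(n^{k-i})$. Thus $\Delta_i/\kappa_k\cdot\log^{4k+2}N=O(n^{-i-1+\beta}\mathrm{polylog})\le p^i$. \cref{cor:KimVu} then applies, and the union bound over $n^6$ pairs and $O(g^2)$ sizes finishes the argument.

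The main obstacle is the edge case where $Z$ meets both $S_1\setminus\{F,F_1\}$ and $S_2\setminus\{F,F_2\}$ while $F$ is not fixed, and $F$ is determined only through the union. There we must check the count via \cref{lem:ExtendPacking} applied to $Z_1\cup\{F_1\}$ and $Z_2\cup\{F_2\}$ separately, summing over the $O(n)$ choices of $F$ adjacent to the fixed structure. We expect the slack factor $n^{1-\beta}$ in $\kappa_k$ to absorb any single lost factor of $n$.
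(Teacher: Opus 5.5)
Your scheme is the paper's: a union bound over edge-disjoint pairs $F_1,F_2$, a multi-hypergraph $J$ whose edges are what remains of $S_1\cup S_2$ after removing the fixed triangles, the same $\kappa_k=n^{k+1-\beta}/(4g\log^{ak}n)$, and \cref{cor:KimVu}. Your variant of $J$, which may retain $F_1$ or $F_2$ when it lies in the other configuration, is harmless. However, the counting, which is the real content of this claim, has a gap.

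Your edge count pairs $O(n^{k_1-1})\cdot O(n^{k_2-3})$ with an edge of size $k=k_1+k_2-4$. That identity holds only when $S_1\cap S_2=\{F\}$, and nothing prevents the two Erd\H{o}s configurations from sharing further triangles. With $k_3:=|S_1\cap S_2|$ the edge has size $k=k_1+k_2-k_3-3$, so your bound only gives $O(n^{k+k_3-1})$. For $k_3\geq 2$ this is at least $n^{k+1}$, which exceeds $\kappa_k$ by more than $n^{\beta}$.

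The missing idea is the paper's: condition on $S_3:=S_1\cap S_2$. Choose $F$ and $S_3\ni F$ inside the already counted $S_1$ in $O(1)$ ways. Then count $S_2$ via \cref{lem:ExtendPacking} as an extension of $S_3\cup\{F_2\}$ rather than of $\{F,F_2\}$. This recovers the $k_3-1$ lost factors of $n$. The degenerate case $S_2\subseteq S_1$ forces $S_1=S_2$ by minimality of Erd\H{o}s configurations and is handled separately. Equivalently, $V(S_1)\cap V(S_2)\supseteq V(S_3)$, and $V(S_3)$ has at least $k_3+2$ vertices as a subpacking of an Erd\H{o}s configuration. So $S_1\cup S_2$ spans at most $k_1+k_2-k_3+2$ vertices, at least five of which are fixed by $F_1\cup F_2$; this gives $e(J^{(k)})=O(n^k)$ directly.

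Your codegree bound rests on the same count and is only asserted. Moreover, your proposed way around the ``main obstacle'' does not work. With your $\kappa_k$, condition $(*_i)$ tolerates $\Delta_i(J^{(k)})$ only up to about $n^{k-i+1-\beta}$ (up to logarithmic factors), so a lost factor of $n$ is \emph{not} absorbed by the slack.

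No factor needs to be lost, provided $F$ is never summed over independently. Split $Z=Z_1\sqcup Z_2\sqcup Z_3$ according to whether its triangles lie in $S_1\setminus S_2$, $S_2\setminus S_1$ or $S_1\cap S_2$. Count $S_1\supseteq\{F_1\}\cup Z_1\cup Z_3$ first. Then pick $F$ and $S_3\supseteq\{F\}\cup Z_3$ inside $S_1$ in $O(1)$ ways. Finally count $S_2\supseteq S_3\cup\{F_2\}\cup Z_2$. Each triangle of $Z$ then saves a factor $n$ in one of the two applications of \cref{lem:ExtendPacking}, which gives $\Delta_i(J^{(k)})=O(n^{k-i})$. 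With these two repairs the rest of your argument goes through.
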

    \begin{proofclaim} 
    Recall that, following~\cref{def:commontwodegree}, the maximum common $2$-degree of $\cH$ with respect to $\cG_1$ is
    \[\Delta^{(2)}_{\cG_1}(\cH) := \max_{\substack{F_1,F_2\in V(\cH)\\E(F_1)\cap E(F_2)=\emptyset}}\size*{\set*{F\in V(\cH)\colon \{FF_1,FF_2\}\subseteq E(\cH)}},\]
    and that 
        \[E(\cH)=\set*{P\subseteq V(\cH)\colon |P|\geq 2,~\exists S\in\mathbf{G},\exists M\in\cM(\cK) \text{ s.t. } P\subseteq S\subseteq M\cup P\text{ and } P\cap M=\emptyset}.\]

    Fix $F_1,F_2$, two copies of $K_3$ in $V(\mathbf{G})$ that are vertex-disjoint in $\cG_1$, hence edge-disjoint in $K_{n,n,n}$. Let $J=J(F_1,F_2)$ be the multi-hypergraph defined by $V(J)=V(\mathbf{G})$ and 
  
    \begin{align*}
    E(J)=\bigcup_{F\in V(\mathbf{G})} \bigcup_{k_1,k_2=4}^g\Big\{(S_1\cup S_2)\setminus\set{F,F_1,F_2}\colon 
    &S_i\in\mathbf{G}^{(k_i)}, \exists M_i\in\cM(\Omega)\text{ s.t. }
    \{F,F_i\}\cap M_i = \emptyset\text{ and }\\
    &\{F,F_i\}\subseteq S_i\subseteq M_i\cup\{F,F_i\}, \forall i\in\{1,2\}
    \Big\}.    
    \end{align*}

    Observe that every edge of $J$ has size $k\in\{1,\ldots,2g-3\}$. Let $J_p$ be the random subhypergraph of $J$, induced on the vertex set $\cV_p$. It is easy to see that once again we have, as desired, 
    \[\size*{\set*{F\in V(\cH)\colon \{FF_1,FF_2\}\subseteq E(\cH)}}\leq e(J_p).\]

    Fix $k\in\{1,\ldots,2g-3\}$, and let \[\kappa_k=\frac{1}{4g}\cdot\frac{n^{k+1-\beta}}{\log^{ak} n}.\]
    
    For a given edge $P$ of $J^{(k)}$, there exist $F$ and packings $S_1,S_2$, with respective size $k_1,k_2$, such that $P=(S_1\cup S_2)\setminus\set{F,F_1,F_2}$. Let $S_3=S_1\cap S_2$ and $k_3=|S_3|$. Then we have
    \[k=k_1+k_2-k_3-3,\quad k_1,k_2\in\{4,\ldots,g\},\quad k_3\in\{1,\ldots,\min\{k_1,k_2\}\}, \]
    where we used that $F\in(S_1\cap S_2)$, hence $k_3>0$. It follows that $4\leq k_1\leq k+3$. Observe further that, if $k_1=k+3$, then $k_2=k_3$, hence $S_2\subseteq S_1$, and as $S_1,S_2\in\mathbf{G}$, we have by minimality that $S_1=S_2$.

    For a fixed graph $F\in V(\mathbf{G})$ and a fixed $k_1\in\{4,\ldots,k+3\}$, let 
    \[\cS_1(k_1, F) = \Big\{S_1\in\mathbf{G}, \exists M_1\in\cM(\Omega)\text{ s.t. }
    \{F,F_1\}\cap M_1 = \emptyset\text{ and }
    \{F,F_1\}\subseteq S_1\subseteq M_1\cup\{F,F_1\} \Big\},    \]
    and for a fixed set of triangles $Q$, let
    \[\cS_1'(k_1, Q) = \Big\{S_1\in\mathbf{G}, Q\subseteq S_1 \text{ and }\exists F'\in S_1\text{ s.t. }|E(F')\cap E(Y)|=1.\Big\}. \]
    Observe that, as $\cM(\Omega)$ is $Y$-enclosed and $k_1\geq 4$, then
    \[\cS_1(k_1,F)\subseteq \cS'_1(k_1,\{F,F_1\})\subseteq \cS'_1(k_1,\{F_1\}).\]

    We then have 
    \[
    e(J^{(k)})\leq
    \sum_{F\in V(\mathbf{G})}\
    \sum_{k_1=4}^{k+3} \
    \sum_{S_1\in\cS_1(k_1,F)}\
    \sum_{\substack{S_3\subseteq S_1\\ F\in S_3}} N(k,k_1,S_3)
    ,\]
    where
    \[N(k,k_1,S_3) := \size*{\set*{S_2\in\mathbf{G}^{(k_2)}\colon k_2:=k-k_1+|S_3|+3,\exists M_2\in\cM(\Omega), (S_3\cup F_2)\subseteq S_2\subseteq M_2\cup S_3\cup F_2 }}.\]
    Therefore
    \begin{align}
        e(J^{(k)})
        &\leq
            \sum_{F\in V(\mathbf{G})}\
            \sum_{k_1=4}^{k+3} \
            \sum_{S_1\in\cS'_1(k_1,\{F,F_1\})}\
            \sum_{\substack{S_3\subseteq S_1\\ F\in S_3}} N(k,k_1,S_3)\nonumber\\
        &\leq
            \sum_{k_1=4}^{k+3} \
            \sum_{S_1\in\cS'_1(k_1,\{F_1\})}\
            \sum_{F\in S_1}\
            \sum_{\substack{S_3\subseteq S_1\\ F\in S_3}} N(k,k_1,S_3)\label{eq:P8LongSum}
    \end{align}

    For fixed $k_1,S_1,F,S_3$, if $k_1=k+3$, then $k_2=|S_3|$, hence 
    \[N(k,k_1,S_3)= 
    \begin{cases}
        1 &\text{ if }S_3=S_1,\\
        0 &\text{ otherwise. }
    \end{cases}.\]
    Then, if $k_1<k+3$, by~\cref{lem:ExtendPacking},
    \[N(k,k_1,S_3) =
    \begin{cases}
        O\parens*{n^{k_2-|S_3|-1}} = O\parens*{n^{k-k_1+2}}&\text{ if } F_2\in S_3,\\
        O\parens*{n^{k_2-|S_3|-2}} = O\parens*{n^{k-k_1+1}}&\text{ otherwise.}
    \end{cases}\]

    We therefore split~\cref{eq:P8LongSum} between
    \begin{itemize}
        \item $\aleph_1$, for $k_1=k+3$,
        \item $\aleph_2$, $k_1<k+3$ and $F_2\in S_1$, 
        \item $\aleph_3$, $k_1<k+3$ and $F_2\notin S_1$.
    \end{itemize}

    First we have, by~\cref{lem:ExtendPacking},
    \begin{align*}
    \aleph_1 &\leq 
    \sum_{\substack{S_1\in\cS'_1(k+3,\{F_1\})\\F_2\in S_1}}\
    \size{\set{F\in S_1}}\\
    &= O\parens*{n^{(k+3)-3}}\cdot (k+3) \\
    &= O\parens*{n^k}.
    \end{align*}
    
    Then
    \begin{align*}
    \aleph_2&\leq 
    \sum_{k_1=4}^{k+2} \
    \sum_{\substack{S_1\in\cS'_1(k_1,\{F_1\})\\F_2\in S_1}}\
    \sum_{F\in S_1}\
    \sum_{\substack{S_3\subseteq S_1\\ F\in S_3}} N(k,k_1,S_3)\\
    &=
    \sum_{k_1=4}^{k+2} n^{k_1-3}\cdot k_1\cdot 2^{k_1}\cdot O\parens*{n^{k-k_1+2}}\\
    &=O\parens*{n^{k-1}}.
    \end{align*}

    And finally
    \begin{align*}
    \aleph_3&\leq 
    \sum_{k_1=4}^{k+2}\
    \sum_{\substack{S_1\in\cS'_1(k_1,\{F_1\})\\F_2\notin S_1}}\
    \sum_{F\in S_1}\
    \sum_{\substack{S_3\subseteq S_1\\ F\in S_3}} N(k,k_1,S_3)\\
    &=
    \sum_{k_1=4}^{k+2} \frac{n^{k_1-1}}{\log^{ag} n}\cdot k_1\cdot 2^{k_1}\cdot O\parens*{n^{k-k_1+1}}\\
    &=O\parens*{\frac{n^{k}}{\log^{ag} n}}.
    \end{align*}

    It follows that 
    \[e(J^{(k)})=O(n^k)\leq \frac{1}{4g}\cdot\frac{n^{k+1-\beta}}{\log^{ak} n} = \kappa_k.\]
    
    For $i\in[k]$, let $Z\subseteq V(\mathbf{G})$ with $|Z|=i$. Remark that if $Z$ is not a packing of $K_{n,n,n}$ or if $\{F_1,F_2\}\cap Z\neq\emptyset$, then $d_J(Z)=0$. Otherwise, if $Z$ is in an edge of $J$, then there exists $S_1,S_2\in\mathbf{G}$ such that $S_1\cup S_2$ contains $Z$. Then every element of $Z$ can be either in $S_1\setminus S_2$, in $S_2\setminus S_1$ or in $S_1\cap S_2$. The computation is $d_{J^{(k)}}(Z)$ is then similar to the one of $e(J^{(k)})$, summing additionally over all $3$-partitions of $Z=Z_1\sqcup Z_2\sqcup Z_3$, and requiring $Z_1\cup Z_3\subseteq S_1$ and $Z_2\cup Z_3\subseteq S_2$, hence $Z_3\subseteq S_3$. We obtain

        \[
    d_{J^{(k)}}(Z)\leq 
    \sum_{\substack{Z_1,Z_2,Z_3\subseteq Z\\ Z_1\sqcup Z_2\sqcup Z_3 = Z}}
    \sum_{k_1=4}^{k+3} 
    \sum_{\substack{S_1\in\cS'_1(k_1,Q)\\Q:=\{F_1\}\cup Z_1\cup Z_3}}
    \sum_{F\in S_1}
    \sum_{\substack{S_3\subseteq S_1\\ (\{F\}\cup Z_3)\subseteq S_3}} N(k,k_1,S_3,Z_2)
    ,\] 
    
    where $N(k,k_1,S_3,Z_2)$ is as $N(k,k_1,S_3)$ replacing
    \begin{align*}
        (S_3\cup F_2)&\subseteq S_2\subseteq (M_2\cup S_3\cup F_2),
        \shortintertext{by}
        (S_3\cup F_2\cup Z_2)&\subseteq S_2\subseteq (M_2\cup S_3\cup F_2\cup Z_2),      
    \end{align*}
    that is
    \[N(k,k_1,S_3,Z_2) := \size*{\set*{S_2\in\mathbf{G}^{(k_2)}\colon k_2:=k-k_1+|S_3|+3,\exists M_2\in\cM(\Omega), (S_3\cup F_2\cup Z_2)\subseteq S_2\subseteq M_2\cup S_3\cup F_2\cup Z_2 }}.\]    
    The rest of the computation is as done for $e(J^{(k)})$, gaining at least a factor $n^{-|Z_1|-|Z_3|}$ in the summation over $\cS'_1$, and a factor $n^{-|Z_2|}$  in $N(k,k_1,S_3,Z_2)$, hence we obtain
    \[\Delta_i(J^{(k)}) = O\parens*{n^{k}\cdot n^{-|Z_1|-|Z_2|-|Z_3|}}= O\parens*{n^{k-i}}.\]

    We verify that $(*_i)$ holds, with $N:=v(J)= n^3$,
    \[\frac{\Delta_i(J^{(k)})}{\kappa_k}\log^{4k+2}N =O\parens*{n^{k-i}}\cdot\frac{\log^{ak} n}{n^{k+1-\beta}}\cdot\log^{4k+2}n = O\parens*{\frac{\log^{a(k+1)} n}{n^{i+1-\beta}}}\leq\frac{\log^{ai}n}{n^i}=p^i,\]
    where we used $a=8g\geq 4k+2$, and $\beta\in(0,1)$.  
    By~\cref{cor:KimVu}, we have 
    \[e(J^{(k)}_p)>  2p^k\kappa_k = \frac{\log^{ak} n}{n^k}\cdot\frac{1}{2g}\cdot\frac{n^{k+1-\beta}}{\log^{ak} n}= \frac{1}{2g} n^{1-\beta}\]
    with probability at most $e^{-\log^2 N}$. We obtain,

        \begin{align*}
            \Prob{\Delta^{(2)}_{\cG_1}(\cH)>n^{1-\beta}}
            &\leq\Prob{\exists \text{ disjoint }F_1,F_2\in V(\cH), \size*{\set*{F\in V(\cH)\colon \{FF_1,FF_2\}\subseteq E(\cH)}>n^{1-\beta} }}\\
            &\leq O(n^6)\cdot\max_{F_1,F_2}\Prob{\size*{\set*{F\in V(\cH)\colon \{FF_1,FF_2\}\subseteq E(\cH)}>n^{1-\beta} }}\\
            &\leq O(n^6)\cdot\max_{F_1,F_2}\Prob{e(J_p)>n^{1-\beta} }\\
            &\leq O(n^6)\cdot(2g)\cdot\max_{F_1,F_2}\Prob{e(J^{k}_p)> \frac{1}{2g} n^{1-\beta} }\\         
            &\leq  O(n^6) \cdot(2g) \cdot e^{-\log^2 N}\\
            &<\sigma/10
        \end{align*}
        given that $N>n$ is large enough.  
    \end{proofclaim}


    \begin{claim}\label{cl:RegularityConfigTwoCodegreeHolds}
        \cref{prop:QuantumTwoCodegrees}, ``The maximum $2$-codegree of $\cG_1$ with $\cH$ is at most $n^{1-\beta}$'', holds with probability at least $1-\sigma/10$.    
    \end{claim}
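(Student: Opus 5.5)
We follow the template of the previous claims: bound the target count by $e(J_p)$ for an auxiliary multi-hypergraph $J$ on $V(\mathbf{G})$, apply Kim--Vu (\cref{cor:KimVu}) to each uniform part, and take a union bound. By \cref{def:icodegree}, the $2$-codegree of a vertex $e\in V(\cG_1)=E(K_{n,n,n})$ and a triangle $F_1\in E(\cG_1)$ with $e\notin F_1$ is the number of $2$-edges $\{F_1,F\}\in\cH$ with $e\in F$. We therefore fix such a pair $(e,F_1)$; there are at most $O(n^5)$ of them. The pair $\{F_1,F\}$ is an edge of $\cH$ exactly when some $S\in\mathbf{G}$ and $M\in\cM(\cK)$ satisfy $\{F,F_1\}\subseteq S\subseteq M\cup\{F,F_1\}$ and $\{F,F_1\}\cap M=\emptyset$. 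Every $S\in\mathbf{G}$ has size at least $4$, so $S$ always contains at least two further triangles, and these lie in $\cK$.

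We define $J=J(e,F_1)$ by
\[E(J)=\bigcup_{F\in\mathbf{D},\,e\in F}\set*{S\setminus\{F,F_1\}\colon S\in\mathbf{G},\ \exists M\in\cM(\Omega),\ \{F,F_1\}\cap M=\emptyset,\ \{F,F_1\}\subseteq S\subseteq M\cup\{F,F_1\}}.\]
This gives a $2$-codegree of at most $e(J_p)$, and the edges of $J$ have sizes $k\in\{2,\ldots,g-2\}$. For fixed $k$, any such $S$ has size $k+2$, contains $F_1$, and contains a $Y$-enclosed triangle other than $F_1$. The triangle $F$ is one of at most $n$ triangles through $e$. For each of them, \cref{lem:ExtendPacking} with $Z=\{F_1,F\}$ gives $O(n^{k+2-2-1})=O(n^{k-1})$ choices of $S$ when $F_1,F$ are edge-disjoint; otherwise the pair is not a packing and contributes nothing. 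Hence $e(J^{(k)})=O(n^{k})$. One could sharpen this with the $Y$-enclosed condition, but $O(n^k)$ already suffices.

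For codegrees, fix $Z$ with $|Z|=i\le k$ a packing disjoint from $F_1$ and containing no triangle through $e$. Applying \cref{lem:ExtendPacking} to $Z\cup\{F_1,F\}$ of size $i+2$, and summing over the at most $n$ choices of $F$, gives $\Delta_i(J^{(k)})=n\cdot O(n^{k+2-(i+2)-1})=O(n^{k-i})$. When $i=k$, $Z\cup\{F_1\}$ has at most one completing vertex set, so only $O(1)$ choices of $F$ remain. We set $\kappa_k:=\frac{1}{2g}\,n^{k+1-\beta}/\log^{ak}n$; then $e(J^{(k)})\le\kappa_k$ for large $n$ since $\beta<1$. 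Checking $(*_i)$ gives $\frac{\Delta_i}{\kappa_k}\log^{4k+2}N=O(\log^{a(k+1)}n/n^{i+1-\beta})\le p^i$, exactly as in \cref{cl:RegularityConfigMaxCommon2DegHolds}. Kim--Vu then shows that $e(J^{(k)}_p)>2p^k\kappa_k=\frac1g n^{1-\beta}$ with probability at most $e^{-\log^2 N}$. Summing over the at most $g$ values of $k$ and union bounding over the $O(n^5)$ pairs $(e,F_1)$ gives total failure probability below $\sigma/10$.

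The main obstacle is the degree count, namely checking that \cref{lem:ExtendPacking} applies when $Z$ is combined with the pair $\{F,F_1\}$ whose vertex sets may overlap. If $Z\cup\{F,F_1\}$ already has girth at most its size, minimality of Erd\H{o}s configurations leaves no extension and the term vanishes, so the lemma's case analysis covers this. There is also slack here: the bound $O(n^{k-i})$ against $\kappa_k\approx n^{k+1-\beta}$ leaves a spare factor of $n^{1-\beta}$.
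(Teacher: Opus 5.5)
Your proposal is correct and follows the paper's proof essentially step for step. You use the same auxiliary multi-hypergraph $J(e,F_1)$ with edge sizes in $\{2,\ldots,g-2\}$, the same bounds $e(J^{(k)})=O(n^k)$ and $\Delta_i(J^{(k)})=O(n^{k-i})$ from \cref{lem:ExtendPacking} (with the $O(1)$ closing-vertex argument when $i=k$), the same choice of $\kappa_k$, and the same application of \cref{cor:KimVu} followed by a union bound over the $O(n^5)$ pairs $(e,F_1)$ and the $g$ values of $k$.
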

    
    \begin{proofclaim} 

    Recall that, following~\cref{def:icodegree}, the maximum $2$-codegree of $\cG_1$ with $\cH$ is
    \begin{align*}
     \Delta^{(2)}(\cG_1,\cH)&= \max_{\substack{e\in V(\cG_1), F\in E(\cG_1)\\e\not\in F}}\size*{\set*{S\in \cH^{(2)}\colon F\in S \text{ and } \exists F'\in S \text{ s.t }e\in F'}}\\    
    & = \max_{\substack{e\in V(\cG_1), F\in E(\cG_1)\\e\not\in F}}\size*{\set*{FF'\in \cH^{(2)}\colon e\in F'}},
    \end{align*}
    
    and that 
    \[E(\cH)=\set*{P\subseteq V(\cH)\colon |P|\geq 2,~\exists S\in\mathbf{G},\exists M\in\cM(\cK) \text{ s.t. } P\subseteq S\subseteq M\cup P\text{ and } P\cap M=\emptyset}.\]

    Fix $e\in V(\mathbf{D})$ and $F\in E(\mathbf{D})$ with $e\not\in F$. Let $J=J(e,F)$ be the multi-hypergraph such that $V(J)=V(\mathbf{G})$ and 
    \[E(J) = \bigcup_{\substack{F'\in E(\mathbf{D})\\ e\in F'}}\set{ S\setminus\{F,F'\}\colon S\in\mathbf{G}, \exists M\in\cM(\Omega), \{F,F'\}\cap M=\emptyset, \{F,F'\}\subseteq S\subseteq \{F,F'\}\cup M }.\]

    Observe that every edge in $J$ has size in $\{2,\ldots,g-2\}$, and that
    \[\size*{\set*{FF'\in \cH^{(2)}\colon e\in F'}}\leq e(J_p).\]
    
    Fix $k\in\{2,\ldots,g-2\}$, and let 
    \[\kappa_k=\frac{1}{2g}\frac{n^{k+1-\beta}}{\log^{ak} n}.\]
    
    We obtain from~\cref{lem:ExtendPacking} that
    \[e(J^{(k)})\leq \sum_{\substack{F'\in E(\mathbf{D})\\ e\in F'}}
    \size*{\set*{S\in \mathbf{G}^{(k+2)}\colon F,F'\in S}}
    = n\cdot O\parens*{n^{(k+2)-2-1}}
    = O\parens*{n^{k}}<\kappa_k.\]

    Let $Z\subseteq V(\mathbf{G})$ with $|Z|=i\in[k]$. Observe that if $Z$ is not a $K_3$-packing, if $F\in Z$, or if $e\in E(Z)$, then $d_J(Z)=0$. Otherwise,
    \[d_{J^{(k)}}(Z)\leq \sum_{\substack{F'\in E(\mathbf{D})\\ e\in F'}} \size*{\set*{S\in \mathbf{G}^{(k+2)}\colon (Z\cup \{F,F'\})\subseteq S}}.\]
    If $i<k$ we obtain from~\cref{lem:ExtendPacking} that,   
    \begin{align*}
    d_{J^{(k)}}(Z)\leq n\cdot O\parens*{n^{(k+2)-(i+2)-1}} = O\parens*{n^{k-i}}.
    \end{align*}        
    If $i=k$, if $Z\cup \{F\}\in\mathbf{G}$ then $d_{J^{(k)}}(Z)=0$, otherwise let $F'\in E(\mathbf{D})$ such that $S=Z\cup \{F,F'\}\in\mathbf{G}^{(k+2)}$. We obtain that $Z\cup \{F\}$ is a set of $k+1$ triangles spanning at least $k+4$ vertices, while $Z\cup \{F,F'\}$ is a set of $k+2$ triangles spanning at most $k+4$ vertices. There are then only constantly many choices for $F'$, hence $d_{J^{(k)}}(Z)=O(1)=O(n^{k-i})$. Then, with $N:=v(J)= n^3$, we check that $(*_i)$ holds,
    \[ \frac{\Delta_i(J^{(k)})}{\kappa_k}\log^{4k+2}N
    = O\parens*{n^{k-i}}\cdot\frac{\log^{ak} n}{n^{k+1-\beta}}\cdot\log^{4k+2}n
    = O\parens*{\frac{\log^{a(k+2)}n}{n^{i+1-\beta}}}
    \leq p^i,\]
    using $a=8g>4k+2$ and $\beta\in(0,1)$. By~\cref{cor:KimVu}, we have,
    \[e(J^{(k)}_p)> 2p^k\kappa_k = 2\cdot\frac{\log^{ak} n}{n^k}\cdot\frac{1}{2g}\cdot\frac{n^{k+1-\beta}}{\log^{ak} n}= \frac{1}{g} n^{1-\beta} \]
    with probability at most $e^{-\log^2 N}$. We obtain,

        \begin{align*}
            \Prob{\Delta^{(2)}(\cG_1,\cH)>n^{1-\beta}}
            &\leq\Prob{\exists e\in V(\cG_1), F\in E(\cG_1),\text{ s.t. }  \size*{\set*{FF'\in \cH^{(2)}\colon e\in F'}}>n^{1-\beta} }\\
            &\leq3n^2\cdot n^3\cdot \max_{e,F}\Prob{\size*{\set*{FF'\in \cH^{(2)}\colon e\in F'}}>n^{1-\beta} }\\
            &\leq3n^5\cdot\max_{e,F}\Prob{e(J_p)>n^{1-\beta} }\\
            &\leq3n^5\cdot g \cdot\max_{e,F}\Prob{e(J^{k}_p)> \frac{1}{g} n^{1-\beta} }\\         
            &\leq3n^5\cdot g \cdot e^{-\log^2 N}\\
            &<\sigma/10
        \end{align*}
        given that $N>n$ is large enough.
    \end{proofclaim}


    \begin{claim}\label{cl:AbundantQuantumPacking}
        \cref{prop:QuantumAbundant}, ``The treasury $\cT$ is $g$-abundant'', holds with probability at least $1-\sigma/10$.    
    \end{claim}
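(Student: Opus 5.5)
Fix a copy $R$ of $K_3$ in $K_{n,n,n}$. We need to show that, with high probability (uniformly over all $R$, then a union bound over the at most $n^3$ choices of $R$), at most $n^{2g-3/2}$ of the $(R,g)$-spheres fail to be $\cH$-avoiding. A sphere $(B,\cBon,\cBoff,R)$ is not $\cH$-avoiding exactly when some $P\subseteq\cBon$ or $P\subseteq\cBoff$ is an edge of $\cH$. By the definition of $\cH$, this means there are $S\in\mathbf{G}$ and $M\in\cM(\cK)$ with $P\cap M=\emptyset$ and $P\subseteq S\subseteq P\cup M$.

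The plan splits into two cases according to whether $S=P$.

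\textbf{Case $S=P$.} Here $P\in\mathbf{G}$ lies inside $\cBon$ or $\cBoff$. By \cref{rem:GirthSpheres}, $\cBon$ and $\cBoff\cup\{R\}$ have girth at least $2g>g$, so no such $P$ exists. Hence only projected configurations matter.

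\textbf{Case $S\supsetneq P$.} Then $Q:=S\setminus P$ is a nonempty subset of $\cK$ made of $Y$-enclosed triangles. Define the multi-hypergraph $J=J(R)$ on $V(\mathbf{G})$ with edges $Q=S\setminus P$, ranging over:
\begin{itemize}
\item spheres $B$ rooted at $R$;
\item nonempty $P\subseteq\cBon$ or $P\subseteq\cBoff$ (at most $2^{4g-1}$ choices by \cref{rem:countingSpheres});
\item $S\in\mathbf{G}$ with $P\subsetneq S\subseteq P\cup M$ for some $M\in\cM(\Omega)$.
\end{itemize}
Each edge is counted with multiplicity equal to the number of spheres it spoils. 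Then the number of non-avoiding spheres is at most $e(J_p)$, and we split this as $\sum_{k=1}^{g-1}e(J_p^{(k)})$. For each $k$ we apply \cref{cor:KimVu} with a choice of $\kappa_k$ satisfying $2p^k\kappa_k\le n^{2g-3/2}/g$, namely
\[\kappa_k\approx n^{2g-3/2+k}/\log^{ak}n.\]

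\textbf{Counting $e(J^{(k)})$.} Fix $Q$ with $|Q|=k$ and $|P|=j$, so $|S|=j+k\le g$. The count combines three ingredients:
\begin{itemize}
\item \cref{lem:ExtendPacking}: $S$ spans at most $|S|+2$ vertices.
\item $Y$-enclosure: at least one triangle of $Q$ contains an edge of $Y$, which costs a factor $\Delta(Y)/n\le\log^{-ag}n$ relative to a free triangle.
\item The sphere structure: $P$ sits inside a sphere that has $2g-1$ free vertices.
\end{itemize}
The key point is that the vertices of $P$ outside $V(R)$ are shared between $S$ and $B$. Because $\cBon$ has rooted girth $2g$, a subfamily of $j$ triangles of the sphere spans at least $j+1$ vertices outside $V(R)$ (for $j<2g$), so fixing $P$ pins down many vertices of $B$. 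This gives
\[e(J^{(k)})=O\!\left(n^{2g-1}\cdot n^{k-1}\cdot\frac{\Delta(Y)}{n}\right)\ll\kappa_k.\]
The heuristic is that choosing $k$ new triangles closing a configuration saves at least one vertex, i.e.\ costs $n^{k-1}$, and the $\log^{-ag}$ factor absorbs the polylog loss in $\kappa_k$. If the saving only comes out as $n^{k-1/2}$, the slack $n^{2g-3/2}$ versus $n^{2g-1}$ still covers it.

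\textbf{Codegrees.} For $Z\subseteq V(J)$ with $|Z|=i\le k$, we need $\Delta_i(J^{(k)})\le n^{2g-3/2+k-i}$ up to polylogs, so that $(*_i)$ holds with $p^i=\log^{ai}n/n^i$. Each fixed triangle of $Z$ lying in $S$ removes roughly one free vertex, i.e.\ a factor $n$, as in \cref{lem:ExtendPacking}. The danger is when $Z$ also fixes vertices of the sphere, potentially saving fewer than $n$ per triangle.

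\textbf{Main obstacle.} This codegree bound is the step I expect to be hardest. The problematic configurations are those where $Z$'s triangles share vertices with $P$, so that fixing $Z$ fixes both sphere vertices and configuration vertices at once. I would handle it by a case analysis on
\[|V(Z)\cap(V(B)\setminus V(R))|.\]
The sphere count drops by $n^{-(\text{vertices fixed})}$. Since $Q$ triangles are $Y$-enclosed and edge-disjoint from $P$, each triangle of $Z$ fixes at least one new vertex overall. This should yield $\Delta_i=O(n^{2g-1+k-1-i})$, which beats the requirement with an $n^{1/2}$ margin.

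Finally, union-bounding over $k\le g$ and over all $R$ gives failure probability $O(n^3 g\,e^{-\log^2 n})<\sigma/10$.
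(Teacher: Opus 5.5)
Your overall scheme is the paper's: the multi-hypergraph $J(R)$ with edges $S\setminus P$, one application of \cref{cor:KimVu} per uniformity, and a union bound over the roots $R$. Your separate treatment of $S=P$ via \cref{rem:GirthSpheres} is correct; the paper uses this implicitly.

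\textbf{First gap: singleton $P$.} You let $P$ range over all \emph{nonempty} subfamilies of $\cBon$ or $\cBoff$. Edges of $\cH$ satisfy $|P|\ge 2$ by \cref{def:CommonProjection}; singletons correspond to deleted vertices of $\cH$, which are irrelevant to $\cH$-avoidance. For $|P|=1$ your heuristic that the $k$ closing triangles cost only $n^{k-1}$ is false. Here $S$ has $k+1$ triangles on at most $k+3$ vertices, and $P$ fixes only three of them. So \cref{lem:ExtendPacking} gives $O(n^{k-1}\Delta(Y))$ completions, a factor $n$ more than you claim. This is not just a lossy estimate. The probability that a fixed triangle lies in a configuration completed by triangles of $\cK$ can be as large as $\log^{-O(1)}n$. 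Hence the singleton terms alone can push $e(J_p)$ to order $n^{2g-1}\log^{-O(1)}n\gg n^{2g-3/2}$. Once you impose $|P|\ge 2$, $P$ already spans at least $|P|+3$ vertices. The case $i\ge 2$ of \cref{lem:ExtendPacking} then gives $O(n^{k-1})$ completions, so $e(J^{(k)})=O(n^{2g+k-2})$ with no use of $Y$ at all. Your $Y$-saving is not guaranteed anyway, since the $Y$-edge of a triangle of $S\setminus P$ may join two vertices of $P$. Also, rooted girth $2g$ gives $j$, not $j+1$, non-root vertices for $j$ sphere triangles; for example $(vb_1b_2)$ has only $b_2$.

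\textbf{Second gap: the codegree step.} You leave this as ``should yield'', and the justification you offer ($Y$-enclosure and edge-disjointness from $P$) does not provide the needed saving.

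For $i<k$, no saving on the sphere side is needed. Apply \cref{lem:ExtendPacking} to the packing $P\cup Z$, of size $|P|+i\ge 2$. This gives $O(n^{k-i-1})$ completions for each of the $O(n^{2g-1})$ pairs (sphere, $P$), so $\Delta_i(J^{(k)})=O(n^{2g+k-i-2})$.

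The decisive case is $i=k$. Then $S=P\cup Z$ is determined, and the trivial bound of $O(n^{2g-1})$ spheres is too weak for $(*_k)$. You must show that $Z$ pins a non-root vertex of the sphere, and this comes from girth, not from $Y$:
\begin{itemize}
\item For $k\ge 2$, $P$ and $Z$ are proper subpackings of an Erd\H{o}s configuration. So $|V(P)|\ge |P|+3$, $|V(Z)|\ge k+3$ and $|V(S)|\le |P|+k+2$, which force $|V(P)\cap V(Z)|\ge 4>|V(R)|$. This leaves only $O(n^{2g-2})$ spheres.
\item For $k=1$, write $Z=\{F\}$; then $V(F)\subseteq V(P)$, and you must still exclude $F=R$. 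If $P\subseteq\cBoff$, this follows from the girth of $\cBoff\cup\{R\}$. If $P\subseteq\cBon$, then $P\cup\{R\}$ would be a packing avoiding the three triangles of $\cBon$ that share an edge with $R$. This forces $V(P)\cap V(R)\subseteq\{v\}$, contradicting $V(R)\subseteq V(P)$.
\end{itemize}
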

    
    \begin{proofclaim} 
    
        We prove the stronger statement that, with probability at least $1-\sigma/10$, for every copy $R$ of $K_3$ in $K_{n,n,n}$, at most $n^{2g-2}\log^{8g^2}n$ copies of $(R,g)$-spheres are not $\cH$-avoiding, which directly implies the desired result as the definition of abundance is weaker, see~\cref{def:abundant}.

        Let $R$ be a copy of $K_3$ in $K_{n,n,n}$, and let $\cR$ be the family of all $(R,g)$-spheres in $K_{n,n,n}$. Observe that by~\cref{rem:countingSpheres}, we have that $|\cR|\leq 3 n^{2g-1}$. Let $J=J(R)$ be the multi-hypergraph such that
        \[E(J) = \bigcup_{(B,\cBon,\cBoff,R)\in\cR} \bigcup_{\cB^*\in\{\cBon,\cBoff\}}
        \bigcup_{\substack{P\subseteq \cB^*\\|P|\geq 2}}
        \set{ S\setminus P\colon S\in\mathbf{G}, \exists M\in\cM(\Omega), P\cap M=\emptyset, P\subsetneq S\subseteq P\cup M }.\]        
        
        Observe that every edge in $J$ has size in $\{1,\ldots, g-2\}$. Let $J_p$ be the random subhypergraph of $J$, induced on the vertex set $\cV_p$. Assume that there exists an $(R,g)$-sphere in $K_{n,n,n}$ that is not $\cH$-avoiding. Then there exists a packing $P$ either in $\cBon$ or in $\cBoff$ such that $P$ is an edge of $\cH$. Therefore, by definition of $\cH$, there exists $S\in\mathbf{G}$ and $M\in\cM(\cK)$ such that $P\subsetneq S\subseteq M\cup P$ and $P\cap M=\emptyset$. It follows that $S\setminus P$ is an edge of $J_p$, and therefore the total number of non $\cH$-avoiding $(R,g)$-spheres is at most $\sum_{k=1}^{g-2}e(J^{(k)}_p)$. Let $k\in\set{1,\ldots g-2}$ and let 
        $$\kappa_k := \frac{1}{2g}\cdot n^{2g+k-2}\cdot \log n.$$
        By~\cref{rem:countingSpheres,lem:ExtendPacking} we obtain that 
        \begin{align*}
            e(J^{(k)}) &= |\cR|\cdot \sum_{\substack{P\subseteq \cBon\cup\cBoff\\t:=|P|\geq 2}} \size*{\set{ S\in\mathbf{G}^{(k+t)}\colon P\subseteq S }}.\\
            &= 3n^{2g-1 }\cdot 2^{4g-1} \cdot O\parens*{n^{k-1}}\\
            &= O\parens*{n^{2g+k-2}}\\
            &\leq \kappa_k.
        \end{align*}
    
        Fix $Z\subseteq V(J)$ with $|Z|=i\in[k]$. If $Z$ is not a packing then $d_J(Z)=0$. Otherwise, by~\cref{lem:ExtendPacking}, if~$i<k$ we have
        \begin{align*}
            d_{J^{(k)}}(Z)
            &\leq \sum_{(B,\cBon,\cBoff,R)\in\cR} \sum_{\substack{P\subseteq \cBon\cup\cBoff\\t:=|P|\geq 2\\P\cap Z=\emptyset}} \set{ S\in\mathbf{G}^{(k+t)}\colon  P\cup Z\subseteq S}    \\
            &\leq 3n^{2g-1 }\cdot 2^{4g-1} \cdot O\parens*{n^{k-i-1}}\\
            &= O\parens*{n^{2g+k-i-2}}.\\     
        \end{align*}

        Suppose now that $i=k$, and fix a $k$-packing $Z$. Every occurrence of $Z$ as an edge of $J^{(k)}$ arises from an $(R,g)$-sphere, a choice $\cB^*\in\{\cBon,\cBoff\}$, a packing $P\subseteq\cB_\star$ with $t:=|P|\geq2$, and an Erd\H{o}s configuration $S=P\cup Z\in\mathbf G^{(t+k)}$. Assume first that $k\geq2$. Since $P$ and $Z$ are proper subpackings of $S$, we have $|V(P)|\geq t+3$ and $|V(Z)|\geq k+3$. As $|V(S)|=t+k+2$, we obtain $|V(P)\cap V(Z)|\geq4$. Since $|V(R)|=3$, some vertex of the fixed packing $Z$ lies in $V(P)\setminus V(R)$, and hence occupies a non-root position of the sphere. There are therefore only $O_g(n^{2g-2})$ possible spheres.
        
        If $k=1$, let $F$ such that $Z=\{F\}$. Observe first that $F\neq R$. Indeed, if $P\in\cBoff$, $F=R$ contradicts the girth of $\cBoff\cup\{R\}$, while if $P\in\cBon$ and $F=R$, then $P\cup\{R\}$ is a packing, hence $P$ contains none of the three triangles of $\cBon$ sharing an edge with $R$.It follows that $V(P)\cap V(R)\subseteq\{v\}$, contradicting $V(R)\subseteq V(P)$. Since $|V(P)|\geq t+3=|V(S)|$, we have $V(F)\subseteq V(P)$. As $F\neq R$, at least one vertex of $F$ lies outside of $V(R)$, and the same argument applies, that is, \[d_{J^{(k)}}(Z)=O(n^{2g-2}).\]
        
        With $N:=v(J)=n^3$, we check that $(*_i)$ holds for all $i\in[k]$,
        \[ \frac{\Delta_i(J^{(k)})}{\kappa_k}\log^{4k+2}N
        = O\parens*{n^{2g+k-2-i}}\cdot\frac{\log^{4k+2}n}{n^{2g+k-2}\cdot \log n} 
        = O\parens*{\frac{\log^{4k+1}n}{n^{i}}}
        \leq p^i,\]
        using $k\geq1$.
        By~\cref{cor:KimVu}, we have,
        \[e(J^{(k)}_p)> 2p^k\kappa_k = 2\cdot\frac{\log^{ak} n}{n^k}\cdot \frac{n^{2g+k-2}\cdot \log n}{2g}= \frac{n^{2g-2}}{g}\log^{ak+1}n.\]
        with probability at most $e^{-\log^2 N}$. Using $k\leq g-2$, it follows that $e(J^{(k)}_p)> \frac{n^{2g-2}}{g}\log^{ag}n$ with probability at most $e^{-\log^2 N}$, and we obtain,
    
        \begin{align*}
            \Prob{\cT \text{ is not $g$-abundant}}
            &\leq\Prob{\exists R\in K_3, \text{ s.t. } e(J_p)>n^{2g-3/2} }\\
            &\leq\Prob{\exists R\in K_3, \text{ s.t. } e(J_p)>n^{2g-2}\cdot\log^{ag}n }\\
            &\leq n^3\cdot\max_{R}\Prob{e(J_p)>n^{2g-2}\cdot\log^{ag}n }\\
            &\leq n^3\cdot g\cdot \max_{R}\Prob{e(J^{(k)}_p)>\frac{1}{g}n^{2g-2}\cdot\log^{ag}n }\\
            &\leq n^3\cdot g \cdot e^{-\log^2 N}\\
            &<\sigma/10
        \end{align*}
        given that $N>n$ is large enough.  
    \end{proofclaim}    

    \newcommand{\properties}{\cref*{prop:FIRSTQuantum}-\cref*{prop:LASTQuantum}}
    Hitherto we proved that if $\cK$ is a random $Y$-enclosed quantum $K_3$-packing of $K_{n,n,n}$, chosen by selecting each copy of $K_3$ from $\Omega$ independently at random with probability $p$, then each one of property~\cref{prop:FIRSTQuantum} to~\cref{prop:LASTQuantum} hold with probability at least $1-\sigma/10$. Therefore by union bound, they all hold simultaneously with probability at least $1-\sigma$.
    
    Let $\nu$ be the probability distribution over $\cL_\cK$ induced by this selection process, and let $\eta$ be the probability distribution $\nu$ conditioning~\crefrange{prop:FIRSTQuantum}{prop:LASTQuantum}. Let $S$ be a fixed $K_3$-packing of $K_{n,n,n}$ of size $s$. Trivially, if $S$ is not $Y$-enclosed, then $\mathbb{P}_{\eta}\brackets*{S\subseteq \cK}=0$. Otherwise, 

    \[\mathbb{P}_{\eta}\brackets*{S\subseteq\cK} =
    \mathbb{P}_{\nu}\brackets*{S\subseteq\cK\mid \properties}
    \leq\frac{\mathbb{P}_{\nu}\brackets*{S\subseteq\cK}}{\mathbb{P}_{\nu}\brackets*{\properties}}
    \leq \frac{p^{|S|}}{1-\sigma}
    \leq \parens*{\frac{p}{1-\sigma}}^{|S|}.\]
    
    Therefore, with $p=\frac{\log^a n}{n}$, $\eta$ is a probability distribution that is $\frac{\log^a n}{(1-\sigma)n}$-spread over the family of $Y$-enclosed quantum $K_3$-packings of $K_{n,n,n}$, as desired.
\end{lateproof}

\section{Additional Tools For Completion Theorems}\label{sec:AdditionalTools}

The purpose of the completion step is to extend the packing produced by the pre-seeding theorem into a full high-girth triangle-decomposition. Recall that the output of the pre-seeding step is not merely a high-girth packing $S$, but also a regular and abundant subtreasury of the projection treasury $\mathrm{Proj}^g(K_{n,n,n},S,K_3)$. By~\cref{prop:ProjSubTreasury}, this yields a regular and abundant subtreasury of the natural treasury associated with the leftover graph $G:=K_{n,n,n}\setminus E(S)$.

The overall strategy follows the refined absorption framework developed by Delcourt and Postle for the High Girth Existence Conjecture. However, our setting differs in two fundamental respects. First, rather than working directly in the ambient treasury, we are only given this restricted subtreasury. Consequently, every stage of the argument (reserving edges, constructing absorbers, maintaining abundance, regularity boosting, and applying forbidden-submatching methods) must be carried out while keeping track of a sequence of nested subtreasuries. Second, our final goal is not merely to prove existence of a high-girth decomposition, but to construct a sufficiently spread probability distribution over such decompositions. This requires spread versions of several ingredients of the absorption method, most notably for absorbers and for the final forbidden-submatching step.

\subsection{Solvent Treasuries}

\begin{definition}[Reserve Design Hypergraph]\label{def:ReserveHypergraph}
Let $F$ be a hypergraph. If $G$ is a hypergraph and $A,B$ are disjoint subsets of $E(G)$, then the \emph{$F$-design reserve hypergraph of $G$ from $A$ to $B$}, denoted ${\rm Reserve}(G,F,A,B)$,  is the bipartite hypergraph $\mathbf{R}=(A,B)$ with $V(\mathbf{R}):=A\cup B$ and $$E(\mathbf{R}) := \{S\subseteq A\cup B: S \text{ is isomorphic to } F,~|S\cap A|=1\}.$$
\end{definition}

Observe that, in particular, each edge of ${\rm Reserve}(G,F,A,B)$ is a copy of $F$ in $G$ with exactly one edge in $A$, and the other edges in $B$.

\begin{definition}[Design treasury]\label{def:DesignTreasury}
Let $g\geq 4$ be an integer. Let $G$ be a graph and let $X\subseteq G'\subseteq G$. The \emph{girth-$g$ $K_3$-design treasury of $G$ with reserve $X$}, denoted ${\rm Treasury}^g(G,G',K_3,X)$, is the treasury 
\[T:=\left({\rm Design}(G'\setminus X,K_3),~{\rm Reserve}(G,K_3,G'\setminus X, X),~{\rm Girth}^g(G,K_3)\right).\]
\end{definition}

We begin the completion step by setting aside a small reserve $X$, which will later be used by the absorber to cover the leftover edges. At the level of treasuries, this amounts to separating the triangles of the original bankrupted treasury into two types: those avoiding $X$, which remain available for the main packing, and those using exactly two edges of $X$, which form the reserve hypergraph.

\begin{definition}[Savings]\label{def:savings}
    Let $g\geq 4$ be an integer and let $G$ be a graph.
    For a given bankrupted subtreasury $T=(G_1,H)$ of ${\rm Treasury}^g(G,G,K_3)$, and a set $X\subseteq G$, the {\em saving of $T$ by $X$}, denoted by ${\rm Saving}(T,X)$, is the treasury $\tilde{T}:=(\tilde{G}_1,\tilde{G}_2,\tilde{H})$ defined by
    \[\begin{cases}
        \tilde{G}_1 &:= G_1\cap {\rm Design}(G\setminus X,K_3),\\
        \tilde{G}_2 &:= G_1\cap {\rm Reserve}(G,K_3,G\setminus X, X) \\ 
        \tilde{H} &:= H.
    \end{cases}\]
    Observe that $(\tilde{G}_1,\tilde{H})$ is a subtreasury of $T$, and that $\tilde{T}$ is a subtreasury of ${\rm Treasury}^g(G,G,K_3,X)$.
\end{definition}

After constructing an absorber $A$, we no longer allow the main packing to use edges of $A$, since these edges will be decomposed later by the absorber itself. The {\em spending} operation therefore restricts the design and reserve hypergraphs to triangles avoiding $A$, while keeping the same configuration hypergraph so that the final matching remains compatible with the absorber decompositions.

\begin{definition}[Spending]\label{def:spending}
    Let $g\geq 4$ be an integer, let $G$ be a graph and let $X\subseteq G$.
    For a given (solvent) subtreasury $T=(G_1,G_2,H)$ of ${\rm Treasury}^g(G,G,K_3,X)$, and a set $A\subseteq G$, the {\em spending of $T$ by $A$}, denoted by ${\rm Spending}(T,A,X)$, is the treasury $\widehat{T}:=(\widehat{G}_1,\widehat{G}_2,\widehat{H})$ defined by
    \[\begin{cases}
        \widehat{G}_1 &:= G_1\cap {\rm Design}(G\setminus (A\cup X),K_3),\\
        \widehat{G}_2 &:= G_2\cap {\rm Reserve}(G,K_3,G\setminus (A\cup X), X) \\
        \widehat{H} &:= H.
    \end{cases}\]
    Observe that $\widehat{T}$ is a subtreasury of ${\rm Treasury}^g(G,G\setminus A,K_3,X)$.
\end{definition}

\subsection{Reserve Theorems}

The following reserve lemma is a tripartite graph version of~\cite[Lemma 3.1]{DKPIV}, with the additional feature that setting aside the reserve preserves the regularity of the underlying treasury.

\begin{lem}\label{thm:reserve}
  For every integer $g\geq 4$, reals $\alpha,\beta\in(0,1)$, and small enough constants $\varepsilon,\sigma\in(0,1)$, let $n$ be a large enough integer. Let $G$ be a $K_{1,1,1}$-divisible spanning subgraph of $K_{n,n,n}$ with $\delta(G)\ge (2-\sigma)n$ and vertex-partition $V^1,V^2,V^3$, and such that there exists a subtreasury $T$ of ${\rm Treasury}^g(G,G,K_3)$ that is $(n,\alpha n,\beta,\alpha)$-regular. Then there exists a spanning subgraph $X\subseteq G$ such that the following holds.
  \begin{enumerate}
      \item $X$ is edge-balanced.
      \item $\Delta(X)\leq 2n^{1-\varepsilon}$.
      \item For every $i\in[3]$, and every vertex $v\in V^i$, we have
      \[\size*{d_{V^{i-1}}(v)-d_{V^{i+1}}(v)}\leq n^{2/3}.\]
      \item ${\rm Saving}(T,X)$ is $(n,4\alpha n,\beta,\alpha)$-regular.
  \end{enumerate}
  
\end{lem}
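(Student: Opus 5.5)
We construct $X$ at random and then correct it. Fix $\rho := n^{-\varepsilon}$. Every edge of $G$ is placed into a random set $X_0$ independently with probability $\rho$. For each vertex $v\in V^i$, the degrees $d_{X_0,V^{i\pm1}}(v)$ are binomial with means $\rho d_{G,V^{i\pm1}}(v)$. These two means are equal because $G$ is $K_{1,1,1}$-divisible, and each is at most $\rho n$. Chernoff together with a union bound over the $3n$ vertices then shows that, with high probability, each degree is within $n^{(1-\varepsilon)/2}\log n$ of its mean. Likewise, for each pair $(i,i+1)$ the number $e_{X_0}(V^i,V^{i+1})$ is concentrated within $n^{1-\varepsilon/2}\log n$ of $\rho e_G(V^i,V^{i+1})$. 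The counts $e_G(V^i,V^{i+1})$ are themselves equal for all $i$: summing the divisibility condition over $V^i$ gives $e(V^i,V^{i-1})=e(V^i,V^{i+1})$ for every $i$. Properties (ii) and (iii) therefore already hold for $X_0$ with room to spare, since $2\rho n$ and $n^{(1-\varepsilon)/2}\log n\ll n^{2/3}$ both suffice.

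To obtain (i) exactly, we delete edges. Let $m$ be the minimum of the three bipartite edge counts of $X_0$. From each of the other two bipartite parts we remove the excess, which is at most $O(n^{1-\varepsilon/2}\log n)$ edges. We spread these removals so that each vertex loses at most $O(n^{1/2})$ edges, for instance by removing a near-regular subgraph: a random subset of the required size has maximum degree $O(\text{excess}/n+\log n)$. After this step $X$ is edge-balanced, still has $\Delta(X)\le 2n^{1-\varepsilon}$, and each vertex imbalance is at most $n^{(1-\varepsilon)/2}\log n+O(n^{1/2})\le n^{2/3}$.

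The main step is (iv). Write ${\rm Saving}(T,X)=(\tilde G_1,\tilde G_2,H)$ with $H$ unchanged. Properties \ref{item:RT_CoDeg} and \ref{item:RT_ConfigDeg} are inherited from $T$, because we only pass to subhypergraphs of $G_1$ and keep $H$. The codegree of $\tilde G_1\cup\tilde G_2$ is trivially $1$, the maximum $2$-codegree and the common $2$-degree can only decrease, and the configuration (co)degrees are those of $H$. For \ref{item:RT_QuasiReg}, fix an edge $e\notin X$. Its $G_1$-degree is at least $n-\alpha n$. The triangles through $e$ that meet $X$ number at most $2\Delta(X)\le 4n^{1-\varepsilon}\ll\alpha n$, so $\tilde G_1$ has degree at least $n-4\alpha n$, and the upper bound $n$ is trivial. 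For \ref{item:RT_ReserveDeg}, every $x\in X$ has $\tilde G_2$-degree at most $n$. The delicate bound is the lower one: every $e\in G\setminus X$ must lie in at least $n^{1-\alpha}$ triangles of $G_1$ whose other two edges are both in $X$. Each of the $\ge(1-\alpha)n$ triangles of $G_1$ through $e$ has both other edges in $X_0$ independently with probability $\rho^2$. Hence the expected count is $\ge(1-\alpha)n^{1-2\varepsilon}$, and the triangles through $e$ are edge-disjoint apart from $e$, so these events are independent. Chernoff and a union bound over $O(n^2)$ edges give at least $\tfrac12 n^{1-2\varepsilon}\ge n^{1-\alpha}$ such triangles whenever $2\varepsilon<\alpha$, which is where smallness of $\varepsilon$ is used.

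The correction step only deletes $O(n^{1/2})$ edges of $X$ at each vertex. This destroys at most $O(n^{1/2})$ reserve triangles per edge $e$, which is negligible, and it does not decrease the $\tilde G_1$-degree lower bound by more than $O(n^{1/2})$. The edges removed from $X$ return to $G\setminus X$, and for these edges the same Chernoff bound, applied before the deletion, already certifies their reserve degree because the union bound was over all edges of $G$. The point needing most care is that the deletion preserves the exact $\ge n^{1-\alpha}$ reserve bound and the $\tilde G_1$ lower bound simultaneously. Beyond that, choosing $\varepsilon<\min\{\alpha/2,1/3\}$ makes all the estimates fit.
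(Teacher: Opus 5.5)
Your strategy is sound, but the claim that (ii) holds for $X_0$ ``with room to spare'' is false as written. Here $\Delta(X)$ is the full degree, and vertices of $G$ can have degree up to $2n$, since $\delta(G)\ge(2-\sigma)n$. With $\rho=n^{-\varepsilon}$, the mean of $d_{X_0}(v)$ is $\rho\, d_G(v)$, which equals $2n^{1-\varepsilon}$ exactly when $d_G(v)=2n$.

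Take $G=K_{n,n,n}$. The degrees $d_{X_0}(v)\sim{\rm Bin}(2n,n^{-\varepsilon})$ for $v\in V^1$ are independent. So with high probability $\Theta(n)$ vertices exceed $2n^{1-\varepsilon}$ by order $n^{(1-\varepsilon)/2}$. Your balancing deletions cannot repair this. They remove only $O(n^{1-\varepsilon/2}\log n)$ edges in total, hence $O(\log n)$ per vertex, and they are not targeted at high-degree vertices.

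The fix is to sample with a smaller probability, for instance $\rho=\tfrac12 n^{-\varepsilon}$ or $\rho=n^{-2\varepsilon}$. Then $\Delta(X_0)\le(1+o(1))n^{1-\varepsilon}$. The reserve count becomes $\Omega(n^{1-2\varepsilon})$, respectively $\Omega(n^{1-4\varepsilon})$, which is still at least $n^{1-\alpha}$ once $\varepsilon$ is small relative to $\alpha$. The paper builds in this slack by taking $p=n^{-4\varepsilon}$, so that $\Delta(X)\le 3n^{1-4\varepsilon}$ is far below $2n^{1-\varepsilon}$.

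Relatedly, use your $O(\log n)$ bound on deleted degrees rather than the stated $O(n^{1/2})$. With only $\varepsilon<1/3$, losing $n^{1/2}$ reserve triangles per edge is not negligible against $n^{1-2\varepsilon}$.

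Once repaired, your argument differs from the paper's mainly in the balancing step. The paper equalizes upwards: it sets $m=\max_i W_i$ and adds $m-W_i$ edges of $G[V^i,V^{i+1}]\setminus X$ as a matching. The added graph then has maximum degree at most $2$, so only $O(1)$ design triangles are lost per edge. Adding also makes the reserve lower bound automatically monotone: edges only leave $A=E(G)\setminus X$, and reserve triangles of the remaining $A$-edges survive.

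You equalize downwards by deleting, which pushes edges back into $A$. You handle this correctly, by taking the reserve Chernoff bound over all edges of $G$ and controlling the deleted degrees. Your deterministic bound $d_{\tilde G_1}(e)\ge d_{G_1}(e)-2\Delta(X)$ for (RT1) is cleaner than the paper's Chernoff argument, and it applies directly to the final $X$.
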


The proof is relatively straightforward: we first choose a small random subgraph of $G$ as a preliminary reserve, then add a few extra edges to make it edge-balanced; the regularity of the saving treasury follows from standard concentration results, and the final sprinkling is too small to affect it substantially.

\begin{proof}
    Let $\sigma,\varepsilon\in(0,1)$ be arbitrarily small constants, in particular with $\varepsilon<\min\{\frac{\alpha}{9},\frac{1}{12}\}$, and $n$ be large enough. Let $\delta=n^{-1/3}$, $X$ be the spanning subgraph of $G$ obtained by choosing each edge of $G$ independently at random with probability $p=n^{-4\varepsilon}$, and let $V^1,V^2,V^3$ be the vertex-partition of $K_{n,n,n}$. For $1\leq i,j\in[3]$, let $X_{i,j}=X[V^i\cup V^j]$, and for any vertex $v\in V^i$, let $d_{V^j}(v)$ be the degree of $v$ in $X_{i,j}$.
    
    Let $\xi=\set*{\exists i\in[3],~\exists v\in V^i,\text{ s.t. } \size*{d_{V^{i-1}}(v)-d_{V^{i+1}}(v)} > 2pn^{2/3}}$. For a fixed $i\in[3]$ and every vertex $v\in V^i$, given that $G$ is $K_{1,1,1}$-divisible, we have 
    \[(1-\sigma)pn\leq \Expect{d_{i+1}(v)}=\Expect{d_{i-1}(v)}=p\cdot d_G(v)/2\leq pn.\]
    Observe further that
    \[\begin{cases}
        \set*{\size*{d_{V^{i-1}}(v)-\Expect{d_{i-1}(v)}}\leq pn^{2/3}}\\
        \set*{\size*{d_{V^{i+1}}(v)-\Expect{d_{i+1}(v)}}\leq pn^{2/3}}
    \end{cases}\implies
    \set*{\size*{d_{V^{i-1}}(v)-d_{V^{i+1}}(v)}\leq 2pn^{2/3}}.\]
    Therefore,
    \begin{align*}
    \Prob{\size*{d_{V^{i-1}}(v)-d_{V^{i+1}}(v)}> 2pn^{2/3}}
    &\leq 2\cdot\Prob{\size*{d_{V^{i-1}}(v)-\Expect{d_{i-1}}}> pn^{2/3}}\\
    &\leq 2\cdot\mathbb{P}\Big[\size*{d_{V^{i-1}}(v)-\Expect{d_{i-1}}}> \delta\Expect{d_{i-1}}\Big],     
    \end{align*}
    and by the Chernoff bound we obtain
    \begin{equation*}
        \Prob{\size*{d_{V^{i-1}}(v)-d_{V^{i+1}}(v)}> 2pn^{2/3}}\leq 2\cdot e^{-\delta^2(1-\sigma)pn/3}.        
    \end{equation*}  
    Finally, by the union bound, using $4\varepsilon<1/3$, we conclude that 
    \begin{equation}\label{eq:reservoir_degrees}
    \Prob{\xi}\leq 6n\cdot e^{-\delta^2(1-\sigma)pn/3} = o(1)
    \end{equation}
    We obtain similarly that
    \begin{equation}\label{eq:reservoir_Delta}
        \Prob{\Delta(X)>3pn} = o(1).
    \end{equation}
    Finally, for every $i\in[3]$, let $W_i$ denote the random variables counting the number of edges in $X_{i,i+1}$. Then, with $G$ being $K_{1,1,1}$-divisible, for every $i\in[3]$ we have $(1-\sigma)pn^2\leq \mathbb{E}W_i = p\cdot e(G)/3\leq pn^2$, and
    \begin{align*}
        \Prob{\size*{W_i-\mathbb{E}W_i}>n^{1-\varepsilon}}
        &\leq\Prob{\size*{W_i-\mathbb{E}W_i}>n^{-1+3\varepsilon} \cdot \mathbb{E}W_i}\\
        &\leq e^{-n^{-2+6\varepsilon}\cdot \mathbb{E}W_i / 3}\\
        &\leq e^{-n^{2\varepsilon}/3} ,
    \end{align*}
    hence
    \begin{equation}\label{eq:reservoir_almostbalanced}
        \Prob{\size*{W_i-\mathbb{E}W_i}>n^{1-\varepsilon}} = o(1)
    \end{equation}
    
    Let $T=(G_1,H)$ be the $(n,\alpha n,\beta,\alpha)$-regular subtreasury of ${\rm Treasury}^g(G,G,K_3)$, and let $\tilde{T}={\rm Saving}(T,X)=(\tilde{G}_1,\tilde{G}_2,\tilde{H})$ as defined in~\cref{def:savings}. We prove that with high probability, $\tilde{T}$ is $(n,3\alpha n,\beta,\alpha)$-regular.
    \begin{enumerate}
        \item For every vertex $v\in V(G_1)$ we have $d_{G_1}(v)\geq (1-\alpha)n$. 
        Observe that any edge $vxy$ from $G_1$ ``survives'' in $\tilde{G}_1$ if and only if $x,y$ are not selected in $X$. Therefore for all $v\in V(\tilde{G}_1)$ we obtain \[\mathbb{E}d_{\tilde{G}_1}(v) = (1-p)^2\cdot d_{G_1}(v)\geq (1-2\alpha)n.\]
        By a standard application of the Chernoff bound,
        \[\Prob{d_{\tilde{G}_1}(v)<(1-3\alpha)n}\leq e^{-\alpha^2 \mathbb{E}d_{\tilde{G}_1}(v) /2} = o(1).\]
        \item Similarly, for all $v\in V(\tilde{G}_1)\cap V(\tilde{G}_2)=E(G)\setminus X$, any edge $vxy$ from $G_1$ is ``transferred'' into $\tilde{G}_2$ if and only if $x,y$ are both selected in $X$. Therefore 
        \[\mathbb{E}d_{\tilde{G}_2}(v) = p^2\cdot d_{G_1}(v)\geq (1-\alpha)n^{1-8\varepsilon}\geq n^{1-9\varepsilon},\]
        and with $9\varepsilon<\alpha$,
        \[\Prob{d_{\tilde{G}_2}(v)<n^{1-\alpha}}\leq\Prob{d_{\tilde{G}_2}(v)<(1-\alpha)n^{1-9\varepsilon}}\leq e^{-\alpha^2 \mathbb{E}d_{\tilde{G}_2}(v) /2} = o(1).\]
        For $v\in E(X)$, we trivially have $d_{\tilde{G}_2}(v)\leq d_{G_1}(v)\leq n$.
        
        \item By definition, we have $\tilde{G}_1\cup \tilde{G}_2\subseteq G_1$, therefore the codegree of $\tilde{G}_1\cup \tilde{G}_2$ is at most the codegree of $G_1$, which is at most $n^{1-\beta}$ by regularity of $T$. Then observe further that $\tilde{H}=H$, therefore we immediately obtain that the maximum $2$-codegree of $\tilde{G}_1\cup \tilde{G}_2$ with $\tilde{H}$ and the maximum common $2$-degree of $\tilde{H}$ with respect to $\tilde{G}_1\cup \tilde{G}_2$ are both at most $n^{1-\beta}$
        \item Similarly, we immediately obtain that for all $2\le s\le g$ we have $\Delta_1\left(\tilde{H}^{(s)}\right) \le \alpha \cdot n^{s-1}\log n$, and that for all $2\le t<s\le g$, $\Delta_{t}\left(\tilde{H}^{(s)}\right) \le n^{s-t-\beta}$
    \end{enumerate}
    By the union bound, we obtain that 
    \begin{equation}\label{eq:reservoir_treasuryregular}
        \Prob{\tilde{T}\text{ is not $(n,3\alpha n,\beta,\alpha)$-regular}}=o(1).    
    \end{equation}

    Combining~\crefrange{eq:reservoir_degrees}{eq:reservoir_treasuryregular}, for $n$ large enough, let $X$ be a fixed spanning subgraph of $G$ such that none of these events happens. Observe that~\cref{eq:reservoir_almostbalanced} implies that
    \[\max\set*{\abs{W_i-W_{j}},~i,j\in[3]}\leq 2n^{1-\varepsilon}.\]

    Let \(m=\max_i W_i\). For each $i$, add $m-W_i$ edges from $G[V^i,V^{i+1}]\setminus X$ to $X[V^i,V^{i+1}]$, choosing these edges as a matching. This is possible greedily, since $G[V^i,V^{i+1}]\setminus X$ has linear minimum degree, while $m-W_i=O(n^{1-\varepsilon})=o(n)$. Let $X'$ denote the resulting graph. Then $X'$ is edge-balanced, and the added edges have maximum degree at most $2$, hence
    \[\Delta(X')\leq \Delta(X)+2\le 2n^{1-\varepsilon}\]
    for $n$ large enough. Moreover, the local degree-balance condition changes by at most $2$ at each vertex, and hence is at most $2pn^{2/3}+2\le n^{2/3}$.

    It remains to check regularity. Since ${\rm Saving}(T,X)$ is
    $(n,3\alpha n,\beta,\alpha)$-regular and $X'\setminus X$ has bounded maximum degree, passing from $X$ to $X'$ removes only constantly-many triangles through any fixed edge. Thus (RT1) still holds with
    error $4\alpha n$, for $n$ large enough. The reserve lower-degree condition is monotone under increasing $X$, the reserve upper-degree condition remains bounded by $n$, and the codegree and configuration-degree bounds are inherited from the original treasury $T$. Therefore ${\rm Saving}(T,X')$ is $(n,4\alpha n,\beta,\alpha)$-regular.
\end{proof}

\subsection{Refined Absorbers}

We recall some key definitions from~\cite{DPI}.

\begin{definition}[Omni-Absorber]
Let $q \ge 3$ be an integer and $X$ be a graph. We say a graph $A$ is a \emph{$K_q$-omni-absorber} for $X$ with \emph{decomposition family} $\cF$ and \emph{decomposition function} $\mathcal{Q}_A$ if $V(X)=V(A)$, $X$ and $A$ are edge-disjoint, $\cF$ is a family of subgraphs of $X\cup A$ each isomorphic to $K_q$ such that $|F\cap X|\le 1$ for all $F\in\cF$, and for every $K_q$-divisible subgraph $L$ of $X$, there exists $\mathcal{Q}_A(L)\subseteq \cF$ that are pairwise edge-disjoint and such that $\bigcup \mathcal{Q}_A(L)=L\cup A$. 
\end{definition}

\begin{definition}[Refined Omni-Absorber]
 Let $C\ge 1$ be real. We say that a $K_q$-omni-absorber $A$ for a graph $X$ with decomposition family $\cF$ is \emph{$C$-refined} if $|\{F\in \cF : e\in E(F) \}| \le C$ for every edge $e\in X\cup A$.
\end{definition}

Delcourt and Postle proved in~\cite{DPI} that linearly efficient refined omni-absorbers exist. For our problem, we need partite absorbers, which we call Latin-Absorbers.   

\begin{definition}[Refined Latin-Absorber]\label{def:LatinAbsorbers}
Let $A,X\subseteq K_{n,n,n}$ be tripartite graphs and let $C\ge 1$ be real.  We say that $A$ is a \emph{Latin-absorber} for $X$ with \emph{decomposition family} $\cF$ and \emph{decomposition function} $\mathcal{Q}_A$ if $V(X)=V(A)$, $X$ and $A$ are edge-disjoint, $\cF$ is a family of subgraphs of $X\cup A$ each isomorphic to $K_3$ such that $|F\cap X|\le 1$ for all $F\in\cF$, and for every $K_{1,1,1}$-divisible subgraph $L$ of $X$, there exists $\mathcal{Q}_A(L)\subseteq \cF$ that are pairwise edge-disjoint and such that $\bigcup \mathcal{Q}_A(L)=L\cup A$. 

We further say that $A$ is \emph{$C$-refined} if $|\{F\in \cF : e\in E(F) \}| \le C$ for every edge $e\in X\cup A$.
\end{definition}

Thankfully, in a forthcoming article, Delcourt and Postle~\cite{DP2026Partite} extended their method of refined absorption to the partite setting.

\begin{thm}[Refined Latin-Omni-Absorber Theorem, Delcourt and Postle~\cite{DP2026Partite}]\label{thm:LatinAbsorberThm}
    For every $\zeta\in(0,1)$, there exist reals $\eta,\sigma\in(0,1)$ and $C\geq 1$, and an integer $n_0$ such that the following holds for all $n\geq n_0$. 
    Let $G$ be a spanning subgraph of $K_{n,n,n}$ with $\delta(G) \geq (2-\sigma)n$. If $X$ is a spanning subgraph of $G$ with $\Delta(X) \leq n^{1-\zeta}$, then there exists a $C$-refined Latin-omni-absorber $A \subseteq G$ for $X$ such that $\Delta(A)\leq n^{1-\eta}$.
\end{thm}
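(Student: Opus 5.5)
The plan is to follow the refined absorption strategy of~\cite{DPI}, carrying each step over to the tripartite host $G\subseteq K_{n,n,n}$. The construction has two layers:
\begin{enumerate}
\item a \emph{refiner}, which reduces an arbitrary $K_{1,1,1}$-divisible $L\subseteq X$ to a disjoint union of bounded-size divisible pieces drawn from a fixed family;
\item a private \emph{absorber} for each piece of that family.
\end{enumerate}
The constants will be chosen in the order $\zeta\gg\eta$ with $\sigma$ small and $C$ large; $n$ is taken large.

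\textbf{Step 1 (Latin-refiner).} I would construct a graph $R\subseteq G\setminus X$ with the following properties:
\begin{itemize}
\item it comes with a family $\cF_R$ of triangles and a family $\cH$ of $K_{1,1,1}$-divisible tripartite graphs, each on at most $C_0=C_0(\zeta)$ vertices;
\item every edge of $X\cup R$ lies in at most $C_0$ members of $\cF_R\cup\cH$;
\item for every divisible $L\subseteq X$ there are edge-disjoint triangles of $\cF_R$ covering all of $L\cup R$ except an edge-disjoint union of members of $\cH$.
\end{itemize}
The plan for building $R$ is local, vertex by vertex. By divisibility, a vertex $v\in V^i$ has equally many $L$-edges to $V^{i-1}$ and to $V^{i+1}$. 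I would pair up the $X$-edges at $v$ through fresh ``hub'' vertices of the third class. Each pair $vu,vw$ with $u\in V^{i-1}$ and $w\in V^{i+1}$ is rerouted along triangles through a hub. The rerouting is arranged so that the choices made at different vertices interact only within bounded-size gadgets. The residual of this rerouting must be divisible piece by piece, which is exactly what the family $\cH$ records. Hubs are chosen randomly in common neighbourhoods. Any $O(1)$ vertices of $G$ have $\Omega(n)$ common neighbours in each other class, since $\delta(G)\ge(2-\sigma)n$. There are at most $O(\Delta(X))=O(n^{1-\zeta})$ demands per vertex, so a random greedy embedding keeps $\Delta(R)\le n^{1-\zeta/2}$.

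\textbf{Step 2 (absorbers for bounded pieces).} For each $H\in\cH$ I would attach a private Latin-absorber $A_H$. This is a bounded-size tripartite graph such that both $A_H$ and $A_H\cup H$ have $K_3$-decompositions. It can be built from Latin-boosters and ``transformers'' in the style of Barber--K\"uhn--Lo--Osthus, adapted to respect the three vertex classes. The key point is that a bounded divisible tripartite graph can be transformed step by step into a disjoint union of triangles. I would embed all $A_H$ edge-disjointly into $G\setminus(X\cup R)$ with new vertices chosen at random in common neighbourhoods, again using $\delta(G)\ge(2-\sigma)n$. Each vertex lies in at most $O(C_0\Delta(X))$ pieces, so the absorbers add maximum degree $O(n^{1-\zeta})$, and the remaining degrees stay large throughout the greedy process.

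\textbf{Step 3 (assembly).} I would set $A:=R\cup\bigcup_H A_H$ and take $\cF$ to be $\cF_R$ together with the triangles of both decompositions of every $A_H$. Given a divisible $L\subseteq X$, Step~1 decomposes $L\cup R$ into triangles of $\cF_R$ plus a set $\cH_L\subseteq\cH$ of edge-disjoint pieces. I would then use the ``on'' decomposition of $A_H\cup H$ for $H\in\cH_L$ and the ``off'' decomposition of $A_H$ otherwise. The bounded multiplicities in $\cF_R\cup\cH$ and the bounded size of each $A_H$ give $C$-refinedness. The degree bounds give $\Delta(A)\le n^{1-\eta}$ for any fixed $\eta<\zeta/2$. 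Finally, $|F\cap X|\le1$ holds for every $F\in\cF$ because every triangle of $\cF$ is built to use at most one $X$-edge.

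The main obstacle is Step~1: designing a partite refiner whose residuals are \emph{divisible and bounded} for \emph{every} divisible $L$, not just for $X$ itself, while keeping every edge in only $O(1)$ triangles. In the non-partite case~\cite{DPI} this uses a careful splitting of each vertex's neighbourhood. Here divisibility is a balance condition between two different classes, so the pairing gadgets must respect orientation modulo~$3$. My first attempt would be to handle each class pair $(V^i,V^{i+1})$ cyclically, then verify by a local case check that every residual gadget is $K_{1,1,1}$-divisible.
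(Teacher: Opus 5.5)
The paper does not prove this theorem; it quotes it from the forthcoming partite extension~\cite{DP2026Partite} of the refined-absorption method of~\cite{DPI}. Your proposal therefore has to stand on its own. Your Steps~2--3 are a sound reduction. Given a refiner as in Step~1, attach a bounded private absorber to each $H\in\cH$, designed so that each triangle of its `on' decomposition meets $H$ in at most one edge. Switching between the `on' and `off' decompositions then yields a $C$-refined Latin-omni-absorber with $\Delta(A)\le n^{1-\eta}$. But Step~1 is the whole substance of the theorem, you do not construct it, and the mechanism you sketch does not work as stated.

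Orient every edge $V^1\to V^2\to V^3\to V^1$. Then $L$ is $K_{1,1,1}$-divisible exactly when it is Eulerian, and triangles are directed $3$-cycles. A fixed pairing of the $X$-edges at $v$ is not respected by such $L$.
\begin{itemize}
\item Suppose you pair $vu_1$ with $vw_1$ and $vu_2$ with $vw_2$, where $u_j\in V^{i-1}$ and $w_j\in V^{i+1}$. For suitable $X$, $L$ can be a single long directed cycle of $X$ passing $u_1\to v\to w_2$.
\item Neither pair is then present in full. The first gadget carries a surplus of one edge towards $V^{i-1}$ at $v$, and the second a surplus of one towards $V^{i+1}$.
\item Every triangle through $v$ and every member of $\cH$ is balanced at $v$. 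So these surpluses can only cancel through triangles of $\cF_R$ joining the two gadgets, or through a residual piece containing both. Which gadgets must be joined depends on $L$.
\item The same happens at every vertex of the cycle, so nothing in your construction confines the residual to bounded pieces.
\end{itemize}

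What is actually needed is a structure at each vertex in which every edge lies in $O(1)$ triangles and $O(1)$ members of $\cH$. It must nevertheless rebalance any set of used edges towards $V^{i-1}$ against any equally large set of used edges towards $V^{i+1}$, simultaneously for every divisible $L$. Joining every pair of gadgets directly puts each edge into $\Theta(d_X(v))$ triangles, which destroys $C$-refinedness.

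Two further points make this harder than a routine adaptation.
\begin{itemize}
\item The condition $|N_L(v)\cap V^{i-1}|=|N_L(v)\cap V^{i+1}|$ is an exact equality, not a congruence as in the non-partite setting. The surplus accumulated over a group of edges at $v$ is therefore an unbounded integer and cannot simply be passed along by bounded gadgets.
\item $X$ itself need not be balanced at $v$, so a perfect pairing of its edges need not exist at all.
\end{itemize}
Constructing and verifying such a partite refiner is exactly what a proof must supply. The ``local case check'' you propose does not provide it.
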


\subsubsection{Treasuries and Projections}

Recall that for a packing $P$ and a treasury $T=(G_1,G_2,H)$, we say that $P$ is in $T$ if every triangle in $P$ is an edge of $G_1$, i.e. every triangle is ``allowed'' to be used for a decomposition, and if $P$ spans no edge of $H$, i.e. if collectively $P$ contains no forbidden configuration. We extend this definition to the following one, regarding absorbers in a treasury. Observe that this is a natural generalization of the concept of absorbers with {\em collective high-girth} presented in~\cite{DPII}, given that we now need our decomposition to happen inside a fixed treasury. 

\begin{definition}\label{def:LatinAbsorberWrtTreasury}
Let $X$ be a graph and $T=(G_1,H)$ be a bankrupted treasury. We say that a Latin-absorber $A$ for $X$ with decomposition function $\mathcal{Q}_A$ is in $T$ if $A\subseteq V(G_1)$ and for every $K_{1,1,1}$-divisible subgraph $L$ of $X$, we have that $\mathcal{Q}_A(L)$ is in $T$, that is, $\mathcal{Q}_A(L)\subseteq G_1$ and $\mathcal{Q}_A(L)$ is $H$-avoiding.
\end{definition}

It is important to understand that it is however not enough for the proof of~\cref{thm:MinDegree} to use omni-absorbers in the given treasury $T$. The issue is that the various $\mathcal{Q}_A(L)$, individually or collectively, may forbid subconfigurations in $G\setminus (X\cup A)$ that would extend to forbidden configurations with $\mathcal{Q}_A(L)$. We will need to ensure there are not too many such forbidden subconfigurations. We therefore need to build absorbers that also do not {\em ``shrink too many''} configurations from $T$. The precise meaning of this requirement uses an extension of projection as defined for packings in~\cref{def:ProjectionPacking}, to projection of omni-absorbers.

\begin{definition}[Projection of Omni-Absorber]\label{def:OmniAbsorberProj}
Let $g\geq4$ be an integer, $G\subseteq K_{n,n,n}$, $X$ be a subgraph of $G$, and let $T$ be a bankrupted subtreasury of ${\rm Treasury}^g(G,~G,~K_3)$. Let $A\subseteq G$ be a Latin-omni-absorber for $X$ with decomposition function $\mathcal{Q}_A$, such that $A$ is in $T$. We define the \emph{projection treasury} of $A$ into $\tilde{T}:={\rm Saving}(T,X)$ as:
$${\rm Proj}^g(\tilde{T},A,X):= {\rm Spending}(\tilde{T},A,X) \perp \mathcal{M}(A)$$
where $$\mathcal{M}(A) := \{ \mathcal{Q}_A(L) : L\in{\rm Div}(X)\}.$$
\end{definition}

The following proposition explains how finding a perfect matching in the projection treasury relates to a perfect matching in the initial one. 

\begin{proposition}\label{prop:FindPM}
Let $g\geq 4$ be an integer. Let $G\subseteq K_{n,n,n}$ be $K_{1,1,1}$-divisible, $X$ be a subgraph of $G$, $T$ be a bankrupted subtreasury of ${\rm Treasury}^g(G,~G,~K_3)$, and let $\tilde{T}= {\rm Saving}(T,X)$. Let $A\subseteq G$ be a Latin-omni-absorber for $X$ in $T$ with decomposition function $\cQ_A$. If $M$ is a perfect matching of ${\rm Proj}^g(\tilde{T},A,X)$, then $M\cup \cQ_A(X\setminus\bigcup M)$ is a perfect matching of $T$.
\end{proposition}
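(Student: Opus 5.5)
The plan is to unpack the definitions of saving, spending and common projection, and then verify the three requirements for a perfect matching of $T=(G_1,H)$. These are: every triangle lies in $G_1$, the triangles cover $E(G)$ exactly once, and the union is $H$-avoiding.

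Write $\widehat T={\rm Spending}(\tilde T,A,X)=(\widehat G_1,\widehat G_2,H)$ and ${\rm Proj}^g(\tilde T,A,X)=(G_1',G_2',H')$. A perfect matching $M$ is then an $H'$-avoiding matching of $G_1'\cup G_2'$ covering every vertex of $A'=E(G)\setminus(A\cup X)$. Each triangle of $G_1'$ lies in $G\setminus(A\cup X)$, and each triangle of $G_2'$ has exactly one edge in $G\setminus(A\cup X)$ and two edges in $X$. Hence $\bigcup M$ covers $G\setminus(A\cup X)$ exactly once, together with some set of edges of $X$, each used at most once.

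\textbf{Step 1: decomposition.} Set $L:=X\setminus\bigcup M$. Since $G$ is $K_{1,1,1}$-divisible and $\bigcup M$ is a union of triangles, $G\setminus\bigcup M=A\cup L$ is $K_{1,1,1}$-divisible. The absorber $A$ is also $K_{1,1,1}$-divisible, because it is decomposed by $\cQ_A(\emptyset)$. Divisibility is additive and edge-disjoint, so $L$ is $K_{1,1,1}$-divisible and $L\in{\rm Div}(X)$. Therefore $\cQ_A(L)$ exists and decomposes $L\cup A$. Combined with $M$, this gives a $K_3$-decomposition of $G$.

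\textbf{Step 2: triangles lie in $G_1$.} The triangles of $\cQ_A(L)$ lie in $G_1$ because $A$ is in $T$. For $M$, note that $G_1'\cup G_2'\subseteq\widehat G_1\cup\widehat G_2\subseteq\tilde G_1\cup\tilde G_2\subseteq G_1$ by the definitions of saving and spending. So $M\cup\cQ_A(L)$ is a perfect matching of $G_1$.

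\textbf{Step 3: $H$-avoidance (the main step).} Suppose some $Z\in E(H)$ satisfies $Z\subseteq M\cup\cQ_A(L)$, and write $P:=Z\cap M$. First, $P\neq\emptyset$, since $\cQ_A(L)$ is $H$-avoiding because $A$ is in $T$. The triangles of $M$ use no edge of $A$, while every triangle of $\cQ_A(L)$ uses an edge of $A$ (using $|F\cap X|\le1$ and $F\subseteq X\cup A$). So the two families are disjoint and $P\cap\cQ_A(L)=\emptyset$. Moreover $\cQ_A(L)\in\cM(A)$, and $P\subseteq Z\subseteq P\cup\cQ_A(L)$.

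There are then two cases.
\begin{itemize}
\item If $|P|=1$, say $P=\{F\}$, then $F\in Z\subseteq\cQ_A(L)\cup\{F\}$ with $F\notin\cQ_A(L)$. By the definition of common projection, $F$ is deleted from $V(H')$, contradicting $F\in M\subseteq V(H')$.
\item If $|P|\ge 2$, then $P\subseteq V(H')$, and the same definition gives $P\in E(H')$. This contradicts the assumption that $M$ is $H'$-avoiding.
\end{itemize}

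The only delicate points are the disjointness $P\cap\cQ_A(L)=\emptyset$ and the divisibility of $L$. Both follow from the refined-absorber definition, in the same way as in the proof of \cref{prop:completion_treasury}.
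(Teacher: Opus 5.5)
Your proof is correct and follows the same route as the paper's proof. It sets $L=X\setminus\bigcup M\in\div{X}$, gets the triangles into $G_1$ via $G_1'\cup G_2'\subseteq\widehat G_1\cup\widehat G_2\subseteq\tilde G_1\cup\tilde G_2\subseteq G_1$, and obtains $H$-avoidance from the definition of the common projection by $\cM(A)$. Your case split on $|Z\cap M|$, and the observation that $A$ is $K_{1,1,1}$-divisible because $\cQ_A(\emptyset)$ decomposes it, simply spell out what the paper compresses into ``by definition of projection'' and ``$G$ and $A$ are $K_{1,1,1}$-divisible''.
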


\begin{proof}
    To keep track on the various operations made on treasuries, we define the following hypergraphs.
    \begin{enumerate}[topsep=.1in,itemsep=.1in]
        \item Let $G_1,H$ such that $T=(G_1,H)$ is the subtreasury of ${\rm Treasury}^g(G,~G,~K_3)$ given by assumptions.
        
        \item Let $\tilde{G}_1,\tilde{G}_2,\tilde{H}$ such that $\tilde{T}=(\tilde{G}_1,\tilde{G}_2,\tilde{H})$ is the treasury after ``saving'' the reserve $X$. Observe that we have in particular 
            \begin{equation}\label{eq:SavingHypergraph}
            \tilde{G}_1\cup\tilde{G}_2\subseteq G_1,\text{ and } \tilde{H} = H.    
            \end{equation}
        
        \item Let $\widehat{G}_1,\widehat{G}_2,\widehat{H}$ such that $\widehat{T}=(\widehat{G}_1,\widehat{G}_2,\widehat{H})$ is the treasury after ``spending'' the omni-absorber $A$, i.e
            \(\widehat{T}={\rm Spending}(\tilde{T},A,X),\)
            in particular with
            \begin{equation}\label{eq:ResHypergraph}
                \widehat{H}=\tilde{H},\qquad \widehat{G}_1\subseteq\tilde{G}_1,\text{ and }\qquad\widehat{G}_2\subseteq\tilde{G}_2.
            \end{equation}        

        \item Let $G'_1,G'_2,H'$ such that $T'=(G'_1,G'_2,H')$ is the projection of $A$ into $\tilde{T}$, that is
            \[ T' = {\rm Proj}^g(\tilde{T},A,X) = {\rm Spending}(\tilde{T},A,X)\perp \cM(A) = \widehat{T}\perp\cM(A),\]
            in particular with 
            \begin{equation}\label{eq:ProjHypergraph}
                G'_1\subseteq\widehat{G}_1,\text{ and }\qquad G'_2\subseteq\widehat{G}_2.
            \end{equation}  
    \end{enumerate}
     
    We are given $M$, a perfect matching of $T'$, i.e. an $H'$-avoiding $G\setminus(A\cup X)$-perfect matching of $G'_1\cup G'_2$. By definition of projection, it is an $\widehat{H}$-avoiding $G\setminus(A\cup X)$-perfect matching of $G'_1\cup G'_2$, such that for all $L\in{\rm Div}(X)$ we have 
    $M\cup \cQ_A(L)$ is $\widehat H$-avoiding. Let $L:=(G\setminus A)\setminus \bigcup M$. Observe that $G$ and $A$ are $K_{1,1,1}$-divisible, hence $L\in\div{X}$. It follows that $\cQ_A(L)$ decomposes $A\cup L$, while the projection ensures that $M\cup \cQ_A(L)$ is $\widehat{H}$-avoiding and all its triangles are in $G_1$. Combining\cref{eq:SavingHypergraph,eq:ResHypergraph,eq:ProjHypergraph}, we obtain that $M\cup \cQ_A(L)$ is a $H$-avoiding $G$-perfect matching of $G_1$, hence a perfect matching of $T$.
\end{proof}

We are now ready to state one of our main technical results, the existence of a spread distribution over omni-absorbers in a given (regular and abundant) treasury. While the statement might seem intricate, we observe that the assumptions and outputs of this theorem are relatively natural once the overview of the refined-absorption methodology is well-understood: Given a graph $G$ with high minimum degree, a parent treasury telling us what triangles may be used during the absorption process, a set of ``reserved'' edges $X$, and the saving treasury in which the remainder packing is built, we need to find an efficient omni-absorber $A$, that does not ``shrink'' too many configurations (see the remark after~\cref{def:LatinAbsorberWrtTreasury}). The theorem then further guarantees that we can find a sufficiently-spread distribution over these absorbers.

\begin{thm}[Spread Latin-Omni-Absorber Theorem]\label{thm:MainLatinAbsorbers}
For all integers $g\geq 4$ and reals $\zeta\in(0,1)$, $\alpha\in(0,1/2)$ and $\beta\in(0,1/(8g))$, there exist an integer $n_0\ge 1$ and reals $\eta,\sigma_0\in(0,1)$ such that the following holds for all $n\ge n_0$ and all $\sigma\leq\sigma_0$.

Let $G\subseteq K_{n,n,n}$ with $\delta(G)\geq(2-\sigma)n$, and assume that $X$ is a spanning subgraph of $G$ with $\Delta(X)\leq n^{1-\zeta}$ such that there exists a bankrupted subtreasury $T$ of ${\rm Treasury}^g(G,~G,~K_3)$ such that $T$ is $g$-abundant and both $T$ and $\tilde{T}= {\rm Saving}(T,X)$ are $\parens*{n,~\alpha n,~2\beta,~\alpha}$-regular.

Let $\cL_\cA$ be the family of Latin-omni-absorbers $A\subseteq G$ for $X$ in $T$, with decomposition family $\cF_A$ and decomposition function $\cQ_A$, with $\Delta(A)\leq n^{1-\eta}$, such that  ${\rm Proj}^g(\tilde{T},A,X)$ contains a subtreasury that is $\parens*{n,2\alpha n,\beta,2\alpha}$-regular.

Then $\cL_\cA$ is non-empty and furthermore there exists a probability distribution over $\cL_\cA$ such that for every $K_3$-packing $S$ of $G$,
  \begin{equation*}
    \Prob{\exists L\in \div{X}\colon S \subseteq \cQ_A(L)} \leq (n^{-1+3/(2g)})^{|S|} .
  \end{equation*}
\end{thm}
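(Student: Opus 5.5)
The plan follows the outline of \cref{sec:OutlineCompletion}. I will not find a high-girth absorber directly. Instead I start from a refined absorber and ``boost'' each triangle of its decomposition family into a private high-girth gadget.

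\textbf{Step 1 (refined absorber).} Apply \cref{thm:LatinAbsorberThm} with parameter $\zeta$ to $G$ and $X$. This gives a $C$-refined Latin-omni-absorber $A_0\subseteq G$ for $X$ with decomposition family $\cF_0$, decomposition function $\cQ_0$, and $\Delta(A_0)\le n^{1-\eta_0}$. Because $A_0$ is $C$-refined, $|\cF_0| = O(n^{2-\eta_0}+e(X))$ and every edge lies in at most $C$ members of $\cF_0$.

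\textbf{Step 2 (quantum boosting).} For each root $R\in\cF_0$, the boosted absorber will replace $R$ by an $(R,g)$-sphere $(B_R,\cB^R_{\rm on},\cB^R_{\rm off},R)$. I take the new absorber to be $A:=A_0\cup\bigcup_R B_R$ together with the edges of $R\cap A_0$. For $L\in\div{X}$, the decomposition $\cQ_A(L)$ uses $\cB^R_{\rm on}$ when $R\in\cQ_0(L)$ and $\cB^R_{\rm off}$ otherwise. By \cref{rem:GirthSpheres} each gadget has rooted girth $2g$. The spheres must be edge-disjoint from each other, from $X$, and from $A_0$; they must be $H$-avoiding (this is where $g$-abundance of $T$ is used); and they must lie in $G_1$ (this is where regularity of $T$ is used). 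Mimicking the proof of \cref{thm:SpreadQuantumPreseeding}, each root first samples every available sphere independently with probability $p\approx \log^{a}n/n^{2g-1}$, so that it keeps about $\log^a n$ candidates. Using \cref{cor:KimVu} and \cref{lem:ExtendPacking} exactly as in \cref{cl:HighGirthHolds,cl:DisjointHolds,cl:RegularityDegreeHolds,cl:RegularityConfigDegreeHolds,cl:RegularityConfigCoDegreeHolds,cl:RegularityConfigMaxCommon2DegHolds,cl:RegularityConfigTwoCodegreeHolds}, I will show that with positive probability four things hold. First, every root retains $(1-o(1))\log^a n$ candidates that are disjoint from all other roots' candidates and are collectively high girth with them, including with the triangles of $\cF_0$ and with every possible choice $\cQ_A(L)$; the $\beta<1/(8g)$ slack pays for the longer configurations. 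Second, the union of all candidates has maximum degree at most $n^{1-\eta}$. Third, the common projection ${\rm Spending}(\tilde T,\cdot,X)\perp\cM$ over all possible choices has a $(n,2\alpha n,\beta,2\alpha)$-regular subtreasury; it is obtained by passing to the induced subtreasury as in \cref{clm:ExistRegularSubtreasury}, using regularity of $\tilde T$ with parameter $2\beta$. Fourth, abundance is preserved. Conditioning on these events changes probabilities by at most a constant factor. Each root then picks one good candidate uniformly at random. Any such choice lies in $\cL_\cA$, because the common-projection subtreasury is a subtreasury of ${\rm Proj}^g(\tilde T,A,X)$ by the argument of \cref{clm:ExistSubtreasury}. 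This also shows that $\cL_\cA$ is non-empty.

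\textbf{Step 3 (spreadness).} The construction gives a vertex-spread guarantee: for fixed roots and prescribed non-root vertices, the probability that the chosen spheres use those vertices is at most $(O(1)/n)^{\#\text{vertices}}$, since each non-root vertex of a sphere is essentially uniform over $\Theta(n)$ values. It remains to convert this into a triangle bound, and this conversion (the announced \cref{lem:VertexToTriangleSpread}) is the main obstacle. Fix a packing $S$ and the event $S\subseteq\cQ_A(L)$ for some $L$. Each triangle of $S$ either lies in $\cF_0$, and so is fixed in advance with no probability gain, or lies in some sphere $B_R$. The number of roots $R$ whose sphere can contain a given triangle is $O(C)$ after the choice of vertices, so the union over roots and over $L$ costs only $O(1)^{|S|}$. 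For a single sphere with $j$ prescribed triangles from $\cB_{\rm on}\cup\cB_{\rm off}$, I would show that these triangles determine at least $j(1-1/(2g))$, rounded suitably, of its $2g-1$ non-root vertices. The reason is that the triangles of a sphere form a cycle of length $4g$ around $u$ and $v$, so $j$ triangles cover at least about $j/2$ of the $b_i$'s plus possibly $u$. The only deficiency is the bounded number of root vertices, which costs a factor $n^{3/(2g)}$ per triangle in the worst case, when all $4g$ triangles are taken. Combined with the vertex-spread bound this gives $\Prob{\exists L: S\subseteq\cQ_A(L)}\le (n^{-1+3/(2g)})^{|S|}$. Triangles of $S$ that lie in $\cF_0$ need a separate argument. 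These are the triangles using edges of $X\cup A_0$ but not of any sphere. I plan to arrange that no triangle of $\cF_0$ appears in any $\cQ_A(L)$: every $R\in\cF_0$ is replaced, since $\cQ_A(L)$ only uses sphere triangles for roots. Then such $S$ have probability $0$, and the bound holds trivially.
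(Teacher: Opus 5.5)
Your Steps 1 and 2 follow the paper's construction: a refined absorber from \cref{thm:LatinAbsorberThm}, quantum sampling of $(F,g)$-spheres at rate $\log^a n/n^{2g-1}$, available candidates, a regular subtreasury of the common projection, one uniform choice per root, and an $O(1/n)$-vertex-spread bound. Step 3 has a genuine gap, however: the root is not paid for correctly.

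\textbf{The root count is wrong.} Your claim that a triangle of $S$ is compatible with only $O(C)$ roots is false. Only the on-positions $(vb_1b_2)$, $(vb_{2g-1}b_{2g})$ and $(ub_{2g}b_1)$ contain an edge of the root.
\begin{itemize}
\item A triangle of $S$ at a position meeting the root in one vertex, such as $(vb_3b_4)$, is compatible with up to $O(C\,\Delta(A_0\cup X))$ roots.
\item A triangle at a position avoiding the root, such as $(ub_2b_3)$, is compatible with essentially all of $\cF_0$.
\end{itemize}
Your fallback charges the undetermined root vertices $n^{3/(2g)}$ per triangle. That amortization only works when a sphere contains all $2g$ triangles of one decomposition. (Only one of the two decompositions is ever used per sphere, so there are never $4g$ triangles.) That is exactly the case where those triangles already cover $v,b_1,b_{2g}$, so the root costs nothing there.

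The dangerous spheres are those containing one or two triangles of $S$. Combine your two ingredients: at least $j(1-1/(2g))$ determined non-root vertices, and a separate count of roots. Then a single triangle at $(ub_2b_3)$ is bounded only by $|\cF_0|\cdot p$ rather than $O(p)$. The true bound there is $O(p)$, but only because that position pins three non-root vertices. The vertex count and the root count have to be traded off against each other, and your sketch does not do this. Separately, there is no union over $L$ to pay for: since $\cQ_A(L)\in\cM(\cB)$, you only need one on/off choice per sphere meeting $S$.

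\textbf{What is missing.} The missing idea is the argument behind \cref{lem:VertexToTriangleSpread}.
\begin{itemize}
\item Fix $T\in S$. Expose the root $R_0$ as one more member, together with the other triangles of $S$ in the same sphere, in a suitable order.
\item A member introducing $j\ge 1$ new vertices has only $O(n^{j-1})$ possible images. This uses the packing property of $S$ and the $C$-refinement of $\cF_0$: at most $C$ members through an edge, $O(Cn)$ through a vertex, and $O(n^2)$ in total.
\item Such a member costs $p^j$ by vertex-spreadness, so it contributes $O((np)^{j-1}p)=O(p)$.
\item You then need the ordering fact proved inside that lemma: there is an order in which at most one member introduces no new vertex, and this happens only when at least $2g-1$ triangles lie in the sphere. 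This gives weight $p^{(1-1/g)k}$ for $k$ triangles in one sphere.
\item Making this ordering work requires choosing $T$ to contain an edge of $A\cup X$ whenever possible, so that $(ub_{2g}b_1)$ cannot close the configuration prematurely.
\item Finally, the grouping of the triangles of $S$ into spheres must be summed with these weights, not bounded by a separate $O(1)^{|S|}$ union bound. The paper does this by induction on $|S|$, summing over the size of the group containing $T$.
\end{itemize}
Without this coupled exposure argument, Step 3 does not yield the bound $(n^{-1+3/(2g)})^{|S|}$.
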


The fact that $\cL_\cA$ is non-empty is already highly non-trivial, let alone the existence of a spread distribution over $\cL_\cA$. The proof of this statement requires some lengthy explanation, with the introduction of new tools, and is therefore deferred to~\cref{sec:HighGirthAbsorption}.

\subsection{Regularity Boosting}

To handle step (3) of our proof outline, we use a variant of the Boost Lemma of Glock, K\"{u}hn, Lo, and Osthus~\cite{GKLO16}, due to Kwan, Sah, Sawhney, Simkin~\cite{KSSS2023substructures} in the tripartite and triangle case, called Triangle-regularization. Given a set of triangles in $G\setminus (A\cup X)$ with suitable regularity and ``extendability'' properties, the lemma finds a subset which is significantly more regular.

\begin{lem}[Kwan, Sah, Sawhney, Simkin~\cite{KSSS2023substructures}]\label{lem:RegBoost}
    There are $C_0\in\bb{N}$ and $n_0:\bb{N}\times \bb{R}\to\bb{N}$ such that the following holds. We are given $\kappa\in(0,1)$ and $C\geq C_0$, and let $n\geq n_0(C, \kappa)$. Suppose , $\xi= C^{-8}$ , and $p\in(n^{-1/12}, 1)$, let $G$ be a balanced tripartite graph on $3n$ vertices $V^1 \sqcup V^2\sqcup V^3$, and let $\cT$ be a collection of triangles of $G$, satisfying the following properties.
    \begin{enumerate}[label=(\alph*)]
        \item Every edge $e\in E(G)$ is in $(1\pm\xi)p^2\kappa n$ triangles of $\cT$.
        \item For every $j\in[3]$ and every set $Q\subseteq V^{j-1}\cup V^j$ with $|Q|\leq 6$  in $G$, there are $(1\pm\xi)p^{|Q|}n$ common neighbors of $Q$ in $V^{j+1}$.
        \item For every $j\in[3]$ and every set $Q$ of at most $6$ edges between $V^{j-1}$ and $V^j$, there are between $C^{-1}p^{|V(Q)|}n$ and $Cp^{|V(Q)|}n$ vertices $u\in V^{j+1}$ which form a triangle in $\cT$ with every edge in $Q$.
        \item $G$ has the same number of edges between each pair of parts, and for every $j\in[3]$ and every $v\in V^j$, we have $\size*{d_{V^{j-1}}(v)-d_{V^{j+1}}(v)}\leq n^{2/3}$.
    \end{enumerate}
    Then, there is a subcollection $\cT'\subseteq\cT$ such that every edge $e\in E(G)$ is in $(1\pm n^{-1/4})p^2\kappa n/4$ triangles of~$\cT'$.
\end{lem}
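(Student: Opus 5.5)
This lemma is cited from Kwan, Sah, Sawhney and Simkin rather than proved here, but if I had to prove it I would follow the Glock--K\"uhn--Lo--Osthus Boost Lemma strategy: first find a fractional reweighting of $\cT$ that is almost perfectly regular, then round it by random sampling.

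\textbf{Step 1: set up the linear system.} I assign a weight $w(T) = \frac{1}{4}(1+\delta_T)$ to each $T\in\cT$, with corrections $\delta_T$ chosen so that for every edge $e\in E(G)$,
\[\sum_{T\ni e} w(T) = \frac{p^2\kappa n}{4}\]
holds exactly. The errors to be fixed are $\epsilon_e := d_\cT(e) - p^2\kappa n$, and by (a) we have $|\epsilon_e|\le \xi p^2\kappa n$.

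\textbf{Step 2: correct edge errors with local gadgets.} I correct each $\epsilon_e$ using bounded-size configurations of triangles around $e$. For $e=xy$, the model gadget picks a third vertex $u$ and an ``alternating cycle'' of triangles through $e$ and nearby edges. Adding $+1$ and $-1$ weights alternately along it changes the load of $e$ while leaving every other edge's load unchanged. Condition (c) guarantees that, for any configuration of at most $6$ edges, there are $\Theta(p^{|V(Q)|}n)$ vertices completing all the required triangles in $\cT$. Condition (b) controls common neighbourhoods, so the number of gadgets through each edge is roughly uniform. I then spread the correction for $e$ evenly over all its gadgets. Each triangle then receives total correction $\sum$ over gadgets containing it, which is $O(C^{O(1)}\xi) = O(C^{-4})$ in relative terms because $\xi = C^{-8}$. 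This keeps every $\delta_T\in(-1/2,1/2)$, so all weights stay in $(0,1)$. Condition (d) is needed so that the total discrepancies between parts and at each vertex can be zeroed. The corrections must be consistent with $K_{1,1,1}$-divisibility-type constraints: the global sum of loads per pair of parts must match, and any vertex imbalance above $n^{2/3}$ could not be absorbed by gadgets, which keep the vertex-level sums fixed.

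\textbf{Step 3: round randomly.} I include each $T$ in $\cT'$ independently with probability $w(T)$. Then each edge has expected load exactly $p^2\kappa n/4$. Chernoff gives deviation $O(\sqrt{p^2 n\log n})$, which is below $n^{-1/4}p^2\kappa n$ because $p > n^{-1/12}$. A union bound over the $O(n^2)$ edges completes the argument.

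\textbf{Main obstacle.} The hardest part is Step 2: designing gadgets in the tripartite setting whose net effect is supported on a single edge, and simultaneously showing that the spreading of corrections does not overload any triangle. I would attempt the gadget built from a $K_{2,2,2}$ octahedron, which has two triangle decompositions. Composing its $\pm$ switch along paths handles edges in the different part-pairs, and the extension counts in (c) for up to $6$ edges are exactly what is needed to count these octahedra uniformly.
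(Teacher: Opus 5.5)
The paper gives no proof of this lemma; it is imported from Kwan--Sah--Sawhney--Simkin. So I can only judge your plan on its own terms.

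The outer shell is sound: a fractional weighting near $1/4$, followed by independent rounding. Chernoff then gives relative error $O(\sqrt{\log n/(p^2n)})=o(n^{-1/4})$ for $p>n^{-1/12}$. However, Steps~1 and~2 fail because of a tripartite obstruction that you mention only in passing.

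Every triangle through $v\in V^j$ uses exactly one edge from $v$ to $V^{j-1}$ and one from $v$ to $V^{j+1}$. So for \emph{any} real weighting $w$ of triangles, the loads $\ell_w(e):=\sum_{T\ni e}w(T)$ satisfy
\[
\sum_{u\in N_{V^{j-1}}(v)}\ell_w(vu)\;=\;\sum_{T\ni v}w(T)\;=\;\sum_{u\in N_{V^{j+1}}(v)}\ell_w(vu)\qquad\text{for every vertex }v\in V^j .
\]
This has three consequences.
\begin{enumerate}
\item Your exact system $\ell_w\equiv p^2\kappa n/4$ is solvable only if $d_{V^{j-1}}(v)=d_{V^{j+1}}(v)$ for every $v$. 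Condition (d) does not give this, and it fails in this paper's application, where the reserve $X$ is only approximately balanced.
\item No gadget can change the load of a single edge $xy$ while fixing all others, since its net load vector would violate the identity at $x$.
\item The octahedron move you propose is load-neutral. Each of the two triangle decompositions of $K_{2,2,2}$ covers each of its $12$ edges exactly once, so their difference adds $+1-1=0$ everywhere. Hence no composition of such moves changes any $\epsilon_e$.
\end{enumerate}
Your remark that imbalance above $n^{2/3}$ ``could not be absorbed'' misplaces the issue: load-correcting moves can absorb no vertex imbalance at all.

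What is missing is the step in which (d) actually does its work. You must first replace the constant target by a target $\tau$ satisfying every vertex identity above, with $\tau_e=(1\pm\delta)p^2\kappa n/4$ for $\delta$ of order $n^{2/3}/(pn)$. The imbalance of up to $n^{2/3}$ at $v$ has to be spread over the roughly $pn$ edges at $v$. This perturbation, not the Chernoff error, is the dominant term that must fit inside the $n^{-1/4}$ tolerance of the conclusion.

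Only then is the error $\tau-\ell_{w_0}$, for $w_0\equiv 1/4$, a balanced vector. You then need correction moves whose net effect is balanced but nonzero. For example, take $x,x'\in V^1$ and $y,y'\in V^2$ with $xy,xy',x'y,x'y'\in E(G)$, and an apex $z\in V^3$ forming triangles of $\cT$ with all four edges; by (c) there are between $C^{-1}p^4n$ and $Cp^4n$ such $z$. The signed combination $xyz-xy'z+x'y'z-x'yz$ changes the loads by exactly the alternating $4$-cycle $xy-xy'+x'y'-x'y$, and leaves every edge at $z$ unchanged.

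Such moves in the three pairs of parts, together with star-type corrections (spreading weight over the triangles at a vertex) that remove the common vertex sums, span the balanced vectors. One then averages over $x',y',z$ and iterates. The $(1\pm\xi)$-control in (a) and (b) makes the residual error shrink, and the $C$-bounds in (c) keep each triangle's accumulated correction $O(C^{O(1)}\xi)$, hence its weight inside $(0,1)$.

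As written, your Step~2 contains no move that is both non-trivial on loads and compatible with the vertex identities, so it cannot be carried out.
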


This regularity-boosting lemma will be applied on the treasuries given by the absorber theorem,~\cref{thm:MainLatinAbsorbers}, and yield a set of triangles that is suitable for the ``nibble-like'' approach of the Forbidden submatching theorem presented in the next subsection.

\subsection{Forbidden Submatchings With Reserves}\label{ss:ForbiddenSubmatching}

The last piece of the absorption-methodology puzzle is a nibble approach applied to the graph $G$ we obtained after removing the reserve $X$ and absorber $A$. The goal is for this to yield a triangle-packing of $G\setminus A$ that covers all edges in $G\setminus (A\cup X)$. The remaining edges will be decomposed using properties of the absorber $A$. The use of the Forbidden Submatching with reserve methodology, instead of a simpler nibble with reserve one, ensures that the resulting decomposition spans no edges from the configuration hypergraph (it avoids in particular low-girth packings). For our purpose, we require the base theorem as established by Delcourt and Postle~\cite{DP22}, on top of which we add a spread component.

\begin{thm}[Forbidden Submatchings with Reserves~\cite{DP22}]\label{thm:ForbiddenSubmatchingReserves}
For all integers $r,g \ge 2$ and real $\beta \in (0,1)$, there exist an integer $D_0\ge 0$ and real $\alpha_0 > 0$ such that the following holds for all $D\ge D_{0}$. If $T$ is an $(r,g)$-treasury that is $(D,D^{1-\beta},\beta,\alpha_0)$-regular, then there exists a perfect matching of $T$. 
\end{thm}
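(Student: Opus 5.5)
This theorem is cited from Delcourt and Postle~\cite{DP22}, so the plan is to recall how that proof goes rather than to build a new argument. The approach is a random greedy / nibble process with forbidden submatchings, run on the design hypergraph $G_1$. The conflict hypergraph $H$ records edges that must not appear together. The reserve hypergraph $G_2$ is used at the very end to cover the vertices of $A$ that are left uncovered.

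\textbf{Step 1: conflict-free nibble.} Apply the conflict-free hypergraph matching theorem, in the form of Delcourt--Postle or Glock--Joos--Kim--Kühn--Lichev, to $G_1$ with conflicts $H$.

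1. Quasi-regularity (RT1) and small codegrees (RT3) are the standard nibble hypotheses.
2. The configuration degree bounds $\Delta_1(H^{(s)})\le \alpha_0 D^{s-1}\log D$ and $\Delta_t(H^{(s)})\le D^{s-t-\beta}$ from (RT4) say that conflicts are sparse enough. The $\log D$ factor, with $\alpha_0$ small, is tuned so that the expected number of conflicts through an edge is a small constant times $\log D$. After deleting conflicting edges in a random alteration, each vertex survives with probability $D^{-O(\alpha_0)}$ of remaining uncovered.
3. This yields an $H$-avoiding matching $M_1$ of $G_1$. The set $U$ of uncovered vertices of $A$ should satisfy that every vertex of $B$, and every reserve edge, sees at most $D^{1-\beta'}$ vertices of $U$, for some $\beta'$ depending on $\alpha_0$. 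I would prove this pseudorandomness of the leftover with the usual trackable weight functions in the nibble analysis.

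\textbf{Step 2: completion via reserves.} Cover $U$ using $G_2$: each $u\in U$ has degree at least $D^{1-\alpha_0}$ in $G_2$ by (RT2).

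1. Choose reserve edges for $U$ so that they are pairwise disjoint on $B$ and avoid all conflicts with each other and with $M_1$.
2. Do this by a second random greedy / Lovász Local Lemma step, processing the vertices of $U$ one at a time.
3. Each $u$ loses few options. Collisions in $B$ are few because every $B$-vertex has degree at most $D$ and $|U|$ is small locally. Conflicts of size $2$ with already chosen edges are controlled by the maximum $2$-codegree and the common $2$-degree bounds in (RT3). Larger conflicts are handled using $\Delta_t(H^{(s)})\le D^{s-t-\beta}$ together with the sparsity of $U$.
4. Since $D^{1-\alpha_0}\gg D^{1-\beta'}$ when $\alpha_0<\beta'$, some option always remains.

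The union of $M_1$ with these reserve edges is then an $H$-avoiding $A$-perfect matching, that is, a perfect matching of $T$.

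\textbf{Main obstacle.} The crux is the interplay of constants. The leftover density must be $D^{-\Omega(1)}$, which is needed for the reserve degree $D^{1-\alpha_0}$ to dominate. Achieving this requires the nibble with conflicts to run long enough to leave only a polynomially small fraction uncovered. The conflict-avoidance cost at each step scales with $\alpha_0$, hence the requirement that $\alpha_0$ be small relative to $\beta$. I would first try to verify this exactly via the quantitative statement of the forbidden submatching theorem in~\cite{DP22}, taking $\alpha_0\ll\beta$ and $D_0$ large.
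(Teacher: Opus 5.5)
The paper gives no proof of this statement: it imports the theorem verbatim from Delcourt and Postle~\cite{DP22} and uses it as a black box. Deferring to that source is therefore exactly the paper's approach, and your outline has the right shape: a conflict-avoiding nibble on $G_1$ that tracks the leftover against the reserve, then covering the leftover with edges of $G_2$. If you ever flesh the sketch out, note that the leftover exponent $\beta'$ should depend only on $r,g,\beta$, with $\alpha_0$ chosen afterwards, not on $\alpha_0$. Also, conflicts completed entirely inside $M_1$ can kill all but a $D^{-O(\alpha_0)}$-fraction of each $u$'s reserve options, because a reserve edge lies in up to $\alpha_0 D^{s-1}\log D$ such conflicts. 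So the surviving reserve degree must be tracked, with concentration, during Step~1; it cannot be dismissed as ``few'' or handled by the sparsity of $U$ in Step~2.
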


We then obtain a spread variant of the Forbidden Submatchings with reserve Theorem, by adding a sparsification step and therefore ensuring that no packing receive excessive weight in the resulting distribution. This line of argument has been used in~\cite{DKPIV} for instance, using a ``Local Lemma Distribution'' approach, on a nibble with reserve base theorem. We extend this to take into account the configuration hypergraph of the treasury, and basing the probabilistic arguments on the Chernoff's and Union bounds for simplicity.

\begin{thm}[Forbidden Submatchings with Reserves - spread version]\label{thm:ForbiddenSubmatchingReservesSpread}
For all integers $r,g \ge 2$ and reals $\beta,\gamma \in (0,1)$ and $c\geq 0$, the following holds for sufficiently large $D$ and sufficiently small $\alpha > 0$. If $T=(G_1,G_2,H)$ is an $(r,g)$-treasury that is $(D,D^{1-\beta},\beta,\alpha)$-regular, with $V(G_1)\cap V(G_2)=A$ and $\size*{V(G_1)\cup V(G_2)}\leq D^c$, then there exists a probability distribution over perfect matchings $M$ of $T$ such that, for every $S\subseteq E(G_1\cup G_2)$,
  \begin{equation*}
    \Prob{S \subseteq M} \leq \left(\frac{1}{D^{1 - \gamma}}\right)^{|S|} .
  \end{equation*}
\end{thm}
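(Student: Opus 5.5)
We will derive the spread distribution from the existential \cref{thm:ForbiddenSubmatchingReserves} by first sparsifying $T$ at random and then conditioning on the sparsified treasury still being regular. Fix $\gamma$, and let $p := D^{-1+\gamma/2}$ (more precisely, $p=D^{-1+\gamma'}$ with $\gamma'<\gamma$ chosen so that all error terms are absorbed). Define $G_1^p$ and $G_2^p$ by keeping each edge of $G_1\cup G_2$ independently with probability $p$; for the reserve we may keep each edge with a slightly larger probability $p_2$ if this is needed for the reserve degree condition. Set $H^p := H[E(G_1^p)\cup E(G_2^p)]$ and $T^p:=(G_1^p,G_2^p,H^p)$. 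Given that $T^p$ is regular, apply \cref{thm:ForbiddenSubmatchingReserves} with $D' := pD = D^{\gamma'}$ to obtain a perfect matching $M$ of $T^p$. Since $T^p$ is a subtreasury of $T$ (we restrict the edges and keep the induced configuration hypergraph), \cref{prop:SubTreasury} shows that $M$ is a perfect matching of $T$. Spreadness then follows because $S\subseteq M$ forces $S\subseteq E(G_1^p\cup G_2^p)$. Hence
\[\Prob{S\subseteq M}\le \frac{\Prob{S\subseteq E(G_1^p\cup G_2^p)}}{\Prob{T^p\text{ regular}}}\le 2p^{|S|}\le \parens*{D^{-1+\gamma}}^{|S|},\]
where the empty case is trivial and, for $|S|\ge1$, the factor $2$ is absorbed since $p\le D^{-1+\gamma}/2$. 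If reserve edges are kept with probability $p_2>p$, we also need $p_2\le D^{-1+\gamma}/2$.

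The main work is to show that $T^p$ is $(D',D'^{1-\beta'},\beta',\alpha_0)$-regular with probability at least $1/2$, for some $\beta'$ depending on $\beta$ and $\gamma$; we would then apply \cref{thm:ForbiddenSubmatchingReserves} with this $\beta'$. We check each condition separately.

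\emph{Quasi-regularity (RT1).} Degrees in $G_1^p$ are binomial with mean between $p(D-D^{1-\beta})$ and $pD$. Chernoff's bound gives concentration within $D'^{1-\beta'}$ with failure probability $\exp(-D^{\Omega(\gamma)})$. A union bound over at most $D^c$ vertices then handles all vertices at once, which is where the hypothesis $|V(G_1)\cup V(G_2)|\le D^c$ is used.

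\emph{Reserve degrees (RT2).} The bound $D'$ on degrees in $B$ is immediate. For vertices of $A$, the degree is $pD^{1-\alpha}$ in expectation; this must be at least $D'^{1-\alpha_0}$, i.e.\ $D^{\gamma'-\alpha}\ge D^{\gamma'(1-\alpha_0)}$, so we require $\alpha\le\gamma'\alpha_0$ (this is why $\alpha$ is taken sufficiently small). It is also $\ge D^{\Omega(1)}$, so Chernoff applies again.

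\emph{Codegrees (RT3).} Codegrees and the $2$-codegrees with $H$ have mean at most $pD^{1-\beta}=D'^{1-\beta/\gamma'}$. If $\beta'<\beta/\gamma'$ this is far below $D'^{1-\beta'}$, and an upper-tail Chernoff bound works provided the target is polynomially larger than the mean; otherwise we use the crude bound $\Prob{X\ge t}\le \binom{N}{t}p^t$ with $t$ a large constant.

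\emph{Configuration degrees (RT4).} The configuration edges through a fixed vertex require all $s-1$ other elements to survive, so $d_{H^{(s)}}$ becomes a degree-$(s-1)$ polynomial in the independent indicators, with mean at most $\alpha p^{s-1}D^{s-1}\log D\approx \alpha D'^{s-1}\log D'/\gamma'$. This is \emph{the step I expect to be the main obstacle}: the $1/\gamma'$ loss must be absorbed by $\alpha$ being small relative to $\alpha_0$, and we need polynomial concentration. I would apply \cref{cor:KimVu} (the Kim–Vu bound), verifying $(*_i)$ using the codegree bounds $\Delta_t(H^{(s)})\le D^{s-t-\beta}$ from (RT4) of $T$. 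These give $p^i\gg \Delta_i/\kappa$ with $\kappa\approx \alpha D^{s-1}\log D$, since $p^i D^{s-1}\ge D^{s-1-i+i\gamma'}\gg D^{s-1-i-\beta}\log^{O(1)}D$. The $t$-codegree bounds and the common $2$-degree bound are handled in the same way, again with a union bound over $D^{O(g)}$ tuples.

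Combining these conditions, $T^p$ is regular with probability $1-o(1)$, which completes the argument.
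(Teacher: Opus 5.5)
Your proposal is correct and follows essentially the same route as the paper: sparsify every edge of $G_1\cup G_2$ at rate $p=D^{-1+\gamma/2}$, keep the induced configuration hypergraph, verify (RT1)--(RT4) for the sparsified treasury via Chernoff and Kim--Vu bounds with union bounds over $D^{O(c)}$ objects, apply the existential theorem, and conclude from $\Pr[S\subseteq M]\le p^{|S|}/\Pr[\text{regular}]\le 2p^{|S|}$. The one slip is normalisation: take the new degree parameter slightly above $pD$ (the paper uses $D'=pD(1+D^{-\gamma/6})$), because with $D'=pD$ exactly the upper degree bounds in (RT1) and (RT2) fail with constant probability at every vertex whose original degree is close to $D$.
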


\begin{nonlateproof}{thm:ForbiddenSubmatchingReservesSpread}

    Observe that it suffices to prove the theorem for small enough $\gamma$, as $\Gamma>\gamma$ implies $\frac{1}{D^{1-\gamma}}< \frac{1}{D^{1-\Gamma}}$. In particular, we assume $\gamma<\beta$. Let $\alpha$ be arbitrarily small, with $6\alpha< \gamma$, and let $D$ be large enough so that various inequalities present in this proof hold.
    
    Let $T'=(G'_1,G'_2,H')$ be the random subtreasury of $T$ obtained by selecting every edge of $G_1,G_2$ independently at random with probability $p:=D^{\gamma/2-1}$ , and by setting $H'=H[E(G_1')\cup E(G_2')]$. Note that we can ask $G_1,G_2$ to be edge disjoint for simplicity. Let $B$ such that $A\sqcup B=V(G_2)$, and we define
    \[\delta := D^{-\gamma/6},\qquad \alpha':=4\frac{\alpha}{\gamma},\qquad D':=pD(1+\delta)=D^{\gamma/2}+D^{\gamma/3}.\] We now prove that with high probability, $T'$ is $(D',(D')^{1-1/4},\frac{1}{4},\alpha')$-regular. Observe that we can take $\alpha$ small enough and $D$ large enough so that~\cref{thm:ForbiddenSubmatchingReserves} would then apply.

    We often use the following observation, if $X$ is a binomial random variable $X\sim{\rm Bin}(N,p)$ on $N\leq f(D)$ trials, then $X$ is dominated by $Y\sim{\rm Bin}(f(D),p)$.
    \begin{remark}\label{rem:Chernoff_upper_bound}
        For positive integers $N_X<N_Y$ and reals $\delta>0$ and $p\in(0,1)$, let $X\sim{\rm Bin}(N_X,p)$ and $Y\sim{\rm Bin}(N_Y,p)$. Then, by the Chernoff bound~\cite{AS16},
        \[\Prob{X\geq (1+\delta)\mathbb{E}Y}
        \leq \Prob{Y\geq (1+\delta)\mathbb{E}Y}
        \leq \exp\brackets*{-\delta^2 \mathbb{E}Y/(2+\delta)}.
        \]
    \end{remark} 
    We split the proof that $T'$ is $(D',(D')^{1-1/4},\frac{1}{4},\alpha')$-regular with high probability into four claims, one for each one of Properties~\crefrange{item:RT_QuasiReg}{item:RT_ConfigDeg}.

    \begin{claim}\label{clm:spread_nibble_RT1}
        With high probability, every vertex of $G_1$ has degree at most $D'$ in $G'_1$ and every vertex of $A$ has degree at least $D'(1-{D'}^{-1/4})$ in $G'_1$, hence~\cref{item:RT_QuasiReg} holds with high probability.
    \end{claim}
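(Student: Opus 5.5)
The plan is to prove the two bounds in \cref{clm:spread_nibble_RT1} separately, each by Chernoff (via \cref{rem:Chernoff_upper_bound} where needed) followed by a union bound over the at most $D^c$ vertices of $G_1\cup G_2$.

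Fix a vertex $v$ of $G_1$. Its degree $d_{G'_1}(v)$ is a sum of $d_{G_1}(v)\le D$ independent Bernoulli$(p)$ indicators, one per edge of $G_1$ containing $v$. It is therefore dominated by ${\rm Bin}(D,p)$, which has mean $pD=D^{\gamma/2}$. By \cref{rem:Chernoff_upper_bound} with $\delta=D^{-\gamma/6}$,
\[\Prob{d_{G'_1}(v)\ge (1+\delta)pD}\le \exp\parens*{-\delta^2 pD/3}=\exp\parens*{-D^{\gamma/6}/3}.\]
Since $D'=(1+\delta)pD$, a union bound over at most $D^c$ vertices gives failure probability $D^c e^{-D^{\gamma/6}/3}=o(1)$.

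For the lower bound, fix $v\in A$. By \ref{item:RT_QuasiReg} for $T$ we have $d_{G_1}(v)\ge D-D^{1-\beta}$, so
\[\Expect{d_{G'_1}(v)}\ge pD(1-D^{-\beta}).\]
The lower-tail Chernoff bound with deviation $\delta pD$ gives, outside an event of probability $e^{-\Omega(D^{\gamma/6})}$,
\[d_{G'_1}(v)\ge pD(1-D^{-\beta}-\delta).\]
It then remains to compare this with $D'(1-{D'}^{-1/4})=pD(1+\delta)(1-{D'}^{-1/4})$. Because ${D'}^{-1/4}\approx D^{-\gamma/8}$, it suffices that $2\delta+D^{-\beta}\le \tfrac12 D^{-\gamma/8}$. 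This holds for large $D$, since $\delta=D^{-\gamma/6}$ and $\gamma<\beta$ make both $\delta$ and $D^{-\beta}$ much smaller than $D^{-\gamma/8}$. Another union bound over $A$ completes the claim.

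The only delicate point is this exponent bookkeeping: we need the concentration window $\delta$ to be polynomially smaller than ${D'}^{-1/4}$ while keeping $\delta^2 pD$ a positive power of $D$. The choice $\delta=D^{-\gamma/6}$ achieves both, since $\gamma/6>\gamma/8$ and $pD\delta^2=D^{\gamma/6}$. So this part should pose no real obstacle, because the assumption $|V(G_1)\cup V(G_2)|\le D^c$ makes the polynomial union bound harmless against the stretched-exponential tails.
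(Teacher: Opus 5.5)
Your proposal is correct and follows the paper's argument. For the upper bound you dominate by $\mathrm{Bin}(D,p)$ and apply Chernoff with $\delta=D^{-\gamma/6}$ and a $D^c$ union bound; for the lower bound you use lower-tail Chernoff from $\mathbb{E}\,d_{G'_1}(v)\ge pD(1-D^{-\beta})$, observing that $\delta$ and $D^{-\beta}$ are negligible against ${D'}^{-1/4}\approx D^{-\gamma/8}$ since $\gamma<\beta$. The only difference is cosmetic: you use an additive deviation $\delta pD$, whereas the paper chains $D'(1-{D'}^{-1/4})\le D^{\gamma/2}(1-D^{-\gamma/7})\le D^{\gamma/2}(1-\delta)(1-D^{-\beta})$ and applies the multiplicative form.
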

    \begin{proofclaim}
        With $T$ being $(D,D^{1-\beta},\beta,\alpha)$-regular, we know that for every vertex $v\in G_1$, $\mathbb{E}d_{G'_1}(v)\leq pD = D^{\gamma/2}$. Using the union bound and~\cref{rem:Chernoff_upper_bound} with $X=d_{G'_1}(v)\sim{\rm Bin}(d_{G_1}(v),p)$ and $Y\sim{\rm Bin}(D,p)$, we obtain
        \begin{align*}
            \Prob{\exists v\in G_1,~d_{G'_1}(v)>D'}
            &=\Prob{\exists v\in G_1,~d_{G'_1}(v)>(1+\delta)pD}\\
            &\leq D^c\cdot\exp\brackets*{-\delta^2 D^{\gamma/2}/(2+\delta)}\\
            &\leq D^c\cdot\exp\brackets*{-D^{\gamma/6}/3}\\
            &=o(1).
        \end{align*}
        Meanwhile, with $\gamma<\beta$ and $D'=D^{\gamma/2}\parens*{1+D^{-\gamma/6}}\leq 2D^{\gamma/2}$, we have for $D$ large enough
        \begin{align*}
            D'(1-{D'}^{-1/4})
        &\leq D^{\gamma/2}\parens*{1+D^{-\gamma/6}}\parens*{1-(2D^{\gamma/2})^{-1/4}}\\
        &\leq D^{\gamma/2}\parens*{1-D^{-\gamma/7}}\\
        &\leq D^{\gamma/2}\parens*{1-D^{-\gamma/6}}\parens*{1-D^{-\beta}}.
        \end{align*}
        With $T$ being $(D,D^{1-\beta},\beta,\alpha)$-regular, we also know that for every vertex $v\in A$, $\mathbb{E}d_{G'_1}(v)\geq D^{\gamma/2}(1-D^{-\beta})$. Then by the Chernoff bound and the union bound, we obtain
        \begin{align*}
            \Prob{\exists v\in A,~d_{G'_1}(v)<D'(1-{D'}^{-1/4})}
            &\leq \Prob{\exists v\in A,~d_{G'_1}(v)<D^{\gamma/2}\parens*{1-D^{-\gamma/6}}\parens*{1-D^{-\beta}}}\\
            &\leq \Prob{\exists v\in A,~d_{G'_1}(v)<(1-\delta)\mathbb{E}d_{G'_1}(v)}\\
            &\leq D^c \cdot\exp\brackets*{-\delta^2 \mathbb{E}d_{G'_1}(v)/2}\\    
            &\leq D^c \cdot\exp\brackets*{-\delta^2 D^{\gamma/2}(1-D^{-\beta})/2}\\
            &\leq D^c \cdot\exp\brackets*{- D^{\gamma/6}(1-D^{-\beta})/2}\\
            &=o(1).\qedhere
        \end{align*}
    \end{proofclaim} 

    \begin{claim}\label{clm:spread_nibble_RT2}
        With high probability, every vertex of $B$ has degree at most $D'$ in $G'_2$, and every vertex of $A$ has degree at least $D'^{1-\alpha'}$ in $G'_2$, hence~\cref{item:RT_ReserveDeg} holds with high probability.
    \end{claim}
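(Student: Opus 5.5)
The plan is to mirror the proof of \cref{clm:spread_nibble_RT1}, now working with the reserve hypergraph $G_2$ and its random sparsification $G'_2$. Each edge of $G_2$ is kept independently with probability $p=D^{\gamma/2-1}$, so for every vertex $v$ we have $d_{G'_2}(v)\sim{\rm Bin}(d_{G_2}(v),p)$. We treat the upper bound on $B$ and the lower bound on $A$ separately, and finish each with a union bound over the at most $D^c$ vertices of $V(G_1)\cup V(G_2)$.

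\emph{Upper bound on $B$.} By \cref{item:RT_ReserveDeg} for $T$, every $v\in B$ satisfies $d_{G_2}(v)\le D$.
\begin{itemize}
\item Apply \cref{rem:Chernoff_upper_bound} with $X=d_{G'_2}(v)$ and $Y\sim{\rm Bin}(D,p)$, so $\mathbb{E}Y=D^{\gamma/2}$.
\item This gives
\[\Prob{d_{G'_2}(v)>(1+\delta)pD}\le \exp\brackets*{-\delta^2D^{\gamma/2}/3}=\exp\brackets*{-D^{\gamma/6}/3}.\]
\item Since $D'=(1+\delta)pD$, the union bound over $D^c$ vertices leaves total failure probability $o(1)$.
\end{itemize}
If instead $T$ is bankrupted ($\Delta(G_2)=0$), then $\Delta(G_2')=0$ and \cref{item:RT_ReserveDeg} holds trivially.

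\emph{Lower bound on $A$.} By \cref{item:RT_ReserveDeg} for $T$, every $v\in A$ satisfies $d_{G_2}(v)\ge D^{1-\alpha}$. Hence $\mathbb{E}d_{G'_2}(v)\ge pD^{1-\alpha}=D^{\gamma/2-\alpha}$, which is a positive power of $D$ because $6\alpha<\gamma$.
\begin{itemize}
\item The lower Chernoff bound gives $d_{G'_2}(v)\ge \tfrac12 D^{\gamma/2-\alpha}$ except with probability $\exp(-\Omega(D^{\gamma/2-\alpha}))$.
\item The union bound over $D^c$ vertices again leaves $o(1)$.
\end{itemize}

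\emph{Comparison with ${D'}^{1-\alpha'}$.} It remains to check that $\tfrac12 D^{\gamma/2-\alpha}\ge {D'}^{1-\alpha'}$. Recall $\alpha'=4\alpha/\gamma$ and $D'\le 2D^{\gamma/2}$, so
\[{D'}^{1-\alpha'}\le 2D^{(\gamma/2)(1-4\alpha/\gamma)}=2D^{\gamma/2-2\alpha}.\]
This is at most $\tfrac12 D^{\gamma/2-\alpha}$ once $D^{\alpha}\ge 4$, which holds for $D$ large. This exponent comparison is the main point to verify. It is exactly why $\alpha'$ was chosen as $4\alpha/\gamma$: the slack of $D^{\alpha}$ between the two exponents absorbs the constant losses from Chernoff and from $D'\le 2D^{\gamma/2}$.

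Combining the two bounds with the union bound, \cref{item:RT_ReserveDeg} holds for $T'$ with high probability.
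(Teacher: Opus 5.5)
Your proposal is correct and follows essentially the same route as the paper. Both arguments use \cref{rem:Chernoff_upper_bound} for the upper bound on $B$, a lower-tail Chernoff bound for $A$, and a union bound over the at most $D^c$ vertices, and both compare ${D'}^{1-\alpha'}\approx D^{\gamma/2-2\alpha}$ with $D^{\gamma/2-\alpha}$ via $\alpha'=4\alpha/\gamma$. The only differences are cosmetic: you use a constant-factor deviation $\tfrac12$ in place of the paper's $(1-\delta)$, which gives a slightly stronger tail bound, and you treat the bankrupted case explicitly, which the paper leaves implicit.
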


    \begin{proofclaim}
        With $T$ being $(D,D^{1-\beta},\beta,\alpha)$-regular, we know that for every vertex $v\in B$, $\mathbb{E}d_{G'_2}(v)\leq D^{\gamma/2}$, and that for every vertex $v\in A$, $\mathbb{E}d_{G'_2}(v)\geq D^{\gamma/2-\alpha}$. Using the union bound and~\cref{rem:Chernoff_upper_bound} we obtain
        \begin{align*}
            \Prob{\exists v\in B,~d_{G'_2}(v)>D'}
            &=\Prob{\exists v\in B,~d_{G'_2}(v)>(1+\delta)pD}\\
            &\leq D^c\cdot\exp\brackets*{-\delta^2 D^{\gamma/2}/(2+\delta)}\\
            &=o(1).
        \end{align*}        
        For any constant $\kappa>0$ and $D$ large enough, we have $(1-\delta)D^\alpha \geq (1+\delta)^{\kappa}$, therefore
        \[{D'}^{1-\alpha'}
         = D^{(\gamma/2)(1-4\alpha/\gamma)}(1+\delta)^{1-\alpha'}\\
        \leq D^{(\gamma/2)-2\alpha}(1+\delta)^{1-\alpha'}
        \leq  (1-\delta)D^{(\gamma/2)-\alpha}.\]
        By the Chernoff bound and the union bound, we obtain
        \begin{align*}
            \Prob{\exists v\in A,~d_{G'_2}(v)<{D'}^{1-\alpha'}}
            &\leq \Prob{\exists v\in A,~d_{G'_2}(v)<(1-\delta)D^{(\gamma/2)-\alpha}}\\
            &\leq \Prob{\exists v\in A,~d_{G'_2}(v)<(1-\delta)\mathbb{E}d_{G'_2}(v)}\\
            &\leq D^c \cdot\exp\brackets*{-\delta^2  D^{\gamma/2-\alpha}/2}\\
            &\leq D^c\cdot\exp\brackets*{-D^{\gamma/6-\alpha}/2}\\
            &=o(1).\qedhere    
        \end{align*}    
    \end{proofclaim} 
   
    \begin{claim}\label{clm:spread_nibble_RT3}
        With high probability, $G'_1\cup G'_2$ has codegrees at most ${D'}^{1-1/4}$, and the maximum $2$-codegree of $G'_1\cup G'_2$ with $H'$ and the maximum common $2$-degree of $H'$ with respect to $G'_1\cup G'_2$ are both at most ${D'}^{1-1/4}$, hence~\cref{item:RT_CoDeg} holds with high probability.
    \end{claim}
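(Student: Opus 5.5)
The bound on the degrees of $G'_1\cup G'_2$ in the codegree sense is immediate, and the other two quantities are counts of edges incident to a bounded configuration. In each case the count in $T'$ is a binomial, or dominated by a sum of binomials, with mean at most $p^k D^{1-\beta}$ for the appropriate $k\ge 1$. Concentration then follows from \cref{rem:Chernoff_upper_bound} and a union bound over the at most $D^{O(c)}$ choices of the fixed objects. The threshold ${D'}^{3/4}\ge D^{3\gamma/8}$ is much larger than these means, because $\gamma<\beta$.

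\textbf{Codegrees.} Fix two vertices $u,w$ of $G_1\cup G_2$. The number of edges of $G_1\cup G_2$ containing both is at most $D^{1-\beta}$ by regularity of $T$. Each survives independently with probability $p$, so the codegree in $G'_1\cup G'_2$ is dominated by ${\rm Bin}(D^{1-\beta},p)$, whose mean is $D^{\gamma/2-\beta}\le 1$. A fixed value $m$ is exceeded with probability at most $(eD^{\gamma/2-\beta}/m)^m$. Take $m={D'}^{3/4}\ge D^{3\gamma/8}$; this makes the probability $\exp(-\Omega(D^{3\gamma/8}\log D))$. That beats the union bound over the at most $D^{2c}$ pairs.

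\textbf{Maximum $2$-codegree of $G'_1\cup G'_2$ with $H'$.} Fix $e$ and $F$ with $e\notin F$. We may condition on $F\in G'_1\cup G'_2$; otherwise the count is $0$. Since $H'=H[E(G'_1)\cup E(G'_2)]$, the relevant count is the number of $2$-edges $FF'$ of $H$ with $e\in F'$ and $F'$ surviving. The corresponding count in $T$ is at most $D^{1-\beta}$. These $F'$ are distinct edges and are retained independently, so the same binomial estimate applies.

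\textbf{Common $2$-degree of $H'$.} Fix vertex-disjoint $F_1,F_2$. The set of $F$ with $FF_1,FF_2\in H$ has size at most $D^{1-\beta}$, and each such $F$ survives with probability $p$. The same estimate and a union bound over at most $D^{2c}$ pairs finish this case.

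The only step needing care is that the union bounds are over polynomially many objects in $D$, guaranteed by $|V(G_1)\cup V(G_2)|\le D^c$. The tails above are $\exp(-D^{\Omega(\gamma)})$, which comfortably beats $D^{O(c)}$. I expect no genuine obstacle here. The one subtlety is to use the small-mean form of the Chernoff bound rather than a relative $(1\pm\delta)$ deviation, because the means $D^{\gamma/2-\beta}$ can be below $1$.
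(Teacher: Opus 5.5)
Your proof is correct and follows the paper's argument: you dominate each of the three counts by ${\rm Bin}(D^{1-\beta},p)$ via (RT3) for $T$, apply a Chernoff upper tail at threshold ${D'}^{3/4}\ge D^{3\gamma/8}$, and take a union bound over polynomially many fixed objects. The only difference is the form of the tail bound: the paper uses the relative form $\exp(-\delta^2\mu/(2+\delta))$ with the large deviation $\delta=D^{\beta-\gamma/8}$, which gives $\exp(-D^{3\gamma/8}/2)$ even though $\mu=D^{\gamma/2-\beta}<1$, so the ``subtlety'' you flag about needing the small-mean form is not actually needed.
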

    \begin{proofclaim}
    
        For every pair of vertices $u,v$ in $G_1\cup G_2$, the expected codegree of $\{u,v\}$ is at most $pD^{1-\beta}=D^{(\gamma/2)-\beta}$. Observe that for $D$ large enough, using $\gamma<\beta$ and therefore $\beta-\gamma/8-\gamma/6>0$
        \begin{equation}
        D^{\beta-\gamma/8}(1+\delta)^{3/4}\geq (1+D^{\beta-\gamma/8}),\label{eq:bound_D}
        \end{equation}
        hence
        \begin{align*}
            {D'}^{3/4} 
        &= D^{3\gamma/8}(1+\delta)^{3/4}\\
        &= D^{(\gamma/2)-\beta+(\beta-\gamma/8)}(1+\delta)^{3/4}\\
        &\geq D^{(\gamma/2)-\beta}(1+D^{\beta-\gamma/8}).
        \end{align*}
        Using the union bound and~\cref{rem:Chernoff_upper_bound}, we obtain, 
        \begin{align*}
        \Prob{G'_1\cup G'_2 \text{ has codegree at least }{D'}^{1-1/4} }
        &\leq\Prob{G'_1\cup G'_2 \text{ has codegree at least }D^{(\gamma/2)-\beta}(1+D^{\beta-\gamma/8}) } \\
            &\leq D^{2c} \cdot\exp\brackets*{-D^{2\beta-\gamma/4} D^{\gamma/2-\beta}/(2D^{\beta-\gamma/8})}\\
            &\leq D^{2c}\cdot\exp\brackets*{-D^{3\gamma/8}/2}\\
            &=o(1).     
        \end{align*}
        
        We obtain with identical computation that the maximum $2$-codegree of $G'_1\cup G'_2$ with $H'$, and the maximum common $2$-degree of $H'$ with respect to $G'_1\cup G'_2$ are both with high probability at most ${D'}^{1-1/4}$. 
    \end{proofclaim}
    
    \begin{claim}\label{clm:spread_nibble_RT4}
        With high probability, for all $2\le s\le g$ we have $\Delta_1\left({H'}^{(s)}\right) \le \alpha' \cdot (D')^{s-1}\log D'$, and for all $2\le t<s\le g$, $\Delta_{t}\left({H'}^{(s)}\right) \le (D')^{s-t-1/4}$, hence~\cref{item:RT_ConfigDeg} holds with high probability.
    \end{claim}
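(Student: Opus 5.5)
We need to prove Claim~\ref{clm:spread_nibble_RT4}: after sampling each edge of $G_1\cup G_2$ independently with probability $p=D^{\gamma/2-1}$ and setting $H'=H[E(G'_1)\cup E(G'_2)]$, the configuration (co)degree bounds of~\ref{item:RT_ConfigDeg} hold with parameters $D'$, $\alpha'$ and $\beta'=1/4$. The plan mirrors the previous claims: bound expectations using the regularity of $T$, then concentrate and take a union bound over the at most $D^{cg}$ relevant vertex sets.

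\textbf{Degree bound.} Fix $s\in\{2,\ldots,g\}$ and a vertex $F\in V(H)$; condition on $F\in E(G'_1\cup G'_2)$, since otherwise $F$ has no edges in $H'$. An edge $Z\ni F$ of $H^{(s)}$ survives in $H'$ exactly when its other $s-1$ elements of $E(G_1\cup G_2)$ are all sampled. Here I use that edges of $H$ are subsets of $E(G_1\cup G_2)$, as is the case for girth configuration hypergraphs restricted to the design. Hence
\[\mathbb{E}\, d_{{H'}^{(s)}}(F)\le p^{s-1}\alpha D^{s-1}\log D=\alpha D^{(\gamma/2)(s-1)}\log D .\]
Meanwhile
\[\alpha'(D')^{s-1}\log D'\ge \frac{4\alpha}{\gamma}\, D^{(\gamma/2)(s-1)}\cdot \frac{\gamma}{2}\log D=2\alpha D^{(\gamma/2)(s-1)}\log D,\]
so we only need the random count to stay below twice a quantity bounding its mean.

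The count is a polynomial of degree $s-1$ in independent indicators, so I will apply \cref{cor:KimVu} to the $(s-1)$-uniform link hypergraph $J=\{Z\setminus\{F\}: Z\in H^{(s)},\,F\in Z\}$, with $\kappa=\alpha D^{s-1}\log D$. The required codegrees are $\Delta_i(J)\le\Delta_{i+1}(H^{(s)})\le D^{s-1-i-\beta}$ for $i<s-1$, and $\Delta_{s-1}(J)=1$. Condition $(*_i)$ then reads
\[p^i\ge D^{s-1-i-\beta}\log^{O(g)}D\,/\,(\alpha D^{s-1}\log D),\]
that is, $D^{i\gamma/2-i}\ge D^{-i-\beta}\log^{O(g)}D$, which holds since $\beta>0$. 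For $i=s-1$ we need $p^{s-1}\ge \log^{O(g)}D/(\alpha D^{s-1})$, which also holds. Since $|V(G_1\cup G_2)|\le D^c$, the logarithms in \cref{cor:KimVu} are $O(\log D)$, and the failure probability is $e^{-\Omega(\log^2 D)}$, which beats the union bound over $D^c$ vertices $F$.

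\textbf{Codegree bound.} Fix $2\le t<s\le g$ and a $t$-set $Q$. The expected codegree is at most $p^{s-t}D^{s-t-\beta}=D^{(\gamma/2)(s-t)-\beta}$. The target is
\[(D')^{s-t-1/4}\ge D^{(\gamma/2)(s-t)-\gamma/8}.\]
Because $\gamma<\beta$, the target exceeds the mean by a factor of $D^{\beta-\gamma/8}\ge D^{\Omega(1)}$. So I will apply \cref{cor:KimVu} with $\kappa=D^{s-t-\gamma/8}\cdot p^{-(s-t)}\cdot D^{(\gamma/2)(s-t)}/2$, chosen so that $2p^{s-t}\kappa$ equals the target. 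The conditions $(*_i)$ involve $\Delta_i\le D^{s-t-i-\beta}$, and they hold with room to spare. A union bound over the at most $D^{cg}$ sets $Q$ completes the argument.

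\textbf{Main obstacle.} The delicate point is the degree bound. There the mean and the target differ only by a constant factor, so polynomial concentration must hold at relative error $O(1)$. This in turn requires $\kappa p^{s-1}$ to be at least polylogarithmically larger than all the smaller-order terms. It works because of the slack $D^{-\beta}$ in the codegrees and the polynomial size of $p^{s-1}\kappa$, which is about $D^{\gamma/2}$. The hypothesis $6\alpha<\gamma$ is what ensures $\alpha'<1$, so that the resulting parameters remain admissible for \cref{thm:ForbiddenSubmatchingReserves}.
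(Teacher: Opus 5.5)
Your proposal is correct and follows essentially the paper's own argument: apply the Kim--Vu corollary (\cref{cor:KimVu}) to the link hypergraph $J(Q)=\{Z\setminus Q\colon Z\in H^{(s)},\,Q\subseteq Z\}$, taking $\kappa=\alpha D^{s-1}\log D$ when $|Q|=1$ and $\kappa$ of order $D^{s-t-\gamma/8}$ when $|Q|=t\geq 2$, and finish with a union bound over the polynomially many $(s,t,Q)$. Two small fixes are needed in the codegree part. First, your displayed formula for $\kappa$ carries a stray factor $D^{s-t}$: the value forced by $2p^{s-t}\kappa=D^{(\gamma/2)(s-t)-\gamma/8}$ is $\kappa=\tfrac{1}{2}D^{s-t-\gamma/8}$, essentially the paper's $\tfrac{1}{4}D^{s-t-\gamma/8}$. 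Second, for $i=s-t$ the condition $(*_i)$ must use $\Delta_{s-t}(J)\leq 1$ rather than $D^{-\beta}$; it still holds because $(s-t)\gamma/2>\gamma/8$.
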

    \begin{proofclaim}
        Recall that by regularity of $T$, for all $2\le s\le g$ we have 
        \[\Delta_1\left(H^{(s)}\right) \le \alpha \cdot D^{s-1}\log D,\] and for all $2\le t<s\le g$, \[\Delta_{t}\left(H^{(s)}\right) \le D^{s-t-\beta}.\]
        Furthermore, as $H$ is a configuration hypergraph for the $r$-bounded hypergraph $G_1\cup G_2$ with $\size*{V(G_1)\cup V(G_2)}\leq D^c$, it follows that, for any $1\leq t\leq g$, the number of $t$-subsets of $V(H)$ is at most $D^{crt}$. Then for all $F\in V(H')$ and all $2\leq s\leq g$, we have
        \[\mathbb{E}d_{{H'}^{(s)}}(F)=p^{s-1}d_{{H}^{(s)}}(F)\leq \alpha \cdot p^{s-1}D^{s-1}\log D.\]

        For concentration, we apply~\cref{cor:KimVu} as we did multiple times in the proof of~\cref{thm:SpreadQuantumPreseeding}. Fix $1\leq t<s\leq g$, and a $t$-set $Q\subseteq V(H)$. Let $J=J(Q)$ be $(s-t)$-uniform hypergraph defined by
        \[J:=\set*{Z\setminus Q:Z\in H^{(s)},\ Q\subseteq Z}.\]
        Let $N:=D^{c+1}$, so that $v(J)\leq N$. If $Q\subseteq V(H')$, we then have $d_{H'^{(s)}}(Q)=e(J_p)$, and otherwise this degree is null. By regularity of $T$,
        \[e(J)\leq
          \begin{cases}
            \alpha D^{s-t}\log D,&\text{ if }t=1,\\
            D^{s-t-\beta},&\text{ if }t\geq2,
          \end{cases}
        \]
        and, for every $1\le i<s-t$,
        \[\Delta_i(J)\leq D^{s-t-i-\beta},
          \qquad \Delta_{s-t}(J)\leq 1.
        \]
        Let \[
          \kappa_t:=
          \begin{cases}
            \alpha D^{s-t}\log D,&\text{ if }t=1,\\
            \frac14D^{s-t-\gamma/8},&\text{ if }t\geq2.
          \end{cases}
        \]
        Since $p=D^{-1+\gamma/2}$, $\gamma<\beta$, and as $D$ is sufficiently large, we have $e(J)\leq\kappa_t$ and, for every $i\in[s-t]$, 
        \[ \frac{\Delta_i(J)}{\kappa_t}\log^{4(s-t)+2}N\leq p^i.\]
        Hence~\cref{cor:KimVu} yields that, with probability at least $1-e^{-\Omega(\log^2D)}$, we have
        \[ d_{H'^{(s)}}(Q)\le 2p^{s-t}\kappa_t.\]
        If $t=1$, then
        \[ 2p^{s-t}\kappa_t \leq \alpha'(D')^{s-t}\log D',\]
        where we used $\alpha'=4\alpha/\gamma$ and $D'=pD(1+\delta)$. If $t\geq2$,
        then
        \[2p^{s-t}\kappa_t =\frac12D^{(s-t)\gamma/2-\gamma/8} \leq (D')^{s-t-1/4}.
        \]
        A union bound over all $s,t,Q$ proves~\cref{item:RT_ConfigDeg} with high probability. 
    \end{proofclaim}
    Let $\cE$ be a ``bad'' event where $T'$ is not $(D',(D')^{1-1/4},1/4,4\alpha/\gamma)$-regular. By the union bound and~\crefrange{clm:spread_nibble_RT1}{clm:spread_nibble_RT4}, we obtain that \(\Prob{\cE}=o(1)\). Conditioning on $\cE^c$, $T'$ is $(D',(D')^{1-1/4},1/4,4\alpha/\gamma)$-regular, hence  by~\cref{thm:ForbiddenSubmatchingReserves}, $T'$ contains a perfect matching $M$, and by~\cref{prop:SubTreasury}, $M$ is then also a perfect matching of $T$. Let $S\subseteq E(G_1\cup G_2)$. If $S=\emptyset$, then we are done. Otherwise, since $M\subseteq E(G'_1\cup G'_2)$, we have
    \begin{align*}
        \Prob{S\subseteq M}
        \leq \Prob{S\subseteq E(G'_1\cup G'_2)\mid\cE^c}
        =\frac{\Prob{S\subseteq E(G'_1\cup G'_2)\text{ and }\cE^c}}{\Prob{\cE^c}}
        \leq\frac{p^{|S|}}{1-o(1)}
        \leq 2p^{|S|}
        \leq \parens*{\frac{1}{D^{1-\gamma}}}^{|S|},
    \end{align*}
where we used $D$ sufficiently large, $|S|\geq1$ and $2\leq D^{\gamma|S|/2}$.
\end{nonlateproof}

Observe that, by the remark that follows~\cref{def:RegularTreasury}, it is sufficient for the treasury $T$ to be $(D,D^{1-\beta},\beta,\alpha)$ for some $\alpha\leq \alpha_0$ for~\cref{thm:ForbiddenSubmatchingReservesSpread} to apply.


\section{Proof of the completion Theorem}\label{sec:ProofCompletion}

We are now ready to prove our completion result,~\cref{thm:MinDegree}. To do so, using ideas already present in~\cite{DKPIV}, we combine the spread distribution over absorbers obtained in~\cref{thm:MainLatinAbsorbers} with the spread distribution obtained during the Forbidden submatching step in~\cref{thm:ForbiddenSubmatchingReservesSpread}.
.


\begin{lateproof}{thm:MinDegree}
    
    Let $g\geq4$ and set $\gamma:=\frac1g$. Let $C=C_0$ be given by~\cref{lem:RegBoost}, and let $\xi=C^{-8}$. Let $\kappa\in(0,1)$ be any constant such that $(1+\xi)\kappa>1$. Let $\beta'>0$ such that $\beta'<\min\set*{\frac14,\frac1{16g}}$, and let $\alpha'>0$ be such that~\cref{thm:ForbiddenSubmatchingReservesSpread} holds with parameters $c=r=3$, $\beta'$, and $\gamma$. Let $\beta:=4\beta'$, and let $\alpha,\sigma,\varepsilon\in(0,1)$ be arbitrarily small reals, in particular ensuring that
    \begin{align}
         (1-\xi)&<(1-8\alpha),\label{eq:apply_boosting_alpha}\\
         (1-\xi)&<(1-12\sigma),\label{eq:apply_boosting_sigma}\\
         \xi^{1/8}&<(1-48\alpha),\label{eq:apply_boosting_small}\\
         8\alpha &< \alpha'\cdot (\kappa/4)^{g+1}.\label{eq:alpha_small},
    \end{align}
    and such that~\cref{thm:reserve,thm:MainLatinAbsorbers} hold with parameters  $g,4\alpha$, $2\beta'$ and $\zeta:=\varepsilon/2$. Let $n$ be a large enough integer, and let $p\in(0,1)$ such that $p(n)$ tends arbitrarily quickly to $1$. Let $G$ be a $K_{1,1,1}$-divisible spanning subgraph of $K_{n,n,n}$ with $\delta(G)\ge (2-\sigma)n$ and vertex-partition $V^1,V^2,V^3$, and such that there exists a subtreasury $T$ of ${\rm Treasury}^g(G,G,K_3)$ that is $(n,\alpha n,\beta,\alpha)$-regular and $g$-abundant. By~\cref{thm:reserve}, there exists a spanning subgraph $X\subseteq G$ such that the following holds.
    \begin{enumerate}
        \item $X$ is edge-balanced.
        \item $\Delta(X)\leq 2n^{1-\varepsilon}\leq n^{1-\zeta}$.
        \item For every $i\in[3]$, and every vertex $v\in V^i$, we have
        \(\size*{d_{V^{i-1}}(v)-d_{V^{i+1}}(v)}\leq n^{2/3}.\)
        \item $\tilde{T} = {\rm Saving}(T,X)$ is $(n,4\alpha n,\beta,\alpha)$-regular.
    \end{enumerate}

    Observe first that the configuration hypergraph of $\tilde{T}$ and $T$ are equal, hence that $\tilde{T}$ is trivially also $g$-abundant. We therefore obtain that $T$ and $\tilde{T}$ are both $g$-abundant and $(n,4\alpha n,\beta,4\alpha)$-regular (by monotonicity of regularity).
    
    Let $\cA$ be the family of Latin-omni-absorbers $A\subseteq G$ for $X$ in $T$ with decomposition family $\cF_A$ and decomposition function $\cQ_A$, with $\Delta(A)\leq n^{1-\eta}$, and such that ${\rm Proj}^g(\tilde{T},A,X)$ contains a subtreasury $\widehat{T}$ that is $\parens*{n,8\alpha n,2\beta',8\alpha}$-regular. By~\cref{thm:MainLatinAbsorbers}, there exists a probability distribution over $\cA$ such that for every $K_3$-packing $S$ of $G$, 
      \begin{equation}\label{eq:ProofCompletion_SpreadAbs}
        \Prob{\exists L\in\div{X}\colon S \subseteq \cQ_A(L)} \leq \left(\frac{1}{n^{1 - 3/(2g)}}\right)^{|S|} .
      \end{equation}
    
    For each Latin-omni-absorber $A\in\cA$, let $J_A := G\setminus (X \cup A)$, and let $G_1,G_2,H$ be hypergraphs such that $\widehat{T}=(G_1,G_2,H)$. Observe that $V(G_1)=J_A$ while $G_2$ is a bipartite hypergraph with parts $J_A$ and $X$. We now prove that the requirements of~\cref{lem:RegBoost}, the regularity boosting lemma from~\cite{KSSS2023substructures}, are satisfied by the graph $J_A$ and the collection of triangles represented by $E(G_1)$.
    \begin{enumerate}[label=(\alph*)]
        \item By regularity of $\widehat{T}$, for all vertices $e\in J_A$  we have 
        \((1-8\alpha)n\leq d_{G_1}(e)\leq n.\)
        Using $(1+\xi)\kappa>1$, $\kappa\in(0,1)$, $p=1-o(1)$, and~\cref{eq:apply_boosting_alpha}, we obtain that for $n$ large enough, every edge of $J_A$ is in $(1\pm\xi)p^2\kappa n$ copies of $K_3$ in $G_1$.

        \item For $j\in[3]$, let $Q$ be a set of at most $6$ vertices in $V^{j-1}\cup V^{j+1}$. With $\Delta(X),\Delta(A)=o(n)$ and $\delta(G)\geq(2-\sigma)n$, we have $\delta(J_A)\geq (2-2\sigma)n$. Therefore there are at least $(1-12\sigma)n$ common neighbors of $Q$ in $V^j$.
        Using $p=1-o(1)$ and~\cref{eq:apply_boosting_sigma}, we obtain that for $n$ large enough, there are $(1\pm \xi)p^{|Q|}n$ common neighbors of $Q$ in $V^j$.

        \item For $j\in[3]$, let $Q$ be a set of at most $6$ edges between $V^{j-1}$ and $V^{j+1}$. 
        Again by regularity of $\widehat{T}$, for every vertex $e\in V(G_1)=E(J_A)$ we have $(1-8\alpha)n\leq d_{G_1}(e)\leq n$. Therefore there are between $(1-48\alpha)n$ and $n$, hence by~\cref{eq:apply_boosting_small} between $C^{(-1)}p^{|V(Q)|}n$ and $Cp^{|V(Q)|}n$ vertices $u\in V^{j}$ which form a triangle in $G_1$ with every edge in $Q$.

        \item As $G$ and $A$ are $K_{1,1,1}$-divisible and $X$ is edge-balanced, $J_A$ is also edge-balanced. Then for every $j\in[3]$ and every $v\in V^j$, observe that $G$ and $A$ being  $K_{1,1,1}$-divisible implies that in $G\setminus A$ we have $d_{V^{j-1}}(v)=d_{V^{j+1}}(v)$. Therefore, by properties of $X$, we conclude that in $J_A$ we have $\size*{d_{V^{j-1}}(v)-d_{V^{j+1}}(v)}\leq n^{2/3}$.
        
    \end{enumerate}
    By~\cref{lem:RegBoost}, there exists a subhypergraph $G'_1$ of $G_1$ such that every edge $e\in J_A$ is in $(1\pm n^{-1/4})p^2\kappa n/4$ hyperedges of $G'_1$, and therefore in $(1\pm 2n^{-1/4})\kappa n/4$ hyperedges of $G'_1$. Let $T_A$ be the subtreasury of $\widehat{T}$ defined by 
    \[H':=H[E(G'_1)\cup E(G_2)],\quad T_A:=(G'_1,G_2,H').\]
    We now prove that the treasury $T_A$ is $(D',\sigma',\beta',\alpha')$-regular, where
    \[D':=(1+2n^{-1/4})\frac{\kappa n}{4},\qquad \sigma'=4n^{-1/4}\cdot\frac{\kappa n}{4}.\]    
    
    First, we directly obtain from above that every edge of $J_A$ has degree at most $D'$ in $G'_1$ and degree at least $D'-\sigma'$ in $G'_1$.
    We then check the degrees in $G_2$. For every edge $e\in X$, we deduce from $\Delta(X)\leq 2n^{1-\varepsilon}$ that $d_{G_2}(e)\leq 2\Delta(X)\leq 4n^{1-\varepsilon}\leq D'$. Then, for every edge $e\in J_A$, it follows from the regularity of $\widehat{T}$ that $d_{G_2}(e)\geq n^{1-8\alpha}\geq (D')^{1-\alpha'}$.

    For codegrees, we know from the regularity of $\widehat{T}$ that $G_1\cup G_2$ has codegree at most $n^{1-2\beta'}$. With $G'_1,H'$ being subgraphs of $G_1,H$, respectively, using  $n\geq D'\geq \frac{\kappa n}{4}$ we obtain that $G_1\cup G_2$ has codegree at most 
    \(n^{1-2\beta'}\leq (4\kappa^{-1}D')^{1-2\beta'}\leq (D')^{1-\beta'}.\)
    Similarly the maximum 2-codegree of $G'_1 \cup G_2$ with $H'$ and the maximum common 2-degree of $H'$ with respect to  $G'_1 \cup G_2$ are at most $(D')^{1-\beta'}$.

    For all $2\leq t<s\leq g$, we also deduce from the regularity of $\widehat{T}$ that
    \[\Delta_t(H^{'(s)})\leq n^{s-t-2\beta'}\leq (4\kappa^{-1}D')^{s-t-2\beta'}\leq (D')^{s-t-\beta'},\]
    and for all $2\leq s\leq g$,
    \begin{align*}
    \Delta_1(H^{'(s)})\leq 8\alpha\cdot n^{s-1}\log n 
    &\leq 8\alpha\cdot(4\kappa^{-1})^{s-1}\cdot (D')^{s-1} \log (4\kappa^{-1}D') \\
    &\leq 8\alpha\cdot(4\kappa^{-1})^{g+1}\cdot (D')^{s-1}\log D'\\
    &\leq \alpha'\cdot (D')^{s-1}\log D'.    
    \end{align*}

    We obtain that $T_A$ is $(D',\sigma',\beta',\alpha')$-regular. Finally with $\beta'\leq \frac{1}{4}$ we conclude that $\sigma'<(D')^{1-\beta'}$, and therefore $T_A$ is $(D',(D')^{1-\beta'},\beta',\alpha')$-regular. With $n$ and therefore $D'$ large enough, by choice of $\alpha',\gamma$, we can apply~\cref{thm:ForbiddenSubmatchingReservesSpread} to obtain a probability distribution over perfect matchings $M_A$ of $T_A$ such that, for every $S\subseteq E(G'_1\cup G_2)$ and every $A'\in \cA$
    \begin{equation}\label{eq:ProofCompletion_SpreadMatch}
        \Prob{S \subseteq M_A~\mid~A=A'} \leq \left(\frac{1}{{D'}^{1 - 1/g}}\right)^{|S|} . 
    \end{equation}
    
    By~\cref{prop:SubTreasury}, any such packing $M_A$ is also a perfect matching of $\widehat{T}$, and therefore a perfect matching of ${\rm Proj}^g(\tilde{T},A,X)$. Let  $L_A:=(G\setminus A)\setminus\bigcup M_A$.
    By~\cref{prop:FindPM}, we have $L_A\in\div{X}$ and $M_A\cup \cQ_A(L_A)$ is a perfect matching of $T$. Fix a $K_3$-packing $S$ of $G$. Since $L_A\in\div{X}$, for every $S'\subseteq S$ we have
    \[\set*{S'\subseteq M_A \text{ and } S\setminus S'\subseteq\cQ_A(L_A)}
    \subseteq
    \set*{S'\subseteq M_A  \text{ and } \exists L\in\div{X}\colon S\setminus S'\subseteq\cQ_A(L)},\]
    hence
    \begin{align*}
    \Prob{S\subseteq M_A\cup\cQ_A(L_A)}
    &\leq \sum_{S'\subseteq S} \Prob{S'\subseteq M_A \text{ and } \exists L\in\div{X}\colon S\setminus S'\subseteq\cQ_A(L)}
    \end{align*}
    Observe that for every $K_3$-packing $R$ of $G$, the event \(\set*{\exists L\in\div{X}\colon R\subseteq\cQ_A(L)}\) depends only on the chosen absorber $A$ and its decomposition function $\cQ_A$. Therefore we have
    \begin{align*}
    \Prob{S\subseteq M_A\cup\cQ_A(L_A)}&\leq \sum_{S'\subseteq S} \sum_{\substack{A'\in\cA\\ \exists L\in\div{X},~S\setminus S'\subseteq\cQ_{A'}(L)}}
        \Prob{S'\subseteq M_A\mid A=A'}\Prob{A=A'}\\    
    &\leq \sum_{S'\subseteq S} \parens*{\frac{1}{(D')^{1-1/g}}}^{|S'|}\cdot \Prob{\exists L\in\div{X}\colon S\setminus S'\subseteq\cQ_A(L)}  &\text{ By }~\cref{eq:ProofCompletion_SpreadMatch}\\      
    &\leq \sum_{S'\subseteq S} \parens*{\frac{1}{(D')^{1-1/g}}}^{|S'|}
    \cdot \parens*{\frac{1}{n^{1-3/(2g)}}}^{|S\setminus S'|}&\text{ By }~\cref{eq:ProofCompletion_SpreadAbs}\\
    &=\parens*{\frac{1}{(D')^{1-1/g}} + \frac{1}{n^{1-3/(2g)}}}^{|S|}.
\end{align*}

    Using $D'=\Omega(n)$, we obtain
    \begin{align*}
      \Prob{S\subseteq M_A\cup \cQ_A(L_A)}
      \leq \left(\frac{O(1)}{{n}^{1 - 3/(2g)}}\right)^{|S|}
      \leq \left(\frac{1}{{n}^{1 - 2/g}}\right)^{|S|},
    \end{align*}    
    as desired.
\end{lateproof}


\section{Spread And High Girth Absorption}\label{sec:HighGirthAbsorption}

It remains to prove our absorber theorem,~\cref{thm:MainLatinAbsorbers}. Leaving aside for a moment the question of spreadness, the goal of~\cref{thm:MainLatinAbsorbers} is to find, for a given treasury $T$ with configuration hypergraph $H$, an efficient omni-absorber $A$ in $T$, while ensuring that most of the triangle-packings in the remainder of the graph are jointly with $A$ still $H$-avoiding. This generalizes the notion of ``high collective girth'' present in the work of Delcourt and Postle on the ``High Girth Existence Conjecture''~\cite{DPII}. In their article, there are no pre-existing constraints on the allowed packings beyond forbidding low-girth configurations (while here we need to take into account the packing given by the pre-seeding step). Therefore the concept of absorbers with high collective girth is equivalent to absorbers existing in ${\rm Treasury}^g(G,G,K_3,X)$.

To achieve our goal, we view the absorber from~\cref{thm:LatinAbsorberThm} as a template, and we embed boosters for each clique in its decomposition family $\cF$, using the abundance of the treasury to obtain a ``boosted absorber'' in $T$. This is possible only because the absorbers from~\cref{thm:LatinAbsorberThm} are $C$-refined, meaning that every edge is only in a constantly-many triangles of $\cF$. 

If $A$ is a Latin-absorber for $X$ with decomposition family $\cF$ and decomposition function $\cQ$ and $(B, \cBon, \cBoff, R)$ is a Latin-booster rooted at $R \in \cF$ such that $B$ is edge-disjoint from $A\cup X$, then $A \cup B$ is also a Latin-absorber for $X$ with decomposition family $\cF \cup \cBon \cup \cBoff \setminus\{R\}$ and decomposition function 
\begin{equation*}
    L \mapsto \left\{\begin{array}{l l}
      \cQ(L) \cup \cBon \setminus \{R\} & \text{if } R \in \cQ(L)\\
      \cQ(L) \cup \cBoff & \text{otherwise.}
    \end{array}\right.
\end{equation*}
In this way, we can replace $R$ with the cliques of the two decompositions $\cBon$ and $\cBoff$ which may have more desirable properties than $R$; in particular for us, they will be in $T$ and be well-spread. 
Using the fact that $T$ is abundant, we can select boosters in $T$, ensuring that each booster is individually $H$-avoiding. We then need to prove that we can select boosters such that there are collectively still in $T$, that is that they collectively avoid any edge from $H$.

\subsection{Omni Booster}

We recall that, to prove~\cref{thm:MainLatinAbsorbers}, we add a private $(R,g)$-sphere for each clique in the decomposition family of the omni-absorber; we do this simultaneously and randomly such that with high probability this is done in a spread manner. To that end, we define the following omni-booster.

\begin{definition}[Latin-Omni-Booster]\label{def:OmniBooster}
Let $G\subseteq K_{n,n,n}$ be a graph, $X$ be a spanning subgraph of $G$, and let $A$ be a Latin-omni-absorber for $X$ with decomposition family $\cF$. We say that a graph $B\subseteq G$ is a \emph{Latin-omni-booster} for $A$ and $X$ in $G$ if $B$ is edge-disjoint from $A\cup X$ and $B$ is the edge-disjoint union of a set of graphs $\cB:= (B_F: F\in\cF)$ (called the \emph{booster family} of $B$) where for each $F\in \cF$, $B_F$ is a Latin-booster rooted at $F$.
The \emph{matching set} of $\cB$ is
$$\cM(\cB):= \set*{ M \in \prod_{F\in \mathcal{F}} \big\{ (B_{F})_{{\rm on}},~ (B_{F})_{{\rm off}} \big\} : M \text{ is a matching of } {\rm Design}(X\cup A\cup B,K_3)}.$$
If $g\geq 4$ and $\tilde{T}={\rm Saving}(T,X)$ for some bankrupted subtreasury of ${\rm Treasury}^g(G,G,K_3)$, we say that $B$ is in $T$ if every $M\in \cM(\cB)$ is in $T$.
\end{definition}

The next definition, codifying treasuries and the underlying graphs, drastically shortens some of the following theorem and lemma statements.

\begin{definition}[$K_3$-sponge]\label{def:sponge}
We say a triple $\cS=(G,A,X,n)$ is a \emph{$K_3$-sponge} if $X$ is a spanning subgraph of $G\subseteq K_{n,n,n}$ and $A\subseteq G\setminus X$ is a Latin-omni-absorber for $X$. We will also write $\cS=(G,A,X,n,\cF)$ where $\cF$ is the decomposition family of $A$ if we desire to specify the decomposition family.

We say $\cS$ is 
\begin{itemize}
    \item \emph{$a$-bounded} if $\Delta(A\cup X)\leq \frac{n}{\log^a n}$,
    \item \emph{$C$-refined} if $A$ is $C$-refined.    
\end{itemize}
\end{definition}

An omni-booster yields a natural omni-absorber decomposition function, called the canonical decomposition, as introduced in~\cite{DPII}. 
 
\begin{proposition}\label{prop:CanonicalBoost}
Let $(G,A,X,n,\cF)$ be a $K_3$-sponge and let $\cQ_A$ denote the decomposition function of $A$. If $B$ is a Latin-omni-booster for $A$ and $X$ in $G$ with booster family $\mathcal{B}=(B_F: F\in\mathcal{F})$, then $A\cup B$ is a Latin-omni-absorber for $X$ with decomposition family $\bigcup_{F\in \mathcal{F}} (\mathcal{B}_F)_{{\rm on}}\cup (\mathcal{B}_F)_{{\rm off}}$ and decomposition function
$$\mathcal{Q}_{A\cup B}(L):= \bigcup_{F\in \mathcal{Q}_A(L)} (\mathcal{B}_F)_{{\rm on}}~\cup \bigcup_{F \in \mathcal{F}\setminus \mathcal{Q}_A(L)} (\mathcal{B}_F)_{{\rm off}}.$$    
We refer to $A\cup B$ with decomposition function $\mathcal{Q}_{A\cup B}$ as the \emph{canonical omni-absorber} for $A$ and $B$.
\end{proposition}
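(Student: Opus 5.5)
We verify the definition of a Latin-omni-absorber directly for $A\cup B$ with the proposed family and function. Write $\cF' := \bigcup_{F\in\cF}(\cB_F)_{\rm on}\cup(\cB_F)_{\rm off}$. The plan is to check the structural conditions first and then the decomposition property for each $L\in\div{X}$. The only facts used are the definition of a rooted Latin-booster (Definition~\ref{def:rootedbooster}), the edge-disjointness of the $B_F$ from each other and from $A\cup X$, and the decomposition property of $\cQ_A$.

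\textbf{Structural conditions.} Since $V(X)=V(A)$ and $B\subseteq G$ is spanned by the same vertex set, we may regard $V(A\cup B)=V(X)$; if needed we take $A\cup B$ as a spanning subgraph. Edge-disjointness of $X$ and $A\cup B$ follows from $X\cap A=\emptyset$ and $B$ being edge-disjoint from $A\cup X$. Each triangle of $(\cB_F)_{\rm on}$ lies in $B_F\cup F\subseteq B\cup A\cup X$, and each triangle of $(\cB_F)_{\rm off}$ lies in $B_F$, so all members of $\cF'$ are triangles of $X\cup A\cup B$.

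We must also check $|T\cap X|\le 1$ for $T\in\cF'$. A triangle of $(\cB_F)_{\rm off}$ uses only edges of $B_F$, hence none of $X$. A triangle of $(\cB_F)_{\rm on}$ uses edges of $B_F\cup F$. It is not $F$ itself, and two triangles sharing two edges are equal, so it contains at most one edge of $F$. Hence it meets $X$ in at most one edge.

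\textbf{Decomposition property.} Fix $L\in\div{X}$; we show $\cQ_{A\cup B}(L)$ is a $K_3$-decomposition of $L\cup A\cup B$.

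First, the triangles of $\cQ_{A\cup B}(L)$ are pairwise edge-disjoint. Within each family, $(\cB_F)_{\rm on}$ and $(\cB_F)_{\rm off}$ are packings. Now take two different roots $F\ne F'$ and compare their chosen families.
\begin{itemize}
\item If neither $F$ nor $F'$ lies in $\cQ_A(L)$, only edges of $B_F$ and $B_{F'}$ are used, and these graphs are disjoint.
\item If $F$ (say) lies in $\cQ_A(L)$ but $F'$ does not, then $(\cB_F)_{\rm on}$ uses edges of $B_F\cup F$ while $(\cB_{F'})_{\rm off}$ uses edges of $B_{F'}$. Here $F\subseteq A\cup X$ is disjoint from $B_{F'}$.
\item If both $F,F'\in\cQ_A(L)$, the families use edges of $B_F\cup F$ and $B_{F'}\cup F'$. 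The graphs $F$ and $F'$ are edge-disjoint because $\cQ_A(L)$ is a packing.
\end{itemize}

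Second, the union of the chosen triangles is $\bigcup_{F\in\cQ_A(L)}(B_F\cup F)\cup\bigcup_{F\notin\cQ_A(L)}B_F$. This equals $B\cup\bigcup\cQ_A(L)=B\cup L\cup A$, using that $B$ is the union of the $B_F$ and that $\cQ_A(L)$ decomposes $L\cup A$.

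No step is a real obstacle. The only point needing care is the case analysis for edge-disjointness across different roots, which rests on $B$ being disjoint from $A\cup X$ while every root $F$ lies in $A\cup X$.
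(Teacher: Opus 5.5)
Your proposal is correct, and it is essentially the argument the paper relies on. The paper gives no separate proof of this proposition; it appeals to the preceding observation that replacing one root $R\in\cF$ by a rooted booster edge-disjoint from $A\cup X$ yields a Latin-absorber with the switched on/off decomposition function, and your check is that observation applied to all roots at once, with the pairwise edge-disjointness of the $B_F$ and their disjointness from $A\cup X$ handling the cross-root cases.
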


To prove~\cref{thm:MainLatinAbsorbers}, it behooves us to prove that $A\cup B$ is in some treasury $T$, and therefore that $\mathcal{Q}_{A\cup B}(L)$ is $H$-avoiding for every $L\in\div{X}$. As this is highly dependent on the structure of $A$ and $X$, it is instead easier to prove that all triangle-packings that are the union of `on'/`off' decompositions are in $T$. To that end, we make the following definition.

\begin{definition}[Projection of Omni-Booster]\label{def:OmniBoosterProj}
Let $g\geq 4$ be an integer, $(G,A,X,n,\cF)$ be a $K_3$-sponge and let $\tilde{T}={\rm Saving}(T,X)$ where $T$ is a bankrupted subtreasury of ${\rm Treasury}^g(G,G,K_3)$. Let $B$ a Latin-omni-booster for $A$ and $X$ in $T$, with booster family $\mathcal{B}=(B_F:F\in\cF)$. We define the \emph{projection treasury} of $B$ into $\tilde{T}$ as:
\[{\rm Proj}^g(\tilde{T},A,B,X):= {\rm Spending}(\tilde{T},A\cup B,X) \perp \mathcal{M}(\cB).\]
\end{definition}

The following proposition relates the treasury of an omni-booster and its canonical omni-absorber.

\begin{proposition}\label{prop:BoosterSubTreasury}
Under the assumptions of~\cref{def:OmniBoosterProj}, we have that ${\rm Proj}^g(\tilde{T},A,B,X)$ is a subtreasury of ${\rm Proj}^g(\tilde{T},A\cup B,X)$.
\end{proposition}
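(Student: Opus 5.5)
The plan is to unfold both sides as operations on the same spent treasury and compare the two matching families. Write $\widehat{T}:={\rm Spending}(\tilde{T},A\cup B,X)=(\widehat G_1,\widehat G_2,\widehat H)$. Then by \cref{def:OmniBoosterProj,def:OmniAbsorberProj} we have
\[{\rm Proj}^g(\tilde{T},A,B,X)=\widehat T\perp\cM(\cB),\qquad {\rm Proj}^g(\tilde{T},A\cup B,X)=\widehat T\perp\cM(A\cup B),\]
where $\cM(A\cup B)=\{\cQ_{A\cup B}(L):L\in\div{X}\}$ for the canonical decomposition function of \cref{prop:CanonicalBoost}. Both projections are taken of the same base treasury, so the statement reduces to a monotonicity property of the common projection in the family of matchings.

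The key observation is $\cM(A\cup B)\subseteq\cM(\cB)$. For each $L\in\div{X}$, the set $\cQ_{A\cup B}(L)$ picks, for every $F\in\cF$, exactly one of $(\cB_F)_{\rm on}$ or $(\cB_F)_{\rm off}$. It is a $K_3$-decomposition of $L\cup A\cup B$ and hence a matching of ${\rm Design}(X\cup A\cup B,K_3)$. It is therefore an element of the product defining $\cM(\cB)$ in \cref{def:OmniBooster}.

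Next I will prove a general monotonicity lemma directly from \cref{def:CommonProjection}. If $\cM'\subseteq\cM$, then $T\perp\cM$ is a subtreasury of $T\perp\cM'$. For vertices, the set of removed $F$ is defined by an existential over $M_i$ in the family, so enlarging the family removes more. This gives $V(H_\cM)\subseteq V(H_{\cM'})$, and hence $E(G_j^{\cM})=E(G_j)\cap V(H_\cM)\subseteq E(G_j^{\cM'})$ with the same vertex sets, so these are spanning subgraphs. For configurations, take $P\in E(H_{\cM'})$ with $P\subseteq E(G_1^\cM)\cup E(G_2^\cM)\subseteq V(H_\cM)$. Its witness $M_i\in\cM'\subseteq\cM$ together with $Z$ also witnesses $P\in E(H_\cM)$. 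Hence $H_{\cM'}[E(G_1^\cM)\cup E(G_2^\cM)]\subseteq H_\cM$, which is exactly \cref{def:SubTreasury}.

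Applying the lemma with $\cM=\cM(\cB)$ and $\cM'=\cM(A\cup B)$ concludes the proof. The only delicate point is the bookkeeping in the containment $\cM(A\cup B)\subseteq\cM(\cB)$: one must check that the canonical decomposition is genuinely a matching, meaning edge-disjoint triangles, and that the matching condition in $\cM(\cB)$ is taken in the same design hypergraph. Both follow from \cref{prop:CanonicalBoost}. The monotonicity step itself is routine.
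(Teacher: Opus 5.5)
Your proposal is correct and follows the paper's proof. The paper likewise observes $\cM(A\cup B)\subseteq\cM(\cB)$ and applies monotonicity of the common projection in the projecting family to the shared base ${\rm Spending}(\tilde T,A\cup B,X)$; your explicit check of that monotonicity from \cref{def:CommonProjection} spells out what the paper states in a single line.
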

\begin{proof}
Let $\cB$ be the booster-family of $B$, and let $\cM(\cB)$ be the matching set of $\cB$, used for the projection ${\rm Proj}^g(\tilde{T},A,B,X)$. Let $\cM(A\cup B)$ be the matching set for the canonical omni-absorber $A\cup B$ as used for the projection ${\rm Proj}^g(\tilde{T},A\cup B,X)$. Observe that $\cM(A\cup B)\subseteq \cM(\cB)$. By definition of common projection, projecting by a family yields a subtreasury of the projection by subfamily. Therefore
${\rm Spending}(\tilde T,A\cup B,X)\perp\cM(\cB)$ is a subtreasury of ${\rm Spending}(\tilde T,A\cup B,X)\perp\cM(A\cup B)$ as desired.
\end{proof}

\subsubsection{Vertex-spread vs triangle-spread}

We are now almost ready to state an omni-booster variant of the omni-absorber theorem,~\cref{thm:MainLatinAbsorbers}. Our goal is that by proving the existence of (a spread distribution over) omni-booster in the given treasury, we obtain the desired (spread distribution over) omni-absorbers via their canonical decompositions. We separate the spread argument into two steps. We first construct a {\em vertex-spread} distribution on the embeddings of the spheres. We then use~\cref{lem:VertexToTriangleSpread} to convert vertex-spreadness into spreadness of the triangles appearing in their on/off decompositions. We will then use the fact that triangles are spread-out inside a given $(R,g)$-sphere, to deduce the ``{\em triangle-spreadness}'' of~\cref{thm:MainLatinAbsorbers}. To achieve such a result, we focus specifically on rooted boosters that are isomorphic to $(R,g)$-spheres. 

\begin{definition}[Sphere-type]
    We say that a Latin omni-booster $B$ with booster-family $\cB$ is {\em $g$-sphere-type} if every rooted-booster in $\cB$ rooted at some graph $R$ is a copy of an $(R,g)$-sphere. 
\end{definition}

\begin{definition}[Vertex-spread]\label{def:vertexspread}
Let $(G,A,X,n,\cF)$ be a $K_3$-sponge, and let $\phi$ be a probability distribution over $g$-sphere-type Latin-omni-boosters for $A$ and $X$ in $G$.For each $F\in\cF$, fix a $(F,g)$-sphere $\Sigma_F$. An omni-booster with booster family $\cB=(B_F\colon F\in\cF)$ defines, for every $F\in\cF$,  an embedding $\psi_F:\Sigma_F\to G$ whose image is $B_F$.

We say that $\phi$ is \emph{$p$-vertex-spread} if, for every choice
of sets $U_F\subseteq V(\Sigma_F)\setminus V(F)$ with $F\in\cF$, and mappings $f_F:U_F\to V(G)$, we have 
\[\mathbb{P}_{\phi}\brackets*{ \psi_F(x)=f_F(x)  \text{ for every }F\in\cF\text{ and }x\in U_F}
    \leq p^{\sum_{F\in\cF}|U_F|},\]
that is, if fixing the images of $t$ non-root vertices costs a factor of at most $p^t$.
\end{definition}

\begin{lem}[Vertex-spread to triangle-spread for spheres]\label{lem:VertexToTriangleSpread}
For every integers $a\geq 1$ and $g\geq 2$, and reals $C,K\geq 1$,
there exist $c>0$ and $n_0$ such that the following holds for every $n\geq n_0$.

Let $(G,A,X,n,\cF)$ be a $C$-refined $a$-bounded $K_3$-sponge, and let $\phi$ be a $p$-vertex-spread probability distribution over $g$-sphere-type Latin-omni-boosters for $A$ and $X$, where $p\leq K/n$. Let $B$ be chosen according to $\phi$, and let $\cB=(B_F\colon F\in\cF)$ be its booster family. Then, for every $K_3$-packing~$S$ of~$G$,
    \[\mathbb{P}_{\phi}\brackets*{S\subseteq M\text{ for some }M\in\cM(\cB)}
\leq \parens*{c\cdot p^{1-1/g}}^{|S|}.\]
\end{lem}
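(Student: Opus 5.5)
Fix a $K_3$-packing $S$ of $G$. Every triangle of a matching $M\in\cM(\cB)$ lies in $(\cB_F)_{\rm on}\cup(\cB_F)_{\rm off}$ for some $F\in\cF$. The event in question is therefore contained in the union, over all ``assignments'' $\tau\colon S\to\cF$ together with a choice of position of each $T\in S$ inside the sphere $\Sigma_{\tau(T)}$ (one of its $4g-1$ on/off triangles), of the event that $\psi_{\tau(T)}$ maps that position onto $T$ for every $T\in S$. The plan is to bound each such event using $p$-vertex-spreadness, and then to control the number of assignments using $C$-refinement.

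\textbf{Step 1 (one sphere).} Fix $F$ and a set $S_F\subseteq S$ of $s$ triangles, each assigned a position in $\Sigma_F$. The images of the non-root vertices $U_F\subseteq V(\Sigma_F)\setminus V(F)$ covered by these positions are then forced. Vertex-spreadness gives probability at most $p^{|U_F|}$. The key combinatorial claim is that $|U_F|\ge (1-1/g)s$, with slack handled by constants.

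This claim should come from the rooted girth $2g$ of the sphere (\cref{rem:GirthSpheres}). Any set $\cB'$ of $s<2g$ triangles from $\cBon$, or from $\cBoff\cup\{F\}$, spans at least $s+3$ vertices, hence at least $s$ non-root vertices. For $\cBon$ the rooted girth gives $|V(\bigcup\cB')\setminus V(F)|\ge s$ directly. For $s\ge 2g$ we only have $|U_F|\le 2g-1$ while $s\le 4g-1$. Since a packing $S$ may use triangles from both $\cBon$ and $\cBoff$ (they share edges, but $S$ need only be edge-disjoint), I will instead argue directly from the structure in \cref{def:spheres}. Each non-root vertex $b_j$ lies in at most 4 on/off triangles, and edge-disjointness of $S$ forces at most 2 of them. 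This gives $|U_F|\ge s/2-O(1)$ crudely, which is too weak.

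So I refine the count: a triangle of $S$ contains at most two root vertices, and I charge each triangle to a non-root vertex it contains. Using the path structure $b_1\cdots b_{2g}$ and the fact that consecutive triangles share an edge, an edge-disjoint set of $s$ triangles alternating between on and off must spread along the path. The only loss comes from wrapping around the full cycle. This should yield $|U_F|\ge s-\lceil s/(2g)\rceil\ge (1-1/g)s$ for $s\ge1$, and hence the factor $p^{(1-1/g)s}$. Across different $F$, the sets $U_F$ are disjoint coordinates, so vertex-spreadness multiplies to $p^{(1-1/g)|S|}$ for the whole assignment.

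\textbf{Step 2 (counting assignments).} A triangle $T\in S$ lying in $B_F$ shares a vertex with $F$ or lies entirely in the non-root part. I split by $S_F$: if some $T\in S_F$ contains an edge of $F$ or a root vertex, then $F$ is determined up to $O(\Delta(A\cup X)\cdot C)$ choices. The danger is that $F$ is unconstrained when all chosen triangles avoid roots. To handle this, I will not fix $F$ first. Instead I sum over $F$ using an extra vertex factor: if $S_F$ avoids $V(F)$ entirely, at least $s+1$ non-root vertices are fixed (a connected chunk of the sphere avoiding $u,v$ and the root edge), paying an extra $p\le K/n$. This offsets the $O(n^2\cdot C)$ choices of $F$ only partially, so I instead locate $F$ via the $O(1)$-many positions from a fixed vertex $b_j$ image. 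The a-boundedness and $C$-refinement of the sponge then bound the number of $F\in\cF$ whose sphere could contain a given vertex in a given position by $\mathrm{poly}(n)$ times $p$-losses. I expect the net effect to be a constant $c^{|S|}$ after absorbing the loss $p^{-1/g}$ per triangle, since $n^{1/g}$ is far larger than the $\log$ and $C$ factors.

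The main obstacle is Step 1's sharp inequality $|U_F|\ge(1-1/g)|S_F|$ for edge-disjoint subsets of $\cBon\cup\cBoff$, together with making the root-location accounting in Step 2 lose only constants. If the sharp form fails, I would fall back on the rooted-girth bound for $s<2g$ and a trivial bound for larger $s$, trading for exponent $1-1/g$ via $|U_F|\ge 2g-1\ge(1-1/g)s$ when $s\le 2g$ and splitting larger $S_F$.
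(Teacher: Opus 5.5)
Your proposal has a genuine gap in Step~2. That step is where the content of the lemma lies, and you leave it as an expectation. Once a group $S_F$ of $s$ triangles has positions in $\Sigma_F$, vertex-spreadness gives $p^{|U_F|}$, but you still have to sum over the root $F$.

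\begin{itemize}
\item \textbf{Root choices cost powers of $n$.} If the positions cover $r$ root vertices, the number of admissible $F\in\mathcal F$ is of order $Cn^2$, $C\Delta(A\cup X)$, or $C$ for $r=0,1,2$ respectively; the middle one is nearly linear in $n$ by $a$-boundedness. The $p^{-1/g}$-per-triangle slack cannot pay for this. With your target $|U_F|\ge(1-1/g)s$ there is no slack left at all.
\item \textbf{A non-root image does not locate $F$.} Knowing $\psi_F(b_j)$ only puts $\psi_F(v)$ among at most $2n$ neighbours. After that, $F$ still ranges over every member of $\mathcal F$ through that vertex.
\item \textbf{The grouping is never paid for.} You do not account for which triangles of $S$ share a sphere. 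Summing a per-group bound over all partitions of $S$ loses a factor superexponential in $|S|$, and $|S|$ can be of order $n^2$.
\end{itemize}

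The missing idea is to count the root and the group-mates jointly, letting extra vertices pay for every choice. The paper proceeds as follows.
\begin{itemize}
\item It fixes $T\in S$, taking one that contains an edge of $A\cup X$ if possible. This guarantees that if any chosen position contains an edge of the root, then so does the distinguished position $U$.
\item It sums over local witnesses: a side $\ell\in\{\mathrm{on},\mathrm{off}\}$, $k$ positions $\mathcal T\ni U$, and an embedding with $\psi(U)=T$, $\psi(R_0)\in\mathcal F$, and $\psi(Q)\in S$ for every $Q\in\mathcal T$.
\item It orders $(\mathcal T\setminus\{U\})\cup\{R_0\}$ along the cycle or path of the sphere. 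Every item except at most one closing item (possible only when $k\ge 2g-1$) introduces $j\ge 1$ new vertices. Such an item has $O(n^{j-1})$ possible images, by the packing property of $S$ and the $C$-refinement of $\mathcal F$.
\item Since $U\mapsto T$ is fixed, these $k$ items introduce exactly $t$ new vertices, where $t$ is the number of non-root vertices of the positions. The total weight is therefore $O(n^{t-k+h}p^t)=O((np)^{t-k+h}p^{k-h})\le (dp^{1-1/g})^k$, using $np\le K$ and $h\le k/g$.
\item An induction on $|S|$ handles the grouping: remove the sphere containing $T$ and sum $\sum_k (dp^{1-1/g})^k(2dp^{1-1/g})^{|S|-k}$.
\end{itemize}

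Three smaller points.
\begin{itemize}
\item Your root-free case is fully offset, not partially. By rooted girth, $s<2g$ triangles of one side span at least $s+3$ vertices, all non-root when $r=0$, and $n^2p^{s+3}\le K^2p^{s+1}$.
\item Each $M\in\mathcal M(\mathcal B)$ uses only one side of each $B_F$, so you never need to handle on/off mixtures within a sphere.
\item The inequality $s-\lceil s/(2g)\rceil\ge(1-1/g)s$ is false for $1\le s<g$.
\end{itemize}
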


\textit{Proof Intuition:}
For a fixed packing $S$, we encode the event that $S$ is contained in some $M\in\cM(\cB)$ by specifying, for each triangle of $S$, the sphere and the on/off decomposition in which it appears. We first analyse the triangles assigned to a single sphere. After fixing one selected triangle, the root and the remaining $k-1$ selected triangles can be exposed so that all but at most $k/g$ of them introduce a new vertex. Each newly exposed vertex costs a factor $p$ by vertex-spreadness, while the possible choices for its image contribute at most a compensating factor $n$; since $np\leq K$, each non-exceptional triangle therefore contributes $O(p)$. The choices for the root are controlled similarly by the $C$-refinement of the absorber. Thus $k$ triangles occurring in one sphere have total weight at most
\[O\parens*{p^{(1-1/g)k}}.\]

To pass from one sphere to all spheres, we fix a triangle $T\in S$, sum over all possibilities for the sphere containing $T$, and then apply the same argument inductively to the remaining triangles. Summing the resulting geometric series gives
\[ \Prob{S\subseteq M\text{ for some }M\in\cM(\cB)} \leq \parens*{c\cdot p^{1-1/g}}^{|S|}. \]

The loss of $p^{1/g}$ comes precisely from the occasional closing step in the sphere.

\begin{nonlateproof}{lem:VertexToTriangleSpread}
    Fix a $K_3$-packing $S$ of $G$. The result is trivially true for $S=\emptyset$, therefore assume that $|S|\geq1$. If at least one triangle of $S$ contains an edge of $A\cup X$, let $T$ be such an arbitrary triangle, and let $T\in S$ arbitrarily otherwise.

    Let $(\Sigma,\Sigma_{\rm on},\Sigma_{\rm off},R_0)$ be an $(R_0,g)$-sphere, with root triangle $R_0$. For $k\in[2g]$, a \emph{local $k$-witness at $T$} consists of
    \begin{itemize}
        \item a choice $\ell\in\{{\rm on},{\rm off}\}$;
        \item a set $\cT$ of $k$ triangles of $\Sigma_\ell$, and a distinguished triangle $U\in\cT$;
        \item an embedding $\psi$ of $R_0\cup\bigcup_{Q\in\cT}Q$ into $G$ such that 
        \[\psi(R_0)\in\cF,\qquad \psi(U)=T, \qquad \psi(Q)\in S \text{ for every }Q\in\cT,\]
        and
        \[\psi\big(E(Q)\setminus E(R_0)\big)\cap E(A\cup X)=\emptyset \qquad\text{for every }Q\in\cT\]
        The last condition records that, in an actual omni-booster, every edge of a sphere outside its root belongs to the booster graph and is therefore disjoint from $A\cup X$.
    \end{itemize}
    Informally, the local $k$-witness of $T$ specifies one possible way in which $T$, together with $k-1$ other triangles of $S$, could all be embedded in the same sphere in the random omni-booster. Let
    \[W(\cT):=\parens*{\bigcup_{Q\in\cT}V(Q)}\setminus V(R_0),\]
    and let $t:=|W(\cT)|$. Thus \(W(\cT)\) is the set non-root vertices of the sphere intersecting the triangles
    of $\cT$. Once the root \(R=\psi(R_0)\) has been chosen, the local witness requires that every $x\in W(\cT)$ is mapped to the prescribed vertex $\psi(x)\in V(G)$. No condition is imposed on the non-root vertices of the sphere that do not occur in $\cT$. Hence, by $p$-vertex-spreadness, the probability that the random sphere agrees with this local witness is at most $p^t$.

    We first bound the total weight of all local $k$-witnesses at $T$. Consider one choice of $\ell\in\{{\rm on},{\rm off}\}$, $\cT$, and $U$. If some triangle of $\cT$ contains an edge of $R_0$, then its image contains an edge of $A\cup X$. By the choice of $T$, the triangle $T=\psi(U)$ also contains an edge of $A\cup X$; the last condition in the definition of a local witness then implies that $U$ contains an edge of $R_0$. 
    Write the vertices of the sphere $(\Sigma,\Sigma_{\rm on},\Sigma_{\rm off},R_0)$ as $u,v,b_1,\ldots,b_{2g}$ as in~\cref{def:spheres}.
    \begin{claim}\label{clm:SphereWitnessExceptions}
        There exists an ordering $Q_1,\ldots,Q_k$ of $(\cT\setminus\{U\})\cup\{R_0\}$ such that
        \[h:=\Big|\Big\{ i\in[k]\colon V(Q_i)\subseteq V(U)\cup\bigcup_{j<i}V(Q_j)\Big\}\Big|\leq1,\]
        with equality only if $k\geq2g-1$.        
    \end{claim}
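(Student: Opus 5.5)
We prove \cref{clm:SphereWitnessExceptions} from the explicit cyclic structure of the sphere in \cref{def:spheres}, ordering the triangles greedily along that structure. Relabel the triangles of $\Sigma_{\rm on}$ and of $\Sigma_{\rm off}\cup\{R_0\}$ by the cyclic index $j\in\mathbb{Z}_{2g}$ of the pair $b_jb_{j+1}$ they contain; here $b_{2g}b_1$ has index $2g$. Each such triangle is $\{a_j,b_j,b_{j+1}\}$ with apex $a_j\in\{u,v\}$. In $\Sigma_{\rm off}\cup\{R_0\}$ every index occurs exactly once, with $R_0$ at index $2g$. In $\Sigma_{\rm on}$ every index also occurs exactly once, and $R_0=\{v,b_{2g},b_1\}$ is an extra triangle repeating index $2g$, with apex $v$ instead of $u$. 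So the family $\cD:=\cT\cup\{R_0\}$ is described by a set $I\subseteq\mathbb{Z}_{2g}$ of indices, with possibly one doubled index in the on case. The quantity $h$ counts the steps at which a triangle brings no vertex outside $V(U)$ and the previously listed triangles.

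\textbf{Greedy ordering.} Start from $U$, with index $j_0$, and grow a covered arc of indices around the cycle. At each step, append a triangle of $\cD\setminus\{U\}$ whose index is adjacent to the covered arc, extending it by one position; if no index is adjacent, append any remaining triangle, which opens a new arc. A triangle extending an arc contributes the far endpoint $b$ of its pair, which has not yet appeared. A triangle opening a new arc contributes two new $b$'s. Hence a step is counted in $h$ only if both $b$-endpoints of the appended triangle are already present. For a triangle with a simple (non-doubled) index, this happens only when its index is the last missing position of the whole cycle, so that it closes the cycle. We schedule this closing triangle last. Closing requires $I=\mathbb{Z}_{2g}$, hence $k+1\geq 2g$; this gives $h\leq 1$, with equality only if $k\geq 2g-1$.

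\textbf{The doubled index (main obstacle).} In the on case with $\{u,b_{2g},b_1\}\in\cT$, the index $2g$ occurs twice. Whichever of $R_0$ and $\{u,b_{2g},b_1\}$ is placed second has both $b$'s already present, and is not counted in $h$ only if its apex is new. Here I would use the observation made just before the claim. If $\{u,b_{2g},b_1\}\in\cT$, then it contains an edge of $R_0$, so $U$ contains an edge of $R_0$. The triangles of $\Sigma_{\rm on}$ sharing an edge with $R_0$ are exactly $\{v,b_1,b_2\}$, $\{v,b_{2g-1},b_{2g}\}$ and $\{u,b_{2g},b_1\}$. I would split into cases according to which of these is $U$. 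If $U=\{u,b_{2g},b_1\}$, place $R_0$ immediately after $U$: it brings $v$, which is new because $V(U)=\{u,b_{2g},b_1\}$. If $U$ is one of the two $v$-triangles, place $\{u,b_{2g},b_1\}$ first: it brings $u$ if $u\notin V(U)$, and otherwise a new $b$; then put $R_0$ last among the triangles of index $2g$. In every case exactly one of the two index-$2g$ triangles is ``free''. The remaining accounting then matches the simple case, with the closing step forced to be the last triangle and to occur only when all $2g$ indices are present, that is when $k\geq 2g-1$. I expect this bookkeeping to be the delicate part, specifically checking that the doubled index and the cycle-closing step cannot both be charged to $h$.

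\textbf{Safety check.} As a cross-check, I would invoke \cref{rem:GirthSpheres}. The girth of $\Sigma_{\rm on}$ and of $\Sigma_{\rm off}\cup\{R_0\}$, and the rooted girth of $\Sigma_{\rm on}$ at $V(R_0)$, are all $2g$. So any subfamily of fewer than $2g$ of these triangles spans at least three more vertices than it has triangles. Suppose two counted steps occurred with $k<2g-1$. Then the prefix of the ordering up to the second counted step would span too few vertices relative to its number of triangles, contradicting this bound. This independently confirms the ``equality only if $k\geq 2g-1$'' part of the claim.
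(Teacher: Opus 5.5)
\textbf{There is a genuine gap in the doubled-index case.} Your arc-growing argument for the case where every index occurs once is fine. It implicitly uses the rule that an arc is abandoned only when both flanking indices lie outside the index set, so later arcs never merge with it. Treating $R_0$ as the index-$2g$ position of the cycle even lets you avoid the hypothesis on $U$ there; the paper's ordering needs that hypothesis because it exposes $R_0$ first.

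The gap occurs when $U\in\{(vb_1b_2),(vb_{2g-1}b_{2g})\}$, where you place $(ub_{2g}b_1)$ before $R_0$. As soon as the first of the two index-$2g$ triangles is placed, both $b_{2g}$ and $b_1$ are known. The apex $v$ of $R_0=(vb_{2g}b_1)$ lies in $V(U)$. So with your order $R_0$ is \emph{always} counted in $h$, by exactly the criterion you state one sentence earlier. For instance:
\begin{itemize}
\item $U=(vb_1b_2)$ and $\cT=\{U,(ub_{2g}b_1)\}$ give $h=1$ with $k=2<2g-1$;
\item $\cT=\Sigma_{\rm on}$ gives $h=2$, namely the charged $R_0$ plus the closing step. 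In fact every ordering with $(ub_{2g}b_1)$ before $R_0$ has $h\geq 2$ in this case, so the failure cannot be repaired by arranging the remaining triangles differently.
\end{itemize}

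The order must be reversed. Put $R_0$ immediately after $U$, so that it is the arc-extending triangle: it brings the missing one of $b_{2g},b_1$. Then place $(ub_{2g}b_1)$ right away, before any other $u$-apex triangle, so that its apex $u$ is genuinely new. The condition ``$u\notin V(U)$'' that you wrote is not the right one; you need $u$ not yet exposed. Only then resume the greedy. The only possible charge is then the final closing step, which requires all $2g$ indices, i.e.\ $k=2g$. This is precisely the paper's ordering: $R_0$ first, then $(ub_{2g}b_1)$, then the path intervals exposed from their ends.

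\textbf{Your ``safety check'' does not confirm anything and should be dropped.} Girth and rooted girth give only a \emph{lower} bound on the number of vertices spanned. A counted step is compensated by any step that brings two or three new vertices, so an ordering with two counted steps and $k<2g-1$ need not produce any vertex deficit. For example, take $\Sigma_{\rm off}$ with $g=4$, $U=(ub_1b_2)$, and $\cT$ the triangles of indices $1,\dots,6$. The order $R_0,(ub_3b_4),(ub_5b_6),(vb_2b_3),(vb_4b_5),(vb_6b_7)$ has $h=2$ although $k=6$. The prefix up to the second counted step has $6$ triangles on $9$ vertices, which is consistent with the girth bound. The claim is an existence statement, and the bound $h\le 1$ (with equality only if $k\geq 2g-1$) has to come from the explicit ordering, not from girth.
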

    \begin{proofclaim}
        For the off-decomposition, the triangles of $\Sigma_{\rm off}$ together with $R_0$ follow the cycle
        \[b_1b_2,\ b_2b_3,\ldots,b_{2g-1}b_{2g},\ b_{2g}b_1. \]
        Starting from $U$, expose each interval of selected triangles from one of its ends. Every exposed element then introduces a new $b_i$, except possibly the last member closing the entire cycle, which requires $k=2g-1$. For the on-decomposition $\Sigma_{\rm on}$, expose $R_0$ first. If the triangle $(ub_{2g}b_1)$ belongs to $\cT\setminus\{U\}$, expose it next. The assumption on $U$ ensures that these members introduce a new vertex. The remaining triangles correspond to intervals of the path $b_1b_2,\ b_2b_3,\ldots,b_{2g-1}b_{2g}$, and may again be exposed from their ends. The only possible exception is the last member completing this entire path, which requires $k\geq2g-1$. 
    \end{proofclaim}

    In particular, we can assume that $h\leq k/g$. We now count the embeddings $\psi$. There are at most $3!=6$ isomorphisms mapping $U$ onto $T$. Since $S$ is a $K_3$-packing, for every $v\in V(G)$ we have
    \[|S|\leq n^2,\qquad |\{F\in S:v\in V(F)\}|\leq n, \]
    and every edge of the host graph $G$ is contained in at most one triangle of $S$. Similarly, since $A$ is $C$-refined, by~\cref{def:LatinAbsorbers} for every $v\in V(G)$ and every $e\in E(G)$ we have
    \[ |\cF|=O(n^2),\qquad  |\{F\in\cF:v\in V(F)\}|\leq Cn,\qquad |\{F\in\cF:e\in E(F)\}|\leq C.\]

    It follows that, given $U,Q_1,\ldots,Q_{i-1}$, if the next triangle $Q_i$ introduces $j\geq1$ new vertices, there are at most $O(n^{j-1})$ choices for their images. If it introduces no new vertex, there are only $O(1)$ choices.

    Recall that $t=|W(\cT)|$ denotes the number of non-root vertices contained in $\cT$ (and depends on the chosen witness). Since
    \[
        \left|V(R_0)\cup\bigcup_{Q\in\cT}V(Q)\right|=t+3
    \]
    and the three vertices of $U$ are already fixed, the members
    $Q_1,\ldots,Q_k$ introduce altogether exactly $t$ new vertices. By~\cref{clm:SphereWitnessExceptions}, precisely $k-h$ of these members introduce at least one new vertex. Hence, for some constant $d_0=d_0(g,C)$, the number of embeddings $\psi$ for the fixed choice of $\ell$, $\cT$, and $U$ is at most
    \[d_0^k n^{t-k+h}.\]
    The sum of their probability bounds is therefore at most
    \[
        d_0^k n^{t-k+h}p^t
        =d_0^k(np)^{t-k+h}p^{k-h}
        \leq d_1^k p^{(1-1/g)\cdot k},
    \]
    for some $d_1=d_1(g,C,K)$, using $np\leq K$, $t\leq 2g-1$ (since any sphere contains $2g-1$ non-root vertices), and $h\leq k/g$. Since there are only constantly many choices of $\ell$, $\cT$, and $U$, there exists $d=d(g,C,K)$ such that
    \begin{equation}\label{eq:LocalSphereWitness}
    \sum_{\omega\in\cW_k(T)}p^{t(\omega)} \leq \parens*{dp^{1-1/g}}^k,        
    \end{equation}
    where $\cW_k(T)$ denotes the family of all local $k$-witnesses at $T$ and $t(\omega):=|W(\cT)|$ for $\omega=(\ell,\cT,U,\psi)\in\cW_k(T)$. 
    
    We now move from local witnesses, which concern a single sphere, to witnesses for the whole packing $S$, that is we now use the one-sphere estimate~\eqref{eq:LocalSphereWitness} to bound witnesses involving all the triangles of $S$. A witness for $S\subseteq M$ for some $M\in\cM(\cB)$ assigns the triangles of $S$ to spheres, chooses the on- or off-decomposition in each sphere, and specifies the images of all non-root vertices occurring in those triangles. If such a witness specifies altogether $t$ formal non-root vertices, then its probability is at most $p^t$. Let $w(S)$ denote the sum of $p^t$ over all witnesses for $S$, where $t$ is the number of formal non-root vertices specified by the witness.

    \begin{claim}
    \[w(S)\leq\parens*{2d p^{1-1/g}}^{|S|}.\]        
    \end{claim}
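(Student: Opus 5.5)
The plan is induction on $|S|$, using the local estimate \eqref{eq:LocalSphereWitness} as the one-step bound. For $S=\emptyset$ the only witness is the empty one, so $w(\emptyset)=1$. For $|S|\geq1$, fix the distinguished triangle $T\in S$ chosen at the start of the proof (one meeting $A\cup X$ if such a triangle exists). Every witness for $S$ places $T$ in some sphere together with a set of $k-1$ further triangles of $S$, for some $k\in[2g]$. Restricting the witness to that sphere gives a local $k$-witness $\omega\in\cW_k(T)$. Restricting it to the remaining spheres gives a witness for $S'=S\setminus\psi(\cT)$, where $|S'|=|S|-k$.

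The map from witnesses of $S$ to pairs $(\omega,\text{witness of }S')$ is injective. Spheres for distinct roots are private, since $\cB$ is indexed by $\cF$ and the $B_F$ are edge-disjoint. Hence the formal non-root vertices split as $t=t(\omega)+t'$, and the weight factorises as $p^t=p^{t(\omega)}p^{t'}$. Summing first over the witnesses of $S'$ (fixing one choice $\cT$ already determines $S'$) and then over $\omega$ gives
\[w(S)\leq\sum_{k=1}^{2g}\sum_{\omega\in\cW_k(T)}p^{t(\omega)}\,\max_{S'\subseteq S,\ |S'|=|S|-k}w(S').\]

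One detail needs checking: the inductive hypothesis must apply to $S'$ with its own distinguished triangle. This is harmless. The claim is stated for every $K_3$-packing, and the rule choosing the distinguished triangle is applied afresh to $S'$, so the local bound \eqref{eq:LocalSphereWitness} is available for $S'$ as well. The only place the special choice of $T$ matters is inside Claim~\ref{clm:SphereWitnessExceptions}, which is already proved.

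By induction and \eqref{eq:LocalSphereWitness},
\[w(S)\leq\sum_{k\geq1}\parens*{dp^{1-1/g}}^k\parens*{2dp^{1-1/g}}^{|S|-k}=\parens*{2dp^{1-1/g}}^{|S|}\sum_{k\geq1}2^{-k}\leq\parens*{2dp^{1-1/g}}^{|S|}.\]

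To finish the lemma, observe that the event $\{S\subseteq M\text{ for some }M\in\cM(\cB)\}$ forces some witness for $S$ to be realised. By $p$-vertex-spreadness, the probability that a given witness is realised is at most $p^t$. A union bound then gives probability at most $w(S)$, and the lemma follows with $c=2d$.

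The main obstacle is the injectivity and factorisation step. A triangle of $S$ must not be double-counted across spheres, and the vertex sets fixed in different spheres must be disjoint formal variables, so that vertex-spreadness multiplies across spheres. Both properties come from the booster family assigning distinct roots distinct spheres, so I expect this step to go through.
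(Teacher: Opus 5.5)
Your proposal is correct and follows the paper's argument. Both proceed by induction on $|S|$: split off the sphere containing the distinguished triangle $T$ as a local $k$-witness, apply \eqref{eq:LocalSphereWitness} together with the inductive hypothesis for $S\setminus S_T$, and sum the geometric series $\sum_{k\ge 1}2^{-k}\le 1$. Your extra remarks make explicit what the paper leaves implicit: $w(\emptyset)=1$, the splitting map is injective with $p^t$ factorising over spheres, and the distinguished triangle is re-chosen by the same rule for the smaller packing. (Edge-disjointness is not needed for the factorisation, since formal non-root vertices are already indexed by their root.)
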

    \begin{proofclaim}
        We prove this statement by induction on $|S|$. The base case $S=\emptyset$ is trivially true. For $|S|\geq 1$, choose $T\in S$ as above. In any witness, let $S_T$ be the set of triangles of $S$ assigned to the same sphere as $T$, and let $k:=|S_T|$. The restriction of the witness to this sphere is a local $k$-witness at $T$. After deleting the part of the witness corresponding to this sphere, the remaining part is a witness for $S\setminus S_T$. Thus, by~\eqref{eq:LocalSphereWitness} and the induction hypothesis,
        \[ w(S) \leq
            \sum_{k=1}^{\min\{2g,|S|\}} \parens*{d p^{1-1/g}}^{k} \cdot \parens*{2d p^{1-1/g}}^{|S|-k}
            \leq \parens*{2d p^{1-1/g}}^{|S|}\cdot \sum_{k=1}^{2g}2^{-k}
            \leq \parens*{2d p^{1-1/g}}^{|S|}.\qedhere\]
    \end{proofclaim}

    Every occurrence of the event in the statement yields such a witness, and hence
    \[\mathbb{P}_{\phi}\brackets*{ S\subseteq M\text{ for some }M\in\cM(\cB)} \leq w(S)
        \leq \parens*{2d\cdot p^{1-1/g}}^{|S|},\]
    as desired by taking $c:=2d$.
\end{nonlateproof}

\subsubsection{Omni-booster theorem}

\begin{thm}[Omni-Booster Theorem]\label{thm:HighGirthOmniBooster}
For all integers $g\geq 4$, $C\geq 1$, and reals $\alpha\in(0,1/2)$ and $\beta\in(0,1/(8g))$, there exist integers $a,n_0\ge 1$ and a real $\sigma\in(0,1)$ such that the following holds for all $n\ge n_0$.

Let $(G,A_0,X,n,\cF)$ be a $(ga)$-bounded and $C$-refined $K_3$-sponge with $\delta(G)\geq(2-\sigma)n$, such that there exists a subtreasury $T$ of ${\rm Treasury}^g(G,G,K_3)$ such that $T$ is $g$-abundant and both $T$ and $\tilde{T}={\rm Saving}(T,X)$ are $\parens*{n,~\alpha n,~2\beta,~\alpha}$-regular, and let 
$\Delta:= \max\set*{ \Delta(A_0\cup X),~\sqrt{n}\log n}$.

Let $\cL_B$ be the family of $g$-sphere-type Latin-omni-boosters $B\subseteq G$ for $A_0$ and $X$ in $T$ with $\Delta(B)\leq a\Delta\cdot \log^{a} \Delta$, such that ${\rm Proj}^g(\tilde{T},A_0, B, X)$ contains a subtreasury that is $\parens*{n,2\alpha n,\beta,2\alpha}$-regular.

Then $\cL_B$ is non-empty and there exists an $n^{-1+3/(2g)}$-spread probability distribution over $\cL_B$. That is, if $B$ is chosen according to this distribution and $\cB$ denotes its booster family, then, for every $K_3$-packing
$S$ of $G$,
\[\Prob{\exists M\in\cM(\cB)\colon S\subseteq M}\leq n^{(-1+3/(2g))|S|}.\]
\end{thm}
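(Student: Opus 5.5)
We follow the quantum strategy already used for~\cref{thm:SpreadQuantumPreseeding}, now applied to spheres rather than triangles. For each $F\in\cF$, we let $\Omega_F$ be the family of $(F,g)$-spheres $(B,\cBon,\cBoff,F)$ in $G$ that satisfy three conditions: $B$ is edge-disjoint from $A_0\cup X$; every triangle of $\cBon\cup\cBoff$ other than those containing root edges lies in the design hypergraph of $\tilde T$ after spending $A_0$; and $\cBon,\cBoff$ are $H$-avoiding. By $g$-abundance (at most $n^{2g-3/2}$ bad spheres per root), $\delta(G)\ge(2-\sigma)n$, the regularity of $T$, and $\Delta(A_0\cup X)\le n/\log^{ga}n$, a greedy embedding count gives $|\Omega_F|\ge(1-o(1))n^{2g-1}$ up to the constant from~\cref{rem:countingSpheres}. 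We then choose a random quantum family $\cK_F\subseteq\Omega_F$ by keeping each sphere independently with probability $q:=\log^{a}n/n^{2g-1}$, so that each root retains about $\log^a n$ candidates.

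Using~\cref{cor:KimVu} exactly as in the claims of~\cref{thm:SpreadQuantumPreseeding}, we show that with probability at least $1-\sigma$ the following hold simultaneously. \textbf{(i)} The union of all candidate spheres has maximum degree at most $a\Delta\log^a\Delta$. Here the $C$-refinement bounds the number of roots at a vertex by $C\Delta$, and each root contributes $O(\log^a n)$ spheres. \textbf{(ii)} For every $F$, all but a $\sigma$-fraction of $\cK_F$ are edge-disjoint from all candidates of other roots. \textbf{(iii)} For every $F$, all but a $\sigma$-fraction of $\cK_F$ are not part of any edge of $H$ formed together with on/off triangles of other candidates, that is, they have collective high girth. \textbf{(iv)} The common projection ${\rm Spending}(\tilde T,A_0\cup\bigcup\cK,X)\perp\cM(\cK)$ over \emph{all} candidate choices is $(n,2\alpha n,\beta,2\alpha)$-regular. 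Property~(iv) follows by repeating Claims~\ref{cl:RegularityDegreeHolds}--\ref{cl:RegularityConfigTwoCodegreeHolds} with triangles replaced by on/off triangles of spheres. The spending step costs only $O(\Delta\log^a\Delta)=o(\alpha n)$ in degree, and the regularity loss from $2\beta$ to $\beta$ absorbs the polylog factors. For each codegree estimate, the auxiliary multi-hypergraph $J$ is now $V(\text{spheres})$-indexed, and we verify $(*_i)$ via an analogue of~\cref{lem:ExtendPacking}. A configuration through a fixed triangle that uses $j$ sphere triangles costs $n^{2g-1}$ per sphere but gains $q$, and~\cref{rem:GirthSpheres} (rooted girth $2g$) guarantees that within one sphere no low-girth saving occurs.

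Conditioning on (i)--(iv), for each $F$ we select $B_F$ uniformly from the good candidates. The result is a $g$-sphere-type omni-booster in $T$, since $H$-avoidance holds individually and collectively. Its projection contains the common projection of all candidates, and hence a regular subtreasury; by~\cref{prop:SubTreasury} this places $B$ in $\cL_B$, which shows $\cL_B\neq\emptyset$. For vertex-spreadness, fixing the images of $t$ non-root vertices of $B_F$ requires that some sphere with these images lies in $\cK_F$. By a union bound this has probability at most $n^{2g-1-t}q/(1-\sigma)\le(\log^a n)\,n^{-t}$, which is then divided by the $(1-\sigma)\log^a n$ choices. This gives $p$-vertex-spreadness with $p=K/n$, with an extra $1/(1-\sigma)^2$ factor absorbed into $K$ for distinct roots by independence. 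Applying~\cref{lem:VertexToTriangleSpread} then yields $(c\,p^{1-1/g})^{|S|}\le n^{(-1+3/(2g))|S|}$ for $n$ large.

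The main obstacle is~(iii) and~(iv). Configurations may mix triangles from several spheres with each other, with $\cF$-roots, and with outside triangles, and we must show that the sparsification $q$ exactly compensates for the $n^{2g-1}$ embeddings per sphere. Our first attempt will be to bound every such mixed configuration by counting the non-root vertices it exposes, using the same exposure ordering as in~\cref{clm:SphereWitnessExceptions}, with the rooted-girth bound guaranteeing at least one vertex of savings per configuration.
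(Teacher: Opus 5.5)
Your architecture matches the paper's proof. You use quantum families $\cK_F\subseteq\Omega_F$ of density $\log^a n/n^{2g-1}$, availability defined by edge-disjointness and collective $H$-avoidance, and regularity of the common projection by $\cM(\cK)$. You then make a uniform choice among available spheres, prove $O(1/n)$-vertex-spreadness (your computation agrees with the paper's), and finish with \cref{lem:VertexToTriangleSpread}.

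The gap is in how you propose to prove (iii) and (iv), which you rightly identify as the main difficulty. You plan to bound mixed configurations by exposing non-root vertices, using an analogue of \cref{lem:ExtendPacking} together with the rooted girth of spheres. That tool works when the configuration hypergraph is ${\rm Girth}^g$, as in the pre-seeding claims. Here, however, $H$ is the configuration hypergraph of an \emph{arbitrary} $(n,\alpha n,2\beta,\alpha)$-regular, $g$-abundant subtreasury $T$. In the intended application it contains projected edges, for example pairs $\{F_1,F_2\}$ that are forbidden only because they close a low-girth configuration with triangles of the pre-seeded packing. Such edges are not $(j+2,j)$-configurations. In general an edge of $H$ carries no vertex-count information, so neither \cref{rem:GirthSpheres} nor the exposure ordering of \cref{clm:SphereWitnessExceptions} gives any saving.

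The paper uses different inputs. Configurations inside a single sphere are excluded from $\Omega_F$ by abundance, as you also do. Configurations meeting two or more spheres, or mixing sphere triangles with outside triangles, are controlled only through the codegree and configuration-(co)degree bounds (RT3)--(RT4) of $T$ and $\tilde T$. The slack between $(n,\alpha n,2\beta,\alpha)$ and $(n,2\alpha n,\beta,2\alpha)$ absorbs the losses, and the Kim--Vu checks are run as in Delcourt--Postle's intrinsic and extrinsic lemmas. You need to redo every $(*_i)$ verification with $\Delta_t(H^{(s)})$ from (RT4) in place of \cref{lem:ExtendPacking}.

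Be especially careful with the three on-triangles that share an edge with the root. They contain an edge of $A_0\cup X$, so they occur among candidates with probability of order $\log^a n/n$ rather than $O(\Delta\log^a n/n^2)$. In the girth setting the second bound of \cref{lem:ExtendPacking} pays for this. In the general setting that compensation has to be extracted from the hypotheses on $T$.

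There are also three smaller slips.
\begin{itemize}
\item In (iv) you spend $A_0\cup\bigcup\cK$. The resulting treasury has no vertices at the edges of unchosen candidate spheres, although these are vertices of ${\rm Proj}^g(\tilde T,A_0,B,X)$. So it is not a spanning subtreasury of that projection, and it gives no degree lower bound at those edges. The paper instead spends only $A_0$, projects by $\cM(\cK)$, and afterwards deletes the triangles meeting $B$, losing at most $2\Delta(B)=o(n)$ per edge.
\item $\Omega_F$ must require \emph{every} triangle of $\cBon\cup\cBoff$, including the on-triangles through root edges, to be an edge of $G_1$. Otherwise $\cQ_{A_0\cup B}(L)$ need not lie in $G_1$, and $B$ is not in $T$.
\item The degree bound in (i) must also count spheres in which the vertex is a non-root vertex. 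These contribute $O(|\cF|\log^a n/n)=O(\Delta\log^a n)$ as well.
\end{itemize}
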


Before proving the existence of such a spread-distribution over omni-boosters, we show that~\cref{thm:HighGirthOmniBooster} implies~\cref{thm:MainLatinAbsorbers}, the existence of a spread-distribution over omni-absorbers.

\begin{proof}[Proof of~\cref{thm:MainLatinAbsorbers}]\

For $g\geq 4$, $\zeta\in(0,1)$, $\alpha\in(0,1/2)$ and $\beta\in(0,1/(8g))$, let $C\geq 1$, $\eta_{abs}$ and $\sigma_{abs}\in(0,1)$ such that~\cref{thm:LatinAbsorberThm} holds, and let $a,n_0$ and $\sigma_{boost}\in(0,1)$ such that~\cref{thm:HighGirthOmniBooster} holds for the parameters $g,C,\alpha,\beta,$. Let $\sigma:=\min\{\sigma_{abs},\sigma_{boost}\}$, and $\eta:=\min\{\zeta,\eta_{abs}\}/8$.

Let $G\subseteq K_{n,n,n}$ with $\delta(G)\geq(2-\sigma)n$, and assume that $X$ is a spanning subgraph of $G$ with $\Delta(X) \leq n^{1-\eta_{abs}}$ such that there exists a bankrupted subtreasury $T$ of ${\rm Treasury}^g(G,~G,~K_3)$ where both $T$ and $\tilde{T}= {\rm Saving}(T,X)$ is $\parens*{n,~\alpha n,~2\beta,~\alpha}$-regular and $g$-abundant.

By~\cref{thm:LatinAbsorberThm}, there exists a $C$-refined Latin-omni-absorber $A_0\subseteq G$ for $X$, with decomposition family $\cF_0$, such that $\Delta(A_0)\leq n^{1-\eta_{abs}}$. Let $\Delta:=\max\{\Delta(A_0\cup X),\sqrt{n}\log n\}$. For $n$ sufficiently large, we obtain that 
\[\Delta(A_0\cup X) \leq \Delta(A_0)+\Delta(X)\leq n^{1-\eta_{abs}}+n^{1-\zeta}\leq \frac{n}{\log^{ga}n}.\]
It follows that $(G,A_0,X,n,\cF_0)$ is a $C$-refined and $(ag)$-bounded $K_3$-sponge. By~\cref{thm:HighGirthOmniBooster} there exists a $n^{-1+3/(2g)}$-spread distribution over $\cL_B$. Observe that for any $L\in\div{X}$, $\cQ_{A_0\cup B}(L)\in\cM(\cB)$, therefore there exists a probability distribution $\eta_B$ over $\cL_B$ such that for every $K_3$-packing $S$ of $G$ we have
\[\mathbb{P}_{\eta_B}\brackets*{\exists L\in \div{X}\colon S\subseteq\cQ_{A_0\cup B}(L)}\leq
\mathbb{P}_{\eta_B}\brackets*{\exists M\in \cM(\cB)\colon S\subseteq M}\leq
\parens*{\frac{1}{n^{1-3/(2g)}}}^{|S|}.\]

Let $\cA=\set*{A_0\cup B\colon B\in\cL_B}$, and let $A=A_0\cup B\in\cA$. By~\cref{prop:CanonicalBoost}, $A$ is an omni-absorber for $X$, with decomposition family $\cF_A = \bigcup_{F\in \cF_0} (\mathcal{B}_F)_{{\rm on}}\cup (\mathcal{B}_F)_{{\rm off}}$ and decomposition function
$$\mathcal{Q}_{A}(L)=\mathcal{Q}_{A_0\cup B}(L)= \bigcup_{F\in \cQ_{A_0}(L)} (\cB_F)_{{\rm on}}~\cup \bigcup_{F \in \cF_0\setminus \cQ_{A_0}(L)} (\cB_F)_{{\rm off}}.$$  
It follows that, for any $L\in\div{X}$, we have $\mathcal{Q}_{A}(L)\in\cM(\cB)$. Observe that, by definition of $\cL_B$, we have $B\in T$, hence every $M\in \cM(\cB)$ is in $T$, therefore we obtain that $A$ is in $T$.

Recall that $8\eta:=\min\{\zeta,\eta_{abs}\}$, hence $(1-4\eta)\geq1/2$ and 
\[\Delta=\max\{\Delta(A_0\cup X),\sqrt{n}\log n\}
\leq \max\{n^{1-\eta_{abs}}+n^{1-\zeta},\sqrt{n}\log n\}
\leq \max\{n^{1-4\eta},\sqrt{n}\log n\}
\leq n^{1-4\eta},
\]
We obtain that 
\[\Delta(A)\leq\Delta(A_0)+\Delta(B)\leq n^{1-\eta_{abs}}+a\Delta\log^a\Delta \leq n^{1-4\eta}+n^{1-2\eta}\leq n^{1-\eta}.\]

Finally by properties of $\cL_B$, there exists a subtreasury of ${\rm Proj}^g(\tilde{T},A_0,B,X)$ that is $(n,2\alpha n, \beta,2\alpha)$-regular. By~\cref{prop:BoosterSubTreasury}, this is a subtreasury of ${\rm Proj}^g(\tilde{T},A,X)$, and we conclude that $A\in\cL_\cA$, as defined in the statement of~\cref{thm:MainLatinAbsorbers}, hence $\cA\subseteq \cL_\cA$. 

Finally, recall that $\eta_B$ is the distribution over $\cL_B$ given by~\cref{thm:HighGirthOmniBooster}, and let $\eta_{\cA}$ be the distribution over $\cL_\cA$ induced by $A:=A_0\cup B$ with $B$ taken at random in $\cL_B$ using $\eta_B$. We obtain immediately that for every $K_3$-packing $S$ of $G$, 
\[\mathbb{P}_{\eta_{\cA}}\brackets*{\exists L\in\div{X}\colon S \subseteq \cQ_{A}(L)} = 
\mathbb{P}_{\eta_{B}}\brackets*{\exists L\in\div{X}\colon S \subseteq \cQ_{A_0\cup B}(L)} \leq\parens*{\frac{1}{n^{1-3/(2g)}}}^{|S|}.\qedhere\]
\end{proof}

\subsection{Proof of omni-booster theorem}

\begin{proof}[Proof of~\cref{thm:HighGirthOmniBooster}]
Write $T=(G_1,H)$ and $\tilde{T}=(\tilde{G}_1,\tilde{G}_2,H):={\rm Saving}(T,X)$. Let \(b:=2g-1\) be the number of non-root vertices of an $(R,g)$-sphere. We choose $\sigma>0$ arbitrarily  small and $a,n_0$ arbitrarily large throughout the proof. 

For each $F\in\cF$, let $\Omega_F$ be the family of all $(F,g)$-spheres $(B,\cBon,\cBoff,F)$ embedded in $G\setminus(A_0\cup X)$ such that every triangle in $\cBon\cup \cBoff$ is an edge of $G_1$, and both $\cBon$ and $\cBoff$ are $H$-avoiding. By standard greedy argument (embedding a rooted booster of a given size), there exist constants $c_0,C_0>0$, such that
\begin{equation}\label{eq:SphereEmbeddingCount}
    c_0 n^b\leq |\Omega_F|\leq C_0n^b
\end{equation}
for every $F\in\cF$. Moreover, after fixing the images of $t$ non-root vertices, there are at most 
\begin{equation}\label{eq:PartialSphereEmbeddingCount}
    C_0n^{b-t}
\end{equation}
members of $\Omega_F$ extending these choices. Indeed, at every step of the greedy embedding, the minimum-degree condition on $G$, the regularity of $T$, and the $(ga)$-boundedness of the sponge ensure that $\Delta(A_0\cup X)\leq \frac{n}{\log^{ga}n}=o(n)$, and therefore that we have linearly many choices.  By $g$-abundance, for each root $F$, at most
\(n^{2g-\frac32}=n^{b-\frac12}=o(n^b)\)
of these spheres fail to be $H$-avoiding.\\ 

The following closely parallels the calculation done for quantum boosters in~\cite{DPII} and for quantum pre-seeding in~\cref{sec:ProofQuantumPreSeeding}. Let $\rho:=\frac{\log^a n}{n^b}$. For every $F\in\cF$, let $\cK_F$ be the random subset of $\Omega_F$, selecting each element independently at random with probability $\rho$. Denote the resulting quantum booster collection by $\cK=(\cK_F:F\in\cF)$. We extend the notion of matching of a quantum packing from~\cref{def:QuantumPacking} by defining $\cM(\cK)$ to be the family of all matchings obtained by choosing, for each $F\in\cF$, one sphere $B_F\in\cK_F$ and one of its two decompositions, that is,
    \[\cM(\cK):= \set*{ M\in \prod_{F\in\cF} \set*{(B_F)_{\rm on},(B_F)_{\rm off}\colon B_F\in\cK_F} \colon M\text{ is a matching of }{\rm Design}(G,K_3)}.\]   
For $B\in\cK_F$, we say that $B$ is {\em available} if it can be selected for its root regardless of the subsequent choices made for the other roots, that is, if the following two conditions are satisfied:
\begin{enumerate}[label=(\roman*)]
    \item \textbf{Edge-disjointness:} $B$ is edge-disjoint from every member of $\cK_{F'}$ with $F'\neq F$; and
    \item \textbf{Compatibility with the treasury:} there do not exist $P\in H$ and $M\in\cM(\cK)$ such that
    \[  P\subseteq M,\text{ and}\qquad P\cap(\cBon\cup \cBoff)\neq\emptyset.\]
\end{enumerate}

Let ${\rm Available}(\cK_F)$ denote the family of available members of $\cK_F$, and let $T_{\cK}:= {\rm Spending}(\tilde{T},A_0,X)\perp\cM(\cK)$ be the corresponding quantum projection treasury.\\

We claim that, with probability at least some absolute constant $c_{\mathcal{E}}>0$, all of the
following hold (with $c_1>0$ some fixed constant):
\begin{enumerate}[label=(P\arabic*)]
    \item \textbf{Max degree:} $\Delta\parens*{\bigcup_{F\in\cF}\bigcup\cK_F} \leq a\Delta\log^a\Delta$,\label{eq:quantum-booster-degree}
    \item \textbf{Availability:} for every $F\in\cF$, we have $\size*{{\rm Available}(\cK_F)} \geq c_1\log^a n,$\label{eq:many-avail-boosters}
    \item \textbf{Subtreasury:} $T_{\cK}$ contains a subtreasury which is $(n,3\alpha n/2,\beta,3\alpha/2)$-regular.\label{eq:quantum-projection-regular}
\end{enumerate}

This is proved exactly as Delcourt and Postle did in
\cite[Lemmas~4.12 and~4.13]{DPII}, specialized to \(q=3\), \(r=2\),
and to the tripartite host graph. Equations~\cref{eq:SphereEmbeddingCount,eq:PartialSphereEmbeddingCount} replace their estimates for a full quantum booster. Their intrinsic argument gives~\cref{eq:quantum-booster-degree,eq:many-avail-boosters}; their extrinsic argument gives~\cref{eq:quantum-projection-regular}. The same~\cref{cor:KimVu} calculations apply: configurations contained in one sphere have already been removed using abundance, while configurations meeting at least two spheres are bounded using (RT3) and (RT4). Working in the given subtreasuries causes no further difficulty, since all upper-degree estimates are monotone under deleting available
triangles, while the required lower-degree estimates are supplied by (RT1) and (RT2). This is also the same modification of the Delcourt--Postle argument used in the proof of~\cref{thm:SpreadQuantumPreseeding}.

Let $\cE$ be the event that \cref{eq:quantum-booster-degree} to~\cref{eq:quantum-projection-regular} all hold. Conditioning on $\cE$, for every $F\in\cF$, choose $B_F$ uniformly and independently at random in ${\rm Available}(\cK_F)$, and set
\[ \cB:=(B_F:F\in\cF),\text{ and}\qquad B:=\bigcup_{F\in\cF}B_F.\]

By the definition of availability, the graphs $B_F$ are pairwise edge-disjoint, and hence $B$ is a $g$-sphere-type Latin-omni-booster for $A_0$ and $X$. Moreover, every $M\in\cM(\cB)$ belongs to $\cM(\cK)$, and condition~(ii) in the definition of availability ensures that $M$ is $H$-avoiding. Since all triangles of the two decompositions of every $B_F$ belong to $G_1$, it follows that $B$ is in $T$. We also have $\Delta(B)\leq a\Delta\log^a\Delta$. Moreover, by the $(ga)$-boundedness of the sponge, $\Delta \leq \max\set*{ \frac{n}{\log^{ga}n},\sqrt n\log n}$, hence  $a\Delta\log^a\Delta=o(n)$ and $\Delta(B)=o(n)$.\\

It remains to verify the regularity of the projection. Take the regular subtreasury of $T_{\cK}$ given by~\cref{eq:quantum-projection-regular}, and delete from its design and reserve hypergraphs every triangle that is not edge-disjoint from $B$. Since
$\cM(\cB)\subseteq\cM(\cK)$, the resulting treasury is a subtreasury of ${\rm Proj}^g(\tilde{T},A_0,B,X)$.

For every edge $e\in G\setminus(A_0\cup B\cup X)$, at most $2\Delta(B)=o(n)$ triangles containing $e$ use an edge of $B$. Therefore the degrees of the design hypergraph are impacted by at most $o(n)$. The reserve lower degrees are unchanged, and the upper-degree and configuration-degree conditions are trivially inherited. Consequently, for $n$ sufficiently large, this treasury is $(n,2\alpha n,\beta,2\alpha)$-regular, as desired, and thus every booster produced by the construction belongs to $\cL_B$.\\

We finally deal with the spread-distribution over $\cL_B$. Let \(\cR=\{B_1,\ldots,B_m\}\) be a family of spheres rooted at distinct members of $\cF$. From~\cref{eq:many-avail-boosters},
\[\Prob{\cR\subseteq\cB} \leq \frac{1}{(c_1\log^a n)^m} \Prob{\cR\subseteq\cK\mid\cE}
\leq \frac{\rho^m}{\Prob{\cE}(c_1\log^a n)^m} \leq \parens*{\frac{C_1}{n^b}}^m,\]
for some constant $C_1$. Suppose that $t$ non-root vertices of these spheres are fixed and let $m\leq t$ be the number of spheres containing at least one of these specified vertices. By~\cref{eq:PartialSphereEmbeddingCount}, there are at most $C_0^mn^{bm-t}$ ways to complete these vertices into $m$ spheres. Consequently,
\[\Prob{\text{The specified vertices have their fixed images}}
\leq C_0^m n^{bm-t} \parens*{\frac{C_1}{n^b}}^m  
\leq \parens*{\frac{C_2}{n}}^t\]
for some constant $C_2$. Hence the resulting distribution on omni-boosters is $C_2/n$-vertex-spread. Applying \cref{lem:VertexToTriangleSpread} with $p=C_2/n$ and $K=C_2$, we obtain, for every $K_3$-packing $S$,
\[\Prob{\exists M\in\cM(\cB)\colon S\subseteq M} 
\leq\brackets*{c\cdot\parens*{\frac{C_2}{n}}^{1-1/g}}^{|S|}
\leq \parens*{n^{-1+3/(2g)}}^{|S|},\]
for $n$ sufficiently large, as desired.
\end{proof}


\section{Concluding Remarks}\label{sec:ConcludingRemarks}

It would be interesting to determine if our methods extend to $K_q^r$-decompositions. The arguments used to avoid medium and large subsquares appear relatively robust, and analogous spread estimates should be sufficient for corresponding forbidden substructures. Similarly, the quantum and booster arguments should admit extensions to more general clique decompositions. The main obstacle seems to be the regularity-boosting step: the lemma we use here is specific to triangles, and an appropriate analogue would be required in the general $K_q^r$-setting. It is also not clear what the natural versions of subsquares, or sub-hypercubes, should be in this general setting.

A second natural question is to identify other properties of Latin squares that can be enforced using spreadness. Our proof uses spreadness to avoid subsquares, but~\cref{thm:main} is formulated more generally: the family of medium subsquares could be replaced by any family of packings that is unlikely to appear under an $n^{-1+\gamma}$-spread distribution, while the large subsquares could be replaced by suitable $Y$-enclosed families that are unlikely under the stronger $(1+\varepsilon)/n$-spread condition. It would be interesting to find other natural properties of Latin squares that fit into either of these frameworks.

\paragraph{Declaration of absence of AI use:} This project was initiated in late 2023 and was mostly completed by mid-2025. No generative AI tools were used in the research or preparation of this article.

\bibliographystyle{plain}
\bibliography{Hilton}

\end{document}